\documentclass[12pt,draftcls,onecolumn]{IEEEtran}


\def\bsg{{\boldsymbol{g}}}

\def\bsv{{\boldsymbol{v}}}

\def\bsx{{\boldsymbol{x}}}

\def\bsH{{\boldsymbol{H}}}

\def\bsK{{\boldsymbol{K}}}
\def\bsL{{\boldsymbol{L}}}

\def\bsM{{\boldsymbol{M}}}

\def\bsQ{{\boldsymbol{Q}}}

\def\calG{{\mathcal{G}}}

\def\calL{{\mathcal{L}}}

\usepackage[cmex10]{amsmath}
\interdisplaylinepenalty=2500
\usepackage{amssymb}
\usepackage{algorithmic}
\usepackage{algorithm}

\usepackage{tikz}
\usepackage{tkz-berge}
\usepackage{lipsum}
\usetikzlibrary{calc}
\usetikzlibrary{shapes,fit}
\usepackage{verbatim}
\usepackage{setspace}
\usetikzlibrary{shapes,arrows}
\usepackage[short]{optidef}
\usepackage{pifont}

\usepackage{amsthm}

\usepackage{hyperref}

\usepackage{xcolor, colortbl}
\definecolor{LightGray}{gray}{0.9}
\definecolor{LightCyan}{rgb}{0.88,1,0.8}

\usepackage{color}
\definecolor{gray}{RGB}{128,128,128}

\usepackage{cite}
\usepackage{graphicx}
\graphicspath{{figure/}}
\usepackage{epstopdf}
\usepackage{tikz}
\usetikzlibrary{arrows,shapes,backgrounds}

\usepackage{bm}

\usepackage{array}
\newcolumntype{M}[1]{>{\centering\arraybackslash}m{#1}}
\newcolumntype{N}{@{}m{0pt}@{}}

\usepackage{multirow}

\usepackage{stfloats}
\usepackage{caption}
\usepackage{subcaption}



\usepackage{enumitem}
\usepackage{url}
\usepackage{adjustbox}
\usepackage{booktabs}
\usepackage{makecell}

\newtheorem{theorem}{Theorem}
\newtheorem{assumption}{Assumption}

\newtheorem{lemma}{Lemma}
\newtheorem{corollary}{Corollary}
\newtheorem{definition}{Definition}
\newtheorem{remark}{Remark}

\DeclareMathOperator{\nullrank}{null}

\DeclareMathOperator{\col}{col}

\DeclareMathOperator{\diag}{diag}
\DeclareMathOperator{\Deg}{Deg}

\newenvironment{proof}[1][Proof]%
  {\smallskip\par\noindent\textbf{#1\,:\ }}%
  {\hspace*{\fill} \rule{6pt}{6pt}\smallskip}
\newenvironment{proof*}[1][Proof]%
  {\smallskip\par\noindent\textbf{#1\,:\ }}%


\newenvironment{assumptionp}[1]{
  
  \assumptionalt
}{\endassumptionalt}

\newlength{\fwidth}\setlength{\fwidth}{0.485\textwidth}%

\allowdisplaybreaks
\begin{document}
\IEEEoverridecommandlockouts
\title{Zeroth-Order Algorithms for Stochastic Distributed Nonconvex Optimization}
\author{Xinlei Yi, Shengjun Zhang, Tao Yang, and Karl H. Johansson
\thanks{This work was supported by the
Knut and Alice Wallenberg Foundation, the  Swedish Foundation for Strategic Research, the Swedish Research Council, and the National Natural Science Foundation of China under grants 62133003 and 61991403.}
\thanks{X. Yi and K. H. Johansson are with Division of Decision and Control Systems, School of Electrical Engineering and Computer Science, KTH Royal Institute of Technology, and they are also affiliated with Digital Futures, 10044, Stockholm, Sweden. {\tt\small \{xinleiy, kallej\}@kth.se}.}%
\thanks{S. Zhang is with the Department of Electrical Engineering, University of North Texas, Denton, TX 76203 USA. {\tt\small  ShengjunZhang@my.unt.edu}.}
\thanks{T. Yang is with the State Key Laboratory of Synthetical Automation for Process Industries, Northeastern University, 110819, Shenyang, China. {\tt\small yangtao@mail.neu.edu.cn}.}
}

\maketitle

\begin{abstract}                
In this paper, we consider a stochastic distributed nonconvex optimization problem with the cost function being distributed over $n$ agents having access only to zeroth-order (ZO) information of the cost. This problem has various machine learning applications. As a solution, we propose two distributed ZO algorithms, in which at each iteration each agent samples the local stochastic ZO oracle at two points with a time-varying smoothing parameter. We show that the proposed algorithms achieve the linear speedup convergence rate $\mathcal{O}(\sqrt{p/(nT)})$ for smooth cost functions under the state-dependent variance assumptions which are more general than the commonly used bounded variance and Lipschitz assumptions, and $\mathcal{O}(p/(nT))$ convergence rate when the global cost function additionally satisfies the Polyak--{\L}ojasiewicz (P--{\L}) condition in addition, where $p$ and $T$ are the dimension of the decision variable and the total number of iterations, respectively. To the best of our knowledge, this is the first linear speedup result for distributed ZO algorithms, which enables systematic processing performance improvements by adding more agents. We also show that the proposed algorithms converge linearly under the relative bounded second moment assumptions and the P--{\L} condition.
We demonstrate through numerical experiments the efficiency of our algorithms on generating adversarial examples from deep neural networks in comparison with baseline and recently proposed centralized and distributed ZO algorithms.

\emph{Index Terms}---Distributed Nonconvex  Optimization, Gradient-Free, Linear Speedup, Polyak-{\L}ojasiewicz Condition, Stochastic Optimization.
\end{abstract}


\section{Introduction}

We consider stochastic distributed nonconvex optimization with zeroth-order (ZO) information feedback. Specifically, consider a network of $n$ agents/machines collaborating to solve the following optimization problem
\begin{align}\label{zerosg:eqn:xopt}
 \min_{x\in \mathbb{R}^p} f(x):=\frac{1}{n}\sum_{i=1}^n\mathbf{E}_{\xi_i}[F_i(x,\xi_i)],
\end{align}
where $x\in \mathbb{R}^p$ is the decision variable, $\xi_i$ is a random variable, and $F_i(\cdot,\xi_i): \mathbb{R}^{p}\mapsto \mathbb{R}$ is a stochastic component function (not necessarily convex). Each agent $i$ only has information about its own stochastic ZO oracle $F_i(x,\xi_i)$. In other words, for any given $x$ and $\xi_i$, each agent $i$ can sample $F_i(x,\xi_i)$ as a stochastic approximation of the true local cost function value $f_i(x)=\mathbf{E}_{\xi_i}[F_i(x,\xi_i)]$, but other information such as the first-order oracle cannot be observed. Agents can communicate with their neighbors through an underlying communication network. The network is modeled by an undirected graph $\mathcal G=(\mathcal V,\mathcal E)$, where $\mathcal V =\{1,\dots,n\}$ is the agent set, $\mathcal E
\subseteq \mathcal V \times \mathcal V$ the edge set, and $(i,j)\in \mathcal E$ if agents $i$ and $j$ can communicate with each other. The neighboring set of agent $i$ is denoted by $\mathcal{N}_i=\{j\in \mathcal V:~ (i,j)\in \mathcal E\}$.
The ZO information feedback setting has wide usage in applications, particularly when explicit expressions of the gradients are unavailable or difficult to obtain \cite{conn2009introduction,audet2017derivative,larson2019derivative}. For example, the cost functions of many big data problems that deal with complex data generating processes cannot be explicitly defined \cite{chen2017zoo}. Moreover, the distributed setting is a core aspect of many important applications in view of flexibility and scalability to large-scale datasets and systems, data privacy and locality, communication reduction to the central entity, and robustness to potential failures of the central entity \cite{nedic2018distributed,Koloskova2019decentralized,yang2019survey}.

\subsection{Literature Review}
The study of gradient-free (derivative-free) optimization has a long history, which can be traced back  at least to the 1960's \cite{hooke1961direct,matyas1965random,nelder1965simplex}. It has recently gained renewed attention by the machine learning community. Classical gradient-free optimization methods can be classified into direct-search and model-based methods. For example, 
stochastic direct-search and model-based trust-region algorithms have been proposed in \cite{bergou2019stochastic,bibi2019stochastic,gorbunov2019stochastic,golovin2019gradientless}
and \cite{marazzi2002wedge,conn2009global,scheinberg2010self}, respectively. 
In recent years, the more popular gradient-free optimization methods are ZO methods, which are gradient-free counterparts of first-order optimization methods and thus  easy to implement. In ZO optimization methods, the full or stochastic gradients are approximated by directional derivatives, which are calculated through sampled function values. A commonly used method to calculate directional derivatives is simply using the function difference at two points \cite{duchi2015optimal,shamir2017optimal,nesterov2017random}.

Various ZO optimization methods have been proposed, e.g., ZO (stochastic) gradient descent algorithms \cite{shamir2013complexity,ghadimi2013stochastic,bach2016highly,nesterov2017random,
balasubramanian2018zeroth,jin2018local,
ye2018hessian,vlatakis2019efficiently,kozak2020stochastic,liu2018zerothieee,
liu2019signsgd,zhang2020improving}, ZO stochastic coordinate descent algorithms \cite{lian2016Comprehensive}, ZO (stochastic) variance reduction algorithms \cite{liu2018zerothieee,liu2019signsgd,balasubramanian2018zeroth,jin2018local,
ghadimi2016mini,gao2018information,kazemi2018proximal,gu2018faster,fang2018spider,
liu2018zeroth,gorbunov2018accelerated,liu2018stochastic,
huang2019faster,pmlr-v97-ji19a,Huang2020Accelerated,chen2020accelerated,gao2020can}, ZO (stochastic) proximal algorithms \cite{huang2019faster,cai2020zeroth,ghadimi2016mini,nazari2020adaptive}, ZO Frank-Wolfe algorithms \cite{balasubramanian2018zeroth,sahu2019towards,Huang2020Accelerated,gao2020can}, ZO mirror descent algorithms \cite{duchi2015optimal,wang2018stochastic,gorbunov2018accelerated}, ZO adaptive momentum methods \cite{chen2019zo,nazari2020adaptive}, ZO methods of multipliers \cite{gao2018information,kazemi2018proximal,huang2019zeroth,huang2019nonconvex}, ZO stochastic path-integrated differential estimator \cite{pmlr-v97-ji19a,fang2018spider,huang2019nonconvex}.
Convergence properties of these algorithms have been analyzed in detail. For instance, the typical convergence result for deterministic centralized optimization problems is that first-order stationary points can be found at an $\mathcal{O}(p/T)$ convergence rate by the two-point sampling based ZO algorithms \cite{nesterov2017random,kozak2020stochastic}, while under stochastic settings the convergence rate is reduced to $\mathcal{O}(\sqrt{p/T})$ \cite{ghadimi2013stochastic,lian2016Comprehensive}, where $T$ is the total number of iterations.

Aforementioned ZO optimization algorithms are all centralized and thus not suitable to solve distributed optimization problems. Recently distributed ZO algorithms have been proposed, e.g., distributed ZO gradient descent algorithms \cite{yuan2014randomized,sahu2018distributed,sahu2018communication,wang2019distributed,pang2019randomized,tang2020distributedzero}, distributed ZO push-sum algorithm \cite{yuan2015gradient}, distributed ZO mirror descent algorithm \cite{yu2019distributed}, distributed ZO gradient tracking algorithm \cite{tang2020distributedzero}, distributed ZO primal--dual algorithms \cite{hajinezhad2018gradient,hajinezhad2019zone,yi2021linear}, distributed ZO sliding algorithm \cite{beznosikov2019derivative}, privacy-preserving distributed ZO algorithm \cite{gratton2020privacy}, distributed ZO Frank-Wolfe algorithm \cite{sahu2020decentralized}. Among these algorithms, the algorithms in \cite{yuan2014randomized,yuan2015gradient,sahu2018distributed,sahu2018communication,
yu2019distributed,yi2021linear,
tang2020distributedzero} are suitable to solve the deterministic form of \eqref{zerosg:eqn:xopt}, while the algorithm in \cite{hajinezhad2019zone} can be directly applied to solve the stochastic optimization problem~\eqref{zerosg:eqn:xopt}. However, the algorithm in \cite{hajinezhad2019zone} requires each agent to have an $\mathcal{O}(T)$ sampling size per iteration, which is not favorable in practice, although it was shown that first-order stationary points can be found at an $\mathcal{O}(p^2n/T)$ convergence rate.

From the discussion above,  three core theoretical questions arise  when considering stochastic distributed optimization problems:


(Q1) Can distributed ZO algorithms achieve similar convergence properties as centralized ZO algorithms? For instance, can distributed ZO algorithms based on two-point sampling have an $\mathcal{O}(\sqrt{p/T})$ convergence rate as their centralized counterparts in \cite{ghadimi2013stochastic,lian2016Comprehensive}?

(Q2) As shown in \cite{lian2017can}, distributed stochastic gradient descent (SGD) algorithms can achieve linear speedup  with respect to the number of agents compared with centralized SGD algorithms. Can distributed ZO algorithms also achieve linear speedup? In particular, can distributed ZO algorithms based on two-point sampling achieve the linear speedup convergence rate $\mathcal{O}(\sqrt{p/nT})$?

(Q3) For deterministic optimization problems, centralized and distributed ZO algorithms can achieve faster convergence rates under more stringent conditions such as strong convexity or Polyak--{\L}ojasiewicz (P--{\L}) conditions, as shown in \cite{nesterov2017random,ye2018hessian,pmlr-v97-ji19a,kozak2020stochastic,chen2020accelerated,
cai2020zeroth} and \cite{yi2021linear,tang2020distributedzero}, respectively. For stochastic optimization problems, can ZO algorithms also achieve faster convergence rates under strong convexity or P--{\L} conditions?

\subsection{Main Contributions}
This paper provides positive answers to the above three questions. We propose two distributed ZO algorithms, one primal--dual and one primal algorithm, to solve the stochastic optimization problem~\eqref{zerosg:eqn:xopt}.
In both algorithms, at each iteration each agent communicates its local primal variables to its neighbors through an arbitrarily connected communication network. Moreover, each agent samples its local stochastic ZO oracle at two points with a time-varying smoothing parameter. The contributions of this paper are summarized as follows.

(C1) We show in Theorems~\ref{zerosg:thm-sg-smT} and \ref{zerosg-p:thm-sg-smT} that our algorithms find a stationary point with the linear speedup convergence rate $\mathcal{O}(\sqrt{p/(nT)})$ for nonconvex but smooth cost functions under the state-dependent variance assumptions which are more general than the commonly used bounded variance and Lipschitz assumptions. This rate is faster than that achieved by the centralized ZO algorithms in \cite{ghadimi2013stochastic,lian2016Comprehensive,zhang2020improving,liu2018zerothieee,
liu2019signsgd,balasubramanian2018zeroth,chen2019zo} and the distributed ZO algorithm in \cite{tang2020distributedzero}. To the best of our knowledge, this is the first linear speedup result for distributed ZO algorithms; thus (Q1) and (Q2) are answered.

(C2) We show in Theorems~\ref{zerosg:thm-sg-diminishingt}, \ref{zerosg-a5:thm-sg-diminishingt}, \ref{zerosg-p:thm-sg-diminishingt}, and \ref{zerosg-p-a5:thm-sg-diminishingt} that our proposed algorithms find a global optimum with an $\mathcal{O}(p/(nT))$ convergence rate when the global cost function satisfies the P--{\L} condition in addition. This rate is faster than that achieved by the centralized ZO algorithms in \cite{shamir2013complexity,bach2016highly} and the distributed ZO algorithms in \cite{sahu2018distributed,tang2020distributedzero}, although  \cite{shamir2013complexity,bach2016highly,sahu2018distributed} assumed strongly convex cost functions and only considered additive sampling noise, and \cite{tang2020distributedzero} only considered the deterministic problem. This paper presents the first performance analysis for ZO algorithms to solve stochastic optimization problems under P--{\L}  or strong convexity assumptions; thus (Q3) is answered.

(C3) We show in Theorems~\ref{zerosg:thm-random-pd-fixed} and \ref{zerosg-p:thm-random-pd-fixed} that our algorithms  with constant algorithm parameters linearly converges to a neighborhood of a global optimum under the P--{\L} condition. Moreover, a precise global optimum can be linearly found if the relative bounded second moment assumptions also hold, see Corollaries~\ref{zerosg:thm-random-pd-fixed-coro1} and \ref{zerosg-p:thm-random-pd-fixed-coro1}. It should be mentioned that the P--{\L} constant is not used to design the algorithm parameters when showing these results. Compared with \cite{nesterov2017random,ye2018hessian,pmlr-v97-ji19a,kozak2020stochastic,chen2020accelerated,
cai2020zeroth} which also achieved linear convergence, we use weaker assumptions on the cost function and/or less samplings per iteration.

The detailed comparison between this paper and the literature is summarized in TABLE~\ref{zerosg:table}.

\begin{table*}[htbp]
\caption{Summary of existing works on ZO optimization, where NoSPPI denotes the number of sampled points per iteration, and the sampling complexity is the total number of function samplings to achieve $\mathbf{E}[\|\nabla f(x_T)\|^2]\le\epsilon$ for nonconvex problems or $\mathbf{E}[f(x_T)-f^*]\le\epsilon$ for (strongly) convex  problems or problems satisfying the P--{\L} condition.}
\label{zerosg:table}
\vskip 0.05in
\begin{center}
\tiny
\scalebox{1}{
\begin{tabular}{M{0.8cm}|M{8.0cm}|M{1.15cm}|M{1.6cm}|M{2.5cm}N}
\hline
Reference&Problem settings&NoSPPI &Convergence rate&Sampling complexity&\\[5pt]

\hline
\multirow{2}{*}[-2pt]{\parbox{0.8cm}{\centering \cite{nesterov2017random}}}&Deterministic, centralized, unconstrained, nonconvex, smooth&\multirow{2}{*}[-1pt]{\parbox{1.1cm}{\centering Two}}&$\mathcal{O}(p/T)$&$\mathcal{O}(p/\epsilon)$&\\[5pt]
\cline{2-2}\cline{4-5}
&Strongly convex in addition&&Linear&$\mathcal{O}(p\log(1/\epsilon))$&\\[5pt]


\hline
\cite{ye2018hessian}&Deterministic, centralized, strongly convex, unconstrained, smooth, Lipschitz Hessian&$p$&Linear&$\mathcal{O}(p\log(1/\epsilon))$&\\[5pt]


\hline
\multirow{2}{*}[-2pt]{\cite{kozak2020stochastic}}&Deterministic, centralized, unconstrained, nonconvex, smooth&\multirow{2}{*}[-1pt]{\parbox{1.1cm}{\centering Two}}&$\mathcal{O}(p/T)$&$\mathcal{O}(p/\epsilon)$&\\[5pt]
\cline{2-2}\cline{4-5}
&P--{\L} condition in addition &&Linear&$\mathcal{O}(p\log(1/\epsilon))$&\\[5pt]

\hline
\cite{cai2020zeroth}&Deterministic, centralized, restricted strongly convex, unconstrained, smooth, $s$-sparse gradient &$4s\log(p/s)$&Linear&$\mathcal{O}(s\log(p/s)\log(1/\epsilon))$&\\[5pt]

\hline
\rowcolor{LightGray}
\cite{shamir2013complexity}&Deterministic, centralized, quadratic, unconstrained, additive sampling noise&One&$\mathcal{O}(p^2/T)$&$\mathcal{O}(p^2/\epsilon)$&\\[5pt]

\hline
\rowcolor{LightGray}
\cite{bach2016highly}&Deterministic, centralized, strongly convex, unconstrained, smooth, additive sampling noise &Two &$\mathcal{O}(p/\sqrt{T})$&$\mathcal{O}(p^2/\epsilon^2)$&\\[5pt]

\hline
&Deterministic, centralized, unconstrained, nonconvex, Lipschitz, smooth&&$\mathcal{O}(p^2/T^{2/3})$&$\mathcal{O}(p^3/\epsilon^{3/2})$&\\[5pt]
\cline{2-2}\cline{4-5}
\rowcolor{LightCyan}
\multirow{2}{*}[9.5pt]{\cite{zhang2020improving}}&Stochastic, centralized, unconstrained, nonconvex, Lipschitz, smooth&\multirow{2}{*}[10.5pt]{\parbox{1.1cm}{\centering One}}&$\mathcal{O}(p^{4/3}/T^{1/3})$&$\mathcal{O}(p^4/\epsilon^{3})$&\\[5pt]


\hline
\rowcolor{LightCyan}
\cite{ghadimi2013stochastic,lian2016Comprehensive}&Stochastic, centralized, unconstrained, nonconvex, smooth &Two & $\mathcal{O}(\sqrt{p/T})$&$\mathcal{O}(p/\epsilon^2)$&\\[5pt]

\hline
\rowcolor{LightCyan}
\cite{balasubramanian2018zeroth}&Stochastic, centralized, unconstrained, nonconvex, smooth, $s$-sparse gradient &Two &$\mathcal{O}(s\log(p)/\sqrt{T})$ &$\mathcal{O}((s\log(p))^2/\epsilon^2)$&\\[5pt]

\hline
\rowcolor{LightCyan}
\cite{ghadimi2016mini}&Stochastic, centralized, constrained, nonconvex, Lipschitz, smooth& $\mathcal{O}(pT)$ &$\mathcal{O}(1/T)$&$\mathcal{O}(p/\epsilon^2)$&\\[5pt]


\hline
\rowcolor{LightCyan}
\cite{chen2019zo}&Stochastic, centralized, constrained, nonconvex, Lipschitz, smooth& Two &$\mathcal{O}(p/\sqrt{T})$ &$\mathcal{O}(p^2/\epsilon^2)$&\\[5pt]

\hline
\cite{liu2018zerothieee}&Deterministic, finite-sum, nonconvex, constrained, Lipschitz, smooth &$\mathcal{O}(\sqrt{T})$ &$\mathcal{O}(p/\sqrt{T})$ &$\mathcal{O}(p^{3}/\epsilon^3)$&\\[5pt]

\hline
\cite{liu2019signsgd}&Deterministic, finite-sum, nonconvex, unconstrained, Lipschitz, smooth&$\mathcal{O}(pT)$&$\mathcal{O}(\sqrt{p/T})$&$\mathcal{O}(p^3/\epsilon^4)$&\\[5pt]

\hline
\cite{gu2018faster}&Deterministic, finite-sum, nonconvex, unconstrained, Lipschitz, smooth, the original and mixture gradients are proportional &$2b$&$\mathcal{O}(pn^\theta/(bT))$ &$\mathcal{O}(pn^\theta/\epsilon)$, $\forall \theta\in(0,1)$&\\[5pt]

\hline
\cite{fang2018spider}&Deterministic, finite-sum, nonconvex, unconstrained, smooth, bounded variance &$\mathcal{O}(pn^{1/2})$ &$\mathcal{O}(1/T)$&$\mathcal{O}(pn^{1/2}/\epsilon)$&\\[5pt]

\hline
\cite{liu2018zeroth}&Deterministic, finite-sum, nonconvex, unconstrained, smooth, bounded variance &$2n$ &$\mathcal{O}(p/T)$ &$\mathcal{O}(pn/\epsilon)$&\\[5pt]

\hline
\cite{huang2019faster}&Deterministic, finite-sum, nonconvex, unconstrained, Lipschitz, smooth &$\mathcal{O}(pn^{2/3})$ &$\mathcal{O}(p/T)$ &$\mathcal{O}(p^2n^{2/3}/\epsilon)$&\\[5pt]

\hline
\multirow{2}{*}[-2pt]{\cite{pmlr-v97-ji19a}}&Deterministic, finite-sum, nonconvex, unconstrained, smooth, bounded variance&\multirow{2}{*}[-1pt]{\parbox{1.1cm}{\centering $\mathcal{O}(pn^{1/2})$}}&$\mathcal{O}(1/T)$&$\mathcal{O}(pn^{1/2}/\epsilon)$&\\[5pt]
\cline{2-2}\cline{4-5}
&P--{\L} condition in addition&&Linear&$\mathcal{O}(pn^{1/2}\log(1/\epsilon))$&\\[5pt]

\hline
\cite{chen2020accelerated}&Deterministic, finite-sum, strongly convex, unconstrained, smooth &Four &Linear &$\mathcal{O}(pn\log(p/\epsilon))$&\\[5pt]

\hline
\rowcolor{LightCyan}
\cite{kazemi2018proximal}&Stochastic, finite-sum, nonconvex, constrained, Lipschitz, smooth &$\mathcal{O}(nT)$ &$\mathcal{O}(p/T)$&$\mathcal{O}(p^2n/\epsilon^2)$&\\[5pt]

\hline
\rowcolor{LightCyan}
\cite{liu2018stochastic}&Stochastic, finite-sum, nonconvex, unconstrained, smooth &Four &$\mathcal{O}(p^{1/3}n^{2/3}/T)$ &$\mathcal{O}(p^{1/3}n^{2/3}/\epsilon)$&\\[3.2pt]

\hline
\cite{yuan2014randomized}&Deterministic, distributed, convex, constrained, Lipschitz &$2n$ &Asymptotic& ---- &\\[5pt]

\hline
\cite{yuan2015gradient}&Deterministic, distributed, convex, unconstrained, Lipschitz&$2n$&$\mathcal{O}(p^3n^2/\sqrt{T})$&$\mathcal{O}(p^6n^5/\epsilon^2)$&\\[5pt]

\hline
\multirow{2}{*}[-2pt]{\cite{yu2019distributed}}&Deterministic, distributed, convex, compact constrained, Lipschitz&\multirow{2}{*}[-1pt]{\parbox{1.0cm}{\centering $2n$}}&$\mathcal{O}(p\sqrt{n/T})$&$\mathcal{O}(p^2n^2/\epsilon^2)$&\\[5pt]
\cline{2-2}\cline{4-5}
&Deterministic, distributed, strongly convex, constrained, Lipschitz&&$\mathcal{O}(p^2n^2/T)$&$\mathcal{O}(p^2n^3/\epsilon)$&\\[5pt]

\hline
\rowcolor{LightGray}
\cite{sahu2018distributed}&Deterministic, distributed, strongly convex, unconstrained, smooth, additive sampling noise &$2n$ &$\mathbb{O}(pn^2/\sqrt{T})$&$\mathcal{O}(p^2n^5/\epsilon^2)$&\\[5pt]

\hline
\rowcolor{LightGray}
\cite{wang2019distributed}&Deterministic, distributed, convex, compact constrained, Lipschitz, additive sampling noise &$2n$ &$\mathcal{O}(1/\sqrt{T})$&$\mathcal{O}(n/\epsilon^2)$&\\[5pt]

\hline
\rowcolor{LightCyan}
\cite{beznosikov2019derivative}&Stochastic, distributed, convex, compact constrained, Lipschitz &$\mathcal{O}(pnT)$ &$\mathcal{O}(1/T)$&$\mathcal{O}(pn/\epsilon^2)$&\\[5pt]

\hline
\multirow{4}{*}[-6pt]{\cite{tang2020distributedzero}}&Deterministic, distributed, nonconvex, unconstrained, Lipschitz, smooth&\multirow{2}{*}[-5pt]{\parbox{1.1cm}{\centering $2n$}}&$\mathcal{O}(\sqrt{p/T})$&$\mathcal{O}(pn/\epsilon^2)$&\\[5pt]
\cline{2-2}\cline{4-5}
&Deterministic, distributed, nonconvex, unconstrained, smooth, P--{\L} condition&&$\mathcal{O}(p/T)$&$\mathcal{O}(pn/\epsilon)$&\\[5pt]
\cline{2-5}
&Deterministic, distributed, nonconvex, unconstrained, smooth&\multirow{2}{*}[-5pt]{\parbox{1.1cm}{\centering $2pn$}}&$\mathcal{O}(1/T)$&$\mathcal{O}(pn/\epsilon)$&\\[5pt]
\cline{2-2}\cline{4-5}
&Deterministic, distributed, nonconvex, unconstrained, smooth, P--{\L} condition&&Linear&$\mathcal{O}(pn\log(1/\epsilon))$&\\[5pt]

\hline
\multirow{2}{*}[-2.5pt]{\cite{yi2021linear}}&Deterministic, distributed, nonconvex, unconstrained, smooth&\multirow{2}{*}[-2.5pt]{\parbox{1.1cm}{\centering $(p+1)n$}}&$\mathcal{O}(1/T)$&$\mathcal{O}(pn/\epsilon)$&\\[5pt]
\cline{2-2}\cline{4-5}
&P--{\L} condition in addition (without using the P--{\L} constant) &&Linear&$\mathcal{O}(pn\log(1/\epsilon))$&\\[5pt]

\hline
\rowcolor{LightCyan}
\cite{hajinezhad2019zone}&Stochastic, distributed, nonconvex, unconstrained, Lipschitz, smooth&$\mathcal{O}(nT)$&$\mathcal{O}(p^2n/T)$&$\mathcal{O}(p^4n^3/\epsilon^2)$&\\[5pt]

\hline
\rowcolor{LightCyan}
&Stochastic, distributed, nonconvex, unconstrained, smooth, state-dependent variance &&$\mathcal{O}(\sqrt{p/(nT)})$&$\mathcal{O}(p/\epsilon^2)$&\\[5pt]
\cline{2-2}\cline{4-5}
\rowcolor{LightCyan}
&Stochastic, distributed, nonconvex, unconstrained, smooth, P--{\L} condition&\multirow{2}{*}[10.5pt]{\parbox{1.1cm}{\centering $2n$}}&$\mathcal{O}(p/(nT))$&$\mathcal{O}(p/\epsilon)$&\\[5pt]
\cline{2-5}
\rowcolor{LightCyan}
&Stochastic, distributed, nonconvex, unconstrained, smooth, relative bounded second moment &&$\mathcal{O}(p/T)$&$\mathcal{O}(np/\epsilon)$&\\[5pt]
\cline{2-2}\cline{4-5}
\rowcolor{LightCyan}
\multirow{4}{*}[30pt]{\parbox{0.8cm}{\centering This paper}}&P--{\L} condition in addition (without using the P--{\L} constant) &\multirow{2}{*}[10.5pt]{\parbox{1.1cm}{\centering $2n$}} &Linear &$\mathcal{O}(pn\log(1/\epsilon))$&\\[5pt]

\hline
\end{tabular}
}

\end{center}
\vskip -0.1in
\end{table*}

\subsection{Outline}
The rest of this paper is organized as follows.
Section~\ref{zerosg:sec-preliminary} introduces some preliminaries. Sections~\ref{zerosg:sec-main-random} and \ref{zerosg-p:sec-main-random} provide the distributed primal--dual and primal ZO algorithms, respectively, and analyze their convergence properties.  Numerical evaluations for an image classification problem from the literature are given in Section~\ref{zerosg:sec-simulation}. Finally, concluding remarks are offered in Section~\ref{zerosg:sec-conclusion}. All the proofs are given in Appendix.

\noindent {\bf Notations}: $\mathbb{N}_+$ denotes the set of positive integers. $[n]$ denotes the set $\{1,\dots,n\}$ for any $n\in\mathbb{N}_+$.
$\|\cdot\|$ represents the Euclidean norm for vectors or the induced 2-norm for matrices.
$\mathbb{B}^p$ and $\mathbb{S}^p$ are the unit ball and sphere centered around the origin in $\mathbb{R}^p$ under Euclidean norm, respectively. Given a differentiable function $f$, $\nabla f$ denotes its gradient.

\section{Preliminaries}\label{zerosg:sec-preliminary}
In this section, we introduce the P--{\L} condition, the random gradient
estimator, and the assumptions used in this paper.

\subsection{Polyak--{\L}ojasiewicz Condition}
\begin{definition} \cite{karimi2016linear}
A differentiable  function $f(x):~\mathbb{R}^p\mapsto\mathbb{R}$ satisfies the Polyak--{\L}ojasiewicz (P--{\L}) condition with constant  $\nu>0$ if $f^*>-\infty$, where $f^*=\min_{x\in\mathbb{R}^p}f(x)$, and
\begin{align}
\frac{1}{2}\|\nabla f(x)\|^2\ge \nu( f(x)-f^*),~\forall x\in \mathbb{R}^p.\label{nonconvex:equ:plc}
\end{align}
\end{definition}
It is straightforward to see that every (essentially, weakly, or restricted) strongly convex function satisfies the P--{\L} condition.
The P--{\L} condition implies that every stationary point is a global minimizer. But unlike (essentially, weakly, or restricted) strong convexity, the P--{\L} condition  alone does not imply convexity of $f$. Moreover, it does not imply that the set of global minimizers is a singleton \cite{karimi2016linear,zhang2015restricted}.
Examples of nonconvex functions which satisfy the P--{\L} condition can be found in \cite{karimi2016linear,zhang2015restricted}.


\subsection{Gradient Estimator}
Let $f(x):~\mathbb{R}^p\mapsto\mathbb{R}$ be a function. The authors of \cite{duchi2015optimal} proposed the following random gradient estimator:
\begin{align}
\hat{\nabla}_2f(x,\delta,u)=\frac{p}{\delta}(f(x+\delta u)-f(x))u,\label{dbco:gradient:model2}
\end{align}
where $\delta>0$ is the smoothing/exploration parameter and $u\in\mathbb{S}^p$ is a uniformly distributed random vector. This gradient estimator can be calculated by sampling the function $f$ at two points (e.g., $x$ and $x+\delta u$). The intuition of this estimator follows from directional derivatives \cite{duchi2015optimal}. From a practical point of view, the larger the smoothing parameter $\delta$ the better, since in this case it is easier to distinguish the two sampled function values.

\subsection{Assumptions}
The following assumptions are made.

\begin{assumption}\label{zerosg:ass:graph}
The undirected graph $\mathcal G$ is connected.
\end{assumption}

\begin{assumption}\label{zerosg:ass:optset}
The optimal set $\mathbb{X}^*$ is nonempty and $f^*>-\infty$, where $\mathbb{X}^*$ and $f^*$ are the optimal set and the minimum function value of the optimization problem \eqref{zerosg:eqn:xopt}, respectively.
\end{assumption}


\begin{assumption}\label{zerosg:ass:zeroth-smooth}
For almost all $\xi_i$, the stochastic ZO oracle $F_i(\cdot,\xi_i)$ is smooth with constant $L_f>0$.
\end{assumption}

\begin{assumption}\label{zerosg:ass:zeroth-variance}
Each stochastic gradient $\nabla_xF_i(x,\xi_i)$ has state-dependent variance, i.e., there exist two constants $\sigma_0$ and $\sigma_1$ such that $\mathbf{E}_{\xi_i}[\|\nabla_xF_i(x,\xi_i)-\nabla f_i(x)\|^2]\le\sigma^2_0\|\nabla f_i(x)\|^2+\sigma^2_1,~\forall i\in[n],~\forall x\in\mathbb{R}^p$.
\end{assumption}

\begin{assumption}\label{zerosg:ass:fig}
Each local gradient $\nabla f_i(x)$ has state-dependent variance, i.e.,
there exists two constants $\tilde{\sigma}_0$ and $\sigma_2$ such that $\|\nabla f_i(x)-\nabla f(x)\|^2\le\tilde{\sigma}^2_0\|\nabla f(x)\|^2+\sigma^2_2,~\forall i\in[n],~\forall x\in\mathbb{R}^p$. Here $\nabla f_i(x)$ can be viewed as a stochastic gradient of $\nabla f(x)$ by randomly picking an index $i\in[n]$.
\end{assumption}

\begin{assumption}\label{zerosg:ass:fil} The global cost function $f(x)$ satisfies the P--{\L} condition with constant $\nu>0$.
\end{assumption}

\begin{remark}
It should be highlighted that no convexity assumptions are made.
Assumption~\ref{zerosg:ass:graph} is common in distributed optimization, e.g., \cite{shi2015extra,nedic2017geometrically,qu2018harnessing,
qu2017accelerated,hajinezhad2019zone,beznosikov2019derivative,tang2020distributedzero}. Assumption~\ref{zerosg:ass:optset} is basic. Assumption~\ref{zerosg:ass:zeroth-smooth} is standard in stochastic optimization with ZO information feedback, e.g., \cite{ghadimi2013stochastic,ghadimi2016mini,lian2016Comprehensive,gao2018information,
balasubramanian2018zeroth,gorbunov2018accelerated,liu2018stochastic,kazemi2018proximal,
hajinezhad2019zone}. When $\sigma_0=0$, Assumption~\ref{zerosg:ass:zeroth-variance} recovers the bounded variance assumption, which is commonly used in the literature studying stochastic ZO optimization, e.g., \cite{ghadimi2013stochastic,balasubramanian2018zeroth,lian2016Comprehensive,ghadimi2016mini,gao2018information,
kazemi2018proximal,gorbunov2018accelerated,sahu2019towards,hajinezhad2019zone}. Therefore, Assumption~\ref{zerosg:ass:zeroth-variance} is more general.
When $\tilde{\sigma}_0=0$, Assumption~\ref{zerosg:ass:fig} becomes the bounded variance assumption, i.e., $\|\nabla f_i(x)-\nabla f(x)\|^2$ is globally bounded, and is weaker than the Lipschitz assumption, i.e., $\|\nabla f_i(x)\|$ is globally bounded. Both the bounded variance and Lipschitz assumptions  are normally used in the literature studying ZO optimization, e.g., \cite{fang2018spider,liu2018zeroth,pmlr-v97-ji19a} and \cite{duchi2015optimal,yuan2014randomized,gu2018faster,yuan2015gradient,liu2018zerothieee,kazemi2018proximal,
hajinezhad2019zone,liu2019signsgd,huang2019faster,gao2018information,
huang2019zeroth,huang2019nonconvex,tang2020distributedzero}, respectively. However, the Lipschitz assumption is too restricted since the simple quadratic functions are not Lipschitz. Moreover, the bounded variance assumption is also restricted, for instance it is impractical to assume this assumption for distributed learning problems with local cost functions being constructed by heterogenous data collected locally by agents. In contrast, Assumption~\ref{zerosg:ass:fig} is more general due to the state-dependent term $\tilde{\sigma}^2_0\|\nabla f(x)\|^2$, and it is not needed when Assumption~\ref{zerosg:ass:fil} holds and the constant $\nu$ is known in advance as shown in Theorems~\ref{zerosg-a5:thm-sg-diminishingt} and \ref{zerosg-p-a5:thm-sg-diminishingt} in the later sections. Assumption~\ref{zerosg:ass:fil}  is  weaker than the assumption that the global or each local cost function is (restricted) strongly convex. It plays a key role to guarantee that a global optimum can be found and to show that faster convergence rate can be achieved.
\end{remark}

To end this section, we introduce the stronger alternatives of Assumptions~\ref{zerosg:ass:zeroth-variance} and \ref{zerosg:ass:fig}, which are the key to show faster convergence for the proposed algorithms as shown in the later sections.

\begin{assumptionp}{\ref{zerosg:ass:zeroth-variance}$'$}
The second moment of each stochastic gradient $\nabla_xF_i(x,\xi_i)$ is relative bounded, i.e., there exists a constant $\breve{\sigma}_0$ such that $\mathbf{E}_{\xi_i}[\|\nabla_xF_i(x,\xi_i)\|^2]\le\breve{\sigma}^2_0\|\nabla f_i(x)\|^2,~\forall i\in[n],~\forall x\in\mathbb{R}^p$.
\end{assumptionp}

\begin{assumptionp}{\ref{zerosg:ass:fig}$'$}
The second moment of each local gradient $\nabla f_i(x)$ is relative bounded, i.e.,
there exists a constant $\hat{\sigma}_0$ such that $\|\nabla f_i(x)\|^2\le\hat{\sigma}^2_0\|\nabla f(x)\|^2,~\forall i\in[n],~\forall x\in\mathbb{R}^p$.
\end{assumptionp}

It is straightforward to check that Assumption~\ref{zerosg:ass:zeroth-variance}$'$ (Assumption~\ref{zerosg:ass:fig}$'$) is equivalent to  Assumption~\ref{zerosg:ass:zeroth-variance} (Assumption~\ref{zerosg:ass:fig}) when $\sigma_1=0$ ($\sigma_2=0$). Assumption~\ref{zerosg:ass:zeroth-variance}$'$ is satisfied trivially when the deterministic ZO information is available. Assumption~\ref{zerosg:ass:fig}$'$ holds when $\nabla f_i(x)$ is proportional to $\nabla f(x)$, for example when all the random variables $\xi_i$ have a common probability distribution and the local stochastic  component functions are the same, which is a common setup in distributed empirical risk minimization problems. Moreover, for deterministic centralized optimization problems, Assumption~\ref{zerosg:ass:zeroth-variance}$'$ and \ref{zerosg:ass:fig}$'$ hold trivially.

\section{Distributed ZO primal--dual Algorithm}\label{zerosg:sec-main-random}
In this section, we propose a distributed ZO primal--dual algorithm and analyze its convergence properties.

When gradient information is available, in \cite{yi2021linear} the following distributed first-order primal--dual algorithm was proposed to solve \eqref{zerosg:eqn:xopt}:
\begin{subequations}\label{nonconvex:kia-algo-dc}
\begin{align}
x_{i,k+1} &= x_{i,k}-\eta\Big(\alpha\sum_{j=1}^nL_{ij}x_{j,k}+\beta v_{i,k}+\nabla f_i(x_{i,k})\Big), \label{nonconvex:kia-algo-dc-x}\\
v_{i,k+1} &=v_{i,k}+ \eta\beta\sum_{j=1}^n L_{ij}x_{j,k},~\forall x_{i,0}\in\mathbb{R}^p, ~\sum_{j=1}^nv_{j,0}={\bm 0}_p,~
\forall i\in[n], \label{nonconvex:kia-algo-dc-q}
\end{align}
\end{subequations}
where $\alpha$, $\beta$, and $\eta$ are positive algorithm parameters, and $L=[L_{ij}]$ is the weighted Laplacian matrix associated with the undirected communication graph $\calG$. As pointed out in \cite{yi2021linear} the distributed first-order algorithm \eqref{nonconvex:kia-algo-dc} is a special form of several existing first-order algorithms in the literature, e.g., \cite{shi2015extra,jakovetic2020primal}, and it has been shown that this algorithm can find a stationary point with an $\mathcal{O}(1/k)$ convergence rate.

Noting that we consider the scenario where only stochastic ZO oracles rather than the explicit expressions of the gradients are available, we need to estimate the gradients used in the distributed first-order algorithm \eqref{nonconvex:kia-algo-dc}. Inspired by \eqref{dbco:gradient:model2}, we introduce
\begin{align}
&g^e_{i,k}=\frac{p(F_i(x_{i,k}+\delta_{i,k} u_{i,k},\xi_{i,k})-F_i(x_{i,k},\xi_{i,k}))}{\delta_{i,k}}u_{i,k},
\label{dbco:gradient:model2-st}
\end{align}
where $\delta_{i,k}>0$ is a time-varying smoothing parameter and $u_{i,k}\in\mathbb{S}^p$ is a uniformly distributed random vector chosen by agent $i$ at iteration $k$; $\xi_{i,k}$ is a random variable sampled by agent $i$ at iteration $k$ according to the distribution of $\xi_i$; and $F_i(x_{i,k}+\delta_{i,k} u_{i,k},\xi_{i,k})$ and $F_i(x_{i,k},\xi_{i,k})$ are the values sampled by agent $i$ at iteration $k$.
We replace the gradient and fixed algorithm parameters in \eqref{nonconvex:kia-algo-dc} with the stochastic gradient estimator \eqref{dbco:gradient:model2-st} and time-varying parameters, respectively. Then we get the following ZO algorithm:
\begin{subequations}\label{zerosg:alg:random-pd}
\begin{align}
x_{i,k+1} &= x_{i,k}-\eta_k\Big(\alpha_k\sum_{j=1}^nL_{ij}x_{j,k}+\beta_k v_{i,k}+g^e_{i,k}\Big), \label{zerosg:alg:random-pd-x}\\
v_{i,k+1} &=v_{i,k}+ \eta_k\beta_k\sum_{j=1}^n L_{ij}x_{j,k},~\forall x_{i,0}\in\mathbb{R}^p, ~\sum_{j=1}^nv_{j,0}={\bm 0}_p,~
\forall i\in[n].  \label{zerosg:alg:random-pd-q}
\end{align}
\end{subequations}

We write the distributed ZO algorithm \eqref{zerosg:alg:random-pd} in pseudo-code as Algorithm~\ref{zerosg:algorithm-random-pd}. In this algorithm, from the way to generate $u_{i,k}$ and $\xi_{i,k}$, we know that $u_{i,k},~\xi_{j,l},~\forall i,j\in[n],~k,l\in\mathbb{N}_+$ are mutually independent. Let $\mathfrak{L}_k$ denote the $\sigma$-algebra generated by the independent random variables $u_{1,k},\dots,u_{n,k},\xi_{1,k},\dots,\xi_{n,k}$ and let $\calL_k=\bigcup_{t=0}^{k}\mathfrak{L}_t$. From the independence property of $u_{i,k}$ and $\xi_{i,l}$, we can see that $x_{i,k}$ and $v_{i,k+1},~i\in[n]$ depend on $\calL_{k-1}$ and are independent of $\mathfrak{L}_t$ for all $t\ge k$.
\begin{algorithm}[tb]
\caption{Distributed  ZO Primal--Dual Algorithm}
\label{zerosg:algorithm-random-pd}
\begin{algorithmic}[1]

\STATE \textbf{Input}: positive sequences $\{\alpha_k\}$, $\{\beta_k\}$, $\{\eta_k\}$, and $\{\delta_{i,k}\}$.
\STATE \textbf{Initialize}: $ x_{i,0}\in\mathbb{R}^p$ and $v_{i,0}={\bm 0}_p,~
\forall i\in[n]$.
\FOR{$k=0,1,\dots$}
\FOR{$i=1,\dots,n$  in parallel}
\STATE  Broadcast $x_{i,k}$ to $\mathcal{N}_i$ and receive $x_{j,k}$ from $j\in\mathcal{N}_i$;
\STATE Generate $u_{i,k}\in\mathbb{S}^{p}$ independently and uniformly at random;
\STATE Generate $\xi_{i,k}$ independently and randomly according to the distribution of $\xi_i$;
\STATE Sample $F_i(x_{i,k},\xi_{i,k})$ and $F_i(x_{i,k}+\delta_{i,k}u_{i,k},\xi_{i,k})$;
\STATE  Update $x_{i,k+1}$ by \eqref{zerosg:alg:random-pd-x};
\STATE  Update $v_{i,k+1}$ by \eqref{zerosg:alg:random-pd-q}.
\ENDFOR
\ENDFOR
\STATE  \textbf{Output}: $\{\bsx_{k}\}$.
\end{algorithmic}
\end{algorithm}

\begin{remark}
In Algorithm~\ref{zerosg:algorithm-random-pd}, each agent $i$ maintains two local sequences, i.e., the local primal and dual variable sequences $\{x_{i,k}\}$ and $\{v_{i,k}\}$, and communicates its local primal variables to its neighbors through the network. Moreover, at each iteration each agent samples its local stochastic ZO oracle at two  points to estimate the gradient of its local cost function. It should be highlighted that the agent-wise smoothing parameter $\delta_{i,k}$ is time-varying. It can in many situations be chosen larger than the fixed smoothing parameter used in existing ZO algorithms. For example, in the following we use an $\mathcal{O}(1/k^{1/4})$ smoothing parameter, which is larger than the $\mathcal{O}(1/T^{1/2})$ smoothing parameter used in \cite{ghadimi2013stochastic}.
\end{remark}

\subsection{Find stationary points}
Let us consider the case when Algorithm~\ref{zerosg:algorithm-random-pd} is able to find stationary points. We first have the following convergence result.
\begin{theorem}\label{zerosg:thm-random-pd-sm}
Suppose Assumptions~\ref{zerosg:ass:graph}--\ref{zerosg:ass:fig} hold. Let $\{\bsx_k\}$ be the sequence generated by Algorithm~\ref{zerosg:algorithm-random-pd} with
\begin{align}\label{zerosg:step:eta1-sm}
&\alpha_k=\kappa_1\beta_k,~\beta_k=\kappa_0(k+t_1)^{\theta},
~ \eta_k=\frac{\kappa_2}{\beta_k},~\delta_{i,k}\le \frac{\kappa_\delta\sqrt{p\eta_k}}{\sqrt{n+p}},~\forall k\in\mathbb{N}_0,
\end{align}
where $\kappa_1>c_1$, $\kappa_2\in(0,c_2(\kappa_1))$, $\theta\in(0.5,1)$, $t_1\ge(\sqrt{p}c_3(\kappa_1,\kappa_2))^{\frac{1}{\theta}}$, $\kappa_0\ge \frac{c_0(\kappa_1,\kappa_2)}{t_1^\theta}$, and $\kappa_\delta>0$ with $c_0(\kappa_1,\kappa_2)$, $c_1$, $c_2(\kappa_1)$, and $c_3(\kappa_1,\kappa_2)$ being given in Appendix~\ref{zerosg:proof-thm-random-pd-sm}.
Then, for any $T\in\mathbb{N}_+$,
\begin{subequations}
\begin{align}
&\frac{1}{T}\sum_{k=0}^{T-1}\mathbf{E}[\|\nabla f(\bar{x}_k)\|^2]
=\mathcal{O}\Big(\frac{\sqrt{p}}{T^{1-\theta}}+\frac{p}{T}\Big),\label{zerosg:thm-sg-sm-equ1}\\
&\mathbf{E}[f(\bar{x}_{T})]-f^*=\mathcal{O}(1),\label{zerosg:thm-sg-sm-equ2}\\
&\mathbf{E}\Big[\frac{1}{n}\sum_{i=1}^{n}\|x_{i,T}-\bar{x}_T\|^2\Big]
=\mathcal{O}\Big(\frac{1}{T^{2\theta}}\Big), \label{zerosg:thm-sg-sm-equ1.1bounded}\\
&\lim_{T\rightarrow+\infty}\mathbf{E}[\|\nabla f(\bar{x}_T)\|^2]=0,\label{zerosg:thm-sg-sm-equ1bounded}
\end{align}
\end{subequations}
where $\bar{x}_k=\frac{1}{n}\sum_{i=1}^{n}x_{i,k}$.
\end{theorem}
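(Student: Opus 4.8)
The plan is to reduce the whole problem to a single scalar stochastic descent recursion for the network average $\bar{x}_k=\frac1n\sum_i x_{i,k}$, while controlling the disagreement through the primal--dual coupling separately. First I would record the two basic properties of the estimator \eqref{dbco:gradient:model2-st}. Writing $f_{i,\delta}(x)=\mathbf{E}_{v\in\mathbb{B}^p}[f_i(x+\delta v)]$ for the ball-smoothed surrogate, the identity $\mathbf{E}_{u,\xi}[g^e_{i,k}\mid\calL_{k-1}]=\nabla f_{i,\delta_{i,k}}(x_{i,k})$ together with Assumption~\ref{zerosg:ass:zeroth-smooth} gives the bias bound $\|\nabla f_{i,\delta_{i,k}}(x_{i,k})-\nabla f_i(x_{i,k})\|\le L_f\delta_{i,k}$, and a second-order Taylor estimate combined with $\mathbf{E}_u[uu\T]=\tfrac1p I_p$ and Assumption~\ref{zerosg:ass:zeroth-variance} yields a second-moment bound of the form $\mathbf{E}_{u,\xi}[\|g^e_{i,k}\|^2\mid\calL_{k-1}]\le c\,p(1+\sigma_0^2)\|\nabla f_i(x_{i,k})\|^2+c\,p\,\sigma_1^2+c\,p^2L_f^2\delta_{i,k}^2$. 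These are the only places where the dimension $p$ enters, and they are the source of the $p$-factors in the final rate.

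Next I would isolate the average dynamics. Summing \eqref{zerosg:alg:random-pd-x} over $i$, using that the Laplacian has zero row sums and that $\sum_i v_{i,k}=\bszero_p$ is preserved by \eqref{zerosg:alg:random-pd-q}, gives the SGD-type recursion $\bar{x}_{k+1}=\bar{x}_k-\tfrac{\eta_k}{n}\sum_i g^e_{i,k}$. Applying the descent lemma for the $L_f$-smooth $f$, taking $\mathbf{E}[\,\cdot\mid\calL_{k-1}]$, and splitting each $\nabla f_{i,\delta_{i,k}}(x_{i,k})$ into $\nabla f_i(\bar{x}_k)$, a consensus part bounded by $L_f\|x_{i,k}-\bar{x}_k\|$, and a smoothing bias bounded by $L_f\delta_{i,k}$, I obtain after Young's inequality and Assumption~\ref{zerosg:ass:fig} (to pass from $\sum_i\|\nabla f_i\|^2$ to $\|\nabla f(\bar{x}_k)\|^2$)
\begin{align*}
\mathbf{E}[f(\bar{x}_{k+1})\mid\calL_{k-1}]
&\le f(\bar{x}_k)-\frac{\eta_k}{2}\|\nabla f(\bar{x}_k)\|^2
+C_1\eta_k L_f^2\,\frac1n\sum_{i=1}^n\|x_{i,k}-\bar{x}_k\|^2\\
&\quad+C_2\frac{\eta_k^2 p}{n}\|\nabla f(\bar{x}_k)\|^2+r_k,
\end{align*}
where $r_k$ collects the constant noise $\tfrac{\eta_k^2 p}{n}\sigma_1^2$, the heterogeneity term from $\sigma_2^2$, and the bias $\eta_k L_f^2\delta_{i,k}^2$ and $\eta_k^2 p^2 L_f^2\delta_{i,k}^2$.

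For the disagreement I would pass to the stacked variables $\bsx_k,\bsv_k$ and observe that the schedule \eqref{zerosg:step:eta1-sm} makes $\eta_k\alpha_k=\kappa_1\kappa_2$ and $\eta_k\beta_k=\kappa_2$ \emph{constant}, so the deviation $((I_{np}-\tfrac1n\bsone_n\bsone_n\T\otimes I_p)\bsx_k,\;\bsv_k)$ evolves under a fixed linear map perturbed only by $\eta_k\bsg^e_k$. Borrowing the spectral Lyapunov estimate used for the first-order scheme \eqref{nonconvex:kia-algo-dc} in \cite{yi2021linear} (connectedness in Assumption~\ref{zerosg:ass:graph} supplies a positive Laplacian spectral gap on the disagreement subspace, and $\kappa_1>c_1$, $\kappa_2<c_2(\kappa_1)$ render the fixed map contractive), I get a consensus Lyapunov term $U_k$ obeying $\mathbf{E}[U_{k+1}\mid\calL_{k-1}]\le(1-\rho)U_k+C_3\eta_k^2\mathbf{E}[\|\bsg^e_k\|^2\mid\calL_{k-1}]$ for some $\rho\in(0,1)$.

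Finally I would form $V_k=f(\bar{x}_k)-f^*+w\,U_k$ with $w$ large enough that the contraction of $U_k$ dominates the consensus term fed back into the descent, and with $\kappa_2$ small and $\eta_0$ controlled through $t_1\ge(\sqrt{p}\,c_3(\kappa_1,\kappa_2))^{1/\theta}$ so that the gradient-proportional noise $C_2\tfrac{\eta_k^2 p}{n}\|\nabla f(\bar{x}_k)\|^2$ is absorbed into $-\tfrac{\eta_k}{2}\|\nabla f(\bar{x}_k)\|^2$ uniformly from $k=0$. This gives $\mathbf{E}[V_{k+1}]\le\mathbf{E}[V_k]-\tfrac{\eta_k}{4}\mathbf{E}[\|\nabla f(\bar{x}_k)\|^2]+r_k$ with $\sum_k r_k<\infty$, since $\theta>1/2$ forces $\sum_k\eta_k^2<\infty$ and $\delta_{i,k}^2\le\kappa_\delta^2 p\eta_k/(n+p)$ forces $\sum_k\eta_k\delta_{i,k}^2<\infty$. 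Telescoping bounds $\sum_{k=0}^{T-1}\eta_k\mathbf{E}[\|\nabla f(\bar{x}_k)\|^2]$; tracking the $p$-dependence through the estimator moments and converting the weighted sum to the uniform one via $\eta_k\ge\eta_{T-1}\sim T^{-\theta}$ and $(T+t_1)^\theta\le T^\theta+t_1^\theta$ yields \eqref{zerosg:thm-sg-sm-equ1}. Then \eqref{zerosg:thm-sg-sm-equ2} follows from $f(\bar{x}_T)-f^*\le V_T\le V_0+\sum_k r_k$; \eqref{zerosg:thm-sg-sm-equ1.1bounded} follows by unrolling the $U_k$ recursion with the geometrically weighted, slowly varying perturbation $\eta_k^2\sim k^{-2\theta}$; and \eqref{zerosg:thm-sg-sm-equ1bounded} follows from $\sum_k\eta_k\mathbf{E}[\|\nabla f(\bar{x}_k)\|^2]<\infty$, $\sum_k\eta_k=\infty$, and a bound on the per-step change of $\mathbf{E}[\|\nabla f(\bar{x}_k)\|^2]$ coming from $L_f$-smoothness and the now-bounded moments. \textbf{The main obstacle} is the joint treatment of the non-symmetric primal--dual consensus system and the state-dependent (gradient-proportional) variance: one must choose the weight $w$ and the step-size thresholds so that simultaneously the consensus contraction beats its feedback into the descent and the $\mathcal{O}(\eta_k^2 p/n)$ gradient-proportional noise is dominated by $-\tfrac{\eta_k}{2}\|\nabla f(\bar{x}_k)\|^2$, uniformly in $k$; this is precisely what the constants $c_0,c_1,c_2,c_3$ and the lower bound $t_1\ge(\sqrt{p}\,c_3)^{1/\theta}$ encode.
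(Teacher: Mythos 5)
Your proposal is correct and follows the same overall architecture as the paper's proof: the same estimator lemmas (unbiasedness with respect to the ball-smoothed surrogate, $L_f\delta$ bias, $\mathcal{O}(p)$ second moment with the state-dependent variance), the descent lemma applied to the average dynamics $\bar{x}_{k+1}=\bar{x}_k-\eta_k\bar{g}^e_k$ (valid because $\sum_i v_{i,k}\equiv{\bm 0}_p$ is preserved), a contracting quadratic Lyapunov function for the disagreement, a weighted combination, and the same telescoping/series arguments, including the standard limit argument for \eqref{zerosg:thm-sg-sm-equ1bounded} that the paper itself cites from \cite{tang2020distributedzero}. The genuine difference is in how the disagreement energy is constructed. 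You exploit the observation that $\eta_k\alpha_k=\kappa_1\kappa_2$ and $\eta_k\beta_k=\kappa_2$ are constant, so the pair $(\bsK\bsx_k,\bsv_k)$ with the \emph{unshifted} dual evolves under a fixed Schur-stable linear map driven by $\eta_k\bsK\bsg^e_k$, giving a clean recursion $\mathbf{E}[U_{k+1}]\le(1-\rho)U_k+C\eta_k^2\mathbf{E}[\|\bsg^e_k\|^2]$ with a $k$-independent contraction factor; the price is that the driving input contains the full gradient rather than gradient increments, which is harmless here since it carries an $\eta_k^2$ factor and $\mathbf{E}[\|\bsg^e_k\|^2]=\mathcal{O}(np)$ once $\mathbf{E}[f(\bar{x}_k)]-f^*$ is shown bounded. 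The paper instead measures the dual error against its moving target, working with $\bsv_k+\frac{1}{\beta_k}\bsg^0_k$ weighted by $\bsQ+\kappa_1\bsK$ together with the cross term $W_{3,k}$ (Lemmas~\ref{zerosg:lemma:sg} and \ref{zerosg:lemma:sg2}); this forces bookkeeping of the drift $\omega_k=\frac{1}{\beta_k}-\frac{1}{\beta_{k+1}}$ and of $\bsg^0_{k+1}-\bsg^0_k$, but in exchange yields the explicit thresholds $c_0$--$c_3$, $\varepsilon_4$, $\varepsilon_6$ quoted in the theorem and a single coupled inequality reused verbatim in Theorems~\ref{zerosg:thm-sg-smT}--\ref{zerosg:thm-random-pd-fixed}. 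Your remaining steps mirror the paper's: the consensus rate \eqref{zerosg:thm-sg-sm-equ1.1bounded} comes from unrolling the geometric recursion against the slowly varying forcing $\eta_k^2\sim k^{-2\theta}$ (the paper's Lemma~\ref{zerosg:serise:lemma:sequence}(iii) applied to $\breve{W}_k$). One point to make explicit in a full write-up: your second-moment bound is stated in terms of $\|\nabla f_i(x_{i,k})\|^2$, so Assumption~\ref{zerosg:ass:fig} and smoothness must be invoked again \emph{inside} the consensus recursion (not only in the descent step) to convert it into $\|\nabla f(\bar{x}_k)\|^2$ plus consensus error plus $\sigma_2^2$, exactly as in the paper's \eqref{zerosg:rand-grad-esti2}; otherwise the feedback of $U_k$ into itself and into the descent inequality cannot be closed.
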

\begin{proof}
The the proof is given in Appendix~\ref{zerosg:proof-thm-random-pd-sm}.
\end{proof}

If the total number of iterations $T$ and the number of agents $n$ are known in advance, then, as shown in the following, Algorithm~\ref{zerosg:algorithm-random-pd} can find a stationary point of \eqref{zerosg:eqn:xopt} with an $\mathcal{O}(\sqrt{p/(nT)})$ convergence rate, and thus achieves linear speedup with respect to the number of agents compared to the $\mathcal{O}(\sqrt{p/T})$ convergence rate achieved by the centralized stochastic ZO algorithms in \cite{ghadimi2013stochastic,lian2016Comprehensive}. The linear speedup property enables us to scale up the computing capability by adding more agents into the algorithm \cite{Yu2019on}.
\begin{theorem}[Linear speedup]\label{zerosg:thm-sg-smT}
Suppose Assumptions~\ref{zerosg:ass:graph}--\ref{zerosg:ass:fig} hold. For any given $T\ge\max\{\frac{n(\tilde{c}_0(\kappa_1,\kappa_2))^2}{p\kappa_2^2},~\frac{n^3}{p}\}$, let $\{\bsx_k,k=0,\dots,T\}$ be the output generated by Algorithm~\ref{zerosg:algorithm-random-pd} with
\begin{align}\label{zerosg:step:eta2-sm}
&\alpha_k=\kappa_1\beta_k,~\beta_k=\beta=\frac{\kappa_2\sqrt{pT}}{\sqrt{n}},~ \eta_k=\frac{\kappa_2}{\beta_k},
~\delta_{i,k}\le\frac{p^{\frac{1}{4}}n^{\frac{1}{4}}\kappa_\delta}
{\sqrt{n+p}(k+1)^{\frac{1}{4}}},~\forall k\le T,
\end{align}
where $\tilde{c}_0(\kappa_1,\kappa_2)$ is given in Appendix~\ref{zerosg:proof-thm-sg-smT}, $\kappa_1>c_1$, $\kappa_2\in(0,c_2(\kappa_1))$,  and $\kappa_\delta>0$ with $c_1$ and $c_2(\kappa_1)$ being given in Appendix~\ref{zerosg:proof-thm-random-pd-sm}.
Then,
\begin{subequations}
\begin{align}
&\frac{1}{T}\sum_{k=0}^{T-1}\mathbf{E}[\|\nabla f(\bar{x}_k)\|^2]
=\mathcal{O}\Big(\frac{\sqrt{p}}{\sqrt{nT}}\Big)+\mathcal{O}\Big(\frac{n}{T}\Big),\label{zerosg:coro-sg-sm-equ3}\\
&\mathbf{E}[f(\bar{x}_{T})]-f^*=\mathcal{O}(1),\label{zerosg:coro-sg-sm-equ4}\\
&\mathbf{E}\Big[\frac{1}{n}\sum_{i=1}^{n}\|x_{i,T}-\bar{x}_T\|^2\Big]
=\mathcal{O}\Big(\frac{n}{T}\Big),\label{zerosg:coro-sg-sm-equ3.1}\\
&\lim_{T\rightarrow+\infty}\mathbf{E}[\|\nabla f(\bar{x}_T)\|^2]=0.\label{zerosg:coro-sg-sm-equ3bounded}
\end{align}
\end{subequations}
\end{theorem}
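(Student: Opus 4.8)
The plan is to read Theorem~\ref{zerosg:thm-sg-smT} as the tuned, constant-step-size specialization of Theorem~\ref{zerosg:thm-random-pd-sm}: instead of letting $\beta_k=\kappa_0(k+t_1)^\theta$ grow, we freeze $\beta_k\equiv\beta=\kappa_2\sqrt{pT}/\sqrt{n}$ at a value calibrated to the known horizon $T$ and agent count $n$. Accordingly, I would not re-derive the dynamics from scratch but reuse the one-step Lyapunov recursion that already underlies Appendix~\ref{zerosg:proof-thm-random-pd-sm}, namely a descent inequality of the schematic form
\begin{align}
\mathbf{E}[V_{k+1}\mid\calL_{k-1}]\le V_k-c\,\eta_k\,\|\nabla f(\bar{x}_k)\|^2+c'\eta_k^2\Big(\tfrac{p}{n}\sigma_1^2+\cdots\Big)+(\text{consensus}+\text{smoothing bias}),\nonumber
\end{align}
where $V_k$ aggregates the function gap $f(\bar{x}_k)-f^*$, the consensus error $\tfrac1n\sum_i\|x_{i,k}-\bar{x}_k\|^2$, and the dual-variable gap. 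Since this recursion is proved for general parameter sequences, the first step is simply to note that it remains valid verbatim once $\beta_k$ and $\eta_k$ are held constant.

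Second, I would check that the constant choice in \eqref{zerosg:step:eta2-sm} satisfies the structural hypotheses that recursion requires. From $\beta=\kappa_2\sqrt{pT}/\sqrt{n}$ and $\eta_k=\kappa_2/\beta$ one reads off the effective step size $\eta=\sqrt{n/(pT)}$, and the conditions $\kappa_1>c_1$, $\kappa_2\in(0,c_2(\kappa_1))$ carry over unchanged. The two standing lower bounds on $T$ are exactly what secures the rest: $T\ge n\tilde{c}_0^2/(p\kappa_2^2)$ is equivalent to $\beta\ge\tilde{c}_0$, which makes $\eta$ small enough (and $\beta$ large enough relative to the nonzero Laplacian eigenvalues) for the descent estimate and the consensus contraction to hold simultaneously.

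Third, I would sum the recursion over $k=0,\dots,T-1$, telescope $V_0-\mathbf{E}[V_T]\le V_0$, and divide by $c\eta T$. With constant $\eta$ the leading terms collapse to
\begin{align}
\frac{1}{T}\sum_{k=0}^{T-1}\mathbf{E}[\|\nabla f(\bar{x}_k)\|^2]\lesssim\frac{V_0}{\eta T}+\eta\,\frac{p}{n}+(\text{higher order}),\nonumber
\end{align}
so substituting $\eta=\sqrt{n/(pT)}$ turns the first term into $\mathcal{O}(\sqrt{p/(nT)})$ and the variance term into $\eta\,p/n=\mathcal{O}(\sqrt{p/(nT)})$, giving the speedup rate \eqref{zerosg:coro-sg-sm-equ3}. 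The residual $\mathcal{O}(n/T)$ is produced by the second-order contributions carrying an extra dimension factor, schematically $\eta^2p=n/T$, together with the telescoped smoothing bias: with $\delta_{i,k}^2=\mathcal{O}(\sqrt{pn}/((n+p)\sqrt{k+1}))$ one has $\sum_{k<T}\delta_{i,k}^2=\mathcal{O}(\sqrt{pnT}/(n+p))$, which after the appropriate $\eta/T$ weighting is absorbed into the same two orders. Estimate \eqref{zerosg:coro-sg-sm-equ4} follows from boundedness of $V_k$, the consensus bound \eqref{zerosg:coro-sg-sm-equ3.1} from the quadratic block of the same recursion (again $\eta^2p=n/T$), and \eqref{zerosg:coro-sg-sm-equ3bounded} then follows exactly as in Theorem~\ref{zerosg:thm-random-pd-sm}.

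The main obstacle will be the joint $(p,n,T)$ bookkeeping that forces every error term into precisely the two advertised orders, and in particular re-securing the consensus contraction. In the time-varying analysis the \emph{growth} of $\beta_k$ drives the contraction and the monotonicity of $\{\eta_k\}$ is exploited when summing; with constant parameters both mechanisms must instead be obtained purely from the magnitudes of $\beta$ and $\eta$, which is what $T\ge n\tilde{c}_0^2/(p\kappa_2^2)$ provides. A second point worth making explicit is that the \emph{linear-speedup} reading — that $\mathcal{O}(\sqrt{p/(nT)})$ dominates $\mathcal{O}(n/T)$ — holds iff $\sqrt{p/(nT)}\ge n/T$, i.e. $T\ge n^3/p$, which is exactly the second hypothesis on $T$; I would state this dominance to justify the name of the theorem.
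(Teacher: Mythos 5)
Your overall route coincides with the paper's: freeze the parameters, note that the one-step Lyapunov estimates of Appendix~\ref{zerosg:proof-thm-random-pd-sm} survive with $\omega_k=0$ provided $\beta\ge\tilde{c}_0(\kappa_1,\kappa_2)$ (which is exactly what $T\ge n(\tilde{c}_0(\kappa_1,\kappa_2))^2/(p\kappa_2^2)$ encodes; the paper packages this as Lemma~\ref{zerosg:lemma:sg2-T}), then sum and telescope; your $(p,n,T)$ bookkeeping for \eqref{zerosg:coro-sg-sm-equ3} matches the paper's. However, two of the four claims are not reached by the steps you describe. First, \eqref{zerosg:coro-sg-sm-equ4} does not follow from ``boundedness of $V_k$'': telescoping the aggregate recursion injects per-step noise of order $pn\eta^2$ through the consensus/dual block, which over $T$ steps accumulates to $pn\eta^2T=n^2$, giving only $\mathbf{E}[f(\bar{x}_T)]-f^*=\mathcal{O}(n)$. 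The paper instead telescopes the decoupled block $W_{4,k}=n(f(\bar{x}_k)-f^*)$ via \eqref{zerosg:v4kspeed}, whose own noise is the $1/n$-averaged $p\varepsilon_{15}\eta^2$, and then must control the coupling term $2\eta L_f^2\sum_{k<T}\mathbf{E}[\|\bsx_k\|^2_{\bsK}]=\mathcal{O}(\eta n^2)=\mathcal{O}\bigl(n\cdot n\sqrt{n}/\sqrt{pT}\bigr)$; this is $\mathcal{O}(n)$ precisely because $T\ge n^3/p$. So that hypothesis is not the cosmetic ``dominance'' condition you describe — without it the proof of \eqref{zerosg:coro-sg-sm-equ4} breaks.

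Second, \eqref{zerosg:coro-sg-sm-equ3.1} is a bound on the \emph{final} iterate, not the time average; summing ``the quadratic block of the same recursion'' only yields $\frac{1}{nT}\sum_{k<T}\mathbf{E}[\|\bsx_k\|_{\bsK}^2]=\mathcal{O}(n/T)$. To get $\frac{1}{n}\mathbf{E}[\|\bsx_T\|^2_{\bsK}]=\mathcal{O}(n/T)$ the paper runs a separate geometric-decay (discrete Gronwall) argument, in the spirit of Lemma~\ref{zerosg:serise:lemma:sequence}(iii), on the gradient-free block $\breve{W}_k$ through \eqref{zerosg:sgproof-vkLya2T-bounded}, treating $\|\bar{\bsg}^0_{k}\|^2$ as an input that must be \emph{uniformly} bounded by $n\tilde{c}_g$; that uniform bound is obtained from \eqref{zerosg:coro-sg-sm-equ4} via the smoothness inequality \eqref{zerosg:rand-grad-smooth}, hence again rests on $T\ge n^3/p$. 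Your proposal is missing this contraction step and the dependency chain \eqref{zerosg:coro-sg-sm-equ4} $\Rightarrow$ uniform gradient bound $\Rightarrow$ \eqref{zerosg:coro-sg-sm-equ3.1}; as written it establishes \eqref{zerosg:coro-sg-sm-equ3}, \eqref{zerosg:coro-sg-sm-equ3bounded}, and the time-averaged consensus bound, but not \eqref{zerosg:coro-sg-sm-equ4} or \eqref{zerosg:coro-sg-sm-equ3.1}.
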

\begin{proof}
The proof is given in Appendix~\ref{zerosg:proof-thm-sg-smT}. It should be highlighted that the omitted constants in the first term on the right-hand side of \eqref{zerosg:coro-sg-sm-equ3} do not depend on any parameters related to the communication network.
\end{proof}

\begin{remark}
To the best of our knowledge, Theorem~\ref{zerosg:thm-sg-smT} is the first result to establish linear speedup for a distributed ZO algorithm to solve stochastic optimization problems. The achieved rate is faster than that achieved by the centralized ZO algorithms in \cite{ghadimi2013stochastic,lian2016Comprehensive,zhang2020improving,liu2018zerothieee,
liu2019signsgd,balasubramanian2018zeroth,chen2019zo} and the distributed ZO gradient descent algorithm in \cite{tang2020distributedzero}. The rate is slower than that achieved by the centralized ZO algorithms in \cite{gu2018faster,fang2018spider,liu2018zeroth,
liu2018stochastic,huang2019faster,pmlr-v97-ji19a,ghadimi2016mini,kazemi2018proximal}, which is reasonable since these algorithms not only are centralized but also use variance reduction techniques.
The distributed ZO gradient tracking algorithm in \cite{tang2020distributedzero} and the distributed ZO primal--dual algorithms in \cite{hajinezhad2019zone,yi2021linear} also achieved faster convergence rates than ours. However, in \cite{gu2018faster,fang2018spider,liu2018zeroth,
huang2019faster,pmlr-v97-ji19a,yi2021linear,tang2020distributedzero}, the considered problems are deterministic;  in \cite{yi2021linear,tang2020distributedzero}, the sampling size of each agent at each iteration is $\mathcal{O}(p)$, which results in a heavy sampling burden when $p$ is large; in \cite{ghadimi2016mini,kazemi2018proximal,hajinezhad2019zone}, the sampling size of each agent at each iteration is $\mathcal{O}(T)$, which is difficult to execute in practice. One of our future research directions is to establish faster convergence with reduced sampling complexity by using variance reduction techniques.
\end{remark}

\subsection{Find global optimum}
Let us next consider cases when Algorithm~\ref{zerosg:algorithm-random-pd} finds global optimum.

\begin{theorem}\label{zerosg:thm-sg-diminishing}
Suppose Assumptions~\ref{zerosg:ass:graph}--\ref{zerosg:ass:fil} hold. Let $\{\bsx_k\}$ be the sequence generated by Algorithm~\ref{zerosg:algorithm-random-pd} with
\begin{align}\label{zerosg:step:eta1}
&\alpha_k=\kappa_1\beta_k,~\beta_k=\kappa_0(k+t_1)^{\theta},
~ \eta_k=\frac{\kappa_2}{\beta_k},~\delta_{i,k}\le \frac{\kappa_\delta\sqrt{p\eta_k}}{\sqrt{n+p}},~\forall k\in\mathbb{N}_0,
\end{align}
where $\kappa_1>c_1$, $\kappa_2\in(0,c_2(\kappa_1))$, $\theta\in(0,1)$, $t_1\in[(pc_3(\kappa_1,\kappa_2))^{\frac{1}{\theta}},(pc_4c_3(\kappa_1,\kappa_2))^{\frac{1}{\theta}}]$, $\kappa_0\ge \frac{c_0(\kappa_1,\kappa_2)}{t_1^\theta}$, $\kappa_\delta>0$, and $c_4\ge1$ with $c_0(\kappa_1,\kappa_2)$, $c_1$, $c_2(\kappa_1)$, and $c_3(\kappa_1,\kappa_2)$ being given in Appendix~\ref{zerosg:proof-thm-random-pd-sm}.
Then, for any $T\in\mathbb{N}_+$,
\begin{subequations}
\begin{align}
&\mathbf{E}\Big[\frac{1}{n}\sum_{i=1}^{n}\|x_{i,T}-\bar{x}_T\|^2\Big]
=\mathcal{O}\Big(\frac{p}{T^{2\theta}}\Big), \label{zerosg:thm-sg-diminishing-equ1.1bounded}\\
&\mathbf{E}[f(\bar{x}_{T})-f^*]
=\mathcal{O}\Big(\frac{p}{nT^\theta}\Big)+\mathcal{O}\Big(\frac{p}{T^{2\theta}}\Big).
\label{zerosg:thm-sg-diminishing-equ1bounded}
\end{align}
\end{subequations}
\end{theorem}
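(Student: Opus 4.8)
The plan is to run a coupled Lyapunov analysis on three quantities and close it with the P--{\L} inequality: the consensus error $X_k:=\frac1n\sum_{i=1}^n\|x_{i,k}-\bar x_k\|^2$, an auxiliary primal--dual error built from $\bsx_k$ and $\bsv_k$ restricted to the subspace orthogonal to $\bsone$, and the optimality gap $a_k:=\mathbf{E}[f(\bar x_k)-f^*]$. Before anything else I would record the two conditional moments of the two-point estimator \eqref{dbco:gradient:model2-st}: given $\calL_{k-1}$, its mean equals $\nabla f_i(x_{i,k})$ up to a smoothing bias of order $pL_f\delta_{i,k}$, and, using Assumptions~\ref{zerosg:ass:zeroth-smooth} and \ref{zerosg:ass:zeroth-variance}, its second moment is bounded by $c\,p(1+\sigma_0^2)\|\nabla f_i(x_{i,k})\|^2+c\,p\sigma_1^2+c\,p^2L_f^2\delta_{i,k}^2$. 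The factor $p$ carried by the estimator is the source of the $p$-dependence throughout, and the prescription $\delta_{i,k}^2=\Theta(\eta_k)$ in \eqref{zerosg:step:eta1} is precisely what makes every bias contribution no larger than the genuine noise term.

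The decisive structural observation is that under \eqref{zerosg:step:eta1} the products $\eta_k\alpha_k=\kappa_1\kappa_2$ and $\eta_k\beta_k=\kappa_2$ are constant in $k$, so the Laplacian-driven consensus/dual dynamics contracts by a fixed factor $\rho\in(0,1)$ at every step while only the injected term $\eta_k g^e_{i,k}$ shrinks. Writing the update \eqref{zerosg:alg:random-pd} in stacked form and projecting onto $\bsone^\perp$, I would obtain $\mathbf{E}[\Omega_{k+1}]\le\rho\,\mathbf{E}[\Omega_k]+c\,\eta_k^2(\mathbf{E}[\|\nabla f(\bar x_k)\|^2]+p\sigma_1^2)$ for a combined error $\Omega_k$ that dominates $X_k$; the choices $\kappa_1>c_1$ and $\kappa_2<c_2(\kappa_1)$ inherited from Appendix~\ref{zerosg:proof-thm-random-pd-sm} are exactly what guarantee $\rho<1$. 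Bounding $\|\nabla f(\bar x_k)\|^2\le 2L_f(f(\bar x_k)-f^*)$ by smoothness and iterating the constant-rate recursion then yields $\mathbf{E}[X_k]=\mathcal{O}(p\eta_k^2)=\mathcal{O}(p/k^{2\theta})$, which is \eqref{zerosg:thm-sg-diminishing-equ1.1bounded}.

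For the gap I would apply the descent lemma to the smooth $f$ along the averaged recursion $\bar x_{k+1}=\bar x_k-\frac{\eta_k}{n}\sum_i g^e_{i,k}$, valid because $\sum_i v_{i,k}=\bszero$ and $\sum_{i,j}L_{ij}x_{j,k}=\bszero$. Taking conditional expectation, the first-order term produces $-\frac{\eta_k}{2}\|\nabla f(\bar x_k)\|^2$ together with a $\Theta(\eta_k)X_k$ term coming from $\|\nabla f_i(x_{i,k})-\nabla f_i(\bar x_k)\|\le L_f\|x_{i,k}-\bar x_k\|$ and the smoothing bias, while the quadratic term contributes $\frac{L_f}{2}\eta_k^2\mathbf{E}\|\frac1n\sum_i g^e_{i,k}\|^2$, whose variance part is $\Theta(\eta_k^2 p\sigma_1^2/n)$; this $1/n$, produced by averaging $n$ independent estimators, is the origin of the $p/(nT^\theta)$ term. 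Invoking Assumption~\ref{zerosg:ass:fil}, $\|\nabla f(\bar x_k)\|^2\ge 2\nu(f(\bar x_k)-f^*)$, converts this into the contraction $a_{k+1}\le(1-\nu\eta_k)a_k+c\,\eta_k\mathbf{E}[X_k]+c\,\eta_k^2 p\sigma_1^2/n$, with the bias already absorbed via $\delta_{i,k}^2=\Theta(\eta_k)$.

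Substituting $\mathbf{E}[X_k]=\mathcal{O}(p\eta_k^2)$ leaves the scalar recursion $a_{k+1}\le(1-\nu\eta_k)a_k+\mathcal{O}(\eta_k^2 p/n)+\mathcal{O}(p\eta_k^3)$; a standard estimate for recursions with $\eta_k=\Theta((k+t_1)^{-\theta})$ and $\theta\in(0,1)$ then gives $a_T=\mathcal{O}(\eta_T p/n)+\mathcal{O}(\eta_T^2 p)=\mathcal{O}(p/(nT^\theta))+\mathcal{O}(p/T^{2\theta})$, which is \eqref{zerosg:thm-sg-diminishing-equ1bounded}. I expect the main obstacle to be the coupled consensus--dual--gap step: since only the P--{\L} condition and not convexity is assumed, there is no duality gap to lean on and the dual variable $\bsv_k$ need not converge, so the auxiliary error $\Omega_k$ and all its coupling weights must be designed so that the constant contraction $\rho$ of the consensus block and the $\nu$-driven contraction of the gap coexist. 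Choosing the Lyapunov weights and the admissible window for $t_1,\kappa_0,\kappa_1,\kappa_2$ so that every cross-term scaled by $\eta_k$, $\eta_k^2$, or $\delta_{i,k}^2$ is absorbed uniformly in $k$ is the delicate bookkeeping the appendix must carry out; the upper bound on $t_1$ in \eqref{zerosg:step:eta1} is what keeps these constants, and hence the final $p$-scaling, under control.
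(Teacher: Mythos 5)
Your overall strategy is the same as the paper's: moment bounds for the two-point estimator, a consensus/dual block that contracts at a constant rate because $\eta_k\alpha_k$ and $\eta_k\beta_k$ are constant, a gap recursion contracted by $(1-\Theta(\nu\eta_k))$ via the P--{\L} inequality, and scalar-recursion lemmas to extract the rates; you also correctly locate the origin of the $1/n$ factor (averaging of $n$ independent estimators) and the role of the upper bound on $t_1$. However, there is a genuine circularity in the order in which you derive the two bounds. You obtain $\mathbf{E}[X_k]=\mathcal{O}(p\eta_k^2)$ from the consensus recursion by bounding its driving term via $\|\nabla f(\bar x_k)\|^2\le 2L_f(f(\bar x_k)-f^*)$ and ``iterating the constant-rate recursion''---but this requires a uniform bound on $\mathbf{E}[f(\bar x_k)-f^*]$, which at that point of your argument has not been established; conversely, your gap recursion needs the consensus bound as input. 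Moreover, you cannot import boundedness of the gap from a Theorem~\ref{zerosg:thm-random-pd-sm}--type argument here, because that argument needs $\sum_k\eta_k^2<\infty$, i.e.\ $\theta>1/2$, whereas this theorem allows all $\theta\in(0,1)$. So the coupling you flag as ``delicate bookkeeping'' is in fact the missing load-bearing step, not a refinement.

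The paper breaks the circle with a preliminary stage that your proposal lacks: it first runs a \emph{joint} Lyapunov function $W_k=W_{1,k}+W_{2,k}+W_{3,k}+W_{4,k}$ (consensus error, shifted dual error $\|\bsv_k+\frac{1}{\beta_k}\bsg_k^0\|^2$, a cross term, and the gap $n(f(\bar x_k)-f^*)$), for which the P--{\L} condition yields the single recursion $\mathbf{E}[W_{k+1}]\le(1-\varepsilon_{16}\eta_k)\mathbf{E}[W_k]+\mathcal{O}(pn\eta_k^2)$ (see \eqref{zerosg:sgproof-vkLya2} and \eqref{zerosg:vkLya2-pl}); Lemma~\ref{zerosg:serise:lemma:sequence}(i) then gives the decaying bound $\mathbf{E}[W_k]=\mathcal{O}(pn/(k+t_1)^{\theta})$, which in particular bounds $\mathbf{E}[\|\bar\bsg_k^0\|^2]$ uniformly (\eqref{zerosg:gbark0-pl}). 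Only with this uniform gradient bound in hand does the decoupled consensus-block recursion \eqref{zerosg:sgproof-vkLya2-bounded} become a constant-contraction inequality with an $\mathcal{O}(pn\eta_k^2)$ forcing term, yielding the faster rate \eqref{zerosg:thm-sg-diminishing-equ1.1bounded}; and only then does the gap-only recursion \eqref{zerosg:v4kspeed-diminishing-3}, with the consensus bound substituted, produce \eqref{zerosg:thm-sg-diminishing-equ1bounded}. To repair your proposal you would either need to reproduce this three-stage structure, or solve the genuinely coupled two-dimensional recursion in $(\Omega_k,a_k)$ directly, which amounts to constructing the same kind of weighted joint Lyapunov function the paper uses.
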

\begin{proof}
The proof is given in Appendix~\ref{zerosg:proof-thm-sg-diminishing}. It should be highlighted that the omitted constants in the first term on the right-hand side of \eqref{zerosg:thm-sg-diminishing-equ1bounded} do not depend on any parameters related to the communication network.
\end{proof}

From Theorem~\ref{zerosg:thm-sg-diminishing}, we see that the convergence rate is strictly slower than $\mathcal{O}(p/(nT))$. In the following we show that the $\mathcal{O}(p/(nT))$ convergence rate can be achieved if the P--{\L} constant $\nu$ is known in advance. Information about the total number of iterations $T$ is not needed.
\begin{theorem}[Linear speedup]\label{zerosg:thm-sg-diminishingt}
Suppose Assumptions~\ref{zerosg:ass:graph}--\ref{zerosg:ass:fil} hold and the P--{\L} constant $\nu$ is known in advance. Let $\{\bsx_{k}\}$ be the sequence generated by Algorithm~\ref{zerosg:algorithm-random-pd} with
\begin{align}\label{zerosg:step:eta1t1}
&\alpha_k=\kappa_1\beta_k,~\beta_k=\kappa_0(k+t_1),~ \eta_k=\frac{\kappa_2}{\beta_k},~\delta_{i,k}\le \frac{\kappa_\delta\sqrt{p\eta_k}}{\sqrt{n+p}},~\forall k\in\mathbb{N}_0,
\end{align}
where $\kappa_1>c_1$, $\kappa_2\in(0,c_2(\kappa_1))$, $\kappa_0\in[\frac{3\hat{c}_0\nu\kappa_2}{16},\frac{3\nu\kappa_2}{16})$, $t_1\ge\hat{c}_3(\kappa_0,\kappa_1,\kappa_2)$, $\kappa_\delta>0$, and $\hat{c}_0\in(0,1)$ with $c_1$ and $c_2(\kappa_1)$ being given in Appendix~\ref{zerosg:proof-thm-random-pd-sm}, and $\hat{c}_3(\kappa_0,\kappa_1,\kappa_2)$ being given in Appendix~\ref{zerosg:proof-thm-sg-diminishingt}.
Then, for any $T\in\mathbb{N}_+$,
\begin{subequations}
\begin{align}
&\mathbf{E}\Big[\frac{1}{n}\sum_{i=1}^{n}\|x_{i,T}-\bar{x}_T\|^2\Big]
=\mathcal{O}\Big(\frac{p}{T^{2}}\Big), \label{zerosg:thm-sg-diminishing-equ2.1bounded}\\
&\mathbf{E}[f(\bar{x}_{T})-f^*]
=\mathcal{O}\Big(\frac{p}{nT}\Big)+\mathcal{O}\Big(\frac{p}{T^{2}}\Big).
\label{zerosg:thm-sg-diminishing-equ2bounded}
\end{align}
\end{subequations}
\end{theorem}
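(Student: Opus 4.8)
The plan is to run the same composite‑Lyapunov argument developed for Theorems~\ref{zerosg:thm-random-pd-sm} and \ref{zerosg:thm-sg-diminishing}, now specialized to the linear schedule $\beta_k=\kappa_0(k+t_1)$ (so that $\eta_k=\kappa_2/\beta_k=\Theta(1/(k+t_1))$) and sharpened by the P--{\L} condition. The genuinely new ingredient is the $\nu$‑dependent tuning of $\kappa_0$: with an $\Theta(1/k)$ step size a Gr\"onwall‑type recursion closes at rate $1/T$ only if the per‑step contraction exponent strictly exceeds $1$, and the admissible window $\kappa_0\in[\tfrac{3\hat c_0\nu\kappa_2}{16},\tfrac{3\nu\kappa_2}{16})$ is precisely what forces the P--{\L} contraction coefficient $\tfrac{3\nu\kappa_2}{16\kappa_0}$ to lie in $(1,1/\hat c_0]$. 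This is why, unlike Theorem~\ref{zerosg:thm-sg-diminishing} (which only reaches $\mathcal{O}(p/(nT^\theta))$ for $\theta<1$), knowledge of $\nu$ upgrades the rate to $\mathcal{O}(p/(nT))$.

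\textbf{Two coupled one‑step estimates.} Writing the dynamics in stacked form and isolating $\bar x_{k+1}=\bar x_k-\tfrac{\eta_k}{n}\sum_{i=1}^n g^e_{i,k}$, I would first use $L_f$‑smoothness (Assumption~\ref{zerosg:ass:zeroth-smooth}) to obtain a descent inequality for $\mathbf{E}[f(\bar x_{k+1})-f^*\mid\calL_{k-1}]$. The conditional mean of $\tfrac1n\sum_i g^e_{i,k}$ is a ball‑smoothed gradient whose deviation from $\nabla f(\bar x_k)$ is governed by the disagreement $\tfrac1n\sum_i\|x_{i,k}-\bar x_k\|^2$ and by an $\mathcal{O}(L_f^2\delta_{i,k}^2)$ smoothing bias, while its conditional second moment is bounded, through Assumptions~\ref{zerosg:ass:zeroth-variance}--\ref{zerosg:ass:fig}, by $\tfrac{Cp}{n}(\|\nabla f(\bar x_k)\|^2+\sigma_1^2)$ plus disagreement and bias terms. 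The decisive point is that independence of the $u_{i,k},\xi_{i,k}$ across $i$ injects the $1/n$ factor into the variance --- this is the source of the linear speedup. Second, I would record the contraction of the consensus/dual energy $M_k:=\|\bsx_k-\bsone\otimes\bar x_k\|^2+\|\bsv_k-\bsv^\star\|^2$ about a fixed dual point $\bsv^\star$: since $\eta_k\alpha_k=\kappa_1\kappa_2$ and $\eta_k\beta_k=\kappa_2$ are \emph{constant}, the conditions $\kappa_1>c_1$, $\kappa_2\in(0,c_2(\kappa_1))$ yield a fixed factor $\rho\in(0,1)$ with forcing $\mathcal{O}(\eta_k^2 p(\|\nabla f(\bar x_k)\|^2+\sigma_1^2))$.

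\textbf{Closing the recursions.} Since Theorem~\ref{zerosg:thm-random-pd-sm} already guarantees $f(\bar x_k)-f^*=\mathcal{O}(1)$, smoothness gives $\|\nabla f(\bar x_k)\|^2\le 2L_f(f(\bar x_k)-f^*)=\mathcal{O}(1)$, so the forcing of the $M_k$ recursion is $\mathcal{O}(\eta_k^2 p)=\mathcal{O}(p/(k+t_1)^2)$; a geometrically contracting recursion then yields $\mathbf{E}[M_k]=\mathcal{O}(p/(k+t_1)^2)$, which is \eqref{zerosg:thm-sg-diminishing-equ2.1bounded}. Substituting this into the descent inequality and invoking \eqref{nonconvex:equ:plc} via $\|\nabla f(\bar x_k)\|^2\ge 2\nu(f(\bar x_k)-f^*)$, the gap $V_k:=\mathbf{E}[f(\bar x_k)-f^*]$ satisfies
\[
V_{k+1}\le\Big(1-\frac{3\nu\kappa_2}{16\kappa_0(k+t_1)}\Big)V_k+\frac{E_1\,p/n}{(k+t_1)^2}+\frac{E_2\,p}{(k+t_1)^3},
\]
where the power‑$2$ forcing collects the averaged variance $\eta_k^2 p\sigma_1^2/n$ together with the smoothing bias $\eta_k\delta_{i,k}^2\asymp p\eta_k^2/(n+p)$ (which merges into the $p/n$ term since $p/(n+p)\le1$), and the power‑$3$ forcing is the coupling $\eta_k\mathbf{E}[M_k]\asymp p/(k+t_1)^3$. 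Choosing $t_1\ge\hat c_3(\kappa_0,\kappa_1,\kappa_2)$ makes every coefficient condition hold from $k=0$ and keeps the bracket in $(0,1)$. Because $\kappa_0<\tfrac{3\nu\kappa_2}{16}$ forces the exponent above $1$, the standard $\theta=1$ recursion lemma gives $V_T=\mathcal{O}(E_1 p/(nT))+\mathcal{O}(E_2 p/T^2)$, which is exactly \eqref{zerosg:thm-sg-diminishing-equ2bounded}.

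\textbf{Main obstacle.} The delicate part is the simultaneous calibration: pushing the P--{\L} contraction exponent $\tfrac{3\nu\kappa_2}{16\kappa_0}$ above $1$ wants $\kappa_0$ small, whereas keeping the consensus factor $\rho$ bounded away from $1$ constrains $\kappa_1\kappa_2$ from below; reconciling these opposing requirements, uniformly in $k$ and with a bracket that stays positive, is exactly what the constants $c_1,c_2(\kappa_1),\hat c_0,\hat c_3$ must encode, and verifying their mutual consistency is where most of the work lies. A secondary subtlety is extracting the $1/n$ cleanly: one must ensure that only the \emph{centered} averaged estimator $\tfrac1n\sum_i(g^e_{i,k}-\mathbf{E}[g^e_{i,k}\mid\calL_{k-1}])$ enters the variance term of the $\bar x_k$ descent, so that cross‑agent independence reduces its second moment by the factor $n$; the gradient‑dependent part $\tfrac{Cp}{n}\|\nabla f(\bar x_k)\|^2$ must then be absorbed into the P--{\L} contraction rather than left in the forcing, which again relies on $t_1$ being large enough.
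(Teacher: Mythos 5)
Your route is the same as the paper's (composite Lyapunov function; geometric consensus contraction with $1/k^{2}$ forcing; a P--{\L}-driven recursion for the gap with $\Theta(1/k)$ steps), but your key calibration is off by a factor of $2$, and the final step fails because of it. The paper's descent inequality \eqref{zerosg:v4kspeed-diminishing-2} carries the term $-\tfrac{3}{16}\eta_k\|\bar{\bsg}_k^0\|^2$, so under \eqref{nonconvex:equ:plc} the contraction coefficient in \eqref{zerosg:v4kspeed-diminishing-3} is $\tfrac{3\nu}{8}\eta_k=\tfrac{3\nu\kappa_2}{8\kappa_0(k+t_1)}$; the window $\kappa_0<\tfrac{3\nu\kappa_2}{16}$ therefore forces the exponent $a_1=\tfrac{3\nu\kappa_2}{8\kappa_0}$ \emph{above $2$} (this is exactly \eqref{zerosg:vkLya2-pl-a1-3-bounded}), not merely above $1$ as you claim for your coefficient $\tfrac{3\nu\kappa_2}{16\kappa_0}$. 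The distinction is not cosmetic. In your recursion the consensus-coupling forcing $E_2\,p/(k+t_1)^3$ yields $\mathcal{O}(p/T^{2})$ via Lemma~\ref{zerosg:serise:lemma:sequence}(ii) only when $a_1-\delta_2>-1$ with $\delta_2=3$, i.e.\ $a_1>2$; if $a_1\in(1,2)$ the same lemma gives only $\mathcal{O}(p\,t_1^{a_1-2}/T^{a_1})$, which decays strictly slower than $1/T^{2}$ (and the initial-condition term likewise decays only like $1/T^{a_1}$). So your conclusion ``exponent above $1$ $\Rightarrow$ $V_T=\mathcal{O}(E_1p/(nT))+\mathcal{O}(E_2p/T^{2})$'' does not follow, and with your constants the theorem as stated is not recovered. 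Note also that the lower end of the window, $\kappa_0\ge\tfrac{3\hat{c}_0\nu\kappa_2}{16}$, is used in the paper to bound the constant $\hat{\varepsilon}_{17}\le\tfrac{1024(\varepsilon_{15}+L_f^2\kappa_\delta^2)}{9\hat{c}_0(2-\hat{c}_0)\nu^2}$ in the dominant $p/(nT)$ term independently of $\kappa_0$, not to cap the contraction coefficient per se.

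The second gap is your bootstrap step: you invoke Theorem~\ref{zerosg:thm-random-pd-sm} to claim $f(\bar{x}_k)-f^*=\mathcal{O}(1)$, hence $\|\nabla f(\bar{x}_k)\|^2=\mathcal{O}(1)$, before running the consensus recursion. But Theorem~\ref{zerosg:thm-random-pd-sm} is proved for the schedule $\beta_k=\kappa_0(k+t_1)^{\theta}$ with $\theta\in(0.5,1)$ and its own constraints on $t_1,\kappa_0$; it says nothing about the $\theta=1$ schedule \eqref{zerosg:step:eta1t1}, so the citation is vacuous here. The paper fills this hole with a dedicated first stage: it applies Lemma~\ref{zerosg:serise:lemma:sequence}(ii) to the \emph{full} Lyapunov function $W_k$ with the weak exponent $\hat{a}_3=\tfrac{2}{3\kappa_6}<1$ (this is where the conditions $t_1\ge\tfrac{2}{3\varepsilon_4},\tfrac{2}{3\varepsilon_6}$ in $\hat{c}_3$ enter), which is too weak for any rate but suffices to conclude $\mathbf{E}[\|\bar{\bsg}_k^0\|^2]\le n\hat{c}_g$ uniformly; only then do the geometric recursion for $\breve{W}_k$ (giving \eqref{zerosg:thm-sg-diminishing-equ2.1bounded}) and the $W_{4,k}$ recursion with exponent $>2$ (giving \eqref{zerosg:thm-sg-diminishing-equ2bounded}) go through. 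Without this intermediate uniform-boundedness step, the forcing of your $M_k$ recursion is not yet known to be $\mathcal{O}(p n/k^{2})$. A minor further point: your consensus energy is built around a fixed dual point $\bsv^\star$, whereas the paper uses the time-shifted variable $\bsv_k+\tfrac{1}{\beta_k}\bsg_k^0$; since $\beta_k$ is time-varying, the shift generates the $\omega_k$ drift terms that the constants $c_0,c_3,\hat{c}_3$ are designed to absorb, so this substitution is workable in principle but not free.
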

\begin{proof}
The proof is given  in Appendix~\ref{zerosg:proof-thm-sg-diminishingt}. It should be highlighted that the omitted constants in the first term on the right-hand side of \eqref{zerosg:thm-sg-diminishing-equ2bounded} do not depend on any parameters related to the communication network.
\end{proof}

Although Assumption~\ref{zerosg:ass:fig} is weaker than the bounded gradient assumption, it can be further relaxed by a mild assumption. Specifically, if each $f_i^*>-\infty$, where $f_i^*=\min_{x\in\mathbb{R}^p}f_i(x)$, then without Assumption~\ref{zerosg:ass:fig}, the convergence results stated in \eqref{zerosg:thm-sg-diminishing-equ2.1bounded} and \eqref{zerosg:thm-sg-diminishing-equ2bounded} still hold, as shown in the following.
\begin{theorem}[Linear speedup]\label{zerosg-a5:thm-sg-diminishingt}
Suppose Assumptions~\ref{zerosg:ass:graph}--\ref{zerosg:ass:zeroth-variance} and \ref{zerosg:ass:fil} hold, and the P--{\L} constant $\nu$ is known in advance, and each $f_i^*>-\infty$. Let $\{\bsx_{k}\}$ be the sequence generated by Algorithm~\ref{zerosg:algorithm-random-pd} with
\begin{align}\label{zerosg-a5:step:eta1t1}
&\alpha_k=\kappa_1\beta_k,~\beta_k=\kappa_0(k+t_1),~ \eta_k=\frac{\kappa_2}{\beta_k},~\delta_{i,k}\le \frac{\kappa_\delta\sqrt{p\eta_k}}{\sqrt{n+p}},~\forall k\in\mathbb{N}_0,
\end{align}
where $\kappa_1>c_1$, $\kappa_2\in(0,c_2(\kappa_1))$, $\kappa_0\in[\frac{3\hat{c}_0\nu\kappa_2}{16},\frac{3\nu\kappa_2}{16})$, $t_1\ge\check{c}_3(\kappa_0,\kappa_1,\kappa_2)$, $\kappa_\delta>0$, and $\hat{c}_0\in(0,1)$ with $c_1$ and $c_2(\kappa_1)$ being given in Appendix~\ref{zerosg:proof-thm-random-pd-sm}, and $\check{c}_3(\kappa_0,\kappa_1,\kappa_2)$ being given in Appendix~\ref{zerosg-a5:proof-thm-sg-diminishingt}.
Then, for any $T\in\mathbb{N}_+$,
\begin{subequations}
\begin{align}
&\mathbf{E}\Big[\frac{1}{n}\sum_{i=1}^{n}\|x_{i,T}-\bar{x}_T\|^2\Big]
=\mathcal{O}\Big(\frac{p}{T^{2}}\Big), \label{zerosg-a5:thm-sg-diminishing-equ2.1bounded}\\
&\mathbf{E}[f(\bar{x}_{T})-f^*]
=\mathcal{O}\Big(\frac{p}{nT}\Big)+\mathcal{O}\Big(\frac{p}{T^{2}}\Big).
\label{zerosg-a5:thm-sg-diminishing-equ2bounded}
\end{align}
\end{subequations}
\end{theorem}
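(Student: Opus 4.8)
The plan is to retrace the analysis of Theorem~\ref{zerosg:thm-sg-diminishingt} almost verbatim and to replace the one place at which Assumption~\ref{zerosg:ass:fig} was invoked. Recall that the proof in Appendix~\ref{zerosg:proof-thm-sg-diminishingt} builds a Lyapunov function combining the optimality gap $f(\bar x_k)-f^*$, the consensus error $\frac1n\sum_{i=1}^n\|x_{i,k}-\bar x_k\|^2$, and a dual/penalty term, and derives a one-step descent inequality. The only role of Assumption~\ref{zerosg:ass:fig} there is to control the aggregate local-gradient energy $\frac1n\sum_{i=1}^n\|\nabla f_i(x_{i,k})\|^2$, which is produced, carrying an $\eta_k^2$ prefactor, by the second-moment bound on the estimator $g^e_{i,k}$ (Assumption~\ref{zerosg:ass:zeroth-variance} together with the standard two-point ZO variance estimate). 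Hence it suffices to re-derive a bound of the same shape---a multiple of $f(\bar x_k)-f^*$, a multiple of the consensus error, plus an additive constant---without using Assumption~\ref{zerosg:ass:fig}.

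First I would use that, by Assumption~\ref{zerosg:ass:zeroth-smooth}, each $f_i$ is $L_f$-smooth, and that by hypothesis $f_i^*>-\infty$; the standard consequence for smooth functions bounded below gives $\|\nabla f_i(x)\|^2\le 2L_f(f_i(x)-f_i^*)$ for all $x$ and all $i$. Next I would pass from the local iterates $x_{i,k}$ to the average $\bar x_k$: expanding $f_i$ around $\bar x_k$ by smoothness, applying Young's inequality to the inner-product term, and re-using the same bounded-below inequality at $\bar x_k$, I would obtain $f_i(x_{i,k})-f_i^*\le 2(f_i(\bar x_k)-f_i^*)+L_f\|x_{i,k}-\bar x_k\|^2$. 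Averaging over $i$ and using $\frac1n\sum_{i=1}^n f_i(\bar x_k)=f(\bar x_k)$ then yields $\frac1n\sum_{i=1}^n\|\nabla f_i(x_{i,k})\|^2\le 4L_f(f(\bar x_k)-f^*)+4L_f(f^*-\bar f^*)+2L_f^2\,\frac1n\sum_{i=1}^n\|x_{i,k}-\bar x_k\|^2$, where $\bar f^*=\frac1n\sum_{i=1}^n f_i^*$ and the heterogeneity gap $f^*-\bar f^*\ge 0$ is a fixed constant. This is exactly the shape of the bound that Assumption~\ref{zerosg:ass:fig} supplied in the original proof, the role of $\sigma_2^2$ now being played by $4L_f(f^*-\bar f^*)$.

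Substituting this estimate into the descent inequality reproduces the recursion of Theorem~\ref{zerosg:thm-sg-diminishingt} with only the constants changed. The positive $\eta_k^2$-multiple of $f(\bar x_k)-f^*$ coming from the new bound is dominated by the negative $\eta_k$-multiple of $\|\nabla f(\bar x_k)\|^2$ produced by the descent once the P--{\L} inequality $\|\nabla f(\bar x_k)\|^2\ge 2\nu(f(\bar x_k)-f^*)$ (Assumption~\ref{zerosg:ass:fil}) is applied, provided $\eta_k$ is small from the outset; this is what fixes the admissible window for $\kappa_0$ through $\hat c_0$ and forces the new threshold $t_1\ge\check c_3(\kappa_0,\kappa_1,\kappa_2)$ defined in Appendix~\ref{zerosg-a5:proof-thm-sg-diminishingt}. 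Because the constant $4L_f(f^*-\bar f^*)$ enters only at $\eta_k^2$ order, it is absorbed---together with the smoothing-bias and $\sigma_1^2$ constants---into the higher-order $\mathcal{O}(p/T^2)$ part of the bound and does not affect the leading $\mathcal{O}(p/(nT))$ term. With the recursion in this canonical contracting form, the same telescoping/induction step as in Appendix~\ref{zerosg:proof-thm-sg-diminishingt} delivers \eqref{zerosg-a5:thm-sg-diminishing-equ2.1bounded} and \eqref{zerosg-a5:thm-sg-diminishing-equ2bounded}. The main obstacle is the second step: one must verify that replacing the state-dependent bound by the function-value bound really reproduces the same canonical recursion---in particular that the newly generated multiple of $f(\bar x_k)-f^*$ carries an $\eta_k^2$ (rather than $\eta_k$) prefactor and is therefore still dominated by the P--{\L} contraction, and that the heterogeneity constant migrates cleanly into the $\mathcal{O}(p/T^2)$ term rather than creating a non-vanishing steady-state error.
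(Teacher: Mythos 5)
Your proposal is correct and is essentially the paper's own proof (Lemma~\ref{zerosg-a5:lemma:sg2} in Appendix~\ref{zerosg-a5:proof-thm-sg-diminishingt}): there, too, the single use of Assumption~\ref{zerosg:ass:fig} is replaced by $\|\nabla f_i(\bar{x}_k)\|^2\le 2L_f(f_i(\bar{x}_k)-f_i^*)$ combined with smoothness between $x_{i,k}$ and $\bar{x}_k$, producing exactly your estimate $\sum_{i=1}^n\|\nabla f_i(x_{i,k})\|^2\le 2L_f^2\|\bsx_k\|^2_{\bsK}+4L_fn(f(\bar{x}_k)-f^*)+2n\check{\sigma}^2_2$ with $\check{\sigma}^2_2=2L_f\big(f^*-\frac{1}{n}\sum_{i=1}^nf_i^*\big)$ (your derivation via function values and the paper's via the gradient decomposition $\nabla f_i(x_{i,k})=(\nabla f_i(x_{i,k})-\nabla f_i(\bar{x}_k))+\nabla f_i(\bar{x}_k)$ give the same inequality with the same constants), after which the resulting $\eta_k^2$-multiple of $W_{4,k}$ is dominated by the P--{\L} contraction under the stepsize/threshold condition encoded in $\check{c}_3$, just as you describe. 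The one inaccuracy is your final claim about constants: the heterogeneity constant $\check{\sigma}^2_2$ enters the additive $p\eta_k^2$ term of the $W_{4}$-recursion (cf.\ \eqref{zerosg-a5:v4kspeed-diminishing-2} and $\check{\varepsilon}_{12}$) and therefore appears in the constant of the leading $\mathcal{O}(p/(nT))$ term, not only in the $\mathcal{O}(p/T^2)$ part---but since it is a fixed problem constant independent of $T$ and of the communication network, this does not affect the stated rates or the linear-speedup conclusion.
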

\begin{proof}
The proof is given  in Appendix~\ref{zerosg-a5:proof-thm-sg-diminishingt}. It should be highlighted that the omitted constants in the first term on the right-hand side of \eqref{zerosg-a5:thm-sg-diminishing-equ2bounded} do not depend on any parameters related to the communication network.
\end{proof}
\begin{remark}
To the best of our knowledge, Theorems~\ref{zerosg:thm-sg-diminishing}--\ref{zerosg-a5:thm-sg-diminishingt} are the first performance analysis results for ZO algorithms to solve stochastic optimization problems under the P--{\L} condition or strong convexity assumption.
In \cite{shamir2013complexity}, a centralized ZO algorithm based on one-point sampling with additive sampling noise was proposed and an $\mathcal{O}(p^2/T)$ convergence rate was achieved for deterministic optimization problems with strongly convex quadratic cost functions. In \cite{bach2016highly}, a centralized ZO algorithm based on two-point sampling with additive noise was proposed and an $\mathcal{O}(p/\sqrt{T})$ convergence rate was achieved for deterministic strongly convex and smooth optimization problems. In \cite{sahu2018distributed}, a distributed ZO gradient descent algorithm based on $2p$-point sampling with additive noise was proposed and an $\mathcal{O}(pn^2/\sqrt{T})$ convergence rate was achieved for deterministic strongly convex and smooth optimization problems. In \cite{tang2020distributedzero}, a distributed ZO gradient descent algorithm based on two-point sampling was proposed and an $\mathcal{O}(p/T)$ convergence rate was achieved for deterministic smooth optimization problems under the P--{\L} condition. It is straightforward to see that aforementioned convergence rates achieved in \cite{shamir2013complexity,bach2016highly,sahu2018distributed,tang2020distributedzero} are slower than that achieved by our distributed stochastic ZO primal--dual algorithm as stated in Theorem~\ref{zerosg-a5:thm-sg-diminishingt}. Moreover, we consider stochastic optimization problems and use the P--{\L} condition, which is slightly weaker than the strong convexity condition. The distributed ZO gradient tracking algorithm in \cite{tang2020distributedzero} and the distributed ZO primal--dual algorithms in \cite{yi2021linear}  achieved linear convergence under the P--{\L} condition. However, both algorithms require each agent at each iteration to sample $\mathcal{O}(p)$ points, which results in a heavy sampling burden when $p$ is large.
\end{remark}

As shown in Theorems~\ref{zerosg:thm-sg-diminishing}--\ref{zerosg-a5:thm-sg-diminishingt}, in expectation, the convergence rate of Algorithm~\ref{zerosg:algorithm-random-pd} with diminishing stepsizes is sublinear. The following theorem establishes that, in expectation, the output of Algorithm~\ref{zerosg:algorithm-random-pd} with constant algorithm parameters linearly converges to a neighborhood of a global
optimum.

\begin{theorem}\label{zerosg:thm-random-pd-fixed}
Suppose Assumptions~\ref{zerosg:ass:graph}--\ref{zerosg:ass:fig} hold. Let $\{\bsx_k\}$ be the sequence generated by Algorithm~\ref{zerosg:algorithm-random-pd} with
\begin{align}\label{zerosg:step:eta2-fixed}
&\alpha_k=\alpha=\kappa_1\beta,~\beta_k=\beta,~ \eta_k=\eta=\frac{\kappa_2}{\beta},
~\delta_{i,k}\le\kappa_\delta\tilde{\varepsilon}^{k},~\forall k\in\mathbb{N}_0,
\end{align}
where $\kappa_1>c_1$, $\kappa_2\in(0,c_2(\kappa_1))$, $\beta\ge\tilde{c}_0(\kappa_1,\kappa_2)$, $\tilde{\varepsilon}\in(0,1)$, and $\kappa_\delta>0$ with $\tilde{c}_0(\kappa_1,\kappa_2)$ being given in Appendix~\ref{zerosg:proof-thm-sg-smT}, and $c_1$ and $c_2(\kappa_1)$ being given in Appendix~\ref{zerosg:proof-thm-random-pd-sm}.
Then, for any $T\in\mathbb{N}_+$,
\begin{subequations}
\begin{align}
&\frac{1}{T}\sum_{k=0}^{T-1}\mathbf{E}\Big[\frac{1}{n}\sum_{i=1}^{n}
\|x_{i,k}-\bar{x}_k\|^2\Big]
=\mathcal{O}\Big(\frac{1}{ T}+(\sigma^2_1+2(1+\sigma_0^2)\sigma^2_2)p\eta^2\Big),
\label{zerosg:thm-sg-fixed-equ3.1}\\
&\mathbf{E}\Big[\frac{1}{n}\sum_{i=1}^{n}
\|x_{i,T}-\bar{x}_T\|^2\Big]=
\mathcal{O}\Big(p\eta^2+(\sigma^2_1+2(1+\sigma_0^2)\sigma^2_2)
p^2\eta^4\Big(\frac{1}{n}+\eta\Big)T\Big),
\label{zerosg:thm-sg-fixed-equ3.2}\\
&\frac{1}{T}\sum_{k=0}^{T-1}\mathbf{E}[\|\nabla f(\bar{x}_k)\|^2]
=\mathcal{O}\Big(\frac{1}{\eta T}+(\sigma^2_1+2(1+\sigma_0^2)\sigma^2_2)\Big(\frac{p\eta}{n}+p\eta^2\Big)\Big).
\label{zerosg:thm-sg-fixed-equ3}
\end{align}
\end{subequations}
Moreover, if Assumption~\ref{zerosg:ass:fil} also holds, then
\begin{align}\label{zerosg:thm-sg-fixed-equ1}
&\mathbf{E}\Big[\frac{1}{n}\sum_{i=1}^{n}\|x_{i,k}-\bar{x}_k\|^2+f(\bar{x}_k)-f^*\Big]
=\mathcal{O}(\varepsilon^{k}+(\sigma^2_1+2(1+\sigma_0^2)\sigma^2_2)p\eta),~\forall k\in\mathbb{N}_+,
\end{align}
where $\varepsilon\in(0,1)$ is a positive constant given in Appendix~\ref{zerosg:proof-thm-random-pd-fixed}.
\end{theorem}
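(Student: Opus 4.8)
The plan is to run a Lyapunov argument after splitting the dynamics into the consensus-subspace (average) part and the disagreement part. Writing $\bar{x}_k=\frac{1}{n}\sum_{i=1}^{n}x_{i,k}$, I would first note that summing \eqref{zerosg:alg:random-pd-x} over $i$ annihilates both the Laplacian term (columns of $L$ sum to zero) and the dual term: since $\sum_i v_{i,0}=\mathbf{0}_p$ and \eqref{zerosg:alg:random-pd-q} preserves this sum, we have $\sum_i v_{i,k}=\mathbf{0}_p$ for all $k$. Hence the average obeys the perturbed stochastic-gradient recursion $\bar{x}_{k+1}=\bar{x}_k-\eta\,\frac{1}{n}\sum_{i=1}^{n}g^e_{i,k}$. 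Before anything else I would record the two structural properties of the estimator \eqref{dbco:gradient:model2-st}, presumably isolated as lemmas in Appendix~\ref{zerosg:proof-thm-random-pd-sm}: its conditional mean equals the gradient of the $\delta_{i,k}$-smoothed local function, so it is an $\mathcal{O}(\delta_{i,k})$-biased version of $\nabla f_i(x_{i,k})$ via Assumption~\ref{zerosg:ass:zeroth-smooth}; and its conditional second moment is bounded by $\mathcal{O}(p)\|\nabla f_i(x_{i,k})\|^2+\mathcal{O}(p)\sigma_1^2+\mathcal{O}(\delta_{i,k}^2)$ via Assumption~\ref{zerosg:ass:zeroth-variance}. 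These are exactly what inject the dimension factor $p$ and the variance constants into every bound.

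The core is a one-step inequality for a composite potential $U_k=a\cdot\frac{1}{n}\sum_i\|x_{i,k}-\bar{x}_k\|^2+(\text{a dual-error/cross term})+(f(\bar{x}_k)-f^*)$ with positive weights to be fixed. For the disagreement block I would use the spectral gap of $L$ on the subspace orthogonal to $\mathbf{1}$ (available by Assumption~\ref{zerosg:ass:graph}): the coupled primal--dual update contracts the consensus error at a rate set by $\alpha=\kappa_1\beta$, while the dual variable acts as integral feedback that, once $\beta\ge\tilde{c}_0(\kappa_1,\kappa_2)$, renders the combined primal--dual block contractive. For the average block I would apply the $L_f$-smoothness descent lemma to $f$ and take the conditional expectation given $\calL_{k-1}$, producing a $-\Theta(\eta)\|\nabla f(\bar{x}_k)\|^2$ term offset by (i) a consensus-error term (agents sample at $x_{i,k}$, not $\bar{x}_k$, controlled through smoothness and Assumption~\ref{zerosg:ass:fig}), (ii) the smoothing bias $\mathcal{O}(\delta_{i,k}^2)$, and (iii) a variance floor. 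Here the independence of $u_{i,k},\xi_{i,k}$ across $i$ gives $\mathrm{Var}(\frac{1}{n}\sum_i g^e_{i,k})=\mathcal{O}(p/n)$, which is the source of the averaged $p\eta/n$ floor in \eqref{zerosg:thm-sg-fixed-equ3}, whereas the non-averaged consensus floor contributes the $p\eta^2$ terms. The decisive step is choosing $\kappa_1>c_1$ and $\kappa_2<c_2(\kappa_1)$ so the state-dependent contributions $\sigma_0^2\|\nabla f_i\|^2$ and $\tilde{\sigma}_0^2\|\nabla f\|^2$ are dominated by the descent and consensus-contraction terms, yielding a master inequality $\mathbf{E}[U_{k+1}\mid\calL_{k-1}]\le U_k-\Theta(\eta)\|\nabla f(\bar{x}_k)\|^2+\mathrm{floor}_k$ with $\mathrm{floor}_k=\mathcal{O}\big((\sigma_1^2+(1+\sigma_0^2)\sigma_2^2)\,p\eta^2\big)+\mathcal{O}(\delta_{i,k}^2)$.

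From this master inequality the three stationary-point bounds follow by telescoping. Summing over $k=0,\dots,T-1$, using $f(\bar{x}_T)\ge f^*$ to cancel the telescoped gap, and dividing by $T$ gives \eqref{zerosg:thm-sg-fixed-equ3}: the $1/(\eta T)$ term is $(U_0-U_T)/(\Theta(\eta)T)$ and the remainder is $\mathrm{floor}/\eta$. Reading the same telescoped inequality on the consensus block yields the averaged consensus bound \eqref{zerosg:thm-sg-fixed-equ3.1}, while propagating the per-step variance injection forward (rather than averaging it) gives the final-iterate bound \eqref{zerosg:thm-sg-fixed-equ3.2}, whose second term grows linearly in $T$. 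For the last claim, once Assumption~\ref{zerosg:ass:fil} is added I would invoke the P--{\L} inequality \eqref{nonconvex:equ:plc} to replace $-\Theta(\eta)\|\nabla f(\bar{x}_k)\|^2$ by $-2\nu\Theta(\eta)(f(\bar{x}_k)-f^*)$, turning the master inequality into a genuine contraction $\mathbf{E}[U_{k+1}\mid\calL_{k-1}]\le \varepsilon\,U_k+\mathrm{floor}_k$ for some $\varepsilon\in(0,1)$. Because $\delta_{i,k}\le\kappa_\delta\tilde{\varepsilon}^{k}$ decays geometrically, its bias contribution is absorbed into the $\varepsilon^k$ transient, and unrolling the recursion leaves precisely the persistent variance floor $\mathcal{O}((\sigma_1^2+2(1+\sigma_0^2)\sigma_2^2)p\eta)$ of \eqref{zerosg:thm-sg-fixed-equ1}.

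I expect the main obstacle to be the construction and weight-tuning of $U_k$ so that the primal--dual cross terms telescope cleanly \emph{and} the state-dependent variance is absorbed simultaneously. The disagreement dynamics couples $x_{i,k}-\bar{x}_k$ with $v_{i,k}$ through $L$, so a pure consensus-error potential is not self-contained, and the correct cross term (with the right sign and magnitude relative to $\alpha$ and $\beta$) is what closes the recursion. Pinning down explicit thresholds $c_1$, $c_2(\kappa_1)$, and $\tilde{c}_0(\kappa_1,\kappa_2)$ under which the quadratic form in $(\text{consensus},\text{dual},\|\nabla f\|)$ stays suitably negative, while keeping the dominant-floor constants network-independent as in the companion results, is the technically delicate part; everything after the master inequality is routine telescoping and geometric-series summation.
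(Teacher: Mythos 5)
Your proposal matches the paper's own proof in all essentials: the paper uses exactly the composite potential you describe ($W_k=\frac{1}{2}\|\bsx_k\|_{\bsK}^2+\frac{1}{2}\|\bsv_k+\frac{1}{\beta}\bsg_k^0\|^2_{\bsQ+\kappa_1\bsK}+\bsx_k^\top\bsK(\bsv_k+\frac{1}{\beta}\bsg_k^0)+n(f(\bar{x}_k)-f^*)$, i.e., consensus error, dual error, cross term, and optimality gap), derives the same master descent inequality with a $-\Theta(\eta)\|\nabla f(\bar{x}_k)\|^2$ term and a $p\eta^2$ variance floor (Lemma~\ref{zerosg:lemma:sg2-T}), telescopes it for \eqref{zerosg:thm-sg-fixed-equ3.1}--\eqref{zerosg:thm-sg-fixed-equ3}, unrolls the consensus-block recursion forward for the final-iterate bound \eqref{zerosg:thm-sg-fixed-equ3.2}, and under the P--{\L} condition converts the inequality into a geometric contraction whose rate $\varepsilon$ absorbs the geometrically decaying smoothing bias, leaving the persistent $\mathcal{O}(p\eta)$ floor in \eqref{zerosg:thm-sg-fixed-equ1}. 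The obstacle you flag as decisive—tuning the cross term and the thresholds $c_1$, $c_2(\kappa_1)$, $\tilde{c}_0(\kappa_1,\kappa_2)$ so the quadratic form stays negative while the equivalence $\kappa_7\hat{V}_k\le W_k\le\kappa_6\hat{V}_k$ holds—is precisely where the paper spends its technical effort.
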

\begin{proof}
The proof is given in Appendix~\ref{zerosg:proof-thm-random-pd-fixed}.
\end{proof}

If Assumption~\ref{zerosg:ass:zeroth-variance}$'$--\ref{zerosg:ass:fig}$'$ hold, then $\sigma_1=\sigma_2=0$. In this case, from Theorem~\ref{zerosg:thm-random-pd-fixed}, we have the following results.
\begin{corollary}[Linear convergence]\label{zerosg:thm-random-pd-fixed-coro1}
Under the same setup as Theorem~\ref{zerosg:thm-random-pd-fixed} and suppose Assumption~\ref{zerosg:ass:zeroth-variance}$'$--\ref{zerosg:ass:fig}$'$ hold, then, for any $T\in\mathbb{N}_+$,
\begin{subequations}
\begin{align}
&\frac{1}{T}\sum_{k=0}^{T-1}\mathbf{E}\Big[\frac{1}{n}\sum_{i=1}^{n}
\|x_{i,k}-\bar{x}_k\|^2\Big]
=\mathcal{O}\Big(\frac{1}{ T}\Big),
\label{zerosg:thm-sg-fixed-equ3.1-coro1}\\
&\mathbf{E}\Big[\frac{1}{n}\sum_{i=1}^{n}
\|x_{i,k}-\bar{x}_k\|^2\Big]=
\mathcal{O}(p\eta^2),
\label{zerosg:thm-sg-fixed-equ3.2-coro1}\\
&\frac{1}{T}\sum_{k=0}^{T-1}\mathbf{E}[\|\nabla f(\bar{x}_k)\|^2]=\mathcal{O}\Big(\frac{1}{\eta T}\Big).
\label{zerosg:thm-sg-fixed-equ3-coro1}
\end{align}
\end{subequations}
Moreover, if Assumption~\ref{zerosg:ass:fil} also holds, then
\begin{align}\label{zerosg:thm-sg-fixed-equ1-coro1}
&\mathbf{E}\Big[\frac{1}{n}\sum_{i=1}^{n}\|x_{i,k}-\bar{x}_k\|^2+f(\bar{x}_k)-f^*\Big]
=\mathcal{O}(\varepsilon^{k}),~\forall k\in\mathbb{N}_+.
\end{align}
\end{corollary}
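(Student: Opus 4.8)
The plan is to obtain Corollary~\ref{zerosg:thm-random-pd-fixed-coro1} as an immediate specialization of Theorem~\ref{zerosg:thm-random-pd-fixed}. Since the corollary is stated under the same setup and merely adds the relative bounded second moment assumptions, the entire burden of the argument reduces to the single observation recorded just before the corollary: Assumption~\ref{zerosg:ass:zeroth-variance}$'$ and Assumption~\ref{zerosg:ass:fig}$'$ are precisely the special cases of Assumption~\ref{zerosg:ass:zeroth-variance} and Assumption~\ref{zerosg:ass:fig} in which $\sigma_1=0$ and $\sigma_2=0$. Once this is in hand, all hypotheses of Theorem~\ref{zerosg:thm-random-pd-fixed} continue to hold with $\sigma_1=\sigma_2=0$, and its four conclusions transfer verbatim.

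First I would justify the reduction $\sigma_1=0$. Using the unbiasedness $\nabla f_i(x)=\mathbf{E}_{\xi_i}[\nabla_xF_i(x,\xi_i)]$, the bias--variance decomposition yields
\begin{align}
\mathbf{E}_{\xi_i}[\|\nabla_xF_i(x,\xi_i)\|^2]=\mathbf{E}_{\xi_i}[\|\nabla_xF_i(x,\xi_i)-\nabla f_i(x)\|^2]+\|\nabla f_i(x)\|^2,\notag
\end{align}
so that Assumption~\ref{zerosg:ass:zeroth-variance} with $\sigma_1=0$ is equivalent to Assumption~\ref{zerosg:ass:zeroth-variance}$'$ upon taking $\breve{\sigma}_0^2=1+\sigma_0^2$ (note $\breve{\sigma}_0^2\ge1$ automatically by Jensen's inequality, which guarantees the reverse implication). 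Analogously, for Assumption~\ref{zerosg:ass:fig}$'$ I would apply the triangle inequality to $\|\nabla f_i(x)-\nabla f(x)\|$ and to $\|\nabla f_i(x)\|$, which shows that the relative bound $\|\nabla f_i(x)\|^2\le\hat{\sigma}_0^2\|\nabla f(x)\|^2$ is equivalent, up to a change of constant, to Assumption~\ref{zerosg:ass:fig} with $\sigma_2=0$.

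With $\sigma_1=\sigma_2=0$ established, the remaining step is pure substitution. In each of \eqref{zerosg:thm-sg-fixed-equ3.1}, \eqref{zerosg:thm-sg-fixed-equ3.2}, \eqref{zerosg:thm-sg-fixed-equ3}, and \eqref{zerosg:thm-sg-fixed-equ1}, the common factor $\sigma_1^2+2(1+\sigma_0^2)\sigma_2^2$ multiplies the second, noise-induced term; setting $\sigma_1=\sigma_2=0$ annihilates this factor and leaves exactly $\mathcal{O}(1/T)$, $\mathcal{O}(p\eta^2)$, $\mathcal{O}(1/(\eta T))$, and $\mathcal{O}(\varepsilon^{k})$, which are the claims \eqref{zerosg:thm-sg-fixed-equ3.1-coro1}, \eqref{zerosg:thm-sg-fixed-equ3.2-coro1}, \eqref{zerosg:thm-sg-fixed-equ3-coro1}, and \eqref{zerosg:thm-sg-fixed-equ1-coro1}. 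There is no genuine obstacle: the only point meriting a line of verification is that the primed assumptions leave the admissible parameter ranges ($\kappa_1>c_1$, $\kappa_2\in(0,c_2(\kappa_1))$, $\beta\ge\tilde{c}_0(\kappa_1,\kappa_2)$) and the contraction constant $\varepsilon$ of Appendix~\ref{zerosg:proof-thm-random-pd-fixed} unchanged, which indeed holds since those quantities depend only on the smoothness constant $L_f$, the graph data, and $\kappa_1,\kappa_2,\beta$, none of which involve $\sigma_1$ or $\sigma_2$.
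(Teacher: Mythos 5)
Your proposal is correct and follows essentially the same route as the paper: the paper's entire argument consists of noting (as stated in Section~\ref{zerosg:sec-preliminary}) that Assumptions~\ref{zerosg:ass:zeroth-variance}$'$--\ref{zerosg:ass:fig}$'$ are equivalent to Assumptions~\ref{zerosg:ass:zeroth-variance} and \ref{zerosg:ass:fig} with $\sigma_1=\sigma_2=0$, and then substituting $\sigma_1=\sigma_2=0$ into the bounds \eqref{zerosg:thm-sg-fixed-equ3.1}--\eqref{zerosg:thm-sg-fixed-equ1} of Theorem~\ref{zerosg:thm-random-pd-fixed}. Your additional details---the bias--variance identity, the triangle-inequality argument, and the check that $c_1$, $c_2(\kappa_1)$, $\tilde{c}_0(\kappa_1,\kappa_2)$, and $\varepsilon$ involve at most $\sigma_0,\tilde{\sigma}_0$ but never $\sigma_1,\sigma_2$---simply make explicit what the paper calls ``straightforward to check.''
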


\begin{remark}
The result stated in \eqref{zerosg:thm-sg-fixed-equ3-coro1} shows that a stationary point can be found with a rate $\mathcal{O}(p/T)$.  This rate is the same as that achieved by the ZO algorithms in \cite{nesterov2017random,liu2018zeroth,gu2018faster,huang2019faster,kozak2020stochastic}. Although the ZO variance reduced algorithms in \cite{fang2018spider,pmlr-v97-ji19a} and the stochastic direct-search algorithms in \cite{bergou2019stochastic,bibi2019stochastic,gorbunov2019stochastic} achieved a faster rate $\mathcal{O}(1/T)$, these algorithms require three or more samplings at each iteration, while our proposed algorithm requires only two samplings. Moreover, the result stated in \eqref{zerosg:thm-sg-fixed-equ1-coro1} shows that a global optimum can be found linearly. 
The ZO algorithms in \cite{nesterov2017random,ye2018hessian,pmlr-v97-ji19a,chen2020accelerated,
kozak2020stochastic,cai2020zeroth} and the stochastic direct-search algorithms in \cite{bergou2019stochastic,bibi2019stochastic,gorbunov2019stochastic,golovin2019gradientless} also achieved linear convergence. However, the algorithms in \cite{ye2018hessian,bergou2019stochastic,bibi2019stochastic,gorbunov2019stochastic,
pmlr-v97-ji19a,golovin2019gradientless,chen2020accelerated}  require three or more samplings at each iteration; the P--{\L} constant needs to be known in advance in \cite{kozak2020stochastic,pmlr-v97-ji19a}, which is not needed in Theorem~\ref{zerosg:thm-random-pd-fixed}; and the cost functions in \cite{nesterov2017random,ye2018hessian,bergou2019stochastic,bibi2019stochastic,
gorbunov2019stochastic,golovin2019gradientless,chen2020accelerated,cai2020zeroth}  are
(restricted) strongly convex, which is stronger than the P--{\L} condition used in Theorem~\ref{zerosg:thm-random-pd-fixed}.
\end{remark}

To end this section, we would like to briefly explain the challenges when analyzing the performance of Algorithm~\ref{zerosg:algorithm-random-pd}. Algorithm~\ref{zerosg:algorithm-random-pd} is simple in the sense that it is a combination of the first-order algorithm proposed in \cite{yi2021linear} with zeroth-order gradient estimators. For such a kind of combination, the standard technique to handle the bias in the ZO gradients is using smoothing function, which is also used in our proofs. However, there still is a gap between the smoothing function and the original function. This gap complicates the proof details, especially under the distributed and stochastic setting. As a result, one needs to make an assumption on the relation between the local and global gradients, such as the Lipschitz assumption, i.e., $\|\nabla f_i(x)\|$ is globally bounded, the bounded variance assumption, i.e., $\|\nabla f_i(x)-\nabla f(x)\|^2$ is globally bounded, or the weaker Assumption~\ref{zerosg:ass:fig} used in this paper. Moreover, to the best of our knowledge, how to show linear speedup for distributed ZO algorithms is an open problem in the literature. A key point to show linear speedup is to guarantee that the omitted constants in the dominate term in the convergence rate do not depend on any parameters related to the communication network.
In addition, the proofs are much more complicated due to weaker assumptions.

\section{Distributed ZO Primal Algorithm}\label{zerosg-p:sec-main-random}
In this section, we propose a distributed ZO primal  algorithm and analyze its convergence rate.
Inspired by distributed first-order (sub)gradient descent algorithm proposed in \cite{nedic2009distributed}, we propose the following distributed ZO primal algorithm:
\begin{align}\label{zerosg-p:alg:random}
x_{i,k+1} =x_{i,k}-\gamma\sum_{j=1}^nL_{ij}x_{j,k}-\eta_kg^e_{i,k},
\end{align}
where $\gamma$ is a positive constant, $\{\eta_k\}$ is a positive sequence to be specified later, and $g^e_{i,k}$ is the stochastic gradient estimator defined in \eqref{dbco:gradient:model2-st}.

We write the distributed random ZO algorithm \eqref{zerosg-p:alg:random} in pseudo-code as Algorithm~\ref{zerosg-p:algorithm-random}. Compared with Algorithm~\ref{zerosg:algorithm-random-pd}, in Algorithm~\ref{zerosg-p:algorithm-random} each agent only computes the primal variable. Similar results as stated in Theorems~\ref{zerosg:thm-random-pd-sm}--\ref{zerosg:thm-random-pd-fixed} and Corollary~\ref{zerosg:thm-random-pd-fixed-coro1} also hold for Algorithm~\ref{zerosg-p:algorithm-random}.
\begin{algorithm}[tb]
\caption{Distributed  ZO Primal Algorithm}
\label{zerosg-p:algorithm-random}
\begin{algorithmic}[1]
\STATE \textbf{Input}: positive constant $\gamma$ and positive sequences $\{\eta_k\}$ and $\{\delta_{i,k}\}$.
\STATE \textbf{Initialize}: $ x_{i,0}\in\mathbb{R}^p,~\forall i\in[n]$.
\FOR{$k=0,1,\dots$}
\FOR{$i=1,\dots,n$  in parallel}
\STATE  Broadcast $x_{i,k}$ to $\mathcal{N}_i$ and receive $x_{j,k}$ from $j\in\mathcal{N}_i$;
\STATE Generate vector $u_{i,k}\in\mathbb{S}^{p}$ independently and uniformly at random;
\STATE Generate $\xi_{i,k}$ independently and randomly according to the distribution of $\xi_i$;
\STATE Sample $F_i(x_{i,k},\xi_{i,k})$ and $F_i(x_{i,k}+\delta_{i,k}u_{i,k},\xi_{i,k})$;
\STATE  Update $x_{i,k+1}$ by \eqref{zerosg-p:alg:random}.
\ENDFOR
\ENDFOR
\STATE  \textbf{Output}: $\{\bsx_{k}\}$.
\end{algorithmic}
\end{algorithm}

\subsection{Find stationary points}

\begin{theorem}\label{zerosg-p:thm-random-sm}
Suppose Assumptions~\ref{zerosg:ass:graph}--\ref{zerosg:ass:fig} hold. Let $\{\bsx_k\}$ be the sequence generated by Algorithm~\ref{zerosg-p:algorithm-random} with
\begin{align}\label{zerosg-p:thm-random-sm-akbk}
\gamma\in(0,d_1),~\eta_k=\frac{\kappa_\eta}{(k+t_1)^\theta},~\delta_{i,k}\le \frac{\kappa_\delta\sqrt{p\eta_k}}{\sqrt{n+p}},~\forall k\in\mathbb{N}_0,
\end{align}
where $\kappa_\delta>0$, $\kappa_\eta\in(0,d_2(\gamma)t_1^\theta]$, $\theta\in(0.5,1)$, and $t_1\ge p^{\frac{1}{2\theta}}$ with $d_1$ and $d_2(\gamma)$ being given in Appendix~\ref{zerosg-p:proof-thm-random-sm}.
Then, for any $T\in\mathbb{N}_+$,
\begin{subequations}
\begin{align}
&\frac{1}{T}\sum_{k=0}^{T-1}\mathbf{E}[\|\nabla f(\bar{x}_k)\|^2]
=\mathcal{O}\Big(\frac{\sqrt{p}}{T^{1-\theta}}+\frac{p}{T}\Big),\label{zerosg-p:thm-sg-sm-equ1}\\
&\mathbf{E}[f(\bar{x}_{T})]-f^*=\mathcal{O}(1),\label{zerosg-p:thm-sg-sm-equ2}\\
&\mathbf{E}\Big[\frac{1}{n}\sum_{i=1}^{n}\|x_{i,T}-\bar{x}_T\|^2\Big]
=\mathcal{O}\Big(\frac{1}{T^{2\theta}}\Big), \label{zerosg-p:thm-sg-sm-equ1.1bounded}\\
&\lim_{T\rightarrow+\infty}\mathbf{E}[\|\nabla f(\bar{x}_T)\|^2]=0.\label{zerosg-p:thm-sg-sm-equ1bounded}
\end{align}
\end{subequations}
\end{theorem}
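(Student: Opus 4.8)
The plan is to mirror the analysis of the primal--dual algorithm in Theorem~\ref{zerosg:thm-random-pd-sm}, tracking two coupled quantities along the iterations: the optimization error through $f(\bar{x}_k)-f^*$ and the consensus error $\Omega_k:=\frac{1}{n}\sum_{i=1}^n\|x_{i,k}-\bar{x}_k\|^2$. The starting point is the first-- and second--moment characterization of the estimator $g^e_{i,k}$ defined in \eqref{dbco:gradient:model2-st}. Writing $\hat{f}_{i,\delta}$ for the ball--smoothed version of $f_i$, the sphere sampling gives $\mathbf{E}[g^e_{i,k}\mid\calL_{k-1}]=\nabla\hat{f}_{i,\delta_{i,k}}(x_{i,k})$; smoothness (Assumption~\ref{zerosg:ass:zeroth-smooth}) controls the smoothing bias as $\|\nabla\hat{f}_{i,\delta}(x)-\nabla f_i(x)\|=\mathcal{O}(\delta L_f p)$, and Assumption~\ref{zerosg:ass:zeroth-variance} bounds the second moment by $\mathbf{E}[\|g^e_{i,k}\|^2\mid\calL_{k-1}]=\mathcal{O}(p(1+\sigma_0^2)\|\nabla f_i(x_{i,k})\|^2+p\sigma_1^2+p^2\delta_{i,k}^2L_f^2)$. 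These are the same estimates already assembled for the primal--dual case, so I would reuse them verbatim.

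Next I would analyze the consensus error. Averaging \eqref{zerosg-p:alg:random} and using $\sum_jL_{ij}=0$ yields $\bar{x}_{k+1}=\bar{x}_k-\eta_k\bar{g}_k$ with $\bar{g}_k=\frac{1}{n}\sum_ig^e_{i,k}$, while the deviation obeys $x_{i,k+1}-\bar{x}_{k+1}=(x_{i,k}-\bar{x}_k)-\gamma\sum_jL_{ij}x_{j,k}-\eta_k(g^e_{i,k}-\bar{g}_k)$. Projecting onto the subspace orthogonal to the all--ones vector and using Assumption~\ref{zerosg:ass:graph} (connectivity guarantees a positive spectral gap $\rho_2>0$ of $L$), the linear part contracts by a factor $1-\gamma\rho_2\in(0,1)$, which holds precisely because $\gamma\in(0,d_1)$. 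This produces a recursion $\mathbf{E}[\Omega_{k+1}]\le(1-c\gamma)\mathbf{E}[\Omega_k]+C\eta_k^2\,\mathbf{E}[\|\bar{g}_k\|^2\text{-type terms}]$. After substituting the second--moment bound and reducing $\|\nabla f_i\|^2$ to $\|\nabla f(\bar{x}_k)\|^2$ via Assumption~\ref{zerosg:ass:fig} and smoothness, the calibrated choices $\delta_{i,k}\le\kappa_\delta\sqrt{p\eta_k}/\sqrt{n+p}$, $\kappa_\eta\le d_2(\gamma)t_1^\theta$, and $t_1\ge p^{1/(2\theta)}$ absorb the $p$--factors and yield $\mathbf{E}[\Omega_k]=\mathcal{O}(1/T^{2\theta})$, which is \eqref{zerosg-p:thm-sg-sm-equ1.1bounded}.

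For the optimization error I would apply $L_f$--smoothness of $f$ to $\bar{x}_{k+1}=\bar{x}_k-\eta_k\bar{g}_k$, take conditional expectation, and split the inner--product term using $\frac{1}{n}\sum_i\nabla f_i(\bar{x}_k)=\nabla f(\bar{x}_k)$, absorbing the consensus gap $\mathcal{O}(L_f\sqrt{\Omega_k})$ and the smoothing bias $\mathcal{O}(\delta_{i,k}L_fp)$ into lower--order terms. The decisive point for linear speedup is the term $\mathbf{E}[\|\bar{g}_k\|^2]$: because $u_{i,k}$ and $\xi_{i,k}$ are mutually independent across $i$, the zero--mean parts of $g^e_{i,k}-\mathbf{E}[g^e_{i,k}\mid\calL_{k-1}]$ are uncorrelated, so their average contributes variance of order $\frac{1}{n}(p\sigma_1^2+\cdots)$ rather than $\mathcal{O}(p\sigma_1^2)$; this is exactly where the $1/n$ enters. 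Choosing $\delta_{i,k}=\mathcal{O}(\sqrt{\eta_k})$ makes every bias contribution $\mathcal{O}(\eta_k^2)$, of the same order as the variance term. Telescoping the descent inequality over $k=0,\dots,T-1$ and feeding in the consensus bound gives $\sum_{k=0}^{T-1}\eta_k\mathbf{E}[\|\nabla f(\bar{x}_k)\|^2]=\mathcal{O}(1)$, finite because $\theta>1/2$ ensures $\sum_k\eta_k^2<\infty$; converting this weighted sum into $\frac{1}{T}\sum$ via the monotonicity $\eta_k\ge\eta_{T-1}$ yields $\mathcal{O}(1/(T\eta_{T-1}))=\mathcal{O}(\sqrt{p}/T^{1-\theta}+p/T)$, which is \eqref{zerosg-p:thm-sg-sm-equ1}, and boundedness of the descent potential gives \eqref{zerosg-p:thm-sg-sm-equ2}.

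The hard part is simultaneously (i) extracting the $1/n$ variance reduction with a \emph{network--independent} constant in the dominant term, and (ii) closing the coupling between $\Omega_k$ and $\mathbf{E}[\|\nabla f(\bar{x}_k)\|^2]$ under the weaker state--dependent assumptions, since the second moment of $g^e_{i,k}$ is proportional to $\|\nabla f_i\|^2$ rather than uniformly bounded; the stepsize must be small enough (guaranteed by $\gamma<d_1$, $\kappa_\eta\le d_2(\gamma)t_1^\theta$, and $t_1\ge p^{1/(2\theta)}$) that the fed--back gradient terms stay dominated by the negative drift $-\eta_k\|\nabla f(\bar{x}_k)\|^2$. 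Finally, the pointwise limit \eqref{zerosg-p:thm-sg-sm-equ1bounded} does not follow from the running average alone; I would obtain it from $\sum_k\eta_k\mathbf{E}[\|\nabla f(\bar{x}_k)\|^2]<\infty$ together with a per--step bound $|\mathbf{E}[\|\nabla f(\bar{x}_{k+1})\|^2]-\mathbf{E}[\|\nabla f(\bar{x}_k)\|^2]|=\mathcal{O}(\eta_k)$ (from smoothness and the moment estimates), invoking the standard lemma that a nonnegative sequence with summable $\eta_k$--weights and $\mathcal{O}(\eta_k)$ increments must converge to zero.
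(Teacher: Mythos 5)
Your overall route is the same as the paper's. The paper's proof (Appendix~\ref{zerosg-p:proof-thm-random-sm}) also works with the coupled quantities $W_{1,k}=\frac{1}{2}\|\bsx_k\|^2_{\bsK}$ and $W_{4,k}=n(f(\bar{x}_k)-f^*)$, reuses the estimator moment bounds from the primal--dual analysis (Lemma~\ref{zerosg:lemma:grad-st}), derives a Laplacian-contraction recursion for the consensus error driven by $\mathcal{O}(\eta_k^2)$ forcing, and a descent inequality for $f(\bar{x}_k)$. The coupling you correctly flag as ``the hard part'' is resolved there exactly as you sketch: the combined inequality \eqref{zerosg-p:w1w4k2} is summed first, which yields $\mathbf{E}[f(\bar{x}_T)]-f^*=\mathcal{O}(1)$, hence boundedness of $\mathbf{E}[\|\bar{\bsg}^0_k\|^2]$ via \eqref{nonconvex:lemma:lipschitz2} and summability of $\mathbf{E}[\|\bsx_k\|^2_{\bsK}]$; only then is the standalone consensus recursion \eqref{zerosg-p:w1w4k1} closed with the series lemma (Lemma~\ref{zerosg:serise:lemma:sequence}) to get the $\mathcal{O}(1/T^{2\theta})$ rate. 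Two of your deviations are harmless or even preferable: you telescope the $\eta_k$-weighted sum and convert with $\eta_k\ge\eta_{T-1}$, whereas the paper divides \eqref{zerosg-p:w1w4k3} by $\eta_k$ and telescopes the $1/\eta_k$-differences against the bounded $W_{4,k}$ (see \eqref{zerosg-p:w1w4k3-thm7}--\eqref{zerosg-p:w1w4k3-thm7-1}); and for the limit \eqref{zerosg-p:thm-sg-sm-equ1bounded} you give a self-contained summability-plus-bounded-increments argument, while the paper defers to the proof of Theorem~1 of \cite{tang2020distributedzero}.

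The one concrete error is your smoothing-bias bound $\|\nabla\hat{f}_{i,\delta}(x)-\nabla f_i(x)\|=\mathcal{O}(\delta L_f p)$. The bound the paper actually reuses, \eqref{zerosg:lemma:uniformsmoothing-equ8}, is $\delta L_f$ with no factor of $p$: for ball smoothing $\nabla f^s(x,\delta)=\mathbf{E}_{u\in\mathbb{B}^p}[\nabla f(x+\delta u)]$, so Jensen gives $\|\nabla f^s(x,\delta)-\nabla f(x)\|\le L_f\delta\,\mathbf{E}[\|u\|]\le L_f\delta$. This is not cosmetic here, because the theorem's $\mathcal{O}(\cdot)$ tracks $p$ explicitly. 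With $\delta_{i,k}^2\le\kappa_\delta^2 p\eta_k/(n+p)$, the per-step bias contribution to the descent inequality is $\eta_kL_f^2\delta_k^2=\mathcal{O}\big(\tfrac{p}{n+p}\eta_k^2\big)$ under the correct bound, but $\eta_kL_f^2p^2\delta_k^2=\mathcal{O}\big(\tfrac{p^3}{n+p}\eta_k^2\big)$ under yours. Summing over $k$ with $\kappa_\eta=\Theta(t_1^\theta/p)$ and $t_1^\theta=\Theta(\sqrt{p})$, the former totals $\mathcal{O}(t_1^{1-2\theta})=o(1)$, while the latter totals $\mathcal{O}(t_1)=\mathcal{O}(p^{1/(2\theta)})$; after multiplying by $1/(T\eta_{T-1})=\mathcal{O}(\sqrt{p}/T^{1-\theta}+p/T)$, your version of the rate becomes $\mathcal{O}(p^{1/2+1/(2\theta)}/T^{1-\theta}+p^{1+1/(2\theta)}/T)$ rather than the claimed \eqref{zerosg-p:thm-sg-sm-equ1}. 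Since you state you would reuse the paper's estimates verbatim, quoting them correctly fixes this; but as written, your bias estimate would not deliver the theorem's dimension dependence.
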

\begin{proof}
The proof is given in Appendix~\ref{zerosg-p:proof-thm-random-sm}.
\end{proof}

\begin{theorem}[Linear speedup]\label{zerosg-p:thm-sg-smT}
Suppose Assumptions~\ref{zerosg:ass:graph}--\ref{zerosg:ass:fig} hold. For any given $T\ge \max\{\frac{n}{pd_2^2(\gamma)},~\frac{n^3}{p}\}$, let $\{\bsx_k,k=0,\dots,T\}$ be the output generated by Algorithm~\ref{zerosg-p:algorithm-random} with
\begin{align}\label{zerosg-p:step:eta2-sm}
&\gamma\in(0,d_1),~\eta_k=\frac{\sqrt{n}}{\sqrt{pT}},
~\delta_{i,k}\le\frac{p^{\frac{1}{4}}n^{\frac{1}{4}}\kappa_\delta}
{\sqrt{n+p}(k+1)^{\frac{1}{4}}},~\forall k\le T,
\end{align}
where $\kappa_\delta>0$, and $d_1$ as well as $d_2(\gamma)$ are given in Appendix~\ref{zerosg-p:proof-thm-random-sm}, then
\begin{subequations}
\begin{align}
&\frac{1}{T}\sum_{k=0}^{T-1}\mathbf{E}[\|\nabla f(\bar{x}_k)\|^2]
=\mathcal{O}\Big(\frac{\sqrt{p}}{\sqrt{nT}}\Big)+\mathcal{O}\Big(\frac{n}{T}\Big),\label{zerosg-p:thm-sg-sm-equ3}\\
&\mathbf{E}[f(\bar{x}_{T})]-f^*=\mathcal{O}(1),\label{zerosg-p:thm-sg-sm-equ4}\\
&\mathbf{E}\Big[\frac{1}{n}\sum_{i=1}^{n}\|x_{i,T}-\bar{x}_T\|^2\Big]
=\mathcal{O}\Big(\frac{n}{T}\Big),\label{zerosg-p:thm-sg-sm-equ3.1}\\
&\lim_{T\rightarrow+\infty}\mathbf{E}[\|\nabla f(\bar{x}_T)\|^2]=0.\label{zerosg-p:coro-sg-sm-equ3bounded}
\end{align}
\end{subequations}
\end{theorem}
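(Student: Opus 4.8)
The plan is to follow the same strategy used for the primal--dual case in Theorem~\ref{zerosg:thm-sg-smT}, reusing the one-step estimates established in Appendix~\ref{zerosg-p:proof-thm-random-sm} for Theorem~\ref{zerosg-p:thm-random-sm} and then specializing them to the constant stepsize $\eta_k\equiv\eta=\sqrt{n}/\sqrt{pT}$ together with the decaying smoothing radius $\delta_{i,k}$. The two quantities I would track jointly are the expected optimality progress of the network average $\bar x_k=\frac1n\sum_{i=1}^n x_{i,k}$ and the consensus error $\frac1n\sum_{i=1}^n\mathbf{E}[\|x_{i,k}-\bar x_k\|^2]$. The first key observation is that, since the Laplacian satisfies $\mathbf{1}\T L=\mathbf{0}$, the consensus term in \eqref{zerosg-p:alg:random} averages out and $\bar x_{k+1}=\bar x_k-\eta_k\bar g_k$ with $\bar g_k=\frac1n\sum_{i=1}^n g^e_{i,k}$; thus $\bar x_k$ evolves as a perturbed stochastic ZO gradient step and the descent lemma based on Assumption~\ref{zerosg:ass:zeroth-smooth} applies directly.

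First I would insert the bias/variance estimates for $g^e_{i,k}$: its conditional mean equals the gradient of the smoothing function, which differs from $\nabla f_i(x_{i,k})$ by an $\mathcal{O}(L_f p\,\delta_{i,k})$ bias, and its conditional second moment is $\mathcal{O}(p\|\nabla f_i(x_{i,k})\|^2+p\sigma_1^2+pL_f^2\delta_{i,k}^2)$ by Assumptions~\ref{zerosg:ass:zeroth-smooth} and~\ref{zerosg:ass:zeroth-variance}. Combining these with the descent lemma and absorbing $\nabla f_i(x_{i,k})$ into $\nabla f(\bar x_k)$ plus the consensus error (via Assumption~\ref{zerosg:ass:fig} and $L_f$-smoothness) yields a descent inequality of the schematic form
\begin{align*}
\mathbf{E}[f(\bar x_{k+1})]\le \mathbf{E}[f(\bar x_k)]-c\eta\,\mathbf{E}[\|\nabla f(\bar x_k)\|^2]+C_1\frac{p\eta^2}{n}+C_2\eta\,\mathbf{E}\Big[\frac1n\sum_{i=1}^n\|x_{i,k}-\bar x_k\|^2\Big]+C_3\eta\,p^2\delta_{i,k}^2,
\end{align*}
where the state-dependent variance constants $\sigma_0,\tilde\sigma_0,\sigma_1,\sigma_2$ are folded into $C_1,C_2$; this step requires $\eta$ small enough that the residual $\mathcal{O}(p\eta^2\|\nabla f\|^2/n)$ term is dominated by the descent term, which is exactly where $\eta\le d_2(\gamma)$, i.e. $T\ge n/(pd_2^2(\gamma))$, is invoked.

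Next I would close the loop with the consensus-error recursion. Because $\gamma\in(0,d_1)$ makes $I-\gamma L$ a contraction with some factor $\rho<1$ on the subspace orthogonal to $\mathbf{1}$, a standard linear-systems argument gives $\mathbf{E}[\frac1n\sum_i\|x_{i,k+1}-\bar x_{k+1}\|^2]\le \rho\,\mathbf{E}[\frac1n\sum_i\|x_{i,k}-\bar x_k\|^2]+\mathcal{O}(\eta^2 p(\mathbf{E}[\|\nabla f(\bar x_k)\|^2]+\sigma_1^2+L_f^2\delta_{i,k}^2))$, whose steady-state level is $\mathcal{O}(\eta^2 p)=\mathcal{O}(n/T)$, giving \eqref{zerosg-p:thm-sg-sm-equ3.1}. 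Summing the two coupled inequalities over $k=0,\dots,T-1$, telescoping $f(\bar x_0)-f^*$, and dividing by $T$ yields $\frac1T\sum_k\mathbf{E}[\|\nabla f(\bar x_k)\|^2]=\mathcal{O}(\frac1{\eta T})+\mathcal{O}(\frac{p\eta}{n})+\mathcal{O}(p\eta^2)+\mathcal{O}(\frac1T\sum_k p^2\delta_{i,k}^2)$; the choice $\delta_{i,k}^2=\mathcal{O}((pn)^{1/2}/((n+p)(k+1)^{1/2}))$ makes $\sum_{k=0}^{T-1}\delta_{i,k}^2=\mathcal{O}(\sqrt T)$ up to $p,n$ factors, so the bias contributes only a lower-order term, and substituting $\eta=\sqrt n/\sqrt{pT}$ collapses the first three terms to $\mathcal{O}(\sqrt{p/(nT)})+\mathcal{O}(n/T)$, establishing \eqref{zerosg-p:thm-sg-sm-equ3}. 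Then \eqref{zerosg-p:thm-sg-sm-equ4} follows from non-negativity of $f-f^*$ and boundedness of the telescoped sum, and \eqref{zerosg-p:coro-sg-sm-equ3bounded} from the vanishing of $\eta$ as $T\to\infty$ combined with the per-iterate bound.

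The hardest part will be guaranteeing that the constant multiplying the leading $\mathcal{O}(\sqrt{p/(nT)})$ term is free of any network-dependent quantity (the Laplacian spectrum, the contraction factor $\rho$, and $\gamma,d_1$), since this is precisely what the linear-speedup claim demands. Concretely, every term carrying a factor of $1/(1-\rho)$ or the algebraic connectivity must be steered into the lower-order $\mathcal{O}(n/T)$ remainder rather than into the dominant term; the role of the condition $T\ge n^3/p$ is to ensure $\mathcal{O}(n/T)$ is indeed dominated by $\mathcal{O}(\sqrt{p/(nT)})$ and that the cross terms generated by the state-dependent variance ($\sigma_0^2,\tilde\sigma_0^2$) remain absorbed in the descent. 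Carrying out this bookkeeping under the weaker Assumptions~\ref{zerosg:ass:zeroth-variance} and~\ref{zerosg:ass:fig}, rather than under globally bounded variance or gradients, is the main technical obstacle.
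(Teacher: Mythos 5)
Your overall architecture is the same as the paper's (the paper packages your two coupled quantities as the Lyapunov terms $W_{1,k}=\frac{1}{2}\|\bsx_k\|^2_{\bsK}$ and $W_{4,k}=n(f(\bar x_k)-f^*)$ in Lemma~\ref{zerosg-p:lemma:sg2}, then sums, telescopes, and runs a geometric consensus recursion exactly as you propose), but there is a genuine quantitative gap in your accounting of the smoothing bias, and it is fatal to the claimed rate as written. You assert that the conditional mean of $g^e_{i,k}$ differs from $\nabla f_i(x_{i,k})$ by an $\mathcal{O}(L_f p\,\delta_{i,k})$ bias, and accordingly your descent inequality carries a term $C_3\eta\,p^2\delta_{i,k}^2$. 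For the sphere-sampling/ball-smoothing estimator used here the bias is in fact \emph{dimension-free}: $\|\nabla f_i^s(x,\delta)-\nabla f_i(x)\|\le L_f\delta$ (Lemma~\ref{dbco:lemma:uniformsmoothing}(ii), inequality \eqref{zerosg:lemma:uniformsmoothing-equ8}); the factor $p^2\delta^2$ appears only in the \emph{second-moment} bound \eqref{zerosg:lemma:uniformsmoothing-equ6-1}, where it enters multiplied by $\eta^2$ and is then tamed by the stepsize restriction $p\eta\le p\,d_2(\gamma)=\mathcal{O}(1)$, leaving a per-step contribution of order $(p+n)L_f^2\eta\delta_k^2$ to the $n$-scaled potential (this is the last term of \eqref{zerosg-p:w1w4k3}). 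The distinction matters: with the prescribed schedule $\delta_{i,k}^2\le\frac{\sqrt{pn}\,\kappa_\delta^2}{(n+p)\sqrt{k+1}}$, your term gives
\begin{align*}
\frac{1}{T}\sum_{k=0}^{T-1}p^2\delta_{i,k}^2
\le\frac{2\kappa_\delta^2\,p^2\sqrt{pn}}{(n+p)\sqrt{T}},
\end{align*}
which exceeds the target $\sqrt{p/(nT)}$ by a factor of order $p^2n/(n+p)$, so your claim that ``the bias contributes only a lower-order term'' fails and the sketch does not recover \eqref{zerosg-p:thm-sg-sm-equ3}. The missing ingredients are precisely the dimension-free bias bound together with the explicit use of $p\eta=\mathcal{O}(1)$ on the $p^2\eta^2\delta^2$ second-moment term; this is where the paper's hypothesis $\eta\le d_2(\gamma)$ (equivalently $T\ge n/(p\,d_2^2(\gamma))$) does double duty.

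A secondary, fixable issue is an ordering circularity: your steady-state consensus bound \eqref{zerosg-p:thm-sg-sm-equ3.1} needs the driving term $\mathbf{E}[\|\nabla f(\bar x_k)\|^2]$ to be uniformly $\mathcal{O}(1)$, which only follows from the function-value bound \eqref{zerosg-p:thm-sg-sm-equ4} via $\|\nabla f\|^2\le 2L_f(f-f^*)$, yet you derive \eqref{zerosg-p:thm-sg-sm-equ4} \emph{after} the consensus bound. The paper resolves this by first summing the $W_4$-descent (using $T\ge n^3/p$ to keep the accumulated consensus error of size $\mathcal{O}(n)$, which is also where that condition is really needed, not merely for dominating $\mathcal{O}(n/T)$ by $\mathcal{O}(\sqrt{p/(nT)})$), obtaining \eqref{zerosg-p:thm-sg-sm-equ4} and the uniform gradient bound, and only then running the geometric recursion for the final-iterate consensus error.
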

\begin{proof}
The proof is given in Appendix~\ref{zerosg-p:proof-thm-sg-smT}. It should be highlighted that the omitted constants in the first term on the right-hand side of \eqref{zerosg-p:thm-sg-sm-equ3} do not depend on any parameters related to the communication network.
\end{proof}

\subsection{Find global optimum}
\begin{theorem}\label{zerosg-p:thm-sg-diminishing}
Suppose Assumptions~\ref{zerosg:ass:graph}--\ref{zerosg:ass:fil} hold. Let $\{\bsx_k\}$ be the sequence generated by Algorithm~\ref{zerosg-p:algorithm-random} with
\begin{align}\label{zerosg-p:step:eta1}
\gamma\in(0,d_1),~\eta_k=\frac{\kappa_\eta}{(k+t_1)^\theta},~\delta_{i,k}\le \frac{\kappa_\delta\sqrt{p\eta_k}}{\sqrt{n+p}},~\forall k\in\mathbb{N}_0,
\end{align}
where $\kappa_\delta>0$, $\kappa_\eta\in(0,d_2(\gamma)t_1^\theta]$, $\theta\in(0,1)$, $t_1\in[ p^{\frac{1}{\theta}},d_3p^{\frac{1}{\theta}}]$, and $d_3\ge1$ with $d_1$ and $d_2(\gamma)$ being given in Appendix~\ref{zerosg-p:proof-thm-random-sm}.
Then, for any $T\in\mathbb{N}_+$,
\begin{subequations}
\begin{align}
&\mathbf{E}\Big[\frac{1}{n}\sum_{i=1}^{n}\|x_{i,T}-\bar{x}_T\|^2\Big]
=\mathcal{O}\Big(\frac{p}{T^{2\theta}}\Big), \label{zerosg-p:thm-sg-diminishing-equ1.1bounded}\\
&\mathbf{E}[f(\bar{x}_{T})-f^*]
=\mathcal{O}\Big(\frac{p}{nT^\theta}\Big)+\mathcal{O}\Big(\frac{p}{T^{2\theta}}\Big).
\label{zerosg-p:thm-sg-diminishing-equ1bounded}
\end{align}
\end{subequations}
\end{theorem}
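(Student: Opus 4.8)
The plan is to follow the same structure as the proof of Theorem~\ref{zerosg:thm-sg-diminishing} for the primal--dual method, specialized to the simpler single-variable dynamics of Algorithm~\ref{zerosg-p:algorithm-random}. First I would record the standard properties of the estimator $g^e_{i,k}$ in terms of the ball-smoothed surrogate $f_{i,\delta}(x)=\mathbf{E}_{v\sim\mathbb{B}^p}[f_i(x+\delta v)]$: by the independence of $u_{i,k},\xi_{i,k}$ from $\calL_{k-1}$ one has $\mathbf{E}[g^e_{i,k}\mid\calL_{k-1}]=\nabla f_{i,\delta_{i,k}}(x_{i,k})$; the smoothing bias satisfies $\|\nabla f_{i,\delta}(x)-\nabla f_i(x)\|\le L_f\delta$ by Assumption~\ref{zerosg:ass:zeroth-smooth}; and, using Assumption~\ref{zerosg:ass:zeroth-variance},
\begin{align*}
\mathbf{E}[\|g^e_{i,k}\|^2\mid\calL_{k-1}]\le c\,p(1+\sigma_0^2)\|\nabla f_i(x_{i,k})\|^2+c\,p\sigma_1^2+c\,p^2L_f^2\delta_{i,k}^2 .
\end{align*}
These are exactly the estimator bounds used in the stationary-point analysis of Appendix~\ref{zerosg-p:proof-thm-random-sm}.

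Next I would split the iterate into its network average and its disagreement. Since $L\bsone=\bszero$, summing \eqref{zerosg-p:alg:random} over $i$ gives $\bar{x}_{k+1}=\bar{x}_k-\eta_k\bar{g}_k$ with $\bar{g}_k=\tfrac1n\sum_{i=1}^n g^e_{i,k}$, so the average obeys a centralized stochastic ZO recursion. For the disagreement part, $\gamma\in(0,d_1)$ makes $I-\gamma L$ a contraction on $\bsone^\perp$ with some factor $\rho_\gamma<1$, which yields
\begin{align*}
\mathbf{E}[C_{k+1}]\le \rho_\gamma\,\mathbf{E}[C_k]+c'\,\eta_k^2\,\mathbf{E}\Big[\frac1n\sum_{i=1}^n\|g^e_{i,k}\|^2\Big],\qquad C_k:=\frac1n\sum_{i=1}^n\|x_{i,k}-\bar{x}_k\|^2 .
\end{align*}
Bounding the estimator second moment via Assumptions~\ref{zerosg:ass:zeroth-variance}--\ref{zerosg:ass:fig} in terms of $\|\nabla f(\bar{x}_k)\|^2$, $C_k$, and the additive constants, and using $\delta_{i,k}^2\le\kappa_\delta^2 p\eta_k/(n+p)$, the $\eta_k^2$ forcing makes $\mathbf{E}[C_k]=\mathcal{O}(p\eta_k^2)=\mathcal{O}(p/T^{2\theta})$, which is \eqref{zerosg-p:thm-sg-diminishing-equ1.1bounded}.

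For the optimality gap I would apply the $L_f$-descent inequality to $f(\bar{x}_k)$, take conditional expectations, replace $\mathbf{E}[\bar{g}_k\mid\calL_{k-1}]$ by $\tfrac1n\sum_i\nabla f_{i,\delta_{i,k}}(x_{i,k})$, and control the cross term with the bias bound, with $C_k$ (entering through $\|\nabla f_i(x_{i,k})-\nabla f_i(\bar{x}_k)\|$ and Assumption~\ref{zerosg:ass:fig}), and with the P--{\L} condition of Assumption~\ref{zerosg:ass:fil} via $\|\nabla f(\bar{x}_k)\|^2\ge 2\nu(f(\bar{x}_k)-f^*)$. This gives
\begin{align*}
\mathbf{E}[f(\bar{x}_{k+1})-f^*]\le (1-\nu\eta_k)\,\mathbf{E}[f(\bar{x}_k)-f^*]+\mathcal{O}\Big(\frac{p\eta_k^2}{n}\Big)+\mathcal{O}\big(p\,\eta_k\,\mathbf{E}[C_k]\big)+\mathcal{O}\Big(p\,\eta_k\max_i\delta_{i,k}^2\Big),
\end{align*}
where the crucial $1/n$ in the variance term comes from independence of the $g^e_{i,k}$ across agents and carries \emph{no} spectral-gap dependence -- this is what produces linear speedup. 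Feeding in $\mathbf{E}[C_k]=\mathcal{O}(p\eta_k^2)$ and the bound on $\delta_{i,k}^2$, the residual additive terms are of order $p\eta_k^2/n$ and $p\eta_k^3$. Finally I would combine the two recursions into a single Lyapunov inequality
\begin{align*}
W_{k+1}\le\Big(1-\frac{\nu}{2}\eta_k\Big)W_k+\mathcal{O}\Big(\frac{p\eta_k^2}{n}\Big)+\mathcal{O}(p\,\eta_k^3),\qquad W_k:=\mathbf{E}[f(\bar{x}_k)-f^*]+\omega\,\mathbf{E}[C_k],
\end{align*}
for a suitable weight $\omega$, and solve it with $\eta_k=\kappa_\eta/(k+t_1)^\theta$ using the standard sequence lemma that $a_{k+1}\le(1-c\eta_k)a_k+b\eta_k^2$ implies $a_k=\mathcal{O}(\eta_k)$, yielding $\mathcal{O}(p/(nT^\theta))+\mathcal{O}(p/T^{2\theta})$ as in \eqref{zerosg-p:thm-sg-diminishing-equ1bounded}.

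I expect the main obstacle to be closing the two-way coupling: the consensus recursion requires a bound on $\mathbf{E}[\tfrac1n\sum_i\|g^e_{i,k}\|^2]$ that, through Assumptions~\ref{zerosg:ass:zeroth-variance}--\ref{zerosg:ass:fig}, depends on $\|\nabla f(\bar{x}_k)\|^2$ and hence on the optimality gap, while the descent recursion requires a bound on $C_k$. Making this loop contractive forces $t_1$ to be large enough that $\eta_0$ is small and $\omega$ to be chosen so that both the $\mathcal{O}(\eta_k C_k)$ feedback and the $\|\nabla f(\bar{x}_k)\|^2$ terms are absorbed into the $1-\tfrac{\nu}{2}\eta_k$ factor; the delicate bookkeeping is to verify that the constants multiplying the dominant $p\eta_k^2/n$ term remain independent of $\gamma$ and of the graph's spectral gap, which is what the network-independence claim in the dominant term relies on.
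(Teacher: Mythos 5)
Your decomposition, estimator bounds, constant-rate consensus contraction, and descent-plus-P--{\L} recursion are exactly the ingredients of the paper's argument (Lemma~\ref{zerosg-p:lemma:sg2} and Appendix~\ref{zerosg-p:proof-thm-sg-diminishing}), and you correctly flag the two-way coupling as the crux. The genuine gap is in how you propose to close that loop. Lumping the two recursions into a single weighted Lyapunov function $W_k=\mathbf{E}[f(\bar{x}_k)-f^*]+\omega\,\mathbf{E}[C_k]$ that contracts at rate $1-\tfrac{\nu}{2}\eta_k$ cannot deliver either claimed bound. First, solving such a recursion resolves $W_k$ (hence $\mathbf{E}[C_k]\le W_k/\omega$) only to order $\eta_k$, so the consensus estimate degrades from the claimed $\mathcal{O}(p/T^{2\theta})$ to an $\mathcal{O}(p/T^{\theta})$-type bound: the content of \eqref{zerosg-p:thm-sg-diminishing-equ1.1bounded} is precisely that the disagreement contracts at a \emph{constant} geometric rate, and this is thrown away once it is tied to the $\eta_k$-rate of the gap. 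Second, to absorb the cross term $\mathcal{O}(\eta_k\mathbf{E}[C_k])$ into the contraction you need $\omega\gtrsim\eta_0/(1-\rho_\gamma)$, but then the consensus-noise forcing $\omega\cdot\mathcal{O}(p\eta_k^2)$, divided by the rate $\tfrac{\nu}{2}\eta_k$ when the recursion is solved, contributes $\mathcal{O}(\omega p\eta_k)=\mathcal{O}\bigl(p\eta_0\eta_k/(1-\rho_\gamma)\bigr)$ to the optimality gap. That term carries no $1/n$ and depends on the spectral gap, so it destroys exactly the linear speedup and network-independence asserted in \eqref{zerosg-p:thm-sg-diminishing-equ1bounded}; under the theorem's stated parameter conditions (which impose no $n$- or graph-dependent smallness on $\eta_0$ beyond $d_2(\gamma)$) you cannot make $\omega$ simultaneously large enough to absorb the cross term and of order $1/n$.

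The fix, which is what the paper does, is a sequential bootstrap rather than a single Lyapunov. (i) Run the crude joint recursion (the paper's $W_{1,k}+W_{4,k}$ from \eqref{zerosg-p:w1w4k2} combined with the P--{\L} inequality) once, only to conclude $\sup_k\mathbf{E}[\|\nabla f(\bar{x}_k)\|^2]<\infty$; here losing the $1/n$ and the $\eta_k^2$ rate is harmless. (ii) With the gradient now bounded, re-solve the consensus recursion \eqref{zerosg-p:w1w4k1} \emph{alone} at its constant contraction rate $1-\epsilon_1$, which upgrades the bound to $\mathbf{E}[\|\bsx_k\|^2_{\bsK}]=\mathcal{O}(pn\eta_k^2)$ and yields \eqref{zerosg-p:thm-sg-diminishing-equ1.1bounded}. (iii) Feed this refined bound into the gap recursion \eqref{zerosg-p:w1w4k3} \emph{alone} and solve at rate $1-\tfrac{\nu}{4}\eta_k$: the consensus feedback then contributes only $\mathcal{O}(\eta_k\cdot pn\eta_k^2)/(n\eta_k)=\mathcal{O}(p\eta_k^2)$ to the gap, while the genuinely centralized noise terms produce the dominant $\mathcal{O}(p\eta_k/n)$ with network-independent constants. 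Your earlier steps (consensus $=\mathcal{O}(p\eta_k^2)$ from the constant contraction; gap recursion with that bound fed in) are precisely (ii)--(iii); what is missing is the realization that they must be preceded by the crude pass (i) to break the circularity, and must not be replaced by the combined weighted Lyapunov.
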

\begin{proof}
The proof is given in Appendix~\ref{zerosg-p:proof-thm-sg-diminishing}. It should be highlighted that the omitted constants in the first term on the right-hand side of \eqref{zerosg-p:thm-sg-diminishing-equ1bounded} do not depend on any parameters related to the communication network.
\end{proof}

\begin{theorem}[Linear speedup]\label{zerosg-p:thm-sg-diminishingt}
Suppose Assumptions~\ref{zerosg:ass:graph}--\ref{zerosg:ass:fil} hold and the P--{\L} constant $\nu$ is known in advance. Let $\{\bsx_{k}\}$ be the sequence generated by Algorithm~\ref{zerosg-p:algorithm-random} with
\begin{align}\label{zerosg-p:step:eta1t1}
\gamma\in(0,d_1),~\eta_k=\frac{\kappa_\eta}{k+t_1},~\delta_{i,k}\le \frac{\kappa_\delta\sqrt{p\eta_k}}{\sqrt{n+p}},~\forall k\in\mathbb{N}_0,
\end{align}
where $\kappa_\delta>0$, $\kappa_\eta\in(\frac{8}{\nu},\frac{8d_3}{\nu}]$, $t_1> \hat{d}_2(\gamma)$ and $d_3>1$ with $d_1$ and $\hat{d}_2(\gamma)$ being given in Appendices~\ref{zerosg-p:proof-thm-random-sm} and \ref{zerosg-p:proof-thm-sg-diminishingt}, respectively.
Then, for any $T\in\mathbb{N}_+$,
\begin{subequations}
\begin{align}
&\mathbf{E}\Big[\frac{1}{n}\sum_{i=1}^{n}\|x_{i,T}-\bar{x}_T\|^2\Big]
=\mathcal{O}\Big(\frac{p}{T^{2}}\Big), \label{zerosg-p:thm-sg-diminishing-equ2.1bounded}\\
&\mathbf{E}[f(\bar{x}_{T})-f^*]
=\mathcal{O}\Big(\frac{p}{nT}\Big)+\mathcal{O}\Big(\frac{p}{T^{2}}\Big).
\label{zerosg-p:thm-sg-diminishing-equ2bounded}
\end{align}
\end{subequations}
\end{theorem}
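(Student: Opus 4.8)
The plan is to mirror the argument for the primal--dual counterpart, Theorem~\ref{zerosg:thm-sg-diminishingt}, adapting it to the primal recursion \eqref{zerosg-p:alg:random}, in which consensus is enforced directly by the mixing operator $I-\gamma L$ rather than through a dual variable. First I would separate the dynamics of the network average from the disagreement: writing \eqref{zerosg-p:alg:random} in stacked form and left-multiplying by $\frac1n\mathbf 1^\top$, the Laplacian term cancels and the average obeys the stochastic-gradient step $\bar x_{k+1}=\bar x_k-\frac{\eta_k}{n}\sum_{i=1}^n g^e_{i,k}$, while projecting onto the complement of the consensus direction and using $\gamma<d_1$ (so that $I-\gamma L$ is a strict contraction there, with some factor $\rho<1$) gives a consensus recursion of the form $\mathbf E[\|\bsx_{k+1}-\mathbf 1\bar x_{k+1}\|^2]\le\rho\,\mathbf E[\|\bsx_k-\mathbf 1\bar x_k\|^2]+\eta_k^2\,\mathbf E[\|\bsg^e_k\|^2]$.

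Next I would record the estimator facts. Conditioned on $\calL_{k-1}$, the estimator \eqref{dbco:gradient:model2-st} is unbiased for the gradient of the smoothed local cost; its bias relative to $\nabla f_i$ is $\mathcal O(L_f\delta_{i,k}p)$ by Assumption~\ref{zerosg:ass:zeroth-smooth}, and its second moment is $\mathcal O(p)$ times $\mathbf E_{\xi}[\|\nabla_xF_i(x_{i,k},\xi)\|^2]$ plus a smoothing term $\mathcal O(L_f^2\delta_{i,k}^2p^2)$. Chaining Assumptions~\ref{zerosg:ass:zeroth-variance} and \ref{zerosg:ass:fig} re-expresses these in terms of $\|\nabla f(\bar x_k)\|^2$, the consensus error, and the absolute-noise constants $\sigma_1^2,\sigma_2^2$. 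The schedule $\delta_{i,k}^2\le\kappa_\delta^2 p\eta_k/(n+p)$ is exactly what forces the smoothing bias to enter at the order of the remaining error terms, so that it can be absorbed. Crucially, since the $n$ estimators are conditionally independent, the variance of the averaged estimator $\frac1n\sum_i g^e_{i,k}$ carries a $1/n$ factor; this is the mechanism that places the $\mathcal O(p/(nT))$ term in the final bound and produces the linear speedup.

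I would then combine a smoothness-based descent inequality for $f(\bar x_k)$ with the consensus recursion into a Lyapunov function $V_k=\mathbf E[f(\bar x_k)-f^*]+c\,\mathbf E[\frac1n\sum_i\|x_{i,k}-\bar x_k\|^2]$, choosing $c$ so that the cross terms (consensus error appearing in the descent, gradient norm appearing in the consensus bound) are dominated. Applying the P--{\L} condition \eqref{nonconvex:equ:plc}, i.e. $\|\nabla f(\bar x_k)\|^2\ge2\nu(f(\bar x_k)-f^*)$, converts the gradient decrease into a decrease of $V_k$ itself, yielding $V_{k+1}\le(1-\tfrac{\nu\kappa_\eta}{8(k+t_1)})V_k+\mathcal O\!\big(\tfrac{p}{n(k+t_1)^2}(\sigma_1^2+\sigma_2^2)\big)$ up to lower-order terms. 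The window $\kappa_\eta\in(8/\nu,8d_3/\nu]$ with $t_1>\hat d_2(\gamma)$ is designed so that $\tfrac{\nu\kappa_\eta}{8}>1$, the critical threshold under which a recursion $s_{k+1}\le(1-\tfrac{a}{k+t_1})s_k+\tfrac{b}{(k+t_1)^2}$ with $a>1$ yields $s_k=\mathcal O(1/k)$; the sequence lemma then delivers \eqref{zerosg-p:thm-sg-diminishing-equ2bounded}. Feeding this gap estimate back into the geometrically contracting consensus recursion, whose forcing decays like $1/(k+t_1)^2$, gives \eqref{zerosg-p:thm-sg-diminishing-equ2.1bounded}.

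I expect the main obstacle to be the joint calibration of the three rates---the P--{\L} decay governed by $\nu$, the consensus contraction governed by $\gamma<d_1$, and the step-size magnitude $\kappa_\eta$---so that the coupled inequality genuinely contracts and, more importantly, so that all network-dependent constants ($\rho$, $\gamma$, and the Laplacian spectrum) appear only in the $\mathcal O(p/T^2)$ term and never in the dominant $\mathcal O(p/(nT))$ term; this separation is precisely what makes the speedup ``clean,'' as emphasized after Theorem~\ref{zerosg:thm-sg-smT}. The delicate quantitative points are verifying that the $p$-dependent smoothing bias stays at or below the target order under the stated $\delta_{i,k}$ schedule, and that $\kappa_\eta>8/\nu$ places the recursion strictly above the critical threshold uniformly in $k\ge0$.
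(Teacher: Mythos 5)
Your overall architecture (average/disagreement split, smoothed-estimator lemmas, P--{\L} combined with the $a>1$ threshold in the sequence lemma, feedback into the consensus recursion) mirrors the paper's, but the central step is flawed: a single-pass combined Lyapunov $V_k=\mathbf{E}[f(\bar x_k)-f^*]+c\,\mathbf{E}[\frac{1}{n}\sum_{i=1}^n\|x_{i,k}-\bar x_k\|^2]$ with a fixed weight $c$ cannot satisfy the recursion you claim, $V_{k+1}\le(1-\frac{\nu\kappa_\eta}{8(k+t_1)})V_k+\mathcal{O}(\frac{p}{n(k+t_1)^2})$. The $1/n$ variance reduction occurs only for the \emph{averaged} estimator $\bar{\bsg}^e_k$; the disagreement dynamics are driven by the stacked vector $\bsg^e_k$, whose per-agent second moment is $\Theta(p(\sigma_1^2+\sigma_2^2))$ with no averaging. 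Hence the consensus block injects a forcing of order $c\cdot p\eta_k^2(\sigma_1^2+\sigma_2^2)$ into $V_k$; this is exactly what the paper's Lemma~\ref{zerosg-p:lemma:sg2} shows, since in \eqref{zerosg-p:w1w4k2} the forcing for $W_{1,k}+W_{4,k}$ is $pn\epsilon_3\eta_k^2$, where $\epsilon_3$ contains the network constant $\epsilon_2$ and no $1/n$. To push this term down to $\mathcal{O}(p\eta_k^2/n)$ you would need $c=\mathcal{O}(1/n)$, but then the cross term $\eta_kL_f^2\cdot\frac{1}{n}\|\bsx_k\|^2_{\bsK}$ leaking from the descent of $f(\bar x_k)$ can no longer be absorbed by the consensus contraction $-c\,\epsilon_1\cdot\frac{1}{n}\|\bsx_k\|^2_{\bsK}$ unless $\eta_0\lesssim\epsilon_1/(nL_f^2)$, i.e.\ unless $t_1$ grows linearly in $n$ --- contradicting the theorem's hypothesis that $t_1>\hat d_2(\gamma)$ is independent of $n$, and re-introducing network constants into the dominant term.

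The paper closes this gap with a bootstrap that your plan skips. Stage 1: apply P--{\L} to the combined recursion \eqref{zerosg-p:vkLya2-pl} and the sequence lemma with $b_1=\min\{\epsilon_1t_1,\nu\kappa_\eta/4\}>1$ (this is precisely where $\kappa_\eta>8/\nu$ and $t_1>1/\epsilon_1$ enter) to obtain only the crude, non-accelerated bound $\check z_k=\mathcal{O}(n+pn/(k+t_1))$, whose sole purpose is the uniform gradient bound $\mathbf{E}[\|\bar{\bsg}^0_k\|^2]\le n\hat d_g$. Stage 2: run the consensus-only recursion \eqref{zerosg-p:w1w4k1} (geometric contraction with forcing $\mathcal{O}(pn\eta_k^2)$) to get $\mathbf{E}[\|\bsx_k\|^2_{\bsK}]=\mathcal{O}(pn/(k+t_1)^2)$, which is \eqref{zerosg-p:thm-sg-diminishing-equ2.1bounded}. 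Stage 3: only now run the average-only descent \eqref{zerosg-p:w1w4k3} with P--{\L}: there the noise enters through $\|\bar{\bsg}^e_k\|^2$ and genuinely carries the $1/n$, while the consensus error enters as a known exogenous term of order $\eta_k\cdot pn/(k+t_1)^2\cdot\frac{1}{n}=\mathcal{O}(p/(k+t_1)^3)$, and the sequence argument with coefficient $\nu\kappa_\eta/4>2$ yields $\mathcal{O}(p/(nT))+\mathcal{O}(p/T^2)$. Note also that your ordering (gap first, consensus second) must be reversed, since the refined gap recursion needs the consensus rate as an input. Finally, a smaller slip: the estimator bias is $\mathcal{O}(L_f\delta_{i,k})$ by \eqref{zerosg:lemma:uniformsmoothing-equ8}, not $\mathcal{O}(L_f\delta_{i,k}p)$; with the extra factor $p$ the stated $\delta_{i,k}$ schedule would not keep the squared bias at the target order, so the correct bias estimate is actually needed for your own absorption claim to go through.
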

\begin{proof}
The proof is given  in Appendix~\ref{zerosg-p:proof-thm-sg-diminishingt}. It should be highlighted that the omitted constants in the first term on the right-hand side of \eqref{zerosg-p:thm-sg-diminishing-equ2bounded} do not depend on any parameters related to the communication network.
\end{proof}

\begin{theorem}[Linear speedup]\label{zerosg-p-a5:thm-sg-diminishingt}
Suppose Assumptions~\ref{zerosg:ass:graph}--\ref{zerosg:ass:zeroth-variance} and \ref{zerosg:ass:fil} hold, and the P--{\L} constant $\nu$ is known in advance, and each $f_i^*>-\infty$.  Let $\{\bsx_{k}\}$ be the sequence generated by Algorithm~\ref{zerosg-p:algorithm-random} with
\begin{align}\label{zerosg-p-a5:step:eta1t1}
\gamma\in(0,d_1),~\eta_k=\frac{\kappa_\eta}{k+t_1},~\delta_{i,k}\le \frac{\kappa_\delta\sqrt{p\eta_k}}{\sqrt{n+p}},~\forall k\in\mathbb{N}_0,
\end{align}
where $\kappa_\delta>0$, $\kappa_\eta\in(\frac{8}{\nu},\frac{8d_3}{\nu}]$, $t_1> \check{d}_2(\gamma)$, and $d_3>1$ with $d_1$ and $\check{d}_2(\gamma)$ being given in Appendices~\ref{zerosg-p:proof-thm-random-sm} and \ref{zerosg-p-a5:proof-thm-sg-diminishingt}, respectively.
Then, for any $T\in\mathbb{N}_+$,
\begin{subequations}
\begin{align}
&\mathbf{E}\Big[\frac{1}{n}\sum_{i=1}^{n}\|x_{i,T}-\bar{x}_T\|^2\Big]
=\mathcal{O}\Big(\frac{p}{T^{2}}\Big), \label{zerosg-p-a5:thm-sg-diminishing-equ2.1bounded}\\
&\mathbf{E}[f(\bar{x}_{T})-f^*]
=\mathcal{O}\Big(\frac{p}{nT}\Big)+\mathcal{O}\Big(\frac{p}{T^{2}}\Big).
\label{zerosg-p-a5:thm-sg-diminishing-equ2bounded}
\end{align}
\end{subequations}
\end{theorem}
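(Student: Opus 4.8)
The plan is to mirror the argument for Theorem~\ref{zerosg:thm-sg-diminishingt} (the primal--dual counterpart that uses Assumption~\ref{zerosg:ass:fig}) and combine it with the technique introduced for Theorem~\ref{zerosg-a5:thm-sg-diminishingt}, adapting both to the primal iteration \eqref{zerosg-p:alg:random}. First I would record the two standard estimates for the ZO estimator \eqref{dbco:gradient:model2-st}. Writing $\bar{g}_{i,k}=\mathbf{E}[g^e_{i,k}\mid\calL_{k-1}]$ for its conditional mean, smoothness (Assumption~\ref{zerosg:ass:zeroth-smooth}) gives a bias bound of the form $\|\bar{g}_{i,k}-\nabla f_i(x_{i,k})\|=\mathcal{O}(L_f\delta_{i,k}p)$ via the smoothing function, while Assumption~\ref{zerosg:ass:zeroth-variance} gives a second-moment bound $\mathbf{E}[\|g^e_{i,k}\|^2\mid\calL_{k-1}]=\mathcal{O}(p(1+\sigma_0^2)\|\nabla f_i(x_{i,k})\|^2+p\sigma_1^2+L_f^2p^2\delta_{i,k}^2)$. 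The prescribed $\delta_{i,k}\le\kappa_\delta\sqrt{p\eta_k}/\sqrt{n+p}$ forces every $\delta_{i,k}^2$ contribution to be dominated by the variance terms, so the smoothing bias is harmless at the targeted rate.

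Next I would set up two coupled recursions in expectation: one for the consensus error $U_k=\mathbf{E}[\frac1n\sum_i\|x_{i,k}-\bar{x}_k\|^2]$ and one for the optimality gap $H_k=\mathbf{E}[f(\bar{x}_k)-f^*]$. Since $\mathbf{1}^\T L=\mathbf{0}$, the averaged iterate obeys $\bar{x}_{k+1}=\bar{x}_k-\eta_k\frac1n\sum_i g^e_{i,k}$, so the Laplacian term drops out of the $\bar{x}_k$ dynamics. For $U_k$ I would use that $I-\gamma L$ is a strict contraction on the disagreement subspace for $\gamma\in(0,d_1)$, yielding $U_{k+1}\le\rho\,U_k+\mathcal{O}(\eta_k^2\cdot\text{gradient bounds})$ with $\rho<1$; for $H_k$ I would apply the $L_f$-descent lemma to $f$ at $\bar{x}_k$, take conditional expectation, and insert the bias and second-moment estimates together with the variance decomposition $\mathbf{E}[\|\frac1n\sum_i(g^e_{i,k}-\bar{g}_{i,k})\|^2\mid\calL_{k-1}]=\frac1{n^2}\sum_i\mathbf{E}[\|g^e_{i,k}-\bar{g}_{i,k}\|^2\mid\calL_{k-1}]$, which is the step where the factor $1/n$ appears and produces the linear speedup.

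The crucial step, and where Assumption~\ref{zerosg:ass:fig} is replaced, is the treatment of the terms $\frac1n\sum_i\|\nabla f_i(x_{i,k})\|^2$ entering the second-moment bound. Since each $f_i$ is $L_f$-smooth with $f_i^*>-\infty$, I would invoke $\|\nabla f_i(x)\|^2\le 2L_f(f_i(x)-f_i^*)$ and then bound $\frac1n\sum_i(f_i(x_{i,k})-f_i^*)$ by $(f(\bar{x}_k)-f^*)+\Delta^*+\mathcal{O}(L_f U_k)$ through a smoothness expansion of each $f_i$ about $\bar{x}_k$ and Young's inequality, where $\Delta^*:=f^*-\frac1n\sum_i f_i^*\ge0$ is a finite heterogeneity constant. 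The delicate point, and the main obstacle, is verifying that $\Delta^*$ enters only through the $\eta_k^2$ variance channel, so that it carries the $1/n$ factor and contributes an $\mathcal{O}(p\Delta^*/(nT))$ term rather than a non-vanishing floor; a careless bound would couple $\Delta^*$ into the mean term $\|\nabla f(\bar{x}_k)\|^2$ and destroy the rate. Tracking this flow, together with decoupling the two different timescales (geometric consensus contraction driven by $\mathcal{O}(\eta_k^2)$ forcing versus the $1-\mathcal{O}(1/k)$ optimality contraction), is the technically demanding part.

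Finally I would assemble the two estimates into a single coupled linear system for $(U_k,H_k)$, use the P--{\L} condition (Assumption~\ref{zerosg:ass:fil}) to replace $-\eta_k\|\nabla f(\bar{x}_k)\|^2\le-2\nu\eta_k H_k$, and exploit $\eta_k=\kappa_\eta/(k+t_1)$ with $\kappa_\eta>8/\nu$. This makes the optimality recursion take the form $H_{k+1}\le(1-\frac{2\nu\kappa_\eta}{k+t_1}+\mathcal{O}(\frac1{(k+t_1)^2}))H_k+\mathcal{O}(\frac{p}{n(k+t_1)^2})+(\text{$U_k$-coupling})$. Because $2\nu\kappa_\eta>1$, the standard sequence lemma for such recursions yields $H_T=\mathcal{O}(p/(nT))+\mathcal{O}(p/T^2)$, while the geometrically contracting consensus recursion with $\mathcal{O}(\eta_k^2)$ forcing gives $U_T=\mathcal{O}(p/T^2)$. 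Choosing $t_1>\check{d}_2(\gamma)$ sufficiently large guarantees that all contraction constants are valid and the higher-order $\eta_k^2$ terms are dominated, which completes the proof of \eqref{zerosg-p-a5:thm-sg-diminishing-equ2.1bounded} and \eqref{zerosg-p-a5:thm-sg-diminishing-equ2bounded}.
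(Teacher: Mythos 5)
Your plan follows essentially the same route as the paper's proof. The paper's Lemma~\ref{zerosg-p-a5:lemma:sg2} derives exactly your two coupled recursions (for $W_{1,k}=\frac{1}{2}\|\bsx_k\|^2_{\bsK}$ and $W_{4,k}=n(f(\bar{x}_k)-f^*)$), replaces Assumption~\ref{zerosg:ass:fig} by the bound $\sum_{i=1}^n\|\nabla f_i(\bar{x}_k)\|^2\le 2L_fn(f(\bar{x}_k)-f^*)+n\check{\sigma}^2_2$ with $\check{\sigma}^2_2=2L_f\big(f^*-\frac{1}{n}\sum_{i=1}^nf_i^*\big)$ --- precisely your $2L_f\Delta^*$, entering only through the $\eta_k^2$ variance channel with the $1/n$ factor --- and finishes with the same sequence lemma under $\nu\kappa_\eta/4>2$ and the same two-timescale decoupling (geometric consensus contraction versus $1-\Theta(1/k)$ optimality contraction).

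Two points need repair, though both are fixable within your framework. First, your bias estimate $\|\bar{g}_{i,k}-\nabla f_i(x_{i,k})\|=\mathcal{O}(pL_f\delta_{i,k})$ is wrong for this estimator: for the uniform-sphere two-point oracle the correct bound is $\|\nabla f_i^s(x,\delta)-\nabla f_i(x)\|\le\delta L_f$, i.e.\ the paper's \eqref{zerosg:lemma:uniformsmoothing-equ8}, with no factor of $p$. This is consequential, not cosmetic: with your version the squared-bias forcing in the gap recursion is $\mathcal{O}(p^3\eta_k^2/(n+p))$, which is \emph{not} dominated by the variance channel $\mathcal{O}(p\eta_k^2/n)$, so your own claim that the smoothing terms are ``harmless at the targeted rate'' fails and the dimension dependence degrades; with the correct bound the smoothing contributes $\mathcal{O}(p\eta_k^2/(n+p))$ and everything goes through. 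Second, your ``coupled linear system'' assembly leaves implicit a bootstrapping order that the paper makes explicit: one must first run the crude coupled recursion (contraction $\min\{\epsilon_1,\nu\eta_k/4\}$ on $W_{1,k}+W_{4,k}$) to get $\mathbf{E}[\|\bar{\bsg}^0_k\|^2]\le n\hat{d}_g$ uniformly; only then is the $\eta_k^2W_{4,k}$-coupling in the consensus recursion genuinely $\mathcal{O}(pn\eta_k^2)$, yielding the $\mathcal{O}(p/T^2)$ consensus bound, which in turn feeds the refined gap recursion to produce $\mathcal{O}(p/(nT))+\mathcal{O}(p/T^2)$. Without fixing that order, the two recursions each need the other's conclusion and the argument is circular.
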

\begin{proof}
The proof is given in Appendix~\ref{zerosg-p-a5:proof-thm-sg-diminishingt}. It should be highlighted that the omitted constants in the first term on the right-hand side of \eqref{zerosg-p-a5:thm-sg-diminishing-equ2bounded} do not depend on any parameters related to the communication network.
\end{proof}



\begin{theorem}\label{zerosg-p:thm-random-pd-fixed}
Suppose Assumptions~\ref{zerosg:ass:graph}--\ref{zerosg:ass:fig} hold. Let $\{\bsx_k\}$ be the sequence generated by Algorithm~\ref{zerosg-p:algorithm-random} with
\begin{align}\label{zerosg-p:step:eta2-fixed}
&\gamma\in(0,d_1),~\eta_k=\eta,
~\delta_{i,k}\le\kappa_\delta\tilde{\epsilon}^{k},~\forall k\in\mathbb{N}_0,
\end{align}
where $\eta\in(0,d_2(\gamma)$ and $\tilde{\epsilon}\in(0,1)$ with $d_1$ and $d_2(\gamma)$ being given in Appendix~\ref{zerosg-p:proof-thm-random-sm}.
Then, for any $T\in\mathbb{N}_+$,
\begin{subequations}
\begin{align}
&\frac{1}{T}\sum_{k=0}^{T-1}\mathbf{E}\Big[\frac{1}{n}\sum_{i=1}^{n}
\|x_{i,k}-\bar{x}_k\|^2\Big]
 =\mathcal{O}\Big(\frac{1}{ T}+(\sigma^2_1+2(1+\sigma_0^2)\sigma^2_2)p\eta^2\Big),
\label{zerosg-p:thm-sg-fixed-equ3.1}\\
&\mathbf{E}\Big[\frac{1}{n}\sum_{i=1}^{n}
\|x_{i,T}-\bar{x}_T\|^2\Big]=
\mathcal{O}\Big(p\eta^2+(\sigma^2_1+2(1+\sigma_0^2)\sigma^2_2)
p^2\eta^4\Big(\frac{1}{n}+\eta\Big)T\Big),
\label{zerosg-p:thm-sg-fixed-equ3.2}\\
&\frac{1}{T}\sum_{k=0}^{T-1}\mathbf{E}[\|\nabla f(\bar{x}_k)\|^2]
=\mathcal{O}\Big(\frac{1}{\eta T}+(\sigma^2_1+2(1+\sigma_0^2)\sigma^2_2)\Big(\frac{p\eta}{n}+p\eta^2\Big)\Big).
\label{zerosg-p:thm-sg-fixed-equ3}
\end{align}
\end{subequations}
Moreover, if Assumption~\ref{zerosg:ass:fil} also holds, then
\begin{align}\label{zerosg-p:thm-sg-fixed-equ1}
&\mathbf{E}\Big[\frac{1}{n}\sum_{i=1}^{n}\|x_{i,k}-\bar{x}_k\|^2+f(\bar{x}_k)-f^*\Big]
=\mathcal{O}(\epsilon^{k}+(\sigma^2_1+2(1+\sigma_0^2)\sigma^2_2)p\eta),~\forall k\in\mathbb{N}_+,
\end{align}
where $\epsilon\in(0,1)$ is a positive constant given in Appendix~\ref{zerosg-p:proof-thm-random-pd-fixed}.
\end{theorem}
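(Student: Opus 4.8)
The plan is to follow the same coupled-Lyapunov strategy used for the primal--dual algorithm in Theorem~\ref{zerosg:thm-random-pd-fixed}, specialized to the simpler primal recursion \eqref{zerosg-p:alg:random}. First I would establish the basic statistics of the gradient estimator $g^e_{i,k}$ in \eqref{dbco:gradient:model2-st}. Using the smoothing function $\hat f_{i,\delta}(x)=\mathbf{E}_{u}[f_i(x+\delta u)]$ together with Assumption~\ref{zerosg:ass:zeroth-smooth}, the conditional mean satisfies $\mathbf{E}[g^e_{i,k}\mid\calL_{k-1}]=\nabla\hat f_{i,\delta_{i,k}}(x_{i,k})$, the smoothing bias obeys $\|\nabla\hat f_{i,\delta}(x)-\nabla f_i(x)\|=\mathcal{O}(L_f\sqrt{p}\,\delta)$, and, invoking Assumption~\ref{zerosg:ass:zeroth-variance}, the second moment is bounded by a multiple of $p\|\nabla f_i(x_{i,k})\|^2$ plus $p\sigma_1^2$ plus an $\mathcal{O}(L_f^2p^2\delta_{i,k}^2)$ smoothing remainder. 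Because $\delta_{i,k}\le\kappa_\delta\tilde\epsilon^{k}$ decays geometrically, the remainder is summable and contributes only to the transient $\mathcal{O}(\epsilon^k)$ term, not to the steady-state neighborhood.

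Next I would split the analysis into the average and the consensus-error coordinates. Since $\sum_iL_{ij}=0$, averaging \eqref{zerosg-p:alg:random} cancels the Laplacian term and gives $\bar x_{k+1}=\bar x_k-\tfrac{\eta}{n}\sum_{i}g^e_{i,k}$, an inexact SGD step. Applying the descent lemma for the $L_f$-smooth $f$ and taking conditional expectation, the cross term produces $-\eta\|\nabla f(\bar x_k)\|^2$ up to (i) a bias contribution controlled by the smoothing bound and (ii) a term measuring how far the local points $x_{i,k}$ are from $\bar x_k$, which is exactly the consensus error and is handled via smoothness and Assumption~\ref{zerosg:ass:fig}. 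For the consensus coordinate I would use that $\gamma<d_1$ makes $I-\gamma L$ a strict contraction on the subspace orthogonal to $\mathbf 1$, yielding a recursion of the form $\mathbf{E}[\tfrac1n\sum_i\|x_{i,k+1}-\bar x_{k+1}\|^2]\le\rho\,\mathbf{E}[\tfrac1n\sum_i\|x_{i,k}-\bar x_k\|^2]+\eta^2(\text{gradient-moment terms})$ with $\rho\in(0,1)$. Substituting the second-moment bound from the first step, and using Assumption~\ref{zerosg:ass:fig} to relate $\|\nabla f_i\|^2$ to $\|\nabla f(\bar x_k)\|^2$, closes the two recursions.

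I would then assemble a coupled two-dimensional linear recursion in the expected consensus error and the expected optimality gap (or, for the first three claims, simply telescope the two inequalities directly). Telescoping the consensus recursion gives \eqref{zerosg-p:thm-sg-fixed-equ3.1} and its one-shot counterpart \eqref{zerosg-p:thm-sg-fixed-equ3.2}, while telescoping the averaged descent inequality, after choosing $\eta$ small enough that the positive $p\eta\|\nabla f_i\|^2$ feedback is dominated by the $-\eta\|\nabla f(\bar x_k)\|^2$ descent term, yields \eqref{zerosg-p:thm-sg-fixed-equ3}. For the final claim \eqref{zerosg-p:thm-sg-fixed-equ1} I would invoke the P--{\L} condition of Assumption~\ref{zerosg:ass:fil}: it converts the accumulated $-\eta\|\nabla f(\bar x_k)\|^2$ into $-2\nu\eta(f(\bar x_k)-f^*)$, so that the Lyapunov function $V_k=c\,\mathbf{E}[\tfrac1n\sum_i\|x_{i,k}-\bar x_k\|^2]+\mathbf{E}[f(\bar x_k)-f^*]$ satisfies $V_{k+1}\le\epsilon V_k+\mathcal{O}((\sigma_1^2+2(1+\sigma_0^2)\sigma_2^2)p\eta)+\mathcal{O}(\tilde\epsilon^{2k})$ for some $\epsilon\in(0,1)$ and a suitable weight $c$, which unrolls to \eqref{zerosg-p:thm-sg-fixed-equ1}.

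The hard part will be constructing the weight $c$ and the contraction factor $\epsilon$ so that the coupled consensus/optimality recursion is simultaneously contractive \emph{without} letting $\nu$ enter the parameter choices---$\nu$ may appear only inside $\epsilon$, not in $\gamma$, $\eta$, or $\tilde\epsilon$. This requires keeping the state-dependent variance terms, which scale with $\|\nabla f_i(x_{i,k})\|^2$ rather than with a fixed constant, carefully absorbed into the descent's negative term, so that the residual steady-state error is exactly the $\mathcal{O}((\sigma_1^2+2(1+\sigma_0^2)\sigma_2^2)p\eta)$ appearing in the statement; the geometric decay of $\delta_{i,k}$ is what guarantees the smoothing bias stays inside the transient $\mathcal{O}(\epsilon^k)$ term rather than enlarging this neighborhood.
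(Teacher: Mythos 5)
Your proposal follows essentially the same route as the paper's proof: the paper's Lemma~\ref{zerosg-p:lemma:sg2} is exactly your pair of recursions (averaged descent inequality for $W_{4,k}=n(f(\bar x_k)-f^*)$ and consensus contraction for $W_{1,k}=\frac12\|\bsx_k\|^2_{\bsK}$ via $\gamma<d_1$, with the state-dependent variance absorbed by $\eta\le d_2(\gamma)$), the first three claims follow by telescoping these, and the P--{\L} part uses the unweighted sum $\check W_k=W_{1,k}+W_{4,k}$ contracting at rate $\tilde\epsilon_8=\min\{\epsilon_1,\nu\eta/4\}$, with $\nu$ entering only the contraction factor and the geometric $\delta_{i,k}$ handled by a geometric-sum lemma so that $\epsilon>\max\{1-\tilde\epsilon_8,\tilde\epsilon^2\}$. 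Your plan matches this in all essential respects (your weight $c$ is realized in the paper by the fixed normalization of $W_{1,k}$ and $W_{4,k}$), so it is correct and not a genuinely different argument.
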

\begin{proof}
The proof is given in Appendix~\ref{zerosg-p:proof-thm-random-pd-fixed}.
\end{proof}

\begin{corollary}[Linear convergence]\label{zerosg-p:thm-random-pd-fixed-coro1}
Under the same setup as Theorem~\ref{zerosg-p:thm-random-pd-fixed} and suppose Assumption~\ref{zerosg:ass:zeroth-variance}$'$--\ref{zerosg:ass:fig}$'$ hold, then, for any $T\in\mathbb{N}_+$,
\begin{subequations}
\begin{align}
&\frac{1}{T}\sum_{k=0}^{T-1}\mathbf{E}\Big[\frac{1}{n}\sum_{i=1}^{n}
\|x_{i,k}-\bar{x}_k\|^2\Big]
=\mathcal{O}\Big(\frac{1}{ T}\Big),
\label{zerosg-p:thm-sg-fixed-equ3.1-coro1}\\
&\mathbf{E}\Big[\frac{1}{n}\sum_{i=1}^{n}
\|x_{i,k}-\bar{x}_k\|^2\Big]=
\mathcal{O}(p\eta^2),
\label{zerosg-p:thm-sg-fixed-equ3.2-coro1}\\
&\frac{1}{T}\sum_{k=0}^{T-1}\mathbf{E}[\|\nabla f(\bar{x}_k)\|^2]=\mathcal{O}\Big(\frac{1}{\eta T}\Big).
\label{zerosg-p:thm-sg-fixed-equ3-coro1}
\end{align}
\end{subequations}
Moreover, if Assumption~\ref{zerosg:ass:fil} also holds, then
\begin{align}\label{zerosg-p:thm-sg-fixed-equ1-coro1}
&\mathbf{E}\Big[\frac{1}{n}\sum_{i=1}^{n}\|x_{i,k}-\bar{x}_k\|^2+f(\bar{x}_k)-f^*\Big]
=\mathcal{O}(\epsilon^{k}),~\forall k\in\mathbb{N}_+.
\end{align}
\end{corollary}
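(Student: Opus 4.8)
The plan is to obtain this corollary as an immediate specialization of Theorem~\ref{zerosg-p:thm-random-pd-fixed} to the regime $\sigma_1=\sigma_2=0$. The governing observation is the equivalence already recorded after the statements of the primed assumptions: Assumption~\ref{zerosg:ass:zeroth-variance}$'$ is nothing but Assumption~\ref{zerosg:ass:zeroth-variance} with $\sigma_1=0$, and Assumption~\ref{zerosg:ass:fig}$'$ is nothing but Assumption~\ref{zerosg:ass:fig} with $\sigma_2=0$. Consequently, under the hypotheses of the corollary all bounds of Theorem~\ref{zerosg-p:thm-random-pd-fixed} remain valid, while the common prefactor $\sigma_1^2+2(1+\sigma_0^2)\sigma_2^2$ that multiplies every error term is identically zero.

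First I would make the reduction rigorous by exhibiting admissible constants. Using $\nabla f_i(x)=\mathbf{E}_{\xi_i}[\nabla_x F_i(x,\xi_i)]$ I write $\mathbf{E}_{\xi_i}[\|\nabla_x F_i(x,\xi_i)-\nabla f_i(x)\|^2]=\mathbf{E}_{\xi_i}[\|\nabla_x F_i(x,\xi_i)\|^2]-\|\nabla f_i(x)\|^2$, so Assumption~\ref{zerosg:ass:zeroth-variance}$'$ bounds the left-hand side by $(\breve{\sigma}_0^2-1)\|\nabla f_i(x)\|^2$; this is Assumption~\ref{zerosg:ass:zeroth-variance} with $\sigma_0^2=\breve{\sigma}_0^2-1$ and $\sigma_1=0$. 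Likewise, $\|\nabla f_i(x)-\nabla f(x)\|^2\le 2\|\nabla f_i(x)\|^2+2\|\nabla f(x)\|^2$ together with Assumption~\ref{zerosg:ass:fig}$'$ gives Assumption~\ref{zerosg:ass:fig} with $\tilde{\sigma}_0^2=2(\hat{\sigma}_0^2+1)$ and $\sigma_2=0$. Hence Theorem~\ref{zerosg-p:thm-random-pd-fixed} applies with this choice of constants.

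Next I would substitute $\sigma_1=\sigma_2=0$ into the four conclusions of that theorem. This collapses \eqref{zerosg-p:thm-sg-fixed-equ3.1} to $\mathcal{O}(1/T)$, yielding \eqref{zerosg-p:thm-sg-fixed-equ3.1-coro1}; \eqref{zerosg-p:thm-sg-fixed-equ3.2} to $\mathcal{O}(p\eta^2)$, yielding \eqref{zerosg-p:thm-sg-fixed-equ3.2-coro1}; \eqref{zerosg-p:thm-sg-fixed-equ3} to $\mathcal{O}(1/(\eta T))$, yielding \eqref{zerosg-p:thm-sg-fixed-equ3-coro1}; and, once Assumption~\ref{zerosg:ass:fil} is added, \eqref{zerosg-p:thm-sg-fixed-equ1} to $\mathcal{O}(\epsilon^k)$, yielding \eqref{zerosg-p:thm-sg-fixed-equ1-coro1}. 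The contraction base $\epsilon\in(0,1)$ is carried over verbatim from Theorem~\ref{zerosg-p:thm-random-pd-fixed}.

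The sole point requiring care---and the closest thing to an obstacle in an otherwise immediate argument---is to confirm that the substitution is genuine rather than cosmetic: the parameter prescriptions in \eqref{zerosg-p:step:eta2-fixed} and the constant $\epsilon$ must not secretly depend on $\sigma_1$ or $\sigma_2$. Since \eqref{zerosg-p:step:eta2-fixed} constrains only $\gamma$, $\eta$, $\kappa_\delta$, and $\tilde{\epsilon}$, and the $\epsilon$ of Theorem~\ref{zerosg-p:thm-random-pd-fixed} is determined by these together with the smoothness and P--{\L} constants, no re-tuning is needed and the corollary follows at once.
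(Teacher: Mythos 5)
Your proposal is correct and matches the paper's own (implicit) argument: the paper derives this corollary exactly by noting that Assumptions~\ref{zerosg:ass:zeroth-variance}$'$--\ref{zerosg:ass:fig}$'$ amount to taking $\sigma_1=\sigma_2=0$ and then substituting this into the bounds of Theorem~\ref{zerosg-p:thm-random-pd-fixed}, whose parameter prescriptions and contraction constant $\epsilon$ are indeed unaffected by $\sigma_1,\sigma_2$. Your explicit verification of admissible constants ($\sigma_0^2=\breve{\sigma}_0^2-1$, $\tilde{\sigma}_0^2=2(\hat{\sigma}_0^2+1)$) is a slightly more careful rendering of the equivalence the paper simply asserts, but it is the same proof.
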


\section{Simulations}\label{zerosg:sec-simulation}
In this section, we verify the theoretical results through numerical simulations. Specifically, we evaluate the performance of Algorithms~\ref{zerosg:algorithm-random-pd} and  \ref{zerosg-p:algorithm-random} in generating adversarial examples from black-box deep neural networks (DNNs).

In image classification tasks, DNNs are vulnerable to adversarial examples \cite{goodfellow2014explaining} even under small perturbations, which leads misclassifications.
Considering the setting of ZO attacks in \cite{carlini2017towards,liu2018zeroth}, the model is hidden and no gradient information is available. We treat this task of generating adversarial examples as a ZO optimization problem. The black-box attack loss function \cite{carlini2017towards,liu2018zeroth} is given as
\begin{align*}
f_i(x)  &= \max \Big\{ F_{y_i} \Big(\frac{1}{2}\tanh ( \tanh^{-1} 2 a_i + x)\Big)
 - \max_{j \neq y_i}\Big\{ F_j\Big(\frac{1}{2}\tanh ( \tanh^{-1} 2 a_i + x)\Big)\Big\} ,~0 \Big\} \\
&\quad + c\Big\|\frac{1}{2}\tanh ( \tanh^{-1} 2 a_i + x) - a_i \Big\|_2^2,
\end{align*}
where $c$ is a constant, $(a_i,y_i)$ denotes the pair of the $i$th natural image $a_i$ and its original class label $y_i$. The output of function $F(z)=\col(F_1(z),\dots,F_m(z))$ is the well-trained model prediction of the input $z$ in all $m$ image classes.

The well-trained DNN model\footnote{\url{https://github.com/carlini/nn_robust_attacks}} on the MNIST handwritten dataset has $99.4\%$ test accuracy on natural examples \cite{liu2018zeroth}.
We compare the proposed distributed primal--dual ZO algorithm (Algorithm~\ref{zerosg:algorithm-random-pd}) and distributed primal ZO algorithm (Algorithm~\ref{zerosg-p:algorithm-random}) with state-of-the-art centralized and distributed ZO algorithms: RSGF \cite{ghadimi2013stochastic}, SZO-SPIDER \cite{fang2018spider}, ZO-SVRG \cite{liu2018zeroth}, SZVR-G \cite{liu2018stochastic}, and ZO-SPIDER-Coord \cite{pmlr-v97-ji19a}, ZO-GDA \cite{tang2020distributedzero}, and ZONE-M \cite{hajinezhad2019zone}.

We consider $n=10$ agents and assume the communication network is generated randomly following the Erd\H{o}s--R\' enyi model with probability of $0.4$. All the hyper-parameters used in the experiment are given in TABLE~\ref{tab:para}. 

\begin{table*}[ht!]
\caption{Parameters in each algorithm.}
\label{tab:para}
\centering
\small
\vskip 0.05in
\begin{tabular}{M{3.5cm}|M{2.1cm}|M{8.0cm}N}
\hline
Algorithm  & Distributed &Parameters&\\[5pt]

\hline
Algorithm~\ref{zerosg:algorithm-random-pd}       & \ding{52}  & $\eta = 0.5/{k^{10^{-5}}}$, $\alpha = 0.5k^{10^{-5}}$, $\beta = 0.1k^{10^{-5}}$ &\\[5pt]

\hline
Algorithm~\ref{zerosg-p:algorithm-random}       & \ding{52}  & $\gamma = 0.01$, $\eta = 0.08/{k^{10^{-5}}}$ & \\[5pt]

\hline
ZO-GDA       & \ding{52}  & $\eta = 0.08/{k^{10^{-5}}}$ &\\[5pt]

\hline
ZONE-M       & \ding{52}  & $\mu = 1\sqrt{k}$, $\rho = 0.4\sqrt{k}$ &\\[5pt]

\hline
RSGF         & \ding{55}  & $\mu = 0.01$ &\\[5pt]

\hline
SZO-SPIDER      & \ding{55}  & $\mu = 0.01$ &\\[5pt]

\hline
ZO-SVRG      & \ding{55}  & $\mu = 0.01$ &\\[5pt]

\hline
SZVR-G       & \ding{55}  & $\mu = 0.01$ &\\[5pt]

\hline
ZO-SPIDER-Coord       & \ding{55}  & $\mu = 0.01$ &\\[5pt]

\hline
\end{tabular}
\end{table*}

Fig.~\ref{zerosg:fig:dnn_loss} and Fig.~\ref{zerosg:fig:dnn_loss_query} show the evolutions of the  black-box attack loss achieved by each ZO algorithm with respect to the number of iterations and function value queries, respectively. From these two figures, we can see that our proposed distributed ZO algorithms are as efficient as ZO-GDA \cite{tang2020distributedzero} in terms of both convergence rate and sampling complexity, and more efficient than the other algorithms. The least $\ell_2$ distortions of the successful adversarial perturbations are listed in TABLE~\ref{zerosg:tab:distortion}. We can see that the adversarial examples generated by the distributed algorithms in general have slightly larger $\ell_2$ distortions than those generated by the centralized algorithms. TABLE~\ref{zerosg:table:digit4} provides a comparison of generated adversarial examples from the DNN on the MNIST dataset: digit class ``4''.
\begin{figure}
\centering
  \includegraphics[width=1\textwidth]{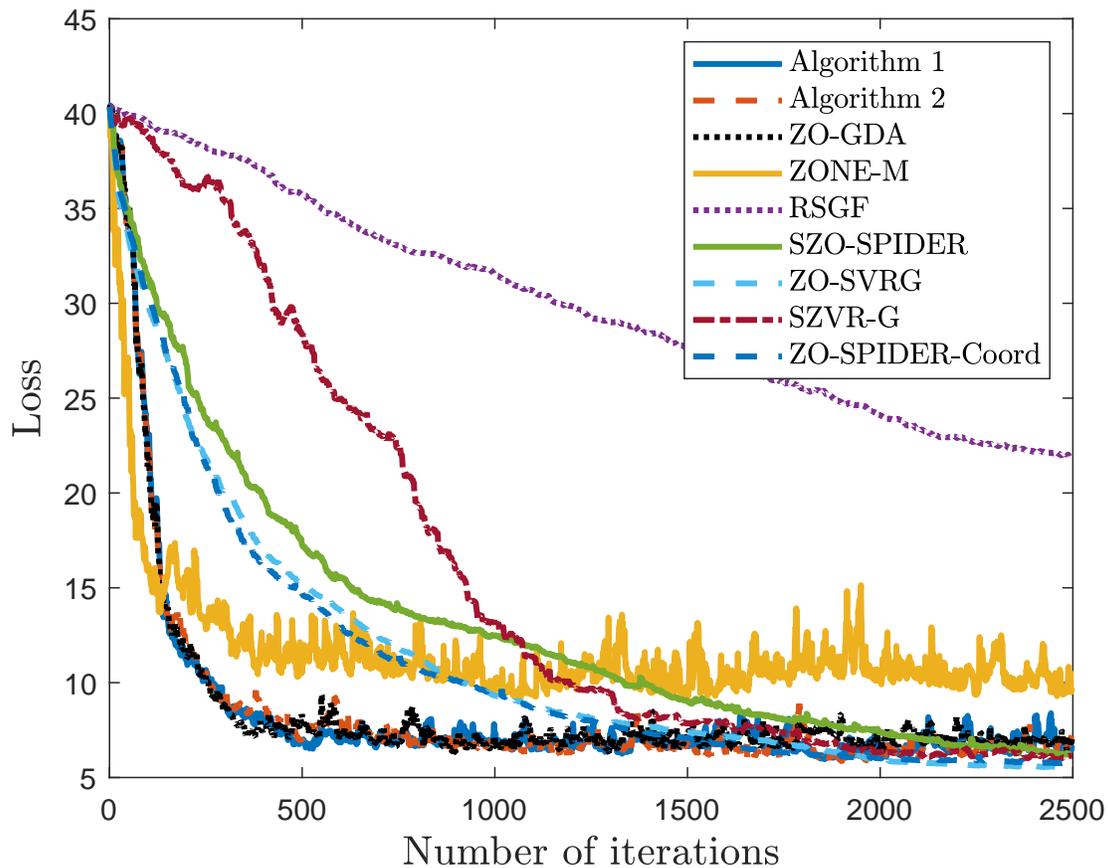}
  \caption{Evolutions of the black-box attack loss with respect to the number of iterations.}
  \label{zerosg:fig:dnn_loss}
\end{figure}

\begin{figure}
\centering
  \includegraphics[width=1\textwidth]{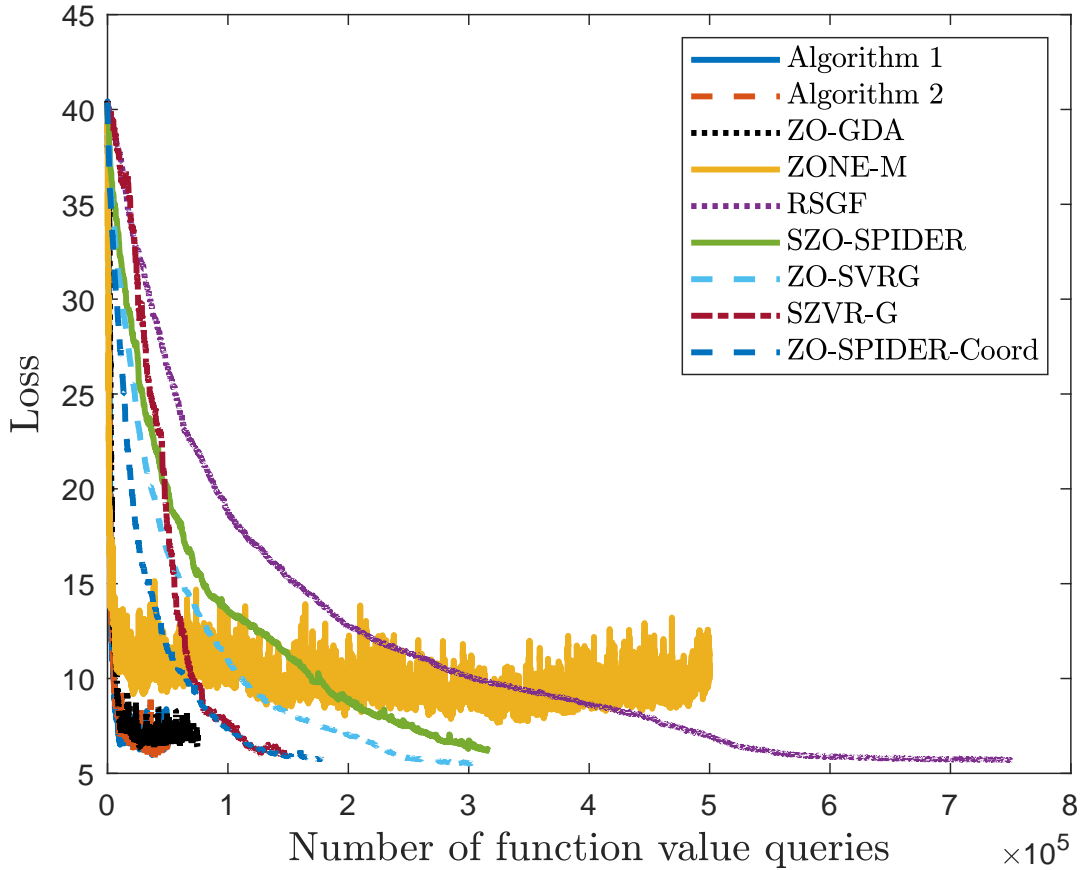}
  \caption{Evolutions of the black-box attack loss with respect to the number of function value queries.}
  \label{zerosg:fig:dnn_loss_query}
\end{figure}

\begin{table}[]
\caption{Distortion}
\label{zerosg:tab:distortion}
\centering
\small
\vskip 0.05in
\begin{tabular}{M{3.5cm}|M{2.1cm}N}
\hline
Algorithm  & $\ell_2$ distortion&\\[5pt]

\hline
Algorithm~\ref{zerosg:algorithm-random-pd}       & 6.44 &\\[5pt]

\hline
Algorithm~\ref{zerosg-p:algorithm-random}       & 5.77  &\\[5pt]

\hline
ZO-GDA       & 7.23  &\\[5pt]

\hline
ZONE-M       & 9.96  &\\[5pt]

\hline
RSGF         & 5.69  &\\[5pt]

\hline
SZO-SPIDER       & 6.19  &\\[5pt]

\hline
ZO-SVRG      & 4.76 &\\[5pt]

\hline
SZVR-G       & 5.16   &\\[5pt]

\hline
ZO-SPIDER-Coord      & 5.76   &\\[5pt]

\hline
\end{tabular}
\end{table}

 \begin{table*}[ht!]
 \caption{Comparison of generated adversarial examples from a black-box DNN on MNIST: digit class ``4''.} \label{zerosg:table:digit4}
  \centering
  \footnotesize
    \vskip 0.05in
  \begin{adjustbox}{max width=\textwidth}
  \begin{tabular}
      {ccccccccccc}
      \hline
      	Image ID & $~~$ 4 & $~~$ 6 & $~~$ 19 & $~~$ 24 & $~~$ 27 & $~~$ 33 & $~~$ 42 & $~~$ 48 & $~~$ 49 & $~~$ 56 \\
      \hline &&&&&&&&&& \vspace{-0.2cm} \\
      	Original &
        \parbox[c]{2.2em}{\includegraphics[width=0.4in]{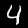}} &
        \parbox[c]{2.2em}{\includegraphics[width=0.4in]{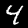}} &
        \parbox[c]{2.2em}{\includegraphics[width=0.4in]{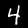}} &
        \parbox[c]{2.2em}{\includegraphics[width=0.4in]{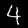}} &
        \parbox[c]{2.2em}{\includegraphics[width=0.4in]{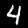}} &
        \parbox[c]{2.2em}{\includegraphics[width=0.4in]{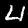}} &
        \parbox[c]{2.2em}{\includegraphics[width=0.4in]{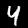}} &
        \parbox[c]{2.2em}{\includegraphics[width=0.4in]{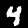}} &
        \parbox[c]{2.2em}{\includegraphics[width=0.4in]{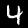}} &
        \parbox[c]{2.2em}{\includegraphics[width=0.4in]{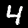}}
      \vspace{0.2cm} \\

      \hline &&&&&&&&&& \vspace{-0.2cm} \\
      	Algorithm~\ref{zerosg:algorithm-random-pd} &
        \parbox[c]{2.2em}{\includegraphics[width=0.4in]{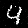}} &
        \parbox[c]{2.2em}{\includegraphics[width=0.4in]{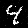}} &
        \parbox[c]{2.2em}{\includegraphics[width=0.4in]{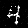}} &
        \parbox[c]{2.2em}{\includegraphics[width=0.4in]{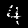}} &
        \parbox[c]{2.2em}{\includegraphics[width=0.4in]{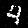}} &
        \parbox[c]{2.2em}{\includegraphics[width=0.4in]{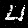}} &
        \parbox[c]{2.2em}{\includegraphics[width=0.4in]{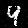}} &
        \parbox[c]{2.2em}{\includegraphics[width=0.4in]{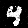}} &
        \parbox[c]{2.2em}{\includegraphics[width=0.4in]{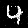}} &
        \parbox[c]{2.2em}{\includegraphics[width=0.4in]{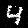}} \\
        Classified as & $~~$ 9 & $~~$ 8 & $~~$ 2 & $~~$ 7 & $~~$ 2 & $~~$ 2 & $~~$ 9 & $~~$ 9 & $~~$ 9 & $~~$ 9 \\

 	\hline &&&&&&&&&& \vspace{-0.2cm} \\
      	Algorithm~\ref{zerosg-p:algorithm-random} &
        \parbox[c]{2.2em}{\includegraphics[width=0.4in]{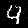}} &
        \parbox[c]{2.2em}{\includegraphics[width=0.4in]{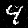}} &
        \parbox[c]{2.2em}{\includegraphics[width=0.4in]{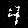}} &
        \parbox[c]{2.2em}{\includegraphics[width=0.4in]{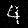}} &
        \parbox[c]{2.2em}{\includegraphics[width=0.4in]{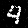}} &
        \parbox[c]{2.2em}{\includegraphics[width=0.4in]{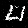}} &
        \parbox[c]{2.2em}{\includegraphics[width=0.4in]{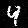}} &
        \parbox[c]{2.2em}{\includegraphics[width=0.4in]{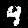}} &
        \parbox[c]{2.2em}{\includegraphics[width=0.4in]{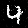}} &
        \parbox[c]{2.2em}{\includegraphics[width=0.4in]{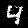}} \\
        Classified as & $~~$ 9 & $~~$ 9 & $~~$ 7 & $~~$ 9 & $~~$ 9 & $~~$ 2 & $~~$ 9 & $~~$ 9 & $~~$ 9 & $~~$ 9 \\

	\hline &&&&&&&&&& \vspace{-0.2cm} \\
      	ZO-GDA &
        \parbox[c]{2.2em}{\includegraphics[width=0.4in]{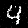}} &
        \parbox[c]{2.2em}{\includegraphics[width=0.4in]{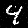}} &
        \parbox[c]{2.2em}{\includegraphics[width=0.4in]{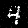}} &
        \parbox[c]{2.2em}{\includegraphics[width=0.4in]{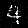}} &
        \parbox[c]{2.2em}{\includegraphics[width=0.4in]{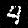}} &
        \parbox[c]{2.2em}{\includegraphics[width=0.4in]{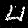}} &
        \parbox[c]{2.2em}{\includegraphics[width=0.4in]{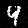}} &
        \parbox[c]{2.2em}{\includegraphics[width=0.4in]{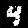}} &
        \parbox[c]{2.2em}{\includegraphics[width=0.4in]{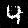}} &
        \parbox[c]{2.2em}{\includegraphics[width=0.4in]{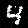}} \\
        Classified as & $~~$ 9 & $~~$ 9 & $~~$ 2 & $~~$ 2 & $~~$ 2 & $~~$ 2 & $~~$ 9 & $~~$ 9 & $~~$ 9 & $~~$ 3 \\

     \hline &&&&&&&&&& \vspace{-0.2cm} \\
      	ZONE-M &
        \parbox[c]{2.2em}{\includegraphics[width=0.4in]{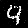}} &
        \parbox[c]{2.2em}{\includegraphics[width=0.4in]{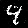}} &
        \parbox[c]{2.2em}{\includegraphics[width=0.4in]{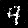}} &
        \parbox[c]{2.2em}{\includegraphics[width=0.4in]{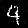}} &
        \parbox[c]{2.2em}{\includegraphics[width=0.4in]{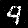}} &
        \parbox[c]{2.2em}{\includegraphics[width=0.4in]{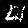}} &
        \parbox[c]{2.2em}{\includegraphics[width=0.4in]{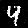}} &
        \parbox[c]{2.2em}{\includegraphics[width=0.4in]{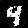}} &
        \parbox[c]{2.2em}{\includegraphics[width=0.4in]{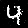}} &
        \parbox[c]{2.2em}{\includegraphics[width=0.4in]{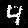}} \\
        Classified as & $~~$ 9 & $~~$ 9 & $~~$ 7 & $~~$ 9 & $~~$ 9 & $~~$ 2 & $~~$ 9 & $~~$ 9 & $~~$ 9 & $~~$ 9 \\
      \hline &&&&&&&&&& \vspace{-0.2cm} \\
      	RSGF &
        \parbox[c]{2.2em}{\includegraphics[width=0.4in]{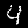}} &
        \parbox[c]{2.2em}{\includegraphics[width=0.4in]{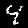}} &
        \parbox[c]{2.2em}{\includegraphics[width=0.4in]{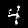}} &
        \parbox[c]{2.2em}{\includegraphics[width=0.4in]{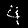}} &
        \parbox[c]{2.2em}{\includegraphics[width=0.4in]{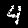}} &
        \parbox[c]{2.2em}{\includegraphics[width=0.4in]{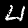}} &
        \parbox[c]{2.2em}{\includegraphics[width=0.4in]{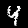}} &
        \parbox[c]{2.2em}{\includegraphics[width=0.4in]{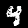}} &
        \parbox[c]{2.2em}{\includegraphics[width=0.4in]{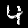}} &
        \parbox[c]{2.2em}{\includegraphics[width=0.4in]{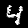}} \\
        Classified as & $~~$ 9 & $~~$ 9 & $~~$ 2 & $~~$ 9 & $~~$ 9 & $~~$ 2 & $~~$ 9 & $~~$ 9 & $~~$ 9 & $~~$ 9 \\

        \hline &&&&&&&&&& \vspace{-0.2cm} \\
      	SZO-SPIDER &
        \parbox[c]{2.2em}{\includegraphics[width=0.4in]{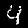}} &
        \parbox[c]{2.2em}{\includegraphics[width=0.4in]{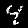}} &
        \parbox[c]{2.2em}{\includegraphics[width=0.4in]{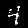}} &
        \parbox[c]{2.2em}{\includegraphics[width=0.4in]{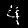}} &
        \parbox[c]{2.2em}{\includegraphics[width=0.4in]{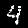}} &
        \parbox[c]{2.2em}{\includegraphics[width=0.4in]{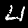}} &
        \parbox[c]{2.2em}{\includegraphics[width=0.4in]{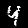}} &
        \parbox[c]{2.2em}{\includegraphics[width=0.4in]{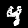}} &
        \parbox[c]{2.2em}{\includegraphics[width=0.4in]{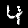}} &
        \parbox[c]{2.2em}{\includegraphics[width=0.4in]{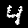}} \\
        Classified as & $~~$ 9 & $~~$ 9 & $~~$ 7 & $~~$ 9 & $~~$ 9 & $~~$ 2 & $~~$ 9 & $~~$ 9 & $~~$ 9 & $~~$ 9 \\

        \hline &&&&&&&&&& \vspace{-0.2cm} \\
      	ZO-SVRG &
        \parbox[c]{2.2em}{\includegraphics[width=0.4in]{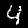}} &
        \parbox[c]{2.2em}{\includegraphics[width=0.4in]{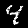}} &
        \parbox[c]{2.2em}{\includegraphics[width=0.4in]{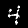}} &
        \parbox[c]{2.2em}{\includegraphics[width=0.4in]{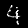}} &
        \parbox[c]{2.2em}{\includegraphics[width=0.4in]{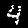}} &
        \parbox[c]{2.2em}{\includegraphics[width=0.4in]{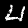}} &
        \parbox[c]{2.2em}{\includegraphics[width=0.4in]{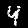}} &
        \parbox[c]{2.2em}{\includegraphics[width=0.4in]{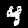}} &
        \parbox[c]{2.2em}{\includegraphics[width=0.4in]{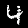}} &
        \parbox[c]{2.2em}{\includegraphics[width=0.4in]{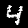}} \\
        Classified as & $~~$ 9 & $~~$ 8 & $~~$ 2 & $~~$ 9 & $~~$ 9 & $~~$ 2 & $~~$ 9 & $~~$ 9 & $~~$ 9 & $~~$ 9 \\

        \hline &&&&&&&&&& \vspace{-0.2cm} \\
      	SZVR-G &
        \parbox[c]{2.2em}{\includegraphics[width=0.4in]{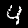}} &
        \parbox[c]{2.2em}{\includegraphics[width=0.4in]{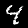}} &
        \parbox[c]{2.2em}{\includegraphics[width=0.4in]{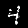}} &
        \parbox[c]{2.2em}{\includegraphics[width=0.4in]{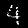}} &
        \parbox[c]{2.2em}{\includegraphics[width=0.4in]{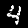}} &
        \parbox[c]{2.2em}{\includegraphics[width=0.4in]{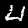}} &
        \parbox[c]{2.2em}{\includegraphics[width=0.4in]{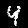}} &
        \parbox[c]{2.2em}{\includegraphics[width=0.4in]{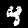}} &
        \parbox[c]{2.2em}{\includegraphics[width=0.4in]{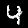}} &
        \parbox[c]{2.2em}{\includegraphics[width=0.4in]{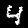}} \\
        Classified as & $~~$ 9 & $~~$ 8 & $~~$ 2 & $~~$ 2 & $~~$ 2 & $~~$ 2 & $~~$ 9 & $~~$ 9 & $~~$ 9 & $~~$ 9 \\

         \hline &&&&&&&&&& \vspace{-0.2cm} \\
      	ZO-SPIDER-Coord &
        \parbox[c]{2.2em}{\includegraphics[width=0.4in]{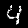}} &
        \parbox[c]{2.2em}{\includegraphics[width=0.4in]{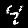}} &
        \parbox[c]{2.2em}{\includegraphics[width=0.4in]{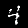}} &
        \parbox[c]{2.2em}{\includegraphics[width=0.4in]{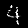}} &
        \parbox[c]{2.2em}{\includegraphics[width=0.4in]{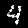}} &
        \parbox[c]{2.2em}{\includegraphics[width=0.4in]{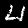}} &
        \parbox[c]{2.2em}{\includegraphics[width=0.4in]{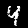}} &
        \parbox[c]{2.2em}{\includegraphics[width=0.4in]{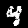}} &
        \parbox[c]{2.2em}{\includegraphics[width=0.4in]{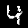}} &
        \parbox[c]{2.2em}{\includegraphics[width=0.4in]{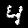}} \\
        Classified as & $~~$ 9 & $~~$ 9 & $~~$ 2 & $~~$ 9 & $~~$ 9 & $~~$ 2 & $~~$ 9 & $~~$ 9 & $~~$ 9 & $~~$ 9 \\

      \hline
  \end{tabular}
  \end{adjustbox}
\end{table*}


In order to verify the result that linear speedup convergence is achieved with respect to the number of agents, we also consider $n=100$ agents. To illustrate the linear speedup results in a more clear manner, we plot the loss in log scale and draw the extensive lines along the convergence lines in Fig.~\ref{zerosg:fig:dnn_loss_node}. The slopes of 10-node lines (blue and red lines) are approximately $-0.025$ and the slopes of 100-node lines (blue and red dash lines) are approximately $-0.079$, which implies the linear speedup results since $-0.079/\sqrt{10} \approx -0.025$. This simulation shows that linear speedup is achieved by our proposed two algorithms even though the optimization problem is nonsmooth. 
\begin{figure}
\centering
  \includegraphics[width=1\textwidth]{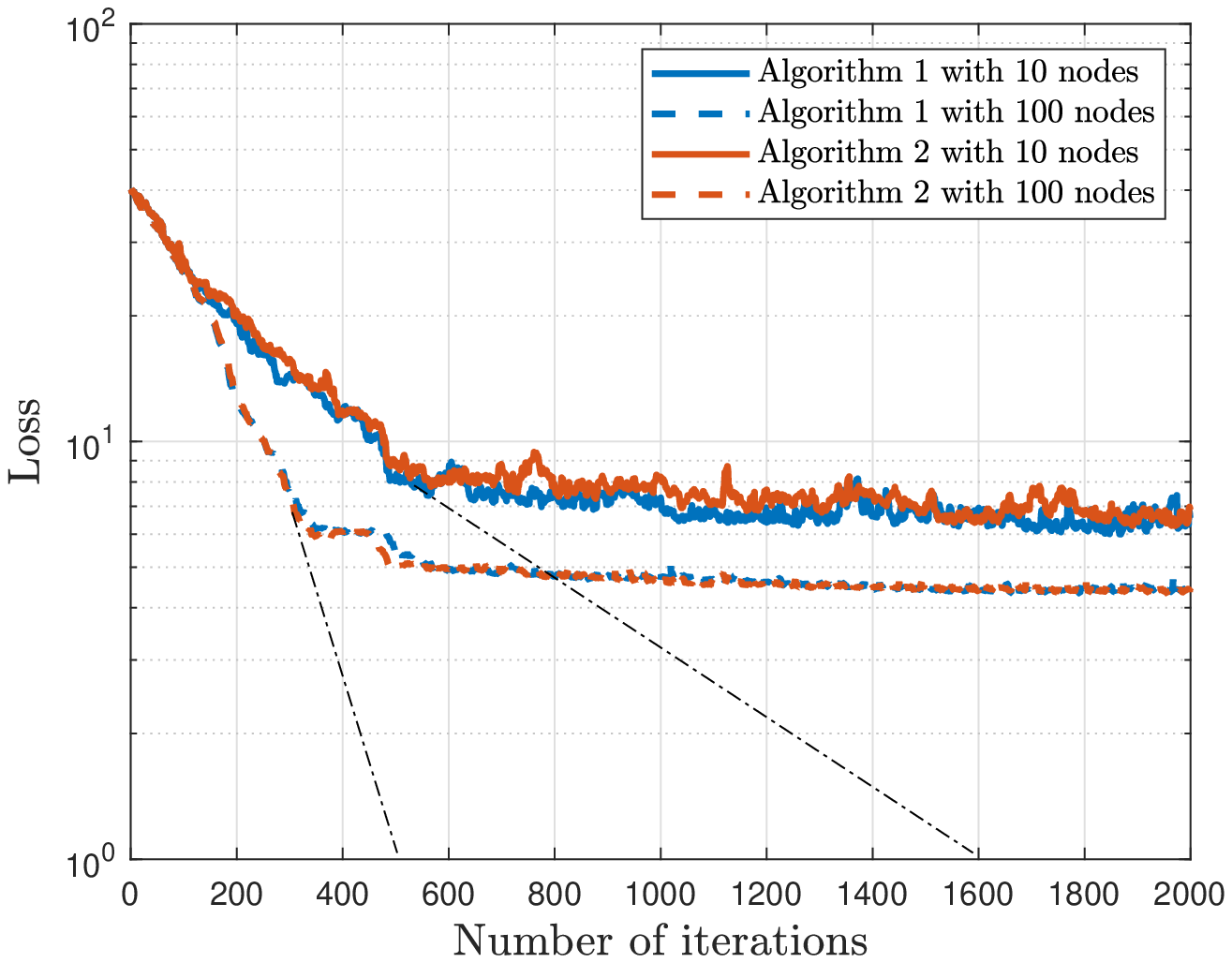}
  \caption{Evolutions of the black-box attack loss with respect to the number of iterations when using different numbers of agents.}
  \label{zerosg:fig:dnn_loss_node}
\end{figure}

\section{Conclusions}\label{zerosg:sec-conclusion}
In this paper, we studied stochastic distributed nonconvex optimization with ZO information feedback. We proposed two distributed ZO algorithms and analyzed their convergence properties. More specifically, linear speedup convergence rate $\mathcal{O}(\sqrt{p/(nT)})$ was established for smooth nonconvex cost functions under  arbitrarily connected communication networks. The convergence rate was improved to  $\mathcal{O}(p/(nT))$ when the global cost function satisfies the P--{\L} condition. It was also shown that the output of the proposed algorithms linearly converges to a neighborhood of a global optimum. Interesting directions for future work include establishing faster convergence with reduced sampling complexity by using variance reduction techniques, and considering communication reduction with asynchronous, periodic, or compressed communication.

\bibliographystyle{IEEEtran}
\bibliography{icml_zeroth,refextra}










\appendix

\subsection{Notations, Definitions, and Useful Lemmas}\label{zerosg:app-lemmas}

\subsubsection{Notations}
${\bm 1}_n$ (${\bm 0}_n$) denotes the column one (zero) vector of dimension $n$. $\col(z_1,\dots,z_k)$ is the concatenated column vector of vectors $z_i\in\mathbb{R}^{p_i},~i\in[k]$. ${\bm I}_n$ is the $n$-dimensional identity matrix. Given a vector $[x_1,\dots,x_n]^\top\in\mathbb{R}^n$, $\diag([x_1,\dots,x_n])$ is a diagonal matrix with the $i$-th diagonal element being $x_i$.  The notation $A\otimes B$ denotes the Kronecker product
of matrices $A$ and $B$. $\nullrank(A)$ is the null space of matrix $A$.
Given two symmetric matrices $M,N$, $M\ge N$ means that $M-N$ is positive semi-definite. $\rho(\cdot)$ stands for the spectral radius for matrices and $\rho_2(\cdot)$ indicates the minimum
positive eigenvalue for matrices having positive eigenvalues. For any square matrix $A$, $\|x\|_A^2$ denotes $x^\top Ax$. $\lceil \cdot\rceil$ and $\lfloor\cdot\rfloor$ denote the ceiling and floor functions, respectively. For any $x\in\mathbb{R}$, $[x]_+$ is the positive part of $x$. ${\bm 1}_{(\cdot)}$ is the indicator function.

\subsubsection{Graph Theory}
For an undirected graph $\mathcal G=(\mathcal V,\mathcal E)$, let $\mathcal{A}=(a_{ij})$ be the associated weighted adjacency matrix with $a_{ij}>0$ if $(i,j)\in \mathcal E$ if $a_{ij}>0$ and zero otherwise. It is assumed that $a_{ii}=0$ for all $i\in [n]$. Let $\deg_i=\sum\limits_{j=1}^{n}a_{ij}$ denotes the weighted degree of vertex $i$. The degree matrix of graph $\mathcal G$ is $\Deg=\diag([\deg_1, \cdots, \deg_n])$. The Laplacian matrix is $L=(L_{ij})=\Deg-\mathcal{A}$. A  path of length $k$ between vertices $i$ and $j$ is a subgraph with distinct vertices $i_0=i,\dots,i_k=j\in [n]$ and edges $(i_j,i_{j+1})\in\mathcal E,~j=0,\dots,k-1$.
An undirected graph is  connected if there exists at least one path between any two distinct vertices.

For a connected undirected graph, we have the following results.
\begin{lemma}\label{nonconvex:lemma-Xinlei} (Lemmas~1 and 2 in \cite{Yi2018distributed})
Let $L$ be the Laplacian matrix of the connected graph $\mathcal{G}$ and $K_n={\bm I}_n-\frac{1}{n}{\bm 1}_n{\bm 1}^{\top}_n$.
Then $L$ and $K_n$ are positive semi-definite, $\nullrank(L)=\nullrank(K_n)=\{{\bm 1}_n\}$, $L\le\rho(L){\bm I}_n$, $\rho(K_n)=1$,
\begin{subequations}
\begin{align}
&K_nL=LK_n=L,\label{nonconvex:KL-L-eq}\\
&0\le\rho_2(L)K_n\le L\le\rho(L)K_n.\label{nonconvex:KL-L-eq2}
\end{align}
\end{subequations}
Moreover, there exists an orthogonal matrix $[r \ R]\in \mathbb{R}^{n \times n}$ with $r=\frac{1}{\sqrt{n}}\mathbf{1}_n$ and $R \in \mathbb{R}^{n\times (n-1)}$ such that
\begin{subequations}
\begin{align}
&R\Lambda_1^{-1}R^{\top}L=LR\Lambda_1^{-1}R^{\top}=K_n,\label{nonconvex:lemma-eq}\\
&\frac{1}{\rho(L)}K_n\leq R\Lambda_1^{-1}R^{\top}\le\frac{1}{\rho_2(L)}K_n,\label{nonconvex:lemma-eq2}
\end{align}
\end{subequations}
where $\Lambda_1=\diag([\lambda_2,\dots,\lambda_n])$ with $0<\lambda_2\leq\dots\leq\lambda_n$ being the eigenvalues of the Laplacian matrix $L$.
\end{lemma}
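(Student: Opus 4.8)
The plan is to exploit the symmetry and positive semi-definiteness of the Laplacian together with the fact that, for a connected graph, the kernel of $L$ is exactly one-dimensional; all the remaining claims then fall out of a simultaneous diagonalization of $L$ and $K_n$. First I would record that since $\mathcal G$ is undirected, $\mathcal A$ is symmetric, hence so are $\Deg$ and $L=\Deg-\mathcal A$. Positive semi-definiteness follows from the standard quadratic-form identity $x^\top L x=\frac{1}{2}\sum_{i,j}a_{ij}(x_i-x_j)^2\ge0$, and $\rho(L){\bm I}_n-L\ge0$ is immediate from the spectral theorem since $\rho(L)$ is the largest eigenvalue, giving $L\le\rho(L){\bm I}_n$. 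The symmetric idempotent $K_n$ is the orthogonal projector onto $\{{\bm 1}_n\}^\perp$, so its eigenvalues are $1$ (multiplicity $n-1$) and $0$ (eigenvector ${\bm 1}_n$), which yields $K_n\ge0$ and $\rho(K_n)=1$ at once.

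Next I would pin down the kernels. From the quadratic form, $x^\top L x=0$ forces $x_i=x_j$ across every edge, and connectivity propagates this to all vertices, so $\nullrank(L)=\{{\bm 1}_n\}$; the reverse inclusion is the zero-row-sum identity $L{\bm 1}_n={\bm 0}_n$. For $K_n$ the kernel is $\{{\bm 1}_n\}$ by construction. The absorption identities then follow algebraically: $K_nL=L-\frac{1}{n}{\bm 1}_n({\bm 1}_n^\top L)=L$ because ${\bm 1}_n^\top L=(L{\bm 1}_n)^\top={\bm 0}_n^\top$, and symmetrically $LK_n=L$. In particular $L$ and $K_n$ commute.

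With commutation in hand, $L$ and $K_n$ are simultaneously diagonalizable and share the eigenline $\mathrm{span}({\bm 1}_n)$. Decomposing any $x=c{\bm 1}_n+y$ with $y\perp{\bm 1}_n$ gives $x^\top K_n x=\|y\|^2$ and $x^\top L x=y^\top L y$, while the restriction of $L$ to $\{{\bm 1}_n\}^\perp$ has eigenvalues in $[\rho_2(L),\rho(L)]=[\lambda_2,\lambda_n]$; this sandwiches $x^\top L x$ between $\rho_2(L)\|y\|^2$ and $\rho(L)\|y\|^2$, i.e. $0\le\rho_2(L)K_n\le L\le\rho(L)K_n$. For the ``moreover'' part I would take an orthonormal eigenbasis of $L$, placing $r=\frac{1}{\sqrt n}{\bm 1}_n$ for the zero eigenvalue and collecting the rest into $R$, so that $L=R\Lambda_1R^\top$ and, from orthogonality of $[r\ R]$, $RR^\top={\bm I}_n-rr^\top=K_n$. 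Then $R^\top R={\bm I}_{n-1}$ gives $R\Lambda_1^{-1}R^\top L=RR^\top=K_n=L\,R\Lambda_1^{-1}R^\top$, and the two-sided bound $\frac{1}{\rho(L)}K_n\le R\Lambda_1^{-1}R^\top\le\frac{1}{\rho_2(L)}K_n$ follows from the same decomposition, now using that $R\Lambda_1^{-1}R^\top$ has eigenvalues $1/\lambda_i\in[1/\rho(L),1/\rho_2(L)]$ on $\{{\bm 1}_n\}^\perp$.

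The individual computations are routine, so the only real care is organizational: the cleanest route is to establish the commutation $K_nL=LK_n=L$ early, since every operator inequality and every pseudoinverse identity then reduces to comparing scalars on the common eigenbasis. The one place to be genuinely careful is distinguishing the spectral radius $\rho(L)$ from the minimum positive eigenvalue $\rho_2(L)=\lambda_2$ and confirming that the zero eigenvalue is simple — which is exactly where connectivity enters and must not be skipped.
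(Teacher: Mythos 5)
Your proof is correct and complete. Note that the paper itself contains no proof of this lemma: it is imported verbatim from Lemmas~1 and~2 of the cited reference, so there is no internal argument to compare against. Your route --- kernel identification via the quadratic form $x^\top L x=\frac{1}{2}\sum_{i,j}a_{ij}(x_i-x_j)^2$ plus connectivity, the absorption identities from ${\bm 1}_n^\top L={\bm 0}_n^\top$, and then the spectral decomposition $L=R\Lambda_1 R^\top$ with $RR^\top={\bm I}_n-rr^\top=K_n$ so that all operator inequalities reduce to eigenvalue comparisons on $\{{\bm 1}_n\}^\perp$ --- is the standard proof of exactly this statement, and every step (including the only delicate points: simplicity of the zero eigenvalue from connectedness, and $\rho_2(L)=\lambda_2$ versus $\rho(L)=\lambda_n$) checks out.
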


\subsubsection{Smooth Functions}
\begin{definition}\cite{nesterov2018lectures}
A function $f(x):~\mathbb{R}^p\mapsto\mathbb{R}$ is smooth with constant $L_f>0$ if it is differentiable and
\begin{align}\label{nonconvex:smooth}
\|\nabla f(x)-\nabla f(y)\|\le L_{f}\|x-y\|,~\forall x,y\in \mathbb{R}^p.
\end{align}
\end{definition}
From Lemma~1.2.3 in \cite{nesterov2018lectures}, we know that \eqref{nonconvex:smooth} implies
\begin{align}
&|f(y)-f(x)-(y-x)^\top\nabla f(x)|
\le\frac{L_f}{2}\|y-x\|^2,~\forall x,y\in\mathbb{R}^{p}, \label{nonconvex:lemma:lipschitz}
\end{align}
which further implies
\begin{align}
\|\nabla f(x)\|^2\le2L_f(f(x)-f^*),~\forall x,y\in\mathbb{R}^{p},\label{nonconvex:lemma:lipschitz2}
\end{align}
where $f^*=\min_{x\in\mathbb{R}^p}f(x)$.

\subsubsection{Properties of Gradient Approximation}
The random gradient estimator $\hat{\nabla}_2f$ defined in \eqref{dbco:gradient:model2} is an unbiased gradient estimator of $f^s$, where $f^s$ is the uniformly smoothed version of $f$ defined as
\begin{align}\label{dbco:equ:uniformsmoothing}
f^s(x,\delta)=\mathbf{E}_{u\in\mathbb{B}^p}[f(x+\delta u)],
\end{align}
with the expectation is taken with respect to uniform distribution.

From Lemma~2 in \cite{yi2019distributed}, Lemma~5 in \cite{tang2020distributedzero}, and Proposition~7.6 in \cite{gao2018information}, we have the following properties of $f^s$ and $\hat{\nabla}_2f$.
\begin{lemma}\label{dbco:lemma:uniformsmoothing}
\begin{enumerate}[label=(\roman*)]
\item The uniform smoothing $f^s(x,\delta)$ is differentiable with respect to $x$, and
\begin{align}
\nabla f^s(x,\delta)=\mathbf{E}_{u\in\mathbb{S}^p}[\hat{\nabla}_2f(x,\delta,u)].
\label{zerosg:lemma:uniformsmoothing-equ1}
\end{align}
\item If $f$ is smooth with constant $L_f>0$, then
\begin{subequations}
\begin{align}
&\|\nabla f^s(x,\delta)-\nabla f(x)\|\le\delta L_f,\label{zerosg:lemma:uniformsmoothing-equ8}\\
&\mathbf{E}_{u\in\mathbb{S}^p}[\|\hat{\nabla}_2f(x,\delta,u)\|^2]\le 2p\|\nabla f(x)\|^2+\frac{1}{2}p^2\delta^2 L_f^2.\label{zerosg:lemma:uniformsmoothing-equ6-1}
\end{align}
\end{subequations}
\end{enumerate}
\end{lemma}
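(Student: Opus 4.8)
The plan is to handle the three claims in turn, with part~(i) resting on the divergence theorem and part~(ii) on the smoothness inequality \eqref{nonconvex:lemma:lipschitz} together with a second-moment computation on the sphere.

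First I would prove~(i). Writing $f^s(x,\delta)=\frac{1}{\mathrm{vol}(\mathbb{B}^p)}\int_{\mathbb{B}^p}f(x+\delta u)\,du$ and setting $h(u)=f(x+\delta u)$, the chain rule gives $\nabla_x f(x+\delta u)=\frac{1}{\delta}\nabla_u h(u)$, so that $\nabla_x f^s(x,\delta)=\frac{1}{\delta\,\mathrm{vol}(\mathbb{B}^p)}\int_{\mathbb{B}^p}\nabla_u h(u)\,du$. Applying the divergence theorem componentwise, $\int_{\mathbb{B}^p}\nabla_u h(u)\,du=\int_{\mathbb{S}^p}h(u)\,u\,dS(u)$, since the outward unit normal on the unit sphere is $u$ itself. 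Using the elementary relation between the surface area of $\mathbb{S}^p$ and the volume of $\mathbb{B}^p$, namely $\mathrm{area}(\mathbb{S}^p)=p\,\mathrm{vol}(\mathbb{B}^p)$, this rearranges to the identity $\nabla_x f^s(x,\delta)=\frac{p}{\delta}\mathbf{E}_{u\in\mathbb{S}^p}[f(x+\delta u)\,u]$. Finally, since the uniform distribution on $\mathbb{S}^p$ is symmetric, $\mathbf{E}_{u\in\mathbb{S}^p}[u]={\bm 0}_p$; subtracting $\frac{p}{\delta}f(x)\,\mathbf{E}_{u\in\mathbb{S}^p}[u]={\bm 0}_p$ inside the expectation replaces $f(x+\delta u)$ by the two-point difference $f(x+\delta u)-f(x)$, which is exactly $\mathbf{E}_{u\in\mathbb{S}^p}[\hat{\nabla}_2 f(x,\delta,u)]$, proving \eqref{zerosg:lemma:uniformsmoothing-equ1}.

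For the first bound in~(ii) I would use the by-product $\nabla_x f^s(x,\delta)=\mathbf{E}_{u\in\mathbb{B}^p}[\nabla f(x+\delta u)]$ of the computation above. Jensen's inequality and the Lipschitz continuity of $\nabla f$ then give $\|\nabla f^s(x,\delta)-\nabla f(x)\|\le\mathbf{E}_{u\in\mathbb{B}^p}[\|\nabla f(x+\delta u)-\nabla f(x)\|]\le L_f\delta\,\mathbf{E}_{u\in\mathbb{B}^p}[\|u\|]\le L_f\delta$, where the last inequality uses $\|u\|\le 1$ on $\mathbb{B}^p$. For the second bound, the point is that $\|u\|=1$ on $\mathbb{S}^p$, hence $\|\hat{\nabla}_2 f(x,\delta,u)\|^2=\frac{p^2}{\delta^2}\big(f(x+\delta u)-f(x)\big)^2$. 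Applying \eqref{nonconvex:lemma:lipschitz} with $y=x+\delta u$ yields $f(x+\delta u)-f(x)=\delta u^\top\nabla f(x)+R$ with $|R|\le\frac{L_f}{2}\delta^2$, so $\big(f(x+\delta u)-f(x)\big)^2\le 2\delta^2\big(u^\top\nabla f(x)\big)^2+\frac{L_f^2}{2}\delta^4$ by $(a+b)^2\le 2a^2+2b^2$. Taking the expectation over $u\in\mathbb{S}^p$ and using the symmetry identity $\mathbf{E}_{u\in\mathbb{S}^p}[uu^\top]=\frac{1}{p}{\bm I}_p$ (which follows since the matrix commutes with all rotations and has trace $\mathbf{E}[\|u\|^2]=1$), we get $\mathbf{E}_{u\in\mathbb{S}^p}[(u^\top\nabla f(x))^2]=\frac{1}{p}\|\nabla f(x)\|^2$, and substituting gives exactly $2p\|\nabla f(x)\|^2+\frac{1}{2}p^2\delta^2 L_f^2$, which is \eqref{zerosg:lemma:uniformsmoothing-equ6-1}.

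The main obstacle is the rigorous execution of part~(i): justifying the divergence-theorem step and tracking the geometric constants so as to land on the precise prefactor $p/\delta$. In particular the statement of~(i) assumes no smoothness of $f$, so strictly one should either argue the divergence-theorem identity for $C^1$ integrands and then pass to a limit, or appeal to the convolution structure of $f^s$ to obtain differentiability for merely locally integrable $f$. Once the identity $\nabla f^s(x,\delta)=\frac{p}{\delta}\mathbf{E}_{u\in\mathbb{S}^p}[f(x+\delta u)u]$ and the sphere second-moment identity are in place, the two estimates in part~(ii) are routine.
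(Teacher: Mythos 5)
Your proof is correct, and the constants land exactly on the stated bounds. Be aware, though, that the paper itself contains no proof of this lemma: it is imported wholesale by citation from Lemma~2 in \cite{yi2019distributed}, Lemma~5 in \cite{tang2020distributedzero}, and Proposition~7.6 in \cite{gao2018information}, so the relevant comparison is with those references, and your argument is precisely the standard one underlying them. Part (i) is the classical Flaxman--Kalai--McMahan computation: differentiate under the integral, apply the divergence theorem componentwise to $h(u)=f(x+\delta u)$ on $\mathbb{B}^p$ (whose outward unit normal is $u$), use $\mathrm{area}(\mathbb{S}^p)=p\,\mathrm{vol}(\mathbb{B}^p)$ to produce the prefactor $p/\delta$, and subtract $\frac{p}{\delta}f(x)\,\mathbf{E}_{u\in\mathbb{S}^p}[u]={\bm 0}_p$ by symmetry. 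For part (ii), your identity $\nabla f^s(x,\delta)=\mathbf{E}_{u\in\mathbb{B}^p}[\nabla f(x+\delta u)]$ plus Jensen and Lipschitz continuity of $\nabla f$ gives \eqref{zerosg:lemma:uniformsmoothing-equ8}, and combining \eqref{nonconvex:lemma:lipschitz}, the inequality $(a+b)^2\le 2a^2+2b^2$, and the moment identity $\mathbf{E}_{u\in\mathbb{S}^p}[uu^\top]=\frac{1}{p}{\bm I}_p$ yields $\frac{p^2}{\delta^2}\big(2\delta^2\cdot\frac{1}{p}\|\nabla f(x)\|^2+\frac{L_f^2}{2}\delta^4\big)=2p\|\nabla f(x)\|^2+\frac{1}{2}p^2\delta^2 L_f^2$, which is \eqref{zerosg:lemma:uniformsmoothing-equ6-1}. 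The only genuine caveat is the one you flag yourself: your derivation of (i) assumes $f$ is differentiable (both to move $\nabla_x$ inside the integral and to apply the divergence theorem), whereas the statement of (i) imposes no smoothness. This costs nothing for the paper's purposes, since the lemma is only ever invoked for the functions $f_i$, which satisfy the smoothness Assumption~\ref{zerosg:ass:zeroth-smooth}; for fully general $f$ your proposed repairs (prove the identity for $C^1$ integrands and pass to a limit by mollification, or exploit the convolution structure of $f^s$) are exactly the standard ones.
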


\subsubsection{Useful Lemmas on Series}
\begin{lemma}\label{zerosg:lemma:sumgeo}
Let $a,b\in(0,1)$ be two constants, then
\begin{align}\label{zerosg:lemma:sumgeo-equ}
\sum_{\tau=0}^{k}a^\tau b^{k-\tau}\le
\begin{cases}
  \frac{a^{k+1}}{a-b}, & \mbox{if } a>b \\
  \frac{b^{k+1}}{b-a}, & \mbox{if } a<b \\
  \frac{c^{k+1}}{c-b}, & \mbox{if } a=b ,
\end{cases}
\end{align}
where $c$ is any constant in $(a,1)$.
\end{lemma}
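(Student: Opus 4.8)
The plan is to dispatch the three cases separately, proving the case $a>b$ directly by a geometric-series computation and then reducing the other two to it.

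For the case $a>b$, I would factor out $b^{k}$ and recognize a finite geometric series with ratio $a/b>1$. Concretely, $\sum_{\tau=0}^{k}a^{\tau}b^{k-\tau}=b^{k}\sum_{\tau=0}^{k}(a/b)^{\tau}=b^{k}\cdot\frac{(a/b)^{k+1}-1}{(a/b)-1}$, which simplifies to $\frac{a^{k+1}-b^{k+1}}{a-b}$. Since $b\in(0,1)$ gives $b^{k+1}>0$, dropping this nonnegative term yields the claimed bound $\frac{a^{k+1}}{a-b}$.

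For the case $a<b$, I would simply reindex the summation via $\tau\mapsto k-\tau$, obtaining $\sum_{\tau=0}^{k}a^{\tau}b^{k-\tau}=\sum_{\tau=0}^{k}b^{\tau}a^{k-\tau}$. This is exactly the sum treated in the first case but with the roles of $a$ and $b$ exchanged, and now $b>a$, so the first case delivers the bound $\frac{b^{k+1}}{b-a}$.

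The remaining case $a=b$ is the only one requiring a small idea, and it is where I expect the main obstacle to lie. Here the sum degenerates to $(k+1)a^{k}$, and the naive estimate $(k+1)a^{k}\le(k+1)c^{k}$ is useless because the polynomial prefactor $k+1$ cannot be absorbed into a single geometric term. Instead, fixing any $c\in(a,1)$, I would use the term-wise inequality $a^{\tau}\le c^{\tau}$ (valid since $0<a\le c$ and $\tau\ge0$) to write $a^{\tau}b^{k-\tau}=a^{k}\le c^{\tau}a^{k-\tau}$. Summing over $\tau$ and then invoking the first case applied to the pair $(c,a)$, for which $c>a$, gives $\sum_{\tau=0}^{k}a^{\tau}b^{k-\tau}\le\sum_{\tau=0}^{k}c^{\tau}a^{k-\tau}\le\frac{c^{k+1}}{c-a}=\frac{c^{k+1}}{c-b}$, as required. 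The resolution is thus to trade the exact value $(k+1)a^{k}$ for a genuine geometric sum with ratio $c/a>1$, which lets the exponential growth dominate the polynomial factor and reuses the computation of the first case verbatim.
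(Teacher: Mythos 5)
Your proposal is correct and follows essentially the same route as the paper: all three cases rest on geometric-series bounds, and your key move in the $a=b$ case (replacing $a^\tau$ by $c^\tau$ for $c\in(a,1)$ to recover a genuine geometric sum) is exactly the paper's argument. The only differences are cosmetic — you compute the finite geometric sum exactly and drop the $b^{k+1}$ term where the paper bounds it by the infinite series, and you handle $a<b$ by reindexing rather than by a parallel computation.
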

\begin{proof}
If $a>b$, then
\begin{align*}
\sum_{\tau=0}^{k}a^\tau b^{k-\tau}=a^k\sum_{\tau=0}^{k}\Big(\frac{b}{a}\Big)^{k-\tau}
\le \frac{a^{k+1}}{a-b}.
\end{align*}
Similarly, when $a<b$, we have
\begin{align*}
\sum_{\tau=0}^{k}a^\tau b^{k-\tau}=b^k\sum_{\tau=0}^{k}\Big(\frac{a}{b}\Big)^\tau
\le \frac{b^{k+1}}{b-a}.
\end{align*}
If $a=b$, then for any $c\in(a,1)$, we have
\begin{align*}
\sum_{\tau=0}^{k}a^\tau b^{k-\tau}\le\sum_{\tau=0}^{k}c^\tau b^{k-\tau}=c^k\sum_{\tau=0}^{k}\Big(\frac{b}{c}\Big)^{k-\tau}
\le \frac{c^{k+1}}{c-b}.
\end{align*}
Hence, this lemma holds.
\end{proof}

\begin{lemma}\label{zerosg:serise:lemma:sum}
Let $k$ and $\tau$ be two integers and $\delta$ be a constant. Suppose $k\ge\tau\ge1$, then
\begin{align}\label{zerosg:serise:lemma:sum-equ}
\sum_{l=\tau}^{k}l^\delta\le
\begin{cases}
  \frac{(k+1)^{\delta+1}}{\delta+1}, & \mbox{if } \delta>-1 \\
  \ln(k), & \mbox{if } \delta=-1 \\
  \frac{-(\tau-1)^{\delta+1}}{\delta+1}, & \mbox{if } \delta<-1~\text{and}~\tau\ge2.
\end{cases}
\end{align}
\end{lemma}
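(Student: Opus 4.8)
The plan is to prove all three cases by the classical integral comparison, exploiting that $g(x)=x^{\delta}$ is monotone on $(0,\infty)$: nondecreasing when $\delta\ge0$ and strictly decreasing when $\delta<0$. For a monotone function each term $l^{\delta}$ is bounded by the integral of $g$ over an adjacent unit interval, so summing a chain of such bounds converts the discrete sum into $\int x^{\delta}\,dx$, whose antiderivative is $\frac{x^{\delta+1}}{\delta+1}$ when $\delta\ne-1$ and $\ln x$ when $\delta=-1$. Throughout I would only need elementary monotonicity of $x^{\delta}$ and of $x^{\delta+1}$.

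First I would treat $\delta>-1$, splitting on the sign of $\delta$. If $\delta\ge0$, then $g$ is nondecreasing, so $l^{\delta}\le\int_{l}^{l+1}x^{\delta}\,dx$; summing $l$ from $\tau$ to $k$ gives $\sum_{l=\tau}^{k}l^{\delta}\le\int_{\tau}^{k+1}x^{\delta}\,dx=\frac{(k+1)^{\delta+1}-\tau^{\delta+1}}{\delta+1}\le\frac{(k+1)^{\delta+1}}{\delta+1}$, the last step dropping the nonnegative term $\tau^{\delta+1}/(\delta+1)$. If $-1<\delta<0$, then $g$ is decreasing, so $l^{\delta}\le\int_{l-1}^{l}x^{\delta}\,dx$; here the integral down to $0$ (relevant when $\tau=1$) is finite precisely because $\delta>-1$, and summing yields $\sum_{l=\tau}^{k}l^{\delta}\le\int_{\tau-1}^{k}x^{\delta}\,dx\le\frac{k^{\delta+1}}{\delta+1}\le\frac{(k+1)^{\delta+1}}{\delta+1}$, using $\delta+1>0$ in the final monotonicity step.

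Next, for $\delta=-1$ the function $1/x$ is decreasing, so $\frac{1}{l}\le\int_{l-1}^{l}\frac{dx}{x}$; summing from $l=2$ to $k$ gives $\sum_{l=2}^{k}\frac{1}{l}\le\int_{1}^{k}\frac{dx}{x}=\ln k$, and the claimed bound follows from $\sum_{l=\tau}^{k}\frac{1}{l}\le\sum_{l=2}^{k}\frac{1}{l}$ once $\tau\ge2$ (the regime in which the lower integration limit stays positive). Finally, for $\delta<-1$ with $\tau\ge2$, again $l^{\delta}\le\int_{l-1}^{l}x^{\delta}\,dx$, so $\sum_{l=\tau}^{k}l^{\delta}\le\int_{\tau-1}^{k}x^{\delta}\,dx=\frac{k^{\delta+1}-(\tau-1)^{\delta+1}}{\delta+1}$; since $k^{\delta+1}>0$ while $\delta+1<0$, the term $k^{\delta+1}/(\delta+1)$ is negative, and discarding it gives the stated $\frac{-(\tau-1)^{\delta+1}}{\delta+1}$.

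The computations are routine; the one point I would flag as the main obstacle is the sign bookkeeping of $\delta+1$ when dropping boundary terms. In the two branches with $\delta\le-1$, dividing by the negative quantity $\delta+1$ reverses inequalities, so one must verify that the discarded piece indeed carries the favorable sign (and, equivalently, that the claimed right-hand sides are genuinely nonnegative). The restriction $\tau\ge2$ in those branches is exactly what keeps the lower integration limit away from $0$, where $x^{\delta}$ is non-integrable for $\delta\le-1$; this is why the first branch $\delta>-1$ tolerates $\tau=1$ whereas the others do not.
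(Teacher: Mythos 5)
Your proof is correct and takes essentially the same route as the paper's: a term-by-term integral comparison exploiting monotonicity of $x^{\delta}$, bounding the sum by $\int_{\tau}^{k+1}x^{\delta}\,dx$ when $\delta\ge 0$ and by $\int_{\tau-1}^{k}x^{\delta}\,dx$ when $\delta<0$, followed by the same sign bookkeeping when discarding boundary terms. Your explicit restriction to $\tau\ge 2$ in the $\delta=-1$ branch is what the paper's own proof implicitly requires as well (its intermediate bound $\ln(k/(\tau-1))$ is meaningless at $\tau=1$), so you are, if anything, slightly more careful on that corner case.
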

\begin{proof}
If $\delta\ge0$, then $h(t)=t^\delta$ is an increasing function in the interval $[1,+\infty)$. Hence,
\begin{align}\label{zerosg:serise:lemma:sum-proof1}
\sum_{l=\tau}^{k}l^\delta\le\int_{\tau}^{k+1}t^\delta dt
=\frac{(k+1)^{\delta+1}-\tau^{\delta+1}}{\delta+1}
\le\frac{(k+1)^{\delta+1}}{\delta+1}.
\end{align}

If $\delta<0$, then $h(t)=t^\delta$ is a decreasing function in the interval $[1,+\infty)$. Hence,
\begin{align}\label{zerosg:serise:lemma:sum-proof2}
\sum_{l=\tau}^{k}l^\delta&\le\int_{\tau-1}^{k}t^\delta dt
=\begin{cases}
  \ln(\frac{k}{\tau-1}), & \mbox{if } \delta=-1, \\
  \frac{k^{\delta+1}-(\tau-1)^{\delta+1}}{\delta+1}, & \mbox{if }-1<\delta<0,\\
  \frac{k^{\delta+1}-(\tau-1)^{\delta+1}}{\delta+1}, & \mbox{if }\delta<-1~\text{and}~\tau\ge2,
\end{cases}
\nonumber\\
&\le\begin{cases}
  \ln(k), & \mbox{if } \delta=-1, \\
  \frac{(k+1)^{\delta+1}}{\delta+1}, & \mbox{if }-1<\delta<0,\\
  \frac{-(\tau-1)^{\delta+1}}{\delta+1}, & \mbox{if }\delta<-1~\text{and}~\tau\ge2.
\end{cases}
\end{align}

Finally, \eqref{zerosg:serise:lemma:sum-proof1} and \eqref{zerosg:serise:lemma:sum-proof2} yield \eqref{zerosg:serise:lemma:sum-equ}.
\end{proof}

\begin{lemma}\label{zerosg:serise:lemma:sequence} 
Let $\{z_k\}$, $\{r_{1,k}\}$, and $\{r_{2,k}\}$ be sequences. Suppose there exists $t_1\in\mathbb{N}_+$ such that
\begin{subequations}
\begin{align}
&z_k\ge0,\label{zerosg:serise:lemma:sequence-equ0}\\
&z_{k+1}\le(1-r_{1,k})z_k+r_{2,k},\label{zerosg:serise:lemma:sequence-equ1}\\
&1> r_{1,k}\ge\frac{a_1}{(k+t_1)^{\delta_1}},\label{zerosg:serise:lemma:sequence-equ2}\\
&r_{2,k}\le\frac{a_2}{(k+t_1)^{\delta_2}},~\forall k\in\mathbb{N}_0, \label{zerosg:serise:lemma:sequence-equ3}
\end{align}
\end{subequations}
where $a_1>0$, $a_2>0$, $\delta_1\in[0,1]$, and $\delta_2>\delta_1$ are constants.

(i) If $\delta_1\in(0,1)$, then
\begin{align}\label{zerosg:serise:lemma:sequence-equ4}
z_{k}\le\phi_1(k,t_1,a_1,a_2,\delta_1,\delta_2,z_{0}),~\forall k\in\mathbb{N}_+,
\end{align}
where
\begin{align}\label{zerosg:serise:lemma:sequence-equ4-phi2}
\phi_1(k,t_1,a_1,a_2,\delta_1,\delta_2,z_{0})
&=\frac{1}{s_1(k+t_1)}\Big(s_1(t_1)z_{0}
+\frac{[t_2-1-t_1]_+s_1(t_1+1)a_2}{t_1^{\delta_2}}\Big)\nonumber\\
&\quad+\frac{a_2}{(k+t_1-1)^{\delta_2}}+\frac{{\bm 1}_{(k+t_1-1\ge t_2)}(\frac{t_1+1}{t_1})^{\delta_2}a_2\delta_2}{a_1\delta_1(k+t_1)^{\delta_2-\delta_1}},
\end{align}
$s_1(k)=e^{\frac{a_1}{1-\delta_1}k^{1-\delta_1}}$ and $t_2=\lceil(\frac{\delta_2}{a_1})^{\frac{1}{1-\delta_1}}\rceil$.

(ii) If $\delta_1=1$, then
\begin{align}\label{zerosg:serise:lemma:sequence-equ5}
z_{k}&\le \phi_2(k,t_1,a_1,a_2,\delta_2,z_{0}),~\forall k\in\mathbb{N}_+,
\end{align}
where
\begin{align}\label{zerosg:serise:lemma:sequence-equ5-phi3}
\phi_2(k,t_1,a_1,a_2,\delta_2,z_{0})&=\frac{t_1^{a_1}z_{0}}{(k+t_1)^{a_1}}
+\frac{a_2}{(k+t_1-1)^{\delta_2}}
+\Big(\frac{t_1+1}{t_1}\Big)^{\delta_2}a_2s_2(k+t_1),
\end{align}
and
\begin{align*}
s_2(k)=
\begin{cases}
  \frac{1}{(a_1-\delta_2+1)k^{\delta_2-1}}, & \mbox{if } a_1-\delta_2>-1, \\
  \frac{\ln(k-1)}{k^{a_1}}, & \mbox{if } a_1-\delta_2=-1, \\
  \frac{-t_1^{a_1-\delta_2+1}}{(a_1-\delta_2+1)k^{a_1}}, & \mbox{if } a_1-\delta_2<-1.
\end{cases}
\end{align*}

(iii) If $\delta_1=0$, then
\begin{align}\label{zerosg:serise:lemma:sequence-equ6}
z_{k}&\le \phi_3(k,t_1,a_1,a_2,\delta_2,z_{0}),~\forall k\in\mathbb{N}_+,
\end{align}
where
\begin{align}\label{zerosg:serise:lemma:sequence-equ6-phi4}
\phi_3(k,t_1,a_1,a_2,\delta_2,z_{0})
&=(1-a_1)^kz_{0}+a_2(1-a_1)^{k+t_1-1}\Big([t_3-t_1]_+s_3(t_1)\nonumber\\
&\quad+([t_4-t_1]_+-[t_3-t_1]_+)s_3(t_4)\Big)\nonumber\\
&\quad+\frac{{\bm 1}_{(k+t_1-1\ge t_4)}2a_2}{-\ln(1-a_1)(k+t_1)^{\delta_2}(1-a_1)},
\end{align}
$s_3(k)=\frac{1}{k^{\delta_2}(1-a_1)^{k}}$, $t_3=\lceil \frac{-\delta_2}{\ln(1-a_1)}\rceil$, and $t_4=\lceil \frac{-2\delta_2}{\ln(1-a_1)}\rceil$.
\end{lemma}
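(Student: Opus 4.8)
The plan is to handle all three cases through a single unrolling of the linear recursion \eqref{zerosg:serise:lemma:sequence-equ1} and then to estimate the resulting homogeneous and inhomogeneous parts separately, invoking the series bounds of Lemmas~\ref{zerosg:lemma:sumgeo} and \ref{zerosg:serise:lemma:sum}. First I would iterate \eqref{zerosg:serise:lemma:sequence-equ1}; since \eqref{zerosg:serise:lemma:sequence-equ0} and $r_{1,l}<1$ make every factor nonnegative, this yields
\begin{align*}
z_k\le\Big(\prod_{l=0}^{k-1}(1-r_{1,l})\Big)z_0
+\sum_{\tau=0}^{k-1}\Big(\prod_{l=\tau+1}^{k-1}(1-r_{1,l})\Big)r_{2,\tau}.
\end{align*}
Using $1-x\le e^{-x}$ together with the lower bound \eqref{zerosg:serise:lemma:sequence-equ2}, every partial product is controlled by $\prod_{l=\tau+1}^{k-1}(1-r_{1,l})\le\exp\big(-a_1\sum_{l=\tau+1}^{k-1}(l+t_1)^{-\delta_1}\big)$, and I would bound the exponent from below by the integral $\int_{\tau+1}^{k}(t+t_1)^{-\delta_1}\,dt$, which is legitimate because the integrand is decreasing.

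The three cases then diverge only through this integral. For $\delta_1\in(0,1)$ it equals $\frac{1}{1-\delta_1}[(k+t_1)^{1-\delta_1}-(\tau+1+t_1)^{1-\delta_1}]$, so the partial product is at most $s_1(\tau+t_1+1)/s_1(k+t_1)$; for $\delta_1=1$ it equals $\ln\frac{k+t_1}{\tau+1+t_1}$, producing the polynomial factor $((\tau+1+t_1)/(k+t_1))^{a_1}$; and for $\delta_1=0$ the bound $r_{1,l}\ge a_1$ makes the product purely geometric, $(1-a_1)^{k-1-\tau}$. In each case the homogeneous term reduces immediately to the first summand of $\phi_1$, $\phi_2$, or $\phi_3$ (respectively $s_1(t_1)z_0/s_1(k+t_1)$, $t_1^{a_1}z_0/(k+t_1)^{a_1}$, and $(1-a_1)^k z_0$), while substituting \eqref{zerosg:serise:lemma:sequence-equ3} turns the inhomogeneous term into a weighted sum $\sum_\tau w(\tau)\,a_2(\tau+t_1)^{-\delta_2}$ with $w(\tau)$ the product bound just derived.

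The crux, and the step I expect to be the main obstacle, is the sharp estimation of this inhomogeneous sum, because its summand $g(\tau)=w(\tau)(\tau+t_1)^{-\delta_2}$ is in general non-monotone and has no closed-form antiderivative. For $\delta_1\in(0,1)$ my plan is to study the sign of $g(\tau)-g(\tau-1)$: the growth $a_1(\tau+t_1)^{-\delta_1}$ of $\log s_1$ overtakes the decay rate $\delta_2/(\tau+t_1)$ of the power exactly at $\tau+t_1=(\delta_2/a_1)^{1/(1-\delta_1)}$, which is why the threshold $t_2$ appears; I would then bound the decreasing head ($\tau+t_1<t_2$) by its number of terms times its first value, isolate the boundary term $\tau=k-1$ (empty product) to produce the middle summand $a_2/(k+t_1-1)^{\delta_2}$, and bound the increasing tail by summation by parts, using $s_1'(m)=a_1m^{-\delta_1}s_1(m)$ to telescope $\sum_m s_1(m)m^{-\delta_2}$ against $\sum_m(s_1(m)-s_1(m-1))m^{\delta_1-\delta_2}$ and pulling $m^{\delta_1-\delta_2}$ out at its largest value $(k+t_1)^{\delta_1-\delta_2}$, which reproduces the last term of $\phi_1$; the offset factor $((t_1+1)/t_1)^{\delta_2}$ arises from replacing $(\tau+t_1)^{-\delta_2}$ by $(\tau+t_1+1)^{-\delta_2}$. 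For $\delta_1=1$ the product factor is a pure power, so the sum collapses to $\frac{a_2}{(k+t_1)^{a_1}}\sum_\tau(\tau+t_1)^{a_1-\delta_2}$, whose three regimes $a_1-\delta_2\gtrless-1$ are exactly the three branches of $s_2$ delivered by Lemma~\ref{zerosg:serise:lemma:sum}; for $\delta_1=0$ the geometric summand $s_3(m)=m^{-\delta_2}(1-a_1)^{-m}$ switches from decreasing to increasing near $m=-\delta_2/\ln(1-a_1)$, which gives the two thresholds $t_3$ and $t_4$, and splitting into a head, a transition block, and a geometric tail past $t_4$ yields the final term of $\phi_3$. Collecting the homogeneous term, the head block, the isolated boundary term, and the tail block then reassembles $\phi_1$, $\phi_2$, and $\phi_3$; the most delicate and error-prone part is the index bookkeeping — the $\tau+t_1$ versus $\tau+t_1+1$ offsets and the indicator functions ${\bm 1}_{(\cdot)}$ — but it is otherwise routine.
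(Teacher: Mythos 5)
Your overall architecture coincides with the paper's: unroll the recursion, apply $1-x\le e^{-x}$ together with the integral lower bound on $\sum_{\tau}r_{1,\tau}$ (valid because the integrand is decreasing), isolate the empty-product boundary term, shift indices to produce the factor $(\tfrac{t_1+1}{t_1})^{\delta_2}$, and split the inhomogeneous sum at the monotonicity threshold of the summand; case (ii) via Lemma~\ref{zerosg:serise:lemma:sum} and the head/boundary estimates are fine. The genuine gap is exactly at the step you yourself flag as the crux: the increasing tail $\sum_{m\ge t_2}s_1(m)m^{-\delta_2}$ in case (i). First, ``pulling $m^{\delta_1-\delta_2}$ out at its largest value $(k+t_1)^{\delta_1-\delta_2}$'' is internally inconsistent: since $\delta_1-\delta_2<0$, this factor is decreasing in $m$, so its largest value over the range is $t_2^{\delta_1-\delta_2}$, attained at the \emph{left} endpoint; replacing every $m^{\delta_1-\delta_2}$ by its value at the right endpoint is simply not an upper-bound operation. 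If you instead pull out the true maximum $t_2^{\delta_1-\delta_2}$ and telescope, you obtain roughly $\frac{s_1(k+t_1)}{a_1t_2^{\delta_2-\delta_1}}$, which after dividing by $s_1(k+t_1)$ leaves a constant rather than the decay $(k+t_1)^{-(\delta_2-\delta_1)}$ that the lemma asserts and that every application in the paper needs. Second, even the telescoping comparison runs the wrong way: for $t^{1-\delta_1}\ge\delta_1/a_1$ the derivative $s_1'(t)=a_1t^{-\delta_1}s_1(t)$ is increasing, hence $s_1(m)-s_1(m-1)=\int_{m-1}^{m}s_1'(t)\,dt\le a_1m^{-\delta_1}s_1(m)$, i.e.\ $\frac{s_1(m)-s_1(m-1)}{a_1m^{\delta_2-\delta_1}}\le\frac{s_1(m)}{m^{\delta_2}}$, so your telescoping sum is a \emph{lower} bound for the quantity you want to bound from above.

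What is missing is a self-absorption argument, and it is precisely the source of the factor $\frac{\delta_2}{a_1\delta_1}$ in the last term of $\phi_1$. The paper first dominates the (increasing) tail sum by the integral $\int_{t_2}^{k+1}\frac{s_1(t)}{t^{\delta_2}}\,dt$, then integrates by parts using $\frac{s_1(t)}{t^{\delta_2}}\,dt=\frac{1}{a_1t^{\delta_2-\delta_1}}\,ds_1(t)$, producing the boundary term $\frac{s_1(k+1)}{a_1(k+1)^{\delta_2-\delta_1}}$ plus a remainder $\int\frac{\delta_2-\delta_1}{a_1t^{1-\delta_1}}\cdot\frac{s_1(t)}{t^{\delta_2}}\,dt$; since $t\ge t_2\ge(\delta_2/a_1)^{1/(1-\delta_1)}$, the coefficient is at most $\frac{\delta_2-\delta_1}{\delta_2}<1$, so the remainder is absorbed back into the left-hand side, giving $\int_{t_2}^{k+1}\frac{s_1(t)}{t^{\delta_2}}\,dt\le\frac{\delta_2\,s_1(k+1)}{a_1\delta_1(k+1)^{\delta_2-\delta_1}}$. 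The same mechanism is needed for the geometric tail in case (iii) --- there $t_4\ge\frac{-2\delta_2}{\ln(1-a_1)}$ gives absorption constant $\frac{1}{2}$ and hence the factor $2$ in the last term of $\phi_3$ --- a step your sketch also leaves unspecified. With this absorption argument substituted for your summation-by-parts step, the rest of your plan matches the paper's proof.
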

\begin{proof}
This proof is inspired by the proof of Lemma~25 in \cite{kar2012distributed}.

From \eqref{zerosg:serise:lemma:sequence-equ0}--\eqref{zerosg:serise:lemma:sequence-equ2}, for any $k\in\mathbb{N}_+$, it holds that
\begin{align}\label{zerosg:serise:lemma:seqproof1}
z_{k}&\le \prod_{\tau=0}^{k-1}(1-r_{1,\tau})z_{0}+r_{2,k-1}
+\sum_{l=0}^{k-2}\prod_{\tau=l+1}^{k-1}(1-r_{1,\tau})r_{2,l}.
\end{align}


For any $t\in[0, 1]$, it holds that $1-t\le e^{-t}$ since $s_4(t)=1-t-e^{-t}$ is a non-increasing function in the interval $[0, 1]$ and $s_4(0)=0$. Thus, for any $k>l\ge 0$, it holds that
\begin{align}\label{zerosg:serise:lemma:seqproof2}
\prod_{\tau=l}^{k-1}(1-r_{1,\tau})
\le e^{-\sum_{\tau=l}^{k-1}r_{1,\tau}}.
\end{align}

We also have
\begin{align}\label{zerosg:serise:lemma:seqproof3}
\sum_{\tau=l}^{k-1}r_{1,\tau}&\ge\sum_{\tau=l}^{k-1}\frac{a_1}{(\tau+t_1)^{\delta_1}}
=\sum_{\tau=l+t_1}^{k-1+t_1}\frac{a_1}{\tau^{\delta_1}}\ge\int_{t=l+t_1}^{k+t_1}\frac{a_1}{t^{\delta_1}}dt\nonumber\\
&=\begin{cases}
   \frac{a_1}{1-\delta_1}((k+t_1)^{1-\delta_1}-(l+t_1)^{1-\delta_1}), & \mbox{if } \delta_1\in(0,1), \\
   a_1\ln(\frac{k+t_1}{l+t_1}), & \mbox{if } \delta_1=1,
 \end{cases}
\end{align}
where the first inequality holds due to \eqref{zerosg:serise:lemma:sequence-equ2} and the second inequality holds since $s_5(t)=\frac{a_1}{t^{\delta_1}}$ is a decreasing function in the interval $[1, +\infty)$.

Hence, \eqref{zerosg:serise:lemma:seqproof2} and \eqref{zerosg:serise:lemma:seqproof3} yield
\begin{align}\label{zerosg:serise:lemma:seqproof4}
&\prod_{\tau=l}^{k-1}(1-r_{1,\tau})
\le e^{-\sum_{\tau=l}^{k-1}r_{1,\tau}}\le\begin{cases}
  \frac{s_1(l+t_1)}{s_1(k+t_1)}, & \mbox{if } \delta_1\in(0,1), \\
   \frac{(l+t_1)^{a_1}}{(k+t_1)^{a_1}}, & \mbox{if } \delta_1=1.
    \end{cases}
\end{align}

(i) When $\delta_1\in(0,1)$, from \eqref{zerosg:serise:lemma:seqproof4} and \eqref{zerosg:serise:lemma:sequence-equ3}, we have
\begin{align}\label{zerosg:serise:lemma:seqproof5}
\sum_{l=0}^{k-2}\prod_{\tau=l+1}^{k-1}(1-r_{1,\tau})r_{2,l}
&\le\sum_{l=0}^{k-2}\frac{s_1(l+t_1+1)}{s_1(k+t_1)}\frac{a_2}{(l+t_1)^{\delta_2}}\nonumber\\
&=\frac{a_2}{s_1(k+t_1)}
\sum_{l=0}^{k-2}\frac{s_1(l+t_1+1)}{(l+t_1)^{\delta_2}}\nonumber\\
&\le \frac{a_2}{s_1(k+t_1)}
\sum_{l=0}^{k-2}\frac{s_1(l+t_1+1)}{(\frac{t_1}{t_1+1}l+t_1)^{\delta_2}}\nonumber\\
&=\frac{(\frac{t_1+1}{t_1})^{\delta_2}a_2}{s_1(k+t_1)}
\sum_{l=0}^{k-2}\frac{s_1(l+t_1+1)}{(l+t_1+1)^{\delta_2}}\nonumber\\
&=\frac{(\frac{t_1+1}{t_1})^{\delta_2}a_2}{s_1(k+t_1)}
\sum_{l=t_1+1}^{k+t_1-1}\frac{s_1(l)}{l^{\delta_2}}\nonumber\\
&=\frac{(\frac{t_1+1}{t_1})^{\delta_2}a_2}{s_1(k+t_1)}
\Big(\sum_{l=t_1+1}^{t_2-1}\frac{s_1(l)}{l^{\delta_2}}
+\sum_{l=t_2}^{k+t_1-1}\frac{s_1(l)}{l^{\delta_2}}\Big).
\end{align}

We know that $s_6(t)=\frac{s_1(t)}{t^{\delta_2}}$ is a decreasing function in the interval $[1,t_2-1]$ since
\begin{align*}
\frac{ds_6(t)}{dt}=\Big(a_1-\frac{\delta_2}{t^{1-\delta_1}}\Big)\frac{s_6(t)}{t^{\delta_1}}
\le0,~\forall t\in\Big(0,\Big(\frac{\delta_2}{a_1}\Big)^{\frac{1}{1-\delta_1}}\Big].
\end{align*} Thus, for any $k\in[1,t_2-1]$, we have
\begin{align}\label{zerosg:serise:lemma:seqproof8.1}
\sum_{l=k}^{t_2-1}\frac{s_1(l)}{l^{\delta_2}}
\le(t_2-k)\frac{s_1(k)}{k^{\delta_2}}.
\end{align}

Noting that $s_6(t)=\frac{s_1(t)}{t^{\delta_2}}$ is an increasing function in the interval $[t_2,+\infty)$, for any $k\ge t_2$, we have
\begin{align}\label{zerosg:serise:lemma:seqproof6}
\sum_{l=t_2}^{k}\frac{s_1(l)}{l^{\delta_2}}
\le\int_{t_2}^{k+1}\frac{s_1(t)}{t^{\delta_2}}dt.
\end{align}

We have
\begin{align}\label{zerosg:serise:lemma:seqproof7}
\int_{t_2}^{k+1}\frac{s_1(t)}{t^{\delta_2}}dt
&=\int_{t_2}^{k+1}\frac{1}{a_1t^{\delta_2-\delta_1}}ds_1(t)\nonumber\\
&=\frac{s_1(k+1)}{a_1(k+1)^{\delta_2-\delta_1}}
-\frac{s_1(t_2)}{a_1t_2^{\delta_2-\delta_1}}
+\int_{t_2}^{k+1}\frac{(\delta_2-\delta_1)s_1(t)}{a_1t^{\delta_2-\delta_1+1}}dt\nonumber\\
&\le\frac{s_1(k+1)}{a_1(k+1)^{\delta_2-\delta_1}}
+\int_{t_2}^{k+1}\frac{(\delta_2-\delta_1)}{a_1t^{1-\delta_1}}
\frac{s_1(t)}{t^{\delta_2}}dt\nonumber\\
&\le\frac{s_1(k+1)}{a_1(k+1)^{\delta_2-\delta_1}}
+\frac{\delta_2-\delta_1}{a_1t_2^{1-\delta_1}}
\int_{t_2}^{k+1}\frac{s_1(t)}{t^{\delta_2}}dt\nonumber\\
&\le\frac{s_1(k+1)}{a_1(k+1)^{\delta_2-\delta_1}}
+\frac{\delta_2-\delta_1}{\delta_2}
\int_{t_2}^{k+1}\frac{s_1(t)}{t^{\delta_2}}dt,
\end{align}
where the second inequality holds since $s_7(t)=\frac{1}{t^{1-\delta_1}}$ is a decreasing function in the interval $[1, +\infty)$; and the last inequality holds due to $t_2^{1-\delta_1}\ge\frac{\delta_2}{a_1}$.

From \eqref{zerosg:serise:lemma:seqproof6} and \eqref{zerosg:serise:lemma:seqproof7}, for any $k\ge t_2$, we have
\begin{align}\label{zerosg:serise:lemma:seqproof8}
\sum_{l=t_2}^{k}\frac{s_1(l)}{l^{\delta_2}}
\le\int_{t_2}^{k+1}\frac{s_1(t)}{t^{\delta_2}}dt
\le\frac{\delta_2s_1(k+1)}{a_1\delta_1(k+1)^{\delta_2-\delta_1}}.
\end{align}

From \eqref{zerosg:serise:lemma:seqproof5}, \eqref{zerosg:serise:lemma:seqproof8.1}, and \eqref{zerosg:serise:lemma:seqproof8}, we have
\begin{align}\label{zerosg:serise:lemma:seqproof5.1}
&\sum_{l=0}^{k-2}\prod_{\tau=l+1}^{k-1}(1-r_{1,\tau})r_{2,l}\nonumber\\
&\le\frac{(\frac{t_1+1}{t_1})^{\delta_2}a_2}{s_1(k+t_1)}
\Big(\frac{[t_2-1-t_1]_+s_1(t_1+1)}{(t_1+1)^{\delta_2}}
+{\bm 1}_{(k+t_1-1\ge t_2)}\frac{\delta_2s_1(k+t_1)}{a_1\delta_1(k+t_1)^{\delta_2-\delta_1}}\Big).
\end{align}

Then, \eqref{zerosg:serise:lemma:seqproof1},  \eqref{zerosg:serise:lemma:seqproof4}, and \eqref{zerosg:serise:lemma:seqproof5.1} yield
\eqref{zerosg:serise:lemma:sequence-equ4}.

(ii) When $\delta_1=1$, from \eqref{zerosg:serise:lemma:seqproof4} and \eqref{zerosg:serise:lemma:sequence-equ3}, we have
\begin{align}\label{zerosg:serise:lemma:seqproof9}
\sum_{l=0}^{k-2}\prod_{\tau=l+1}^{k-1}(1-r_{1,\tau})r_{2,l}
&\le\sum_{l=0}^{k-2}\frac{(l+t_1+1)^{a_1}}{(k+t_1)^{a_1}}
\frac{a_2}{(l+t_1)^{\delta_2}}\nonumber\\
&\le\sum_{l=0}^{k-2}\frac{(l+t_1+1)^{a_1}}{(k+t_1)^{a_1}}
\frac{a_2}{(\frac{t_1}{t_1+1}l+t_1)^{\delta_2}}\nonumber\\
&=\frac{(\frac{t_1+1}{t_1})^{\delta_2}a_2}{(k+t_1)^{a_1}}
\sum_{l=0}^{k-2}\frac{(l+t_1+1)^{a_1}}{(l+t_1+1)^{\delta_2}}\nonumber\\
&=\frac{(\frac{t_1+1}{t_1})^{\delta_2}a_2}{(k+t_1)^{a_1}}
\sum_{l=t_1+1}^{k+t_1-1}l^{a_1-\delta_2},
\end{align}
where the first inequality holds due to \eqref{zerosg:serise:lemma:seqproof4} and \eqref{zerosg:serise:lemma:sequence-equ3}.

From \eqref{zerosg:serise:lemma:seqproof1}, \eqref{zerosg:serise:lemma:seqproof4}, \eqref{zerosg:serise:lemma:seqproof9}, and \eqref{zerosg:serise:lemma:sum-equ}, we have \eqref{zerosg:serise:lemma:sequence-equ5}.

(iii) Denote $a=1-a_1$.  From \eqref{zerosg:serise:lemma:sequence-equ2} and $\delta_1=0$, we know that $a_1\in(0,1)$. Thus, $a\in(0,1)$.

From \eqref{zerosg:serise:lemma:sequence-equ0}--\eqref{zerosg:serise:lemma:sequence-equ3} and $\delta_1=0$, for any $k\in\mathbb{N}_+$, it holds that
\begin{align}\label{zerosg:serise:lemma:seqproof1-0}
z_{k}&\le (1-a_1)^kz_{0}+\sum_{\tau=0}^{k-1}(1-a_1)^{k-1-\tau}r_{2,\tau}\nonumber\\
&\le a^kz_{0}+a_2a^{k+t_1-1}\sum_{\tau=0}^{k-1}\frac{1}{(\tau+t_1)^{\delta_2}a^{\tau+t_1}}.
\end{align}

We have
\begin{align}\label{zerosg:serise:lemma:seqproof1.1-0}
&\sum_{\tau=0}^{k-1}\frac{1}{(\tau+t_1)^{\delta_2}a^{\tau+t_1}}
=\sum_{\tau=t_1}^{k+t_1-1}\frac{1}{\tau^{\delta_2}a^{\tau}}
=\sum_{\tau=t_1}^{t_3-1}s_3(\tau)
+\sum_{\tau=t_3}^{t_4-1}s_3(\tau)
+\sum_{\tau=t_4}^{k+t_1-1}s_3(\tau).
\end{align}

We know that $s_3(t)=\frac{1}{t^{\delta_2}a^{t}}$ is decreasing and increasing in the intervals $[1,t_3-1]$ and $[t_3,+\infty)$, respectively, since
\begin{align*}
\frac{ds_3(t)}{dt}&=-s_3(t)\Big(\frac{\delta_2}{t}+\ln(a)\Big)\le0,~\forall t\in\Big(0,\frac{-\delta_2}{\ln(a)}\Big],\\
\frac{ds_3(t)}{dt}&=-s_3(t)\Big(\frac{\delta_2}{t}+\ln(a)\Big)\ge0,~\forall t\in\Big[\frac{-\delta_2}{\ln(a)},+\infty\Big).
\end{align*}
Thus, we have
\begin{subequations}
\begin{align}
&\sum_{\tau=k_1}^{t_3-1}s_3(\tau)
\le(t_3-k_1)s_3(k_1),~\forall k_1\in[1,t_3-1],\label{zerosg:serise:lemma:seqproof1.2-0}\\
&\sum_{\tau=k_2}^{t_4-1}s_3(\tau)
\le(t_4-k_2)s_3(t_4),~\forall k_2\in[t_3,t_4-1],\label{zerosg:serise:lemma:seqproof1.3-0}\\
&\sum_{\tau=t_4}^{k_3}s_3(\tau)
\le\int_{t_4}^{k_3+1}s_3(t)dt,~\forall k_3\ge t_4.\label{zerosg:serise:lemma:seqproof1.4-0}
\end{align}
\end{subequations}

Denote $b=\frac{1}{a}$. We have
\begin{align}\label{zerosg:serise:lemma:seqproof1.5-0}
\int_{t_4}^{k_3+1}s_3(t)dt&=\int_{t_4}^{k_3+1}\frac{b^t}{t^{\delta_2}}dt
=\int_{t_4}^{k_3+1}\frac{1}{\ln(b) t^{\delta_2}}db^{t}\nonumber\\
&=\frac{b^{k_3+1}}{\ln(b) (k_3+1)^{\delta_2}}-\frac{b^{t_4}}{\ln(b) t_4^{\delta_2}}
+\int_{t_4}^{k_3+1}\frac{\delta_2b^{t}}{\ln(b) t^{\delta_2+1}}dt\nonumber\\
&\le\frac{b^{k_3+1}}{\ln(b) (k_3+1)^{\delta_2}}
+\int_{t_4}^{k_3+1}\frac{\delta_2}{\ln(b) t}s_3(t)dt\nonumber\\
&\le\frac{b^{k_3+1}}{\ln(b) (k_3+1)^{\delta_2}}
+\frac{\delta_2}{\ln(b) t_4}\int_{t_4}^{k_3+1}s_3(t)dt\nonumber\\
&\le\frac{b^{k_3+1}}{\ln(b) (k_3+1)^{\delta_2}}
+\frac{1}{2}\int_{t_4}^{k_3+1}s_3(t)dt,
\end{align}
where the last inequality holds due to $t_4=\lceil \frac{-2\delta_2}{\ln(1-a_1)}\rceil\ge\frac{-2\delta_2}{\ln(1-a_1)}
=\frac{2\delta_2}{\ln(b)}$.

From \eqref{zerosg:serise:lemma:seqproof1.4-0} and \eqref{zerosg:serise:lemma:seqproof1.5-0}, we have
\begin{align}
&\sum_{\tau=t_4}^{k_3}s_3(\tau)
\le\frac{2}{-\ln(a) (k_3+1)^{\delta_2}a^{k_3+1}},~\forall k_3\ge t_4.\label{zerosg:serise:lemma:seqproof1.6-0}
\end{align}


From \eqref{zerosg:serise:lemma:seqproof1-0}, \eqref{zerosg:serise:lemma:seqproof1.1-0}, \eqref{zerosg:serise:lemma:seqproof1.2-0}, \eqref{zerosg:serise:lemma:seqproof1.3-0}, and \eqref{zerosg:serise:lemma:seqproof1.6-0}, we get \eqref{zerosg:serise:lemma:sequence-equ6}.
\end{proof}

\subsection{Proof of Theorem~\ref{zerosg:thm-random-pd-sm}}\label{zerosg:proof-thm-random-pd-sm}


Denote $\bsL=L\otimes {\bm I}_p$, $\bsK=K_n\otimes {\bm I}_p$, $\bsH=\frac{1}{n}({\bm 1}_n{\bm 1}_n^\top\otimes{\bm I}_p)$, $\bsQ=R\Lambda^{-1}_1R^{\top}\otimes {\bm I}_p$, $\delta_{k}=\max_{i\in[n]}\{\delta_{i,k}\}$, $\bsx=\col(x_1,\dots,x_n)$, $\tilde{f}(\bsx)=\sum_{i=1}^{n}f_i(x_i)$,  $\bar{x}_k=\frac{1}{n}({\bm 1}_n^\top\otimes{\bm I}_p)\bsx_k$, $\bar{\bsx}_k={\bm 1}_n\otimes\bar{x}_k$, $\bsg_k=\nabla\tilde{f}(\bsx_k)$, $\bar{\bsg}_k=\bsH\bsg_{k}$, $\bsg^0_k=\nabla\tilde{f}(\bar{\bsx}_k)$, $\bar{\bsg}_k^0=\bsH\bsg^0_{k}={\bm 1}_n\otimes\nabla f(\bar{x}_k)$, $\bsg^e_k=\col(g^e_{1,k},\dots,g^e_{n,k})$, $\bar{g}^e_k=\frac{1}{n}({\bm 1}_n^\top\otimes{\bm I}_p)\bsg^e_k$,  $\bar{\bsg}^e_k={\bm 1}_n\otimes\bar{g}^e_k=\bsH\bsg^e_k$, $f^s_{i}(x,\delta_{i,k})=\mathbf{E}_{u\in\mathbb{B}^p}[f_i(x+\delta_{i,k} u)]$, $g^s_{i,k}=\nabla f^s_{i}(x_{i,k},\delta_{i,k})$, $\bsg^s_k=\col(g^s_{1,k},\dots,g^s_{n,k})$, and $\bar{\bsg}^s_k=\bsH\bsg^s_k$.

We also denote the following notations.
\begin{align*}
&c_0(\kappa_1,\kappa_2)=\max\Big\{\varepsilon_{1},~\frac{2\varepsilon_5}{\varepsilon_4},
~\Big(\frac{2p(1+\sigma_0^2)(1+\tilde{\sigma}_0^2)\varepsilon_7}{\varepsilon_4}\Big)^{\frac{1}{2}},
~\frac{\varepsilon_8}{2\varepsilon_6},
~\frac{24(1+\tilde{\sigma}_0^2)\kappa_4}{\kappa_2},
~128p(1+\sigma_0^2)(1+\tilde{\sigma}_0^2)\kappa_2\varepsilon_{10}\Big\},\\
&c_1=\frac{1}{\rho_2(L)}+1,\\
&c_2(\kappa_1)=\min\Big\{\frac{\varepsilon_2}{\varepsilon_3},~\frac{1}{5}\Big\},\\
&c_3(\kappa_1,\kappa_2)=\frac{24(1+\tilde{\sigma}_0^2)\kappa_3}{\kappa_2},\\
&\kappa_3=\frac{1}{\rho_2(L)}+\kappa_1+1,\\
&\kappa_4=\frac{1}{\rho_2(L)}+\kappa_1,\\
&\kappa_5=\frac{1}{\rho_2(L)}+\kappa_1+\frac{3}{2},\\
&\kappa_6=\frac{\kappa_1+1}{2}+\frac{1}{2\rho_2(L)},\\
&\kappa_7=\min\Big\{\frac{1}{2\rho(L)},~\frac{\kappa_1-1}{2\kappa_1}\Big\},\\
&\varepsilon_{1}=\max\{1+3L_f^2,
~(8+8p(1+\sigma_0^2)(1+\tilde{\sigma}_0^2)(6+L_f))^{\frac{1}{2}}L_f,~p\kappa_3\},\\
&\varepsilon_2=(\kappa_1-1)\rho_2(L)-1,\\
&\varepsilon_3=\rho(L)+(2\kappa_1^2+1)\rho(L^2)+1,\\
&\varepsilon_4=\frac{1}{2}(\varepsilon_2\kappa_2-\varepsilon_3\kappa_2^2),\\
&\varepsilon_5=\frac{1}{2}-\kappa_1\kappa_2\rho_2(L)+\kappa_2^2\rho(L)
+\frac{1}{2}(1+3\kappa_1\kappa_2+2\kappa_2)\kappa_1\kappa_2\rho(L^2),\\
&\varepsilon_6=\frac{1}{4}(\kappa_2-5\kappa_2^2),\\
&\varepsilon_7
=8(7+6\kappa_2+2\kappa_4+10\kappa_2\kappa_4)\kappa_2L_f^4
+\frac{(1+2L_f^2)\kappa_2}{2p(1+\sigma_0^2)(1+\tilde{\sigma}_0^2)}
+\Big(\frac{5}{p(1+\sigma_0^2)(1+\tilde{\sigma}_0^2)}+24\Big)L_f^2\kappa_2^2,\\
&\varepsilon_8=\kappa_4+\kappa_1\kappa_2+3\kappa_2^2+\kappa_2\kappa_4,\\
&\varepsilon_9=\frac{3\kappa_0}{2\kappa_2^2}(2\kappa_4+1),\\
&\varepsilon_{10}=10+L_f+\frac{1}{\kappa_2}(2\kappa_4+1)L_f^2
+(10\kappa_4+6)L_f^2,\\
&\varepsilon_{11}=L_f^2\Big(\frac{1}{512(1+\sigma_0^2)(1+\tilde{\sigma}_0^2)}
+\frac{13\kappa_2+4}{p}\Big),\\
&\varepsilon_{12}=2\varepsilon_{10}\sigma^2_1+\frac{\varepsilon_9\sigma^2_2}{p}
+4(1+\sigma_0^2)\varepsilon_{10}\sigma^2_2,\\
&\varepsilon_{13}=\frac{(1+\tilde{\sigma}_0^2)\varepsilon_9}{p}
+8(1+\sigma_0^2)(1+\tilde{\sigma}_0^2)\varepsilon_{10},\\
&\varepsilon_{14}=\frac{W_{0}}{n}
+\frac{p(\varepsilon_{11}\kappa_\delta^2
+\varepsilon_{12})\kappa_2^2}{(2\theta-1)\kappa_0^2},\\
&\varepsilon_{15}=2(\sigma^2_1+2(1+\sigma_0^2)\sigma^2_2)L_f,\\
&a_1=\frac{1}{\kappa_6}\min\{\varepsilon_{4},~\varepsilon_{6}\},\\
&a_2=pn(\varepsilon_{11}\kappa_\delta^2+\varepsilon_{12}
+2L_f\varepsilon_{13}\varepsilon_{14})\frac{\kappa_2^2}{\kappa_0^2}.
\end{align*}

To prove Theorem~\ref{zerosg:thm-random-pd-sm}, the following three lemmas are used. 
\begin{lemma}\label{zerosg:lemma:grad-st}
Suppose Assumption~\ref{zerosg:ass:zeroth-smooth} holds. Let $\{\bsx_k\}$ be the sequence generated by Algorithm~\ref{zerosg:algorithm-random-pd}, then
\begin{subequations}
\begin{align}
\bsg^s_k&=\mathbf{E}_{\mathfrak{L}_k}[\bsg^e_k],\label{zerosg:rand-grad-esti1}\\
\|\bsg_k^0-\bsg^s_k\|^2&\le 2L_f^2\|\bsx_{k}\|^2_{\bsK}+2nL_f^2\delta_k^2,\label{zerosg:rand-grad-esti8}\\
\|\bar{\bsg}_k^0-\bar{\bsg}^s_k\|^2
&\le 2L_f^2\|\bsx_{k}\|^2_{\bsK}+2nL_f^2\delta_k^2,\label{zerosg:rand-grad-esti9}\\
\mathbf{E}_{\mathfrak{L}_k}[\|\bar{\bsg}^e_{k}\|^2]
&\le  \frac{1}{n}\mathbf{E}_{\mathfrak{L}_k}[\|\bsg^e_k\|^2]+\|\bar{\bsg}^s_{k}\|^2,\label{zerosg:rand-grad-esti5}\\
\mathbf{E}_{\mathfrak{L}_k}[\|\bsg_k^0-\bsg^e_k\|^2]&\le 4L_f^2\|\bsx_{k}\|^2_{\bsK}+4nL_f^2\delta_k^2
+2\mathbf{E}_{\mathfrak{L}_k}[\|\bsg^e_k\|^2],\label{zerosg:rand-grad-esti6}\\
\|\bsg^0_{k+1}-\bsg^0_{k}\|^2&\le \eta^2_kL_f^2\|\bar{\bsg}^e_{k}\|^2
\le\eta^2_kL_f^2\|\bsg^e_{k}\|^2,\label{zerosg:gg-rand-pd}\\
\|\bar{\bsg}^0_k\|^2&\le 2nL_f(f(\bar{x}_k)-f^*).\label{zerosg:rand-grad-smooth}
\end{align}
\end{subequations}
If Assumptions~\ref{zerosg:ass:zeroth-variance} and \ref{zerosg:ass:fig} also hold, then
\begin{subequations}
\begin{align}
\mathbf{E}_{\mathfrak{L}_k}[\|\bsg^e_k\|^2]
&\le  16p(1+\sigma_0^2)(1+\tilde{\sigma}_0^2)(\|\bar{\bsg}_{k}^0\|^2
+L_f^2\|\bsx_{k}\|^2_{\bsK})+4np\sigma^2_1
+8np(1+\sigma_0^2)\sigma^2_2+\frac{1}{2}np^2L_f^2\delta_k^2,\label{zerosg:rand-grad-esti2}\\
\|\bsg^0_{k+1}\|^2&\le 3(\eta^2_kL_f^2\|\bsg^e_{k}\|^2+n\sigma^2_2
+(1+\tilde{\sigma}_0^2)\|\bar{\bsg}_{k}^0\|^2).\label{zerosg:rand-grad-esti4}
\end{align}
\end{subequations}
\end{lemma}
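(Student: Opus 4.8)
The plan is to establish the seven unconditional relations \eqref{zerosg:rand-grad-esti1}--\eqref{zerosg:rand-grad-smooth} first, and then the two moment bounds \eqref{zerosg:rand-grad-esti2} and \eqref{zerosg:rand-grad-esti4} that additionally invoke Assumptions~\ref{zerosg:ass:zeroth-variance} and \ref{zerosg:ass:fig}. Throughout I would work component-wise in the agent index and then re-stack, using that $\bsH$ is an orthogonal projection (so $\|\bsH z\|\le\|z\|$), that $\|\bsx\|_{\bsK}^2=\sum_{i}\|x_{i}-\bar{x}\|^2$, and that $\|\bar{\bsg}^0_k\|^2=n\|\nabla f(\bar{x}_k)\|^2$, $\|\bar{\bsg}^e_k\|^2=n\|\bar{g}^e_k\|^2$. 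For \eqref{zerosg:rand-grad-esti1} I would take $\mathbf{E}_{\mathfrak{L}_k}$ of $g^e_{i,k}$ in two stages: averaging over $\xi_{i,k}$ turns $F_i$ into $f_i$ by unbiasedness, producing $\hat{\nabla}_2 f_i(x_{i,k},\delta_{i,k},u_{i,k})$, and then averaging over $u_{i,k}\in\mathbb{S}^p$ yields $g^s_{i,k}=\nabla f^s_i(x_{i,k},\delta_{i,k})$ by \eqref{zerosg:lemma:uniformsmoothing-equ1}; stacking over $i$ gives the claim.

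For \eqref{zerosg:rand-grad-esti8} I would insert $\pm\nabla f_i(x_{i,k})$ into each summand, bound $\|\nabla f_i(\bar{x}_k)-\nabla f_i(x_{i,k})\|^2\le L_f^2\|x_{i,k}-\bar{x}_k\|^2$ by Assumption~\ref{zerosg:ass:zeroth-smooth} and $\|\nabla f_i(x_{i,k})-g^s_{i,k}\|^2\le L_f^2\delta_{i,k}^2$ by \eqref{zerosg:lemma:uniformsmoothing-equ8}, then apply $\|a+b\|^2\le2\|a\|^2+2\|b\|^2$ and sum; \eqref{zerosg:rand-grad-esti9} then follows by the non-expansiveness of $\bsH$. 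Bound \eqref{zerosg:rand-grad-esti5} is a variance decomposition applied to $\bar{g}^e_k=\tfrac1n\sum_i g^e_{i,k}$: with $\bar{g}^s_k:=\tfrac1n\sum_i g^s_{i,k}=\mathbf{E}_{\mathfrak{L}_k}[\bar{g}^e_k]$, one has $\mathbf{E}[\|\bar{g}^e_k\|^2]=\|\bar{g}^s_k\|^2+\mathbf{E}[\|\bar{g}^e_k-\bar{g}^s_k\|^2]$, and the conditional independence of the $g^e_{i,k}$ across $i$ collapses the second term to $\tfrac1{n^2}\sum_i\mathbf{E}[\|g^e_{i,k}-g^s_{i,k}\|^2]\le\tfrac1{n^2}\mathbf{E}[\|\bsg^e_k\|^2]$; multiplying by $n$ and using $\bar{\bsg}^s_k={\bm 1}_n\otimes\bar{g}^s_k$ gives the stacked form. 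Then \eqref{zerosg:rand-grad-esti6} comes from $\|a+b\|^2\le2\|a\|^2+2\|b\|^2$ applied to $\bsg^0_k-\bsg^e_k=(\bsg^0_k-\bsg^s_k)+(\bsg^s_k-\bsg^e_k)$ together with \eqref{zerosg:rand-grad-esti8} and $\mathbf{E}[\|\bsg^e_k-\mathbf{E}\bsg^e_k\|^2]\le\mathbf{E}[\|\bsg^e_k\|^2]$.

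The structurally central step is \eqref{zerosg:gg-rand-pd}: I would first show the average dynamics reduce to a plain stochastic step, $\bar{x}_{k+1}-\bar{x}_k=-\eta_k\bar{g}^e_k$. This uses that ${\bm 1}_n^\top L={\bm 0}$ annihilates the Laplacian term in the average of \eqref{zerosg:alg:random-pd-x}, and that summing \eqref{zerosg:alg:random-pd-q} over $i$ preserves $\sum_i v_{i,k}={\bm 0}_p$ by induction from the initialization, so the dual term drops out as well. Smoothness of each $f_i$ then gives $\|\bsg^0_{k+1}-\bsg^0_k\|^2\le nL_f^2\|\bar{x}_{k+1}-\bar{x}_k\|^2=\eta_k^2L_f^2\|\bar{\bsg}^e_k\|^2$, and $\|\bar{\bsg}^e_k\|^2\le\|\bsg^e_k\|^2$ finishes it. Estimate \eqref{zerosg:rand-grad-smooth} is then just \eqref{nonconvex:lemma:lipschitz2} applied to the $L_f$-smooth global $f$, scaled by $n$.

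The main obstacle is the second-moment bound \eqref{zerosg:rand-grad-esti2}, since it is where Assumptions~\ref{zerosg:ass:zeroth-smooth}--\ref{zerosg:ass:fig} and the approximation lemma must all be chained with careful constant tracking. I would apply \eqref{zerosg:lemma:uniformsmoothing-equ6-1} to $F_i(\cdot,\xi_{i,k})$ (smooth with $L_f$) to obtain $\mathbf{E}_{u_{i,k}}[\|g^e_{i,k}\|^2]\le2p\|\nabla_xF_i(x_{i,k},\xi_{i,k})\|^2+\tfrac12p^2\delta_{i,k}^2L_f^2$, then take $\mathbf{E}_{\xi_{i,k}}$ and use Assumption~\ref{zerosg:ass:zeroth-variance} to replace the first term by $2p(1+\sigma_0^2)\|\nabla f_i(x_{i,k})\|^2+2p\sigma_1^2$. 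Next I would bound $\|\nabla f_i(x_{i,k})\|^2$ by splitting off $\nabla f_i(\bar{x}_k)$ and $\nabla f(\bar{x}_k)$, controlling $\|\nabla f_i(x_{i,k})-\nabla f_i(\bar{x}_k)\|^2\le L_f^2\|x_{i,k}-\bar{x}_k\|^2$ by smoothness and $\|\nabla f_i(\bar{x}_k)-\nabla f(\bar{x}_k)\|^2$ by Assumption~\ref{zerosg:ass:fig}, which after summation gives $\sum_i\|\nabla f_i(x_{i,k})\|^2\le2L_f^2\|\bsx_k\|_{\bsK}^2+4(1+\tilde{\sigma}_0^2)\|\bar{\bsg}^0_k\|^2+4n\sigma_2^2$; collecting terms and absorbing factors into the (deliberately loose) constants $16,4,8,\tfrac12$ yields \eqref{zerosg:rand-grad-esti2}. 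Finally \eqref{zerosg:rand-grad-esti4} follows from the three-way split $\nabla f_i(\bar{x}_{k+1})=(\nabla f_i(\bar{x}_{k+1})-\nabla f_i(\bar{x}_k))+(\nabla f_i(\bar{x}_k)-\nabla f(\bar{x}_k))+\nabla f(\bar{x}_k)$, bounding the three pieces by smoothness together with $\bar{x}_{k+1}-\bar{x}_k=-\eta_k\bar{g}^e_k$, by Assumption~\ref{zerosg:ass:fig}, and by $\|\bar{\bsg}^0_k\|^2=n\|\nabla f(\bar{x}_k)\|^2$, respectively, and then summing over $i$.
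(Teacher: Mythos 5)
Your proposal is correct and, for eight of the nine estimates, follows the paper's own argument step for step: the same two-stage conditioning for \eqref{zerosg:rand-grad-esti1}, the same $\pm\nabla f_i(x_{i,k})$ insertion and Cauchy--Schwarz for \eqref{zerosg:rand-grad-esti8}--\eqref{zerosg:rand-grad-esti6} (your variance-decomposition phrasing of \eqref{zerosg:rand-grad-esti5} is the paper's cross-term computation in disguise), the same reduction of the averaged dynamics to $\bar{x}_{k+1}=\bar{x}_k-\eta_k\bar{g}^e_k$ for \eqref{zerosg:gg-rand-pd}, and the same three-way split for \eqref{zerosg:rand-grad-esti4}. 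One implicit step you should flag: smoothness of $f_i$ (and of $f$) is not Assumption~\ref{zerosg:ass:zeroth-smooth} itself but follows from it by Jensen's inequality, which the paper records separately before using it.

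The one place where you genuinely deviate is \eqref{zerosg:rand-grad-esti2}, and there your route only closes because of a second, compensating deviation. The paper bounds $\|\nabla f_i(x_{i,k})\|^2$ via the chain $\nabla f_i(x_{i,k})\to\nabla f(x_{i,k})\to\nabla f(\bar{x}_k)$ (Assumption~\ref{zerosg:ass:fig} applied at $x_{i,k}$, then smoothness of the global $f$), which yields $\sum_i\|\nabla f_i(x_{i,k})\|^2\le 2n\sigma_2^2+4(1+\tilde{\sigma}_0^2)(L_f^2\|\bsx_k\|_{\bsK}^2+\|\bar{\bsg}_k^0\|^2)$; you instead go $\nabla f_i(x_{i,k})\to\nabla f_i(\bar{x}_k)\to\nabla f(\bar{x}_k)$, which costs an extra factor of $2$ on the $\sigma_2^2$ term ($4n\sigma_2^2$ in your intermediate bound). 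Had you then followed the paper's Cauchy--Schwarz treatment of the stochastic-gradient variance, $\mathbf{E}_{\xi}[\|\nabla_xF_i\|^2]\le 2(1+\sigma_0^2)\|\nabla f_i\|^2+2\sigma_1^2$, the resulting coefficient $16np(1+\sigma_0^2)\sigma_2^2$ would overshoot the stated $8np(1+\sigma_0^2)\sigma_2^2$. Your claimed replacement $2p(1+\sigma_0^2)\|\nabla f_i(x_{i,k})\|^2+2p\sigma_1^2$ is what saves you, but it is only valid through the exact conditional bias--variance identity $\mathbf{E}_{\xi_{i,k}}[\|\nabla_xF_i\|^2]=\|\nabla f_i\|^2+\mathbf{E}_{\xi_{i,k}}[\|\nabla_xF_i-\nabla f_i\|^2]$ (the cross term vanishes since $x_{i,k}$ is independent of $\xi_{i,k}$), not through the naive $\|a+b\|^2\le2\|a\|^2+2\|b\|^2$ that the paper uses. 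You should make that orthogonality explicit; with it, all your terms sit under the stated constants and the proof is complete.
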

\begin{proof}
(i) From $u_{i,k}$ and $\xi_{i,k}$ are independent, $x_{i,k}$ is independent of $u_{i,k}$ and $\xi_{i,k}$, and \eqref{zerosg:lemma:uniformsmoothing-equ1}, we have
\begin{align*}
\mathbf{E}_{\mathfrak{L}_k}[g^e_{i,k}]
&=\mathbf{E}_{u_{i,k}}\big[\mathbf{E}_{\xi_{i,k}}\big[\frac{p}{\delta_{i,k}}(F_i(x_{i,k}+\delta_{i,k} u_{i,k},\xi_{i,k})-F_i(x_{i,k},\xi_{i,k}))u_{i,k}\big]\big]\nonumber\\
&=\mathbf{E}_{u_{i,k}}\big[\frac{p}{\delta_{i,k}}(f_i(x_{i,k}+\delta_{i,k} u_{i,k})-f_i(x_{i,k}))u_{i,k}\big]\nonumber\\
&=\mathbf{E}_{u_{i,k}}[\hat{\nabla}_2f_i(x_{i,k},\delta_{i,k},u_{i,k})]=\nabla f^s_i(x_{i,k},\delta_{i,k})=g^s_{i,k},
\end{align*}
which gives \eqref{zerosg:rand-grad-esti1}.

(ii) From Assumption~\ref{zerosg:ass:zeroth-smooth}, we know that each $f_i(x)=\mathbf{E}_{\xi_i}[F_i(x,\xi_i)]$ is smooth with constant $L_{f}$ since
\begin{align}\label{zerosg:ass:fiu:equ}
\|\nabla f_i(x)-\nabla f_i(y)\|&=\|\mathbf{E}_{\xi_i}[\nabla_xF_i(x,\xi_i)-\nabla_xF_i(y,\xi_i)]\|\nonumber\\
&\le\mathbf{E}_{\xi_i}[\|\nabla_xF_i(x,\xi_i)-\nabla_xF_i(y,\xi_i)\|]\nonumber\\
&\le  \mathbf{E}_{\xi_i}[L_f\|x-y\|]= L_f\|x-y\|,~\forall x,y\in\mathbb{R}^p.
\end{align}

From \eqref{zerosg:ass:fiu:equ}, we have
\begin{align}
\|\bsg^0_{k}-\bsg_{k}\|^2
&=\|\nabla\tilde{f}(\bar{\bsx}_{k})-\nabla\tilde{f}(\bsx_k)\|^2\nonumber\\
&=\sum_{i=1}^n\|\nabla f_i(\bar{x}_k)-\nabla f_i(x_{i,k})\|^2
\le\sum_{i=1}^nL_f^2\|\bar{x}_k-x_{i,k}\|^2\nonumber\\
&= L_f^2\|\bar{\bsx}_{k}-\bsx_{k}\|^2
=L_f^2\|\bsx_{k}\|^2_{\bsK}.\label{zerosg:gg1}
\end{align}

From \eqref{zerosg:ass:fiu:equ} and \eqref{zerosg:lemma:uniformsmoothing-equ8}, we have
\begin{align*}
\|g^s_{i,k}-g_{i,k}\|\le L_f\delta_{i,k}.
\end{align*}
Thus,
\begin{align}
\|\bsg^s_k-\bsg_k\|^2=\sum_{i=1}^n\|g^s_{i,k}-g_{i,k}\|^2\le nL_f^2\delta_{k}^2.\label{zerosg:rand-grad-esti3}
\end{align}

Noting $\|\bsg_k^0-\bsg^s_k\|^2\le2\|\bsg_k^0-\bsg_k\|^2+2\|\bsg_k-\bsg^s_k\|^2$, from \eqref{zerosg:gg1} and \eqref{zerosg:rand-grad-esti3}, we know \eqref{zerosg:rand-grad-esti8} holds.

(iii) Noting $\|\bar{\bsg}_k^0-\bar{\bsg}^s_k\|^2=\|\bsH(\bsg_k^0-\bsg^s_k)\|^2$, from $\rho(\bsH)=1$ and \eqref{zerosg:rand-grad-esti8}, we know \eqref{zerosg:rand-grad-esti9} hold.

(iv) We have
\begin{align}\label{zerosg:rand-grad-esti5.1}
\mathbf{E}_{\mathfrak{L}_k}[\|\bar{g}^e_{k}\|^2]
&=\mathbf{E}_{\mathfrak{L}_k}\Big[\Big\|\sum_{i=1}^{n}\frac{1}{n} g^e_{i,k}\Big\|^2\Big]\nonumber\\
&=\frac{1}{n^2}\mathbf{E}_{\mathfrak{L}_k}\Big[\sum_{i=1}^{n}\|g^e_{i,k}\|^2
+\sum_{i=1}^{n}\sum_{j=1,j\neq i}^{n}\langle g^e_{i,k},g^e_{j,k}\rangle\Big]\nonumber\\
&=\frac{1}{n^2}\mathbf{E}_{\mathfrak{L}_k}[\|\bsg^e_{k}\|^2]
+\frac{1}{n^2}\sum_{i=1}^{n}\sum_{j=1,j\neq i}^{n}\langle \mathbf{E}_{\mathfrak{L}_k}[g^e_{i,k}],\mathbf{E}_{\mathfrak{L}_k}[g^e_{j,k}]\rangle\nonumber\\
&=\frac{1}{n^2}\mathbf{E}_{\mathfrak{L}_k}[\|\bsg^e_{k}\|^2]
+\frac{1}{n^2}\sum_{i=1}^{n}\sum_{j=1,j\neq i}^{n}\langle g^s_{i,k},g^s_{j,k}\rangle\nonumber\\
&=\frac{1}{n^2}\mathbf{E}_{\mathfrak{L}_k}[\|\bsg^e_{k}\|^2]
+\|\bar{g}^s_{k}\|^2-\frac{1}{n^2}\|\bsg^s_{k}\|^2,
\end{align}
where the third equality holds since $u_{i,k}$ and $\xi_{i,k},~\forall i\in[n],~k\in\mathbb{N}_+$ are mutually independent; and the fourth equality holds due to \eqref{zerosg:rand-grad-esti1}.

From \eqref{zerosg:rand-grad-esti5.1}, $\mathbf{E}_{\mathfrak{L}_k}[\|\bar{\bsg}^e_{k}\|^2]=n\mathbf{E}_{\mathfrak{L}_k}[\|\bar{g}^e_{k}\|^2]$ and $\|\bar{\bsg}^s_{k}\|^2=n\|\bar{g}^s_{k}\|^2$, we know that \eqref{zerosg:rand-grad-esti5} holds.

(v) We have
\begin{align}\label{zerosg:rand-grad-esti6.1}
\mathbf{E}_{\mathfrak{L}_k}[\|\bsg_k^0-\bsg^e_k\|^2]
&\le2\|\bsg_k^0-\bsg_k^s\|^2
+2\mathbf{E}_{\mathfrak{L}_k}[\|\bsg_k^s-\bsg^e_k\|^2]\nonumber\\
&=2\|\bsg_k^0-\bsg_k^s\|^2
+2\mathbf{E}_{\mathfrak{L}_k}[\|\bsg^e_k\|^2]-2\|\bsg_k^s\|^2,
\end{align}
where the inequality holds due to the Cauchy--Schwarz inequality; and the equality holds since \eqref{zerosg:rand-grad-esti1} and $\bsx_{k}$ is independent of $\mathfrak{L}_k$.

From \eqref{zerosg:rand-grad-esti6.1} and \eqref{zerosg:rand-grad-esti8}, we know \eqref{zerosg:rand-grad-esti6} holds.

(vi) The distributed ZO algorithm \eqref{zerosg:alg:random-pd} can be rewritten as
\begin{subequations}\label{zerosg:alg:random-pd-compact}
\begin{align}
\bm{x}_{k+1}&=\bm{x}_k-\eta_k(\alpha_k\bsL\bm{x}_k+\beta_k\bm{v}_k+\bsg^e_k),\label{zerosg:alg:random-pd-compact-x}\\
\bm{v}_{k+1}&=\bm{v}_k+\eta_k\beta_k\bsL\bm{x}_k,~\forall \bsx_0\in\mathbb{R}^{np},~\sum_{i=1}^{n}v_{i,0}={\bm 0}_p.\label{zerosg:alg:random-pd-compact-v}
\end{align}
\end{subequations}

From \eqref{zerosg:alg:random-pd-compact-v}, we know that
\begin{align}
\bar{v}_{k+1}=\bar{v}_k.\label{zerosg:vbardynamic-rand-pd}
\end{align}
Then, from \eqref{zerosg:vbardynamic-rand-pd}, $\sum_{i=1}^{n}v_{i,0}={\bm 0}_p$, and \eqref{zerosg:alg:random-pd-compact-x}, we know that $\bar{v}_k={\bm 0}_p$ and
\begin{align}
\bar{\bsx}_{k+1}=\bar{\bsx}_{k}-\eta_k\bar{\bsg}^e_k.\label{zerosg:xbardynamic-rand-pd}
\end{align}
Then, we have
\begin{align*}
\|\bsg^0_{k+1}-\bsg^0_{k}\|^2
&=\|\nabla\tilde{f}(\bar{\bsx}_{k+1})-\nabla\tilde{f}(\bar{\bsx}_k)\|^2\nonumber\\
&\le L_f^2\|\bar{\bsx}_{k+1}-\bar{\bsx}_{k}\|^2
=\eta^2_kL_f^2\|\bar{\bsg}^e_{k}\|^2\le\eta^2_kL_f^2\|\bsg^e_{k}\|^2,
\end{align*}
where the first inequality holds due to \eqref{zerosg:ass:fiu:equ}; the last equality holds due to \eqref{zerosg:xbardynamic-rand-pd}; and the last equality holds due to $\bar{\bsg}^e_{k}=\bsH\bsg^e_{k}$ and $\rho(\bsH)=1$. Thus, \eqref{zerosg:gg-rand-pd} holds.

(vii) From \eqref{nonconvex:lemma:lipschitz2}, we have
\begin{align}
\|\bar{\bsg}^0_k\|^2=n\|\nabla f(\bar{x}_k)\|^2\le2nL_f(f(\bar{x}_k)-f^*),
\end{align}
which yields \eqref{zerosg:rand-grad-smooth}.

(viii) From Assumption~\ref{zerosg:ass:zeroth-smooth}, $x_{i,k}$ and $\xi_{i,k}$ are independent of $u_{i,k}$, and \eqref{zerosg:lemma:uniformsmoothing-equ6-1},
we know that for almost every $\xi_{i,k}$ it holds that
\begin{align}\label{zerosg:rand-grad2-esti1.1}
\mathbf{E}_{u_{i,k}}[\|g^e_{i,k}\|^2]
\le2p\|\nabla_{x}F_{i}(x_{i,k},\xi_{i,k})\|^2+\frac{1}{2}p^2L_f^2\delta_{i,k}^2.
\end{align}
Then,
\begin{align}\label{zerosg:rand-grad2-esti1}
\mathbf{E}_{\mathfrak{L}_k}[\|g^e_{i,k}\|^2]
&\le2p\mathbf{E}_{\xi_{i,k}}[\|\nabla_{x}F_{i}(x_{i,k},\xi_{i,k})\|^2]+\frac{1}{2}p^2L_f^2\delta_{i,k}^2\nonumber\\
&=2p\mathbf{E}_{\xi_{i,k}}[\|\nabla_{x}F_{i}(x_{i,k},\xi_{i,k})-\nabla f_i(x_{i,k})+\nabla f_i(x_{i,k})\|^2]+\frac{1}{2}p^2L_f^2\delta_{i,k}^2\nonumber\\
&\le4p\mathbf{E}_{\xi_{i,k}}[\|\nabla_{x}F_{i}(x_{i,k},\xi_{i,k})-\nabla f_i(x_{i,k})\|^2+\|\nabla f_i(x_{i,k})\|^2]+\frac{1}{2}p^2L_f^2\delta_{i,k}^2\nonumber\\
&\le4p(1+\sigma_0^2)\|\nabla f_i(x_{i,k})\|^2+4p\sigma^2_1+\frac{1}{2}p^2L_f^2\delta_{i,k}^2,
\end{align}
where the first inequality holds due to \eqref{zerosg:rand-grad2-esti1.1}; the second inequality holds due to the Cauchy--Schwarz inequality; and the last inequality holds since Assumption~\ref{zerosg:ass:zeroth-variance} and $x_{i,k}$ is independent of $\xi_{i,k}$.

From \eqref{zerosg:ass:fiu:equ}, we have
\begin{align}\label{zerosg:rand-grad2-esti1.3}
\|\nabla f(x)-\nabla f(y)\|^2&=\Big\|\frac{1}{n}\sum_{i=1}^{n}(\nabla f_i(x)-\nabla f_i(y))\Big\|^2\nonumber\\
&\le\frac{1}{n}\sum_{i=1}^{n}\|\nabla f_i(x)-\nabla f_i(y)\|^2\le L_f^2\|x-y\|^2,~\forall x,y\in\mathbb{R}^p.
\end{align}
Then, we have 
\begin{align}\label{zerosg:rand-grad2-esti1.2}
\|\nabla f_i(x_{i,k})\|^2&=\|\nabla f_i(x_{i,k})-\nabla f(x_{i,k})
+\nabla f(x_{i,k})\|^2\nonumber\\
&\le2(\|\nabla f_i(x_{i,k})-\nabla f(x_{i,k})\|^2
+\|\nabla f(x_{i,k})\|^2)\nonumber\\
&\le2(\sigma^2_2
+(1+\tilde{\sigma}_0^2)\|\nabla f(x_{i,k})\|^2)\nonumber\\
&=2(\sigma^2_2
+(1+\tilde{\sigma}_0^2)\|\nabla f(x_{i,k})-\nabla f(\bar{x}_k)+\nabla f(\bar{x}_k)\|^2)\nonumber\\
&\le 2\sigma^2_2
+4(1+\tilde{\sigma}_0^2)(\|\nabla f(x_{i,k})-\nabla f(\bar{x}_k)\|^2+\|\nabla f(\bar{x}_k)\|^2)\nonumber\\
&\le 2\sigma^2_2
+4(1+\tilde{\sigma}_0^2)(L_f^2\|x_{i,k}-\bar{x}_k\|^2+\|\nabla f(\bar{x}_k)\|^2),
\end{align}
where the first and third inequalities hold due to the Cauchy--Schwarz inequality; the second inequality holds due to Assumption~\ref{zerosg:ass:fig}; and the last inequality holds due to \eqref{zerosg:rand-grad2-esti1.3}.

From \eqref{zerosg:rand-grad2-esti1} and \eqref{zerosg:rand-grad2-esti1.2}, we know \eqref{zerosg:rand-grad-esti2} holds.

(ix) From  the Cauchy--Schwarz inequality, we have
\begin{align}
\|\bsg^0_{k+1}\|^2
=\|\bsg^0_{k+1}-\bsg^0_{k}+\bsg^0_{k}-\bar{\bsg}^0_k+\bar{\bsg}^0_k\|^2
\le3(\|\bsg^0_{k+1}-\bsg^0_{k}\|^2+\|\bsg^0_{k}-\bar{\bsg}^0_k\|^2+\|\bar{\bsg}^0_k\|^2).
\label{zerosg:rand-grad-esti4.1}
\end{align}
From Assumption~\ref{zerosg:ass:fig}, we have
\begin{align}\label{zerosg:rand-grad-esti4.2}
\|\bsg^0_{k}-\bar{\bsg}^0_k\|^2=\sum_{i=1}^{n}\|f_i(\bar{x}_k)-f(\bar{x}_k)\|^2\le n\sigma^2_2+n\tilde{\sigma}^2_0\|f(\bar{x}_k)\|^2.
\end{align}

From \eqref{zerosg:rand-grad-esti4.1}, \eqref{zerosg:rand-grad-esti4.2}, and \eqref{zerosg:gg-rand-pd}, we know  \eqref{zerosg:rand-grad-esti4} holds.
\end{proof}

\begin{lemma}\label{zerosg:lemma:sg}
Suppose Assumptions~\ref{zerosg:ass:graph}--\ref{zerosg:ass:fig} hold. Suppose $\{\beta_k\}$ is non-decreasing, $\alpha_k=\kappa_1\beta_k$, and $\eta_k=\frac{\kappa_2}{\beta_k}$, where $\kappa_1>1$ and $\kappa_2>0$ are constants. Moreover, suppose $\beta_k\ge\varepsilon_{1}$.  Let $\{\bsx_k\}$ be the sequence generated by Algorithm~\ref{zerosg:algorithm-random-pd}, then
\begin{subequations}
\begin{align}
\mathbf{E}_{\mathfrak{L}_k}[W_{k+1}]
&\le  W_{k}-\|\bsx_k\|^2_{(2\varepsilon_4-\varepsilon_5\omega_k-b_{1,k})\bsK}
-\Big\|\bm{v}_k+\frac{1}{\beta_k}\bsg_{k}^0\Big\|^2_{b_{2,k}\bsK}\nonumber\\
&\quad-\eta_k\Big(\frac{1}{4}-(1+\tilde{\sigma}_0^2)(b_{3,k}+8p(1+\sigma_0^2)b_{4,k})\eta_k\Big)\|\bar{\bsg}^0_{k}\|^2
+2pn\sigma^2_1b_{4,k}\eta_k^2\nonumber\\
&\quad+n\sigma^2_2(b_{3,k}+4p(1+\sigma_0^2)b_{4,k})\eta_k^2+b_{5,k}\eta_k\delta_k^2,
\label{zerosg:sgproof-vkLya}\\
\mathbf{E}_{\mathfrak{L}_k}[\breve{W}_{k+1}]
&\le  \breve{W}_{k}-\|\bsx_k\|^2_{(2\varepsilon_4-\varepsilon_5\omega_k-b_{1,k})\bsK}
-\Big\|\bm{v}_k+\frac{1}{\beta_k}\bsg_{k}^0\Big\|^2_{b_{2,k}\bsK}\nonumber\\
&\quad+(1+\tilde{\sigma}_0^2)(b_{3,k}+8p(1+\sigma_0^2)b_{4,k})\eta_k^2\|\bar{\bsg}^0_{k}\|^2
+2pn\sigma^2_1b_{4,k}\eta_k^2\nonumber\\
&\quad+n\sigma^2_2(b_{3,k}+4p(1+\sigma_0^2)b_{4,k})\eta_k^2+b_{5,k}\eta_k\delta_k^2,
\label{zerosg:sgproof-vkLya-bounded}
\end{align}
\end{subequations}
where $W_{k}=\sum_{i=1}^{4}W_{i,k}$, $\breve{W}_k=\sum_{i=1}^{3}W_{i,k}$, $\omega_k=\frac{1}{\beta_{k}}-\frac{1}{\beta_{k+1}}$ and
\begin{align*}
W_{1,k}&=\frac{1}{2}\|\bm{x}_k \|^2_{\bsK},\\
W_{2,k}&=\frac{1}{2}\Big\|\bsv_k+\frac{1}{\beta_k}\bsg_k^0\Big\|^2_{\bsQ+\kappa_1\bsK},\\
W_{3,k}&=\bsx_k^\top\bsK\Big(\bm{v}_k+\frac{1}{\beta_k}\bsg_k^0\Big),\\
W_{4,k}&=n(f(\bar{x}_k)-f^*)=\tilde{f}(\bar{\bsx}_k)-f^*,\\
b_{1,k}
&=8p(1+\sigma_0^2)(1+\tilde{\sigma}_0^2)\kappa_3L_f^4\frac{\eta_k}{\beta_k^2}
+16p(1+\sigma_0^2)(1+\tilde{\sigma}_0^2)\kappa_5L_f^4\frac{\eta_k^2}{\beta_k^2}\\
&\quad+\Big(\frac{1}{2}+L_f^2\Big)\eta_k\omega_k
+8p(1+\sigma_0^2)(1+\tilde{\sigma}_0^2)\kappa_4L_f^4
\frac{\eta_k\omega_k}{\beta_k^2}\\
&\quad+(5+32p(1+\sigma_0^2)(1+\tilde{\sigma}_0^2)
+24p(1+\sigma_0^2)(1+\tilde{\sigma}_0^2)\kappa_3L_f^2)L_f^2\eta_k^2\omega_k\\
&\quad+24p(1+\sigma_0^2)(1+\tilde{\sigma}_0^2)\kappa_4L_f^4\eta_k^2\omega_k^2
+16p(1+\sigma_0^2)(1+\tilde{\sigma}_0^2)\kappa_4L_f^4\frac{\eta_k^2\omega_k}{\beta_k^2},\\
b_{2,k}&=2\varepsilon_6
-\frac{1}{2}\omega_k(\kappa_1\kappa_2+3\kappa_2^2+\kappa_4)
-\frac{1}{2}\omega_k\eta_k\kappa_4,\\
b_{3,k}&=\frac{3}{2}\kappa_3\frac{\omega_k}{\eta_k^2}
+\frac{3}{2}\kappa_4\frac{\omega_k^2}{\eta_k^2},\\
b_{4,k}&=6+L_f+\frac{\kappa_3L_f^2}{\kappa_2}\frac{1}{\beta_k}+2\kappa_5L_f^2\frac{1}{\beta_k^2}
+\frac{\kappa_4L_f^2}{\kappa_2}\frac{\omega_k}{\beta_k}\\
&\quad+(4+3\kappa_3L_f^2)\omega_k+3\kappa_4L_f^2\omega_k^2+2\kappa_4L_f^2\frac{\omega_k}{\beta_k^2},\\
b_{5,k}
&=nL_f^2\Big(\frac{1}{4}p^2b_{4,k}\eta_k+3+\omega_k+8\eta_k+5\eta_k\omega_k\Big).
\end{align*}
\end{lemma}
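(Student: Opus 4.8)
The plan is to track the one-step conditional change of each Lyapunov component $W_{1,k},\dots,W_{4,k}$ separately and then assemble the pieces. I would start from the compact form \eqref{zerosg:alg:random-pd-compact} together with the averaged dynamics \eqref{zerosg:xbardynamic-rand-pd}, and repeatedly invoke the estimator properties collected in Lemma~\ref{zerosg:lemma:grad-st}: the unbiasedness $\bsg^s_k=\mathbf{E}_{\mathfrak{L}_k}[\bsg^e_k]$ from \eqref{zerosg:rand-grad-esti1}, the second-moment bound \eqref{zerosg:rand-grad-esti2}, the averaging bound \eqref{zerosg:rand-grad-esti5}, and the smoothing-bias bounds \eqref{zerosg:rand-grad-esti8}--\eqref{zerosg:rand-grad-esti9}. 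These are exactly what convert the stochastic estimator $\bsg^e_k$ into controllable quantities in $\|\bsx_k\|^2_{\bsK}$, $\|\bar{\bsg}^0_k\|^2$, the noise levels $\sigma_1^2,\sigma_2^2$, and the bias $\delta_k^2$.

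For $W_{1,k}=\tfrac12\|\bsx_k\|^2_{\bsK}$ I would expand $\tfrac12\|\bsx_{k+1}\|^2_{\bsK}$ via \eqref{zerosg:alg:random-pd-compact-x} using $\bsK\bsL=\bsL$ and the idempotence of $\bsK$ from Lemma~\ref{nonconvex:lemma-Xinlei}, take $\mathbf{E}_{\mathfrak{L}_k}$, replace $\bsg^e_k$ by $\bsg^s_k$ in the linear cross terms through \eqref{zerosg:rand-grad-esti1}, and bound the quadratic term with \eqref{zerosg:rand-grad-esti2}. The decisive sign here comes from $\alpha_k\bsL$ acting on $\bsK$ together with $\rho_2(L)K_n\le L\le\rho(L)K_n$ in \eqref{nonconvex:KL-L-eq2}. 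For $W_{2,k}$ the key subtlety is that the gradient-shifted dual $\bsv_k+\frac1{\beta_k}\bsg^0_k$ moves both through \eqref{zerosg:alg:random-pd-compact-v} and through the time-variation of $\bsg^0_k$ and of $1/\beta_k$; this generates increments $\bsg^0_{k+1}-\bsg^0_k$, handled by \eqref{zerosg:gg-rand-pd} and \eqref{zerosg:rand-grad-esti4}, accompanied by factors $\omega_k=\frac1{\beta_k}-\frac1{\beta_{k+1}}$ measuring the diminishing step. For $W_{3,k}$ I would substitute both updates and again split $\bsg^e_k$ into mean plus fluctuation. Finally, for $W_{4,k}=n(f(\bar x_k)-f^*)$ I would apply the descent inequality \eqref{nonconvex:lemma:lipschitz} to $f$ along $\bar{\bsx}_{k+1}=\bar{\bsx}_k-\eta_k\bar{\bsg}^e_k$, expose $-\eta_k\langle\nabla f(\bar x_k),\bar{\bsg}^s_k\rangle$, and bound the bias $\bar{\bsg}^s_k-\bar{\bsg}^0_k$ by \eqref{zerosg:rand-grad-esti9} and the second moment by \eqref{zerosg:rand-grad-esti5}.

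Summing the four contributions, I would then regroup by type. The net coefficient of $\|\bsx_k\|^2_{\bsK}$ is $2\varepsilon_4-\varepsilon_5\omega_k-b_{1,k}$, where $\varepsilon_4=\tfrac12(\varepsilon_2\kappa_2-\varepsilon_3\kappa_2^2)$ is the leading contraction, $\varepsilon_5$ collects the $\omega_k$-weighted first-order remainder, and $b_{1,k}$ absorbs all the higher-order $L_f$-, $\eta_k/\beta_k^2$-, and $\eta_k\omega_k$-weighted corrections; the dual term collapses to the stated $b_{2,k}$; and the gradient contributions accumulate into the $\|\bar{\bsg}^0_k\|^2$ coefficient, which in \eqref{zerosg:sgproof-vkLya} is made negative (of size $\tfrac14$) by the $W_{4,k}$ descent, whereas dropping $W_{4,k}$ leaves it positive in \eqref{zerosg:sgproof-vkLya-bounded}. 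The relations $\alpha_k=\kappa_1\beta_k$, $\eta_k=\kappa_2/\beta_k$ and the auxiliary constants $\kappa_3,\kappa_4,\kappa_5$ are precisely what make the $\eta_k/\beta_k$, $\eta_k\omega_k$ and $\delta_k^2$ scalings in $b_{1,k},\dots,b_{5,k}$ emerge as written, while $\beta_k\ge\varepsilon_1$ keeps the $L_f$-dependent corrections subordinate.

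The hard part will be the bookkeeping in $W_{2,k}$ and $W_{3,k}$: because the Lyapunov function uses the gradient-shifted dual, every occurrence of $\bsg^0_{k+1}$ must be re-expressed through \eqref{zerosg:rand-grad-esti4} in terms of $\|\bsg^e_k\|^2$, $\|\bar{\bsg}^0_k\|^2$ and $\sigma_2^2$, and the many cross terms must be split by Young's inequality with weights chosen so that they are reabsorbed into the negative $\|\bsx_k\|^2_{\bsK}$ and dual terms rather than inflating the $\|\bar{\bsg}^0_k\|^2$ coefficient. Distinguishing which remainders carry a factor $\omega_k$ (hence vanish as the stepsize stabilizes) from those carrying $\eta_k$ or $\delta_k^2$ is the delicate accounting that ultimately produces the exact forms of $b_{1,k}$--$b_{5,k}$; the $\breve{W}_k$ inequality then follows by the identical computation with the $W_{4,k}$ term simply omitted.
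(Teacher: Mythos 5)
Your proposal follows essentially the same route as the paper's own proof: it bounds $\mathbf{E}_{\mathfrak{L}_k}[W_{i,k+1}]$ for $i=1,\dots,4$ separately from the compact dynamics \eqref{zerosg:alg:random-pd-compact} and the averaged update \eqref{zerosg:xbardynamic-rand-pd}, using exactly the estimator properties of Lemma~\ref{zerosg:lemma:grad-st} (unbiasedness, the bias bounds \eqref{zerosg:rand-grad-esti8}--\eqref{zerosg:rand-grad-esti9}, the increment bounds \eqref{zerosg:gg-rand-pd} and \eqref{zerosg:rand-grad-esti4}, and the second-moment bound \eqref{zerosg:rand-grad-esti2}), then sums, regroups the $\|\bsx_k\|^2_{\bsK}$, dual, and $\|\bar{\bsg}^0_k\|^2$ terms, and finally invokes \eqref{nonconvex:KL-L-eq2} together with $\alpha_k=\kappa_1\beta_k$, $\eta_k=\kappa_2/\beta_k$, $\beta_k\ge\varepsilon_1$ to reach the stated coefficients, with \eqref{zerosg:sgproof-vkLya-bounded} obtained by omitting the $W_{4,k}$ contribution. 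This matches the paper's argument in structure and in all key steps, so no gap to report.
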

\begin{proof}
Note that $W_{4,k}$ is well defined due to $f^*>-\infty$ as assumed in Assumption~\ref{zerosg:ass:optset}. Thus, $W_{k}$ is well defined.

(i) We have
\begin{align}
\mathbf{E}_{\mathfrak{L}_k}[W_{1,k+1}]
&=\mathbf{E}_{\mathfrak{L}_k}\Big[\frac{1}{2}\|\bm{x}_{k+1} \|^2_{\bsK}\Big]\nonumber\\
&=\mathbf{E}_{\mathfrak{L}_k}\Big[\frac{1}{2}\|\bm{x}_k-\eta_k(\alpha_k\bsL\bm{x}_k+\beta_k\bm{v}_k+\bsg^e_k) \|^2_{\bsK}\Big]\nonumber\\
&=\mathbf{E}_{\mathfrak{L}_k}\Big[\frac{1}{2}\|\bm{x}_k\|^2_{\bsK}-\eta_k\alpha_k\|\bsx_k\|^2_{\bsL}
+\frac{1}{2}\eta^2_k\alpha^2_k\|\bsx_k\|^2_{\bsL^2}
\nonumber\\
&\quad-\eta_k\beta_k\bsx^\top_k({\bm I}_{np}-\eta_k\alpha_k\bsL)\bsK\Big(\bm{v}_k+\frac{1}{\beta_k}\bsg^e_k\Big)
+\frac{1}{2}\eta^2_k\beta^2_k\Big\|\bm{v}_k+\frac{1}{\beta_k}\bsg^e_k\Big\|^2_{\bsK}\Big]\nonumber\\
&=W_{1,k}-\|\bsx_k\|^2_{\eta_k\alpha_k\bsL
-\frac{1}{2}\eta^2_k\alpha^2_k\bsL^2}
-\eta_k\beta_k\bsx^\top_k({\bm I}_{np}-\eta_k\alpha_k\bsL)\bsK\Big(\bm{v}_k
+\frac{1}{\beta_k}\bsg^s_k\Big)\nonumber\\
&\quad+\frac{1}{2}\eta^2_k\beta^2_k\mathbf{E}_{\mathfrak{L}_k}\Big[\Big\|\bm{v}_k+\frac{1}{\beta_k}\bsg_k^0
+\frac{1}{\beta_k}\bsg^e_k-\frac{1}{\beta_k}\bsg_k^0\Big\|^2_{\bsK}\Big]\nonumber\\
&=W_{1,k}-\|\bsx_k\|^2_{\eta_k\alpha_k\bsL
-\frac{1}{2}\eta^2_k\alpha^2_k\bsL^2}-\eta_k\beta_k\bsx^\top_k({\bm I}_{np}\nonumber\\
&\quad-\eta_k\alpha_k\bsL)\bsK\Big(\bm{v}_k
+\frac{1}{\beta_k}\bsg_k^0
+\frac{1}{\beta_k}\bsg^s_k-\frac{1}{\beta_k}\bsg_k^0\Big)\nonumber\\
&\quad+\frac{1}{2}\eta^2_k\beta^2_k\mathbf{E}_{\mathfrak{L}_k}\Big[\Big\|\bm{v}_k+\frac{1}{\beta_k}\bsg_k^0
+\frac{1}{\beta_k}\bsg^e_k-\frac{1}{\beta_k}\bsg_k^0\Big\|^2_{\bsK}\Big]\nonumber\\
&\le W_{1,k}-\|\bsx_k\|^2_{\eta_k\alpha_k\bsL
-\frac{1}{2}\eta^2_k\alpha^2_k\bsL^2}-\eta_k\beta_k\bsx^\top_k\bsK\Big(\bm{v}_k+\frac{1}{\beta_k}\bsg_k^0\Big)\nonumber\\
&~~~+\frac{1}{2}\eta_k\|\bm{x}_k\|^2_{\bsK}
+\frac{1}{2}\eta_k\|\bsg^s_k-\bsg_k^0\|^2\nonumber\\
&~~~+\frac{1}{2}\eta^2_k\alpha^2_k\|\bm{x}_k\|^2_{\bsL^2}
+\frac{1}{2}\eta^2_k\beta^2_k\Big\|\bm{v}_k+\frac{1}{\beta_k}\bsg_k^0\Big\|^2_{\bsK}\nonumber\\
&~~~+\frac{1}{2}\eta^2_k\alpha^2_k\|\bm{x}_k\|^2_{\bsL^2}
+\frac{1}{2}\eta^2_k\|\bsg^s_k-\bsg_k^0\|^2\nonumber\\
&~~~+\eta^2_k\beta^2_k\Big\|\bm{v}_k+\frac{1}{\beta_k}\bsg_k^0\Big\|^2_{\bsK}
+\eta^2_k\mathbf{E}_{\mathfrak{L}_k}[\|\bsg^e_k-\bsg_k^0\|^2]\nonumber\\
&= W_{1,k}-\|\bsx_k\|^2_{\eta_k\alpha_k\bsL-\frac{1}{2}\eta_k\bsK
-\frac{3}{2}\eta^2_k\alpha^2_k\bsL^2}\nonumber\\
&~~~+\frac{1}{2}\eta_k(1+\eta_k)\|\bsg^s_k-\bsg_k^0\|^2
+\eta^2_k\mathbf{E}_{\mathfrak{L}_k}[\|\bsg^e_k-\bsg_k^0\|^2]\nonumber\\
&~~~-\eta_k\beta_k\bsx^\top_k\bsK\Big(\bm{v}_k+\frac{1}{\beta_k}\bsg_k^0\Big)
+\Big\|\bm{v}_k+\frac{1}{\beta_k}\bsg_k^0\Big\|^2_{\frac{3}{2}\eta^2_k\beta^2_k\bsK}\nonumber\\
&\le W_{1,k}-\|\bsx_k\|^2_{\eta_k\alpha_k\bsL-\frac{1}{2}\eta_k\bsK
-\frac{3}{2}\eta^2_k\alpha^2_k\bsL^2-\eta_k(1+5\eta_k)L_f^2\bsK}
\nonumber\\
&~~~-\eta_k\beta_k\bsx^\top_k\bsK\Big(\bm{v}_k+\frac{1}{\beta_k}\bsg_k^0\Big)
+\Big\|\bm{v}_k+\frac{1}{\beta_k}\bsg_k^0\Big\|^2_{\frac{3}{2}\eta^2_k\beta^2_k\bsK}\nonumber\\
&~~~+nL_f^2\eta_k(1+5\eta_k)\delta^2_k+2\eta^2_k\mathbf{E}_{\mathfrak{L}_k}[\|\bsg^e_k\|^2],\label{zerosg:v1k}
\end{align}
where the second equality holds due to \eqref{zerosg:alg:random-pd-compact-x}; the third equality holds due to \eqref{nonconvex:KL-L-eq} in Lemma~\ref{nonconvex:lemma-Xinlei}; the fourth equality holds since \eqref{zerosg:rand-grad-esti1} and that $x_{i,k}$ and $v_{i,k}$ are independent of $\mathfrak{L}_k$; the first inequality holds due to the Cauchy--Schwarz inequality and $\rho(\bsK)=1$; and the last  inequality holds due to \eqref{zerosg:rand-grad-esti8} and \eqref{zerosg:rand-grad-esti6}.

(ii) We know that $\omega_k\ge0$ since $\{\beta_k\}$ is non-decreasing. We have
\begin{align}
W_{2,k+1}
&=\frac{1}{2}\Big\|\bsv_{k+1}+\frac{1}{\beta_{k+1}}\bsg_{k+1}^0\Big\|^2_{\bsQ+\kappa_1\bsK}\nonumber\\
&=\frac{1}{2}\Big\|\bsv_{k+1}+\frac{1}{\beta_{k}}\bsg_{k+1}^0
+\Big(\frac{1}{\beta_{k+1}}-\frac{1}{\beta_{k}}\Big)\bsg_{k+1}^0\Big\|^2_{\bsQ+\kappa_1\bsK}\nonumber\\
&\le\frac{1}{2}(1+\omega_k)\Big\|\bsv_{k+1}+\frac{1}{\beta_{k}}\bsg_{k+1}^0\Big\|^2_{\bsQ+\kappa_1\bsK}
+\frac{1}{2}(\omega_k+\omega_k^2)\|\bsg_{k+1}^0\|^2_{\bsQ+\kappa_1\bsK},\label{zerosg:v2k-1}
\end{align}
where the inequality holds due to the Cauchy--Schwarz inequality.

For the first term on the right-hand side of \eqref{zerosg:v2k-1}, we have
\begin{align}
\frac{1}{2}\Big\|\bsv_{k+1}+\frac{1}{\beta_{k}}\bsg_{k+1}^0\Big\|^2_{\bsQ+\kappa_1\bsK}
&=\frac{1}{2}\Big\|\bm{v}_k+\frac{1}{\beta_k}\bsg_{k}^0+\eta_k\beta_k\bsL\bm{x}_k
+\frac{1}{\beta_k}(\bsg_{k+1}^0-\bsg_{k}^0) \Big\|^2_{\bsQ+\kappa_1\bsK}\nonumber\\
&= W_{2,k}+\eta_k\beta_k\bsx^\top_k(\bsK+\kappa_1\bsL)\Big(\bm{v}_k+\frac{1}{\beta_k}\bsg_k^0\Big)\nonumber\\
&\quad+\|\bsx_k\|^2_{\frac{1}{2}\eta_k^2\beta_k^2(\bsL+\kappa_1\bsL^2)}
+\frac{1}{2\beta^2_k}\Big\|\bsg_{k+1}^0-\bsg_{k}^0\Big\|^2_{\bsQ+\kappa_1\bsK}\nonumber\\
&\quad+\frac{1}{\beta_k}\Big(\bm{v}_k+\frac{1}{\beta_k}\bsg_{k}^0
+\eta_k\beta_k\bsL\bm{x}_k\Big)^\top(\bsQ
+\kappa_1\bsK)(\bsg_{k+1}^0-\bsg_{k}^0)\nonumber\\
&\le W_{2,k}+\eta_k\beta_k\bsx^\top_k(\bsK+\kappa_1\bsL)\Big(\bm{v}_k+\frac{1}{\beta_k}\bsg_k^0\Big)\nonumber\\
&\quad+\|\bsx_k\|^2_{\frac{1}{2}\eta_k^2\beta_k^2(\bsL+\kappa_1\bsL^2)}
+\frac{1}{2\beta^2_k}\|\bsg_{k+1}^0-\bsg_{k}^0\|^2_{\bsQ+\kappa_1\bsK}\nonumber\\
&\quad+\frac{\eta_k}{2}\Big\|\bm{v}_k+\frac{1}{\beta_k}\bsg_{k}^0\Big\|^2_{\bsQ+\kappa_1\bsK}
+\frac{1}{2\eta_k\beta_k^2}\|\bsg_{k+1}^0-\bsg_{k}^0\|^2_{\bsQ+\kappa_1\bsK}\nonumber\\
&\quad+\frac{1}{2}\eta^2_k\beta^2_k\|\bsL\bm{x}_k\|^2_{\bsQ+\kappa_1\bsK}
+\frac{1}{2\beta^2_k}\|\bsg_{k+1}^0-\bsg_{k}^0\|^2_{\bsQ+\kappa_1\bsK}\nonumber\\
&= W_{2,k}+\eta_k\beta_k\bsx^\top_k(\bsK+\kappa_1\bsL)\Big(\bm{v}_k+\frac{1}{\beta_k}\bsg_k^0\Big)\nonumber\\
&\quad+\|\bsx_k\|^2_{\eta^2_k\beta^2_k(\bsL+\kappa_1\bsL^2)}
+\Big\|\bm{v}_k+\frac{1}{\beta_k}\bsg_{k}^0
\Big\|^2_{\frac{1}{2}\eta_k(\bsQ+\kappa_1\bsK)}\nonumber\\
&\quad+\frac{1}{\beta^2_k}\Big(1+\frac{1}{2\eta_k}\Big)
\|\bsg_{k+1}^0-\bsg_{k}^0\|^2_{\bsQ+\kappa_1\bsK}\nonumber\\
&\le W_{2,k}+\eta_k\beta_k\bsx^\top_k(\bsK+\kappa_1\bsL)\Big(\bm{v}_k+\frac{1}{\beta_k}\bsg_k^0\Big)\nonumber\\
&\quad+\|\bsx_k\|^2_{\eta^2_k\beta^2_k(\bsL+\kappa_1\bsL^2)}
+\Big\|\bm{v}_k+\frac{1}{\beta_k}\bsg_{k}^0
\Big\|^2_{\frac{1}{2}\eta_k(\bsQ+\kappa_1\bsK)}\nonumber\\
&\quad+\frac{1}{\beta^2_k}\Big(1+\frac{1}{2\eta_k}\Big)\kappa_4
\|\bsg_{k+1}^0-\bsg_{k}^0\|^2\nonumber\\
&\le W_{2,k}+\eta_k\beta_k\bsx^\top_k(\bsK+\kappa_1\bsL)\Big(\bm{v}_k+\frac{1}{\beta_k}\bsg_k^0\Big)\nonumber\\
&\quad+\|\bsx_k\|^2_{\eta^2_k\beta^2_k(\bsL+\kappa_1\bsL^2)}
+\Big\|\bm{v}_k+\frac{1}{\beta_k}\bsg_{k}^0
\Big\|^2_{\frac{1}{2}\eta_k(\bsQ+\kappa_1\bsK)}\nonumber\\
&\quad+\frac{\eta_k}{\beta^2_k}\Big(\eta_k+\frac{1}{2}\Big)
\kappa_4L_f^2\|\bar{\bsg}^e_{k}\|^2,\label{zerosg:v2k-2}
\end{align}
where the first equality holds due to \eqref{zerosg:alg:random-pd-compact-v}; the second equality holds due to \eqref{nonconvex:KL-L-eq} and \eqref{nonconvex:lemma-eq} in Lemma~\ref{nonconvex:lemma-Xinlei};  the first inequality holds due to the Cauchy--Schwarz inequality; the last equality holds due to \eqref{nonconvex:KL-L-eq} and \eqref{nonconvex:lemma-eq}; the second inequality holds due to $\rho(\bsQ+\kappa_1\bsK)\le\rho(\bsQ)+\kappa_1\rho(\bsK)$, \eqref{nonconvex:lemma-eq2}, $\rho(\bsK)=1$; and the last  inequality holds due to \eqref{zerosg:gg-rand-pd}.

For the second term on the right-hand side of \eqref{zerosg:v2k-1}, we have
\begin{align}\label{zerosg:v2k-3}
\|\bsg_{k+1}^0\|^2_{\bsQ+\kappa_1\bsK}
\le\kappa_4\|\bsg_{k+1}^0\|^2.
\end{align}

Also note that
\begin{align}\label{zerosg:v2k-4}
\Big\|\bm{v}_k+\frac{1}{\beta_k}\bsg_{k}^0\Big\|^2_{\bsQ+\kappa_1\bsK}
\le\kappa_4\Big\|\bm{v}_k+\frac{1}{\beta_k}\bsg_{k}^0
\Big\|^2_{\bsK}.
\end{align}

Then, from \eqref{zerosg:v2k-1}--\eqref{zerosg:v2k-4}, we have
\begin{align}
W_{2,k+1}
&\le W_{2,k}
+(1+\omega_k)\eta_k\beta_k\bsx^\top_k(\bsK+\kappa_1\bsL)\Big(\bm{v}_k+\frac{1}{\beta_k}\bsg_k^0\Big)\nonumber\\
&\quad+\frac{1}{2}(\eta_k+\omega_k+\eta_k\omega_k)\kappa_4
\Big\|\bm{v}_k+\frac{1}{\beta_k}\bsg_{k}^0\Big\|^2_{\bsK}\nonumber\\
&\quad+\|\bsx_k\|^2_{(1+\omega_k)\eta^2_k\beta^2_k(\bsL+\kappa_1\bsL^2)}
+\frac{\eta_k}{\beta^2_k}\Big(\eta_k+\frac{1}{2}\Big)(1+\omega_k)
\kappa_4L_f^2\|\bar{\bsg}^e_{k}\|^2\nonumber\\
&\quad+\frac{1}{2}\kappa_4(\omega_k+\omega_k^2)\|\bsg_{k+1}^0\|^2.
\label{zerosg:v2k}
\end{align}

(iii) We have
\begin{align}
W_{3,k+1}&=\bsx_{k+1}^\top\bsK\Big(\bm{v}_{k+1}+\frac{1}{\beta_{k+1}}\bsg_{k+1}^0\Big)\nonumber\\
&=\bsx_{k+1}^\top\bsK\Big(\bm{v}_{k+1}+\frac{1}{\beta_{k}}\bsg_{k+1}^0
+\Big(\frac{1}{\beta_{k+1}}-\frac{1}{\beta_{k}}\Big)\bsg_{k+1}^0\Big)\nonumber\\
&=\bsx_{k+1}^\top\bsK\Big(\bm{v}_{k+1}+\frac{1}{\beta_{k}}\bsg_{k+1}^0\Big)
-\omega_k\bsx_{k+1}^\top\bsK\bsg_{k+1}^0\nonumber\\
&\le\bsx_{k+1}^\top\bsK\Big(\bm{v}_{k+1}+\frac{1}{\beta_{k}}\bsg_{k+1}^0\Big)
+\frac{1}{2}\omega_k(\|\bsx_{k+1}\|^2_{\bsK}+\|\bsg_{k+1}^0\|^2).\label{zerosg:v3k-1}
\end{align}
For the first term on the right-hand side of \eqref{zerosg:v3k-1}, we have
\begin{align}
\mathbf{E}_{\mathfrak{L}_k}\Big[\bsx_{k+1}^\top\bsK\Big(\bm{v}_{k+1}
+\frac{1}{\beta_k}\bsg_{k+1}^0\Big)\Big]
&=\mathbf{E}_{\mathfrak{L}_k}\Big[(\bm{x}_k-\eta_k(\alpha_k\bsL\bm{x}_k+\beta_k\bm{v}_k
+\bsg_k^0+\bsg^e_k-\bsg_k^0))^\top
\nonumber\\
&~~~\times\bsK\Big(\bm{v}_k+\frac{1}{\beta_k}\bsg_{k}^0+\eta_k\beta_k\bsL\bm{x}_k
+\frac{1}{\beta_k}(\bsg_{k+1}^0-\bsg_{k}^0)\Big)\Big]\nonumber\\
&=\bm{x}_k^\top(\bsK-\eta_k(\alpha_k+\eta_k\beta^2_k)\bsL)\Big(\bm{v}_k+\frac{1}{\beta_k}\bsg_{k}^0\Big)
\nonumber\\
&~~~+\|\bm{x}_k\|^2_{\eta_k\beta_k(\bsL-\eta_k\alpha_k\bsL^2)}\nonumber\\
&~~~+\frac{1}{\beta_k}\bm{x}_k^\top(\bsK-\eta_k\alpha_k\bsL)
\mathbf{E}_{\mathfrak{L}_k}[\bsg_{k+1}^0-\bsg_{k}^0]\nonumber\\
&~~~-\eta_k\beta_k\Big\|\bm{v}_k+\frac{1}{\beta_k}\bsg_{k}^0\Big\|^2_{\bsK}\nonumber\\
&~~~-\eta_k\Big(\bm{v}_k+\frac{1}{\beta_k}\bsg_{k}^0\Big)^\top\bsK
\mathbf{E}_{\mathfrak{L}_k}[\bsg_{k+1}^0-\bsg_{k}^0]\nonumber\\
&~~~-\eta_k(\bsg_k^s-\bsg_k^0)^\top
\bsK\Big(\bm{v}_k+\frac{1}{\beta_k}\bsg_{k}^0+\eta_k\beta_k\bsL\bm{x}_k\Big)\nonumber\\
&~~~-\frac{1}{\beta_k}\mathbf{E}_{\mathfrak{L}_k}[\eta_k(\bsg^e_k-\bsg_k^0)^\top
\bsK(\bsg_{k+1}^0-\bsg_{k}^0)]\nonumber\\
&\le\bm{x}_k^\top(\bsK-\eta_k\alpha_k\bsL)\Big(\bm{v}_k+\frac{1}{\beta_k}\bsg_{k}^0\Big)
+\frac{1}{2}\eta^2_k\beta^2_k\|\bsL\bsx_k\|^2\nonumber\\
&~~~+\frac{1}{2}\eta^2_k\beta^2_k\Big\|\bm{v}_k+\frac{1}{\beta_k}\bsg_{k}^0\Big\|^2_{\bsK}
+\|\bm{x}_k\|^2_{\eta_k\beta_k(\bsL-\eta_k\alpha_k\bsL^2)}\nonumber\\
&~~~+\frac{1}{2}\eta_k\|\bm{x}_k\|^2_\bsK
+\frac{1}{2\eta_k\beta^2_k}\mathbf{E}_{\mathfrak{L}_k}[\|\bsg_{k+1}^0-\bsg_{k}^0\|^2\nonumber\\
&~~~+\frac{1}{2}\eta^2_k\alpha^2_k\|\bsL\bm{x}_k\|^2
+\frac{1}{2\beta^2_k}\mathbf{E}_{\mathfrak{L}_k}[\|\bsg_{k+1}^0-\bsg_{k}^0\|^2]\nonumber\\
&~~~
-\eta_k\beta_k\Big\|\bm{v}_k+\frac{1}{\beta_k}\bsg_{k}^0\Big\|^2_{\bsK}\nonumber\\
&~~~+\frac{1}{2}\eta^2_k\beta_k^2\Big\|\bm{v}_k+\frac{1}{\beta_k}\bsg_{k}^0\Big\|^2_{\bsK}
+\frac{1}{2\beta_k^2}\mathbf{E}_{\mathfrak{L}_k}[\|\bsg_{k+1}^0-\bsg_{k}^0\|^2]\nonumber\\
&~~~+\frac{1}{2}\eta_k\|\bsg_k^s-\bsg_k^0\|^2
+\frac{1}{2}\eta_k\Big\|\bm{v}_k+\frac{1}{\beta_k}\bsg_{k}^0\Big\|^2_{\bsK}\nonumber\\
&~~~
+\frac{1}{2}\eta^2_k\|\bsg_k^s-\bsg_k^0\|^2
+\frac{1}{2}\eta^2_k\beta^2_k\|\bsL\bm{x}_k\|^2\nonumber\\
&~~~
+\frac{1}{2}\eta^2_k\mathbf{E}_{\mathfrak{L}_k}[\|\bsg^e_k-\bsg_k^0\|^2]
+\frac{1}{2\beta^2_k}\mathbf{E}_{\mathfrak{L}_k}[\|\bsg_{k+1}^0-\bsg_{k}^0\|^2]\nonumber\\
&=\bm{x}_k^\top(\bsK-\eta_k\alpha_k\bsL)\Big(\bm{v}_k+\frac{1}{\beta_k}\bsg_{k}^0\Big)\nonumber\\
&~~~+\frac{1}{2}(\eta_k+\eta^2_k)\|\bsg_k^s-\bsg_k^0\|^2
+\frac{1}{2}\eta^2_k\mathbf{E}_{\mathfrak{L}_k}[\|\bsg^e_k-\bsg_k^0\|^2]\nonumber\\
&~~~+\|\bm{x}_k\|^2_{\eta_k(\beta_k\bsL+\frac{1}{2}\bsK)
+\eta^2_k(\frac{1}{2}\alpha^2_k-\alpha_k\beta_k+\beta^2_k)\bsL^2}\nonumber\\
&~~~+\Big(\frac{1}{2\eta_k\beta^2_k}+\frac{3}{2\beta^2_k}\Big)
\mathbf{E}_{\mathfrak{L}_k}[\|\bsg_{k+1}^0-\bsg_{k}^0\|^2]\nonumber\\
&~~~-\Big\|\bm{v}_k+\frac{1}{\beta_k}\bsg_{k}^0\Big\|^2_{\eta_k(\beta_k-\frac{1}{2}
-\eta_k\beta^2_k)\bsK}\nonumber\\
&\le\bm{x}_k^\top\bsK\Big(\bm{v}_k+\frac{1}{\beta_k}\bsg_{k}^0\Big)
-(1+\omega_k)\eta_k\alpha_k\bm{x}_k^\top\bsL\Big(\bm{v}_k+\frac{1}{\beta_k}\bsg_{k}^0\Big)\nonumber\\
&~~~+\omega_k\eta_k\alpha_k\bm{x}_k^\top\bsL\Big(\bm{v}_k+\frac{1}{\beta_k}\bsg_{k}^0\Big)\nonumber\\
&~~~+\|\bm{x}_k\|^2_{\eta_k(\beta_k\bsL+\frac{1}{2}\bsK)
+\eta^2_k(\frac{1}{2}\alpha^2_k-\alpha_k\beta_k+\beta^2_k)\bsL^2
+\eta_k(1+3\eta_k)L_f^2\bsK}\nonumber\\
&~~~+\frac{\eta_k}{2\beta^2_k}(1+3\eta_k)L_f^2\mathbf{E}_{\mathfrak{L}_k}[\|\bar{\bsg}^e_{k}\|^2]
+nL_f^2\eta_k(1+3\eta_k)\delta^2_k\nonumber\\
&~~~+\eta^2_k\mathbf{E}_{\mathfrak{L}_k}[\|\bsg^e_k\|^2]-\Big\|\bm{v}_k+\frac{1}{\beta_k}\bsg_{k}^0\Big\|^2_{\eta_k(\beta_k-\frac{1}{2}
-\eta_k\beta^2_k)\bsK},
\label{zerosg:v3k-2}
\end{align}
where the first equality holds due to \eqref{zerosg:alg:random-pd-compact}; the second equality holds since \eqref{nonconvex:KL-L-eq}, \eqref{zerosg:rand-grad-esti1}, and that $x_{i,k}$ and $v_{i,k}$ are independent of $\mathfrak{L}_k$; the first inequality holds due to the Cauchy--Schwarz inequality, the Jensen's inequality, \eqref{nonconvex:KL-L-eq}, and $\rho(\bsK)=1$; and the last  inequality holds due to \eqref{zerosg:rand-grad-esti8}, \eqref{zerosg:rand-grad-esti6}, and \eqref{zerosg:gg-rand-pd}.

For the third term on the right-hand side of \eqref{zerosg:v3k-2}, we have
\begin{align}\label{zerosg:v3k-3}
\omega_k\eta_k\alpha_k\bm{x}_k^\top\bsL\Big(\bm{v}_k+\frac{1}{\beta_k}\bsg_{k}^0\Big)
&=\omega_k\eta_k\alpha_k\bsx^\top_k\bsL\bsK\Big(\bm{v}_k+\frac{1}{\beta_k}\bsg_k^0\Big)\nonumber\\
&\le\|\bsx_k\|^2_{\frac{1}{2}\omega_k\eta_k\alpha_k\bsL^2}
+\Big\|\bm{v}_k+\frac{1}{\beta_k}\bsg_k^0\Big\|^2_{\frac{1}{2}\omega_k\eta_k\alpha_k\bsK}.
\end{align}

Then, from \eqref{zerosg:v3k-1}--\eqref{zerosg:v3k-3}, we have
\begin{align}
\mathbf{E}_{\mathfrak{L}_k}[W_{3,k+1}]
&\le W_{3,k}
-(1+\omega_k)\eta_k\alpha_k\bm{x}_k^\top\bsL\Big(\bm{v}_k+\frac{1}{\beta_k}\bsg_{k}^0\Big)\nonumber\\
&\quad+\|\bm{x}_k\|^2_{\eta_k(\beta_k\bsL+\frac{1}{2}\bsK)
+\eta^2_k(\frac{1}{2}\alpha^2_k-\alpha_k\beta_k+\beta^2_k)\bsL^2}
+\|\bsx_k\|^2_{\frac{1}{2}\omega_k\eta_k\alpha_k\bsL^2+\eta_k(1+3\eta_k)L_f^2\bsK}\nonumber\\
&\quad+\frac{\eta_k}{2\beta^2_k}(1+3\eta_k)L_f^2\mathbf{E}_{\mathfrak{L}_k}[\|\bar{\bsg}^e_{k}\|^2]
+nL_f^2\eta_k(1+3\eta_k)\delta^2_k+\eta^2_k\mathbf{E}_{\mathfrak{L}_k}[\|\bsg^e_k\|^2]\nonumber\\
&\quad-\Big\|\bm{v}_k+\frac{1}{\beta_k}\bsg_{k}^0\Big\|^2_{\eta_k(\beta_k-\frac{1}{2}
-\eta_k\beta^2_k-\frac{1}{2}\omega_k\alpha_k)\bsK}
+\frac{1}{2}\omega_k\mathbf{E}_{\mathfrak{L}_k}[2W_{1,k+1}+\|\bsg_{k+1}^0\|^2].
\label{zerosg:v3k}
\end{align}

(iv) We have
\begin{align}
\mathbf{E}_{\mathfrak{L}_k}[W_{4,k+1}]
&=\mathbf{E}_{\mathfrak{L}_k}[\tilde{f}(\bar{\bsx}_{k+1})-nf^*]\nonumber\\
&=\mathbf{E}_{\mathfrak{L}_k}[\tilde{f}(\bar{\bsx}_k)-nf^*+\tilde{f}(\bar{\bsx}_{k+1})
-\tilde{f}(\bar{\bsx}_k)]\nonumber\\
&\le\mathbf{E}_{\mathfrak{L}_k}\Big[\tilde{f}(\bar{\bsx}_k)-nf^*
-\eta_k(\bar{\bsg}_{k}^e)^\top\bsg^0_k
+\frac{1}{2}\eta^2_kL_f\|\bar{\bsg}_{k}^e\|^2\Big]\nonumber\\
&=W_{4,k}
-\eta_k(\bar{\bsg}_{k}^s)^\top\bsg^0_k
+\frac{1}{2}\eta^2_kL_f\mathbf{E}_{\mathfrak{L}_k}[\|\bar{\bsg}_{k}^e\|^2]\nonumber\\
&=W_{4,k}
-\eta_k(\bar{\bsg}_{k}^s)^\top\bar{\bsg}^0_k
+\frac{1}{2}\eta^2_kL_f\mathbf{E}_{\mathfrak{L}_k}[\|\bar{\bsg}_{k}^e\|^2]\nonumber\\
&=W_{4,k}
-\frac{1}{2}\eta_k(\bar{\bsg}_{k}^s)^\top(\bar{\bsg}^s_k+\bar{\bsg}^0_k-\bar{\bsg}^s_k)\nonumber\\
&\quad-\frac{1}{2}\eta_k(\bar{\bsg}^s_{k}-\bar{\bsg}^0_k+\bar{\bsg}^0_k)^\top\bar{\bsg}^0_k
+\frac{1}{2}\eta^2_kL_f\mathbf{E}_{\mathfrak{L}_k}[\|\bar{\bsg}_{k}^e\|^2]\nonumber\\
&\le W_{4,k}-\frac{1}{4}\eta_k(\|\bar{\bsg}^s_{k}\|^2
-\|\bar{\bsg}^0_k-\bar{\bsg}^s_k\|^2+\|\bar{\bsg}_{k}^0\|^2\nonumber\\
&\quad-\|\bar{\bsg}^0_k-\bar{\bsg}^s_k\|^2)
+\frac{1}{2}\eta^2_kL_f\mathbf{E}_{\mathfrak{L}_k}[\|\bar{\bsg}_{k}^e\|^2]\nonumber\\
&= W_{4,k}-\frac{1}{4}\eta_k\|\bar{\bsg}^s_{k}\|^2
+\frac{1}{2}\eta_k\|\bar{\bsg}^0_k-\bar{\bsg}^s_k\|^2\nonumber\\
&\quad-\frac{1}{4}\eta_k\|\bar{\bsg}_{k}^0\|^2
+\frac{1}{2}\eta^2_kL_f\mathbf{E}_{\mathfrak{L}_k}[\|\bar{\bsg}_{k}^e\|^2]\nonumber\\
&\le W_{4,k}-\frac{1}{4}\eta_k\|\bar{\bsg}^s_{k}\|^2
+\|\bsx_k\|^2_{\eta_kL_f^2\bsK}\nonumber\\
&\quad+nL_f^2\eta_k\delta^2_k-\frac{1}{4}\eta_k\|\bar{\bsg}_{k}^0\|^2
+\frac{1}{2}\eta^2_kL_f\mathbf{E}_{\mathfrak{L}_k}[\|\bar{\bsg}^e_{k}\|^2],\label{zerosg:v4k}
\end{align}
where the first inequality holds since that $\tilde{f}$ is smooth as shown in \eqref{zerosg:ass:fiu:equ}, \eqref{nonconvex:lemma:lipschitz} and \eqref{zerosg:xbardynamic-rand-pd}; the third equality holds since \eqref{zerosg:rand-grad-esti1} and that $x_{i,k}$ and $v_{i,k}$ are independent of $\mathfrak{L}_k$; the fourth equality holds due to $(\bar{\bsg}_{k}^s)^\top\bsg^0_k=(\bsg_{k}^s)^\top\bsH\bsg^0_k=(\bsg_{k}^s)^\top\bsH\bsH\bsg^0_k
=(\bar{\bsg}_{k}^s)^\top\bar{\bsg}^0_k$; the second inequality holds due to the Cauchy--Schwarz inequality; and the last inequality holds due to \eqref{zerosg:rand-grad-esti9}.

(v) Denote
\begin{align*}
\bsM_{1,k}&=(\alpha_k-\beta_k)\bsL-(1+3L_f^2)\bsK,\\
\bsM^0_{2,k}&=\beta^2_k\bsL+(2\alpha^2_k+\beta^2_k)\bsL^2+8L_f^2\bsK,\\
\bsM_{2,k}&=\bsM^0_{2,k}
+8p(1+\sigma_0^2)(1+\tilde{\sigma}_0^2)(6+L_f)L_f^2\bsK,\\
\bsM_{3}&=\frac{1}{2}\bsK-\kappa_1\kappa_2\bsL+\frac{1}{2}\kappa_1\kappa_2\bsL^2
+\frac{3}{2}\kappa_1^2\kappa_2^2\bsL^2
+\kappa_2^2(\bsL+\kappa_1\bsL^2),\\
b^0_{2,k}&=\frac{1}{2}\eta_k(2\beta_k-\kappa_3)-\frac{5}{2}\kappa_2^2
-\frac{1}{2}\omega_k(\kappa_1\kappa_2+3\kappa_2^2+\kappa_4)-\frac{1}{2}\omega_k\eta_k\kappa_4,\\
b^0_{4,k}&=\frac{1}{2}(6+L_f)+\frac{\kappa_3L_f^2}{2\kappa_2}\frac{1}{\beta_k}
+2\omega_k+\kappa_5L_f^2\frac{1}{\beta_k^2}
+\frac{\kappa_4L_f^2}{2\kappa_2}\frac{\omega_k}{\beta_k}
+\kappa_4L_f^2\frac{\omega_k}{\beta_k^2}.
\end{align*}

We have
\begin{align}
\mathbf{E}_{\mathfrak{L}_k}[W_{k+1}]
&\le W_{k}+\frac{1}{2}\omega_k\|\bm{x}_k\|^2_{\bsK}
-(1+\omega_k)\|\bsx_k\|^2_{\eta_k\alpha_k\bsL-\frac{1}{2}\eta_k\bsK
-\frac{3}{2}\eta^2_k\alpha^2_k\bsL^2-\eta_k(1+5\eta_k)L_f^2\bsK}\nonumber\\
&\quad+(1+\omega_k)\Big\|\bm{v}_k+\frac{1}{\beta_k}\bsg_k^0\Big\|^2_{\frac{3}{2}\eta^2_k\beta^2_k\bsK}
+(1+\omega_k)nL_f^2\eta_k(1+5\eta_k)\delta^2_k\nonumber\\
&\quad+2(1+\omega_k)\eta^2_k
\mathbf{E}_{\mathfrak{L}_k}[\|\bsg^e_k\|^2]
+\frac{1}{2}(\eta_k+\omega_k+\eta_k\omega_k)\kappa_4
\Big\|\bm{v}_k+\frac{1}{\beta_k}\bsg_{k}^0\Big\|^2_{\bsK}\nonumber\\
&\quad+\|\bsx_k\|^2_{(1+\omega_k)\eta^2_k\beta^2_k(\bsL+\kappa_1\bsL^2)}
+\frac{\eta_k}{\beta^2_k}\Big(\eta_k+\frac{1}{2}\Big)(1+\omega_k)\kappa_4L_f^2
\mathbf{E}_{\mathfrak{L}_k}[\|\bar{\bsg}^e_{k}\|^2]\nonumber\\
&\quad+\frac{1}{2}\kappa_4(\omega_k+\omega_k^2)
\mathbf{E}_{\mathfrak{L}_k}[\|\bsg_{k+1}^0\|^2]
+\|\bm{x}_k\|^2_{\eta_k(\beta_k\bsL+\frac{1}{2}\bsK)
+\eta^2_k(\frac{1}{2}\alpha^2_k-\alpha_k\beta_k+\beta^2_k)\bsL^2}\nonumber\\
&\quad+\|\bsx_k\|^2_{\frac{1}{2}\omega_k\eta_k\alpha_k\bsL^2+\eta_k(1+3\eta_k)L_f^2\bsK}
+\frac{\eta_k}{2\beta^2_k}(1+3\eta_k)L_f^2
\mathbf{E}_{\mathfrak{L}_k}[\|\bar{\bsg}^e_{k}\|^2]\nonumber\\
&\quad+nL_f^2\eta_k(1+3\eta_k)\delta^2_k
+\eta^2_k\mathbf{E}_{\mathfrak{L}_k}[\|\bsg^e_k\|^2]
-\Big\|\bm{v}_k+\frac{1}{\beta_k}\bsg_{k}^0\Big\|^2_{\eta_k(\beta_k-\frac{1}{2}
-\eta_k\beta^2_k-\frac{1}{2}\omega_k\alpha_k)\bsK}\nonumber\\
&\quad+\frac{1}{2}\omega_k\mathbf{E}_{\mathfrak{L}_k}[\|\bsg_{k+1}^0\|^2]
-\frac{1}{4}\eta_k\|\bar{\bsg}^s_{k}\|^2
+\|\bsx_k\|^2_{\eta_kL_f^2\bsK}\nonumber\\
&\quad+nL_f^2\eta_k\delta^2_k-\frac{1}{4}\eta_k\|\bar{\bsg}_{k}^0\|^2
+\frac{1}{2}\eta^2_kL_f\mathbf{E}_{\mathfrak{L}_k}[\|\bar{\bsg}^e_{k}\|^2]\nonumber\\
&\le W_{k}-\|\bsx_k\|^2_{\eta_k\bsM_{1,k}-\eta_k^2\bsM^0_{2,k}-\omega_k\bsM_{3}
-((\frac{1}{2}+L_f^2)\eta_k\omega_k+5L_f^2\eta_k^2\omega_k)\bsK}
-\Big\|\bm{v}_k+\frac{1}{\beta_k}\bsg_{k}^0\Big\|^2_{b^0_{2,k}\bsK}\nonumber\\
&\quad-\frac{1}{4}\eta_k\|\bar{\bsg}_{k}^0\|^2+b^0_{4,k}\eta^2_k
\mathbf{E}_{\mathfrak{L}_k}[\|\bsg^e_k\|^2]
+\frac{1}{2}(\kappa_3\omega_k+\kappa_4\omega_k^2)
\mathbf{E}_{\mathfrak{L}_k}[\|\bsg_{k+1}^0\|^2]\nonumber\\
&\quad+nL_f^2(3+\omega_k+8\eta_k+5\eta_k\omega_k)\eta_k\delta_k^2\label{zerosg:vkLya-1}\\
&\le W_{k}-\|\bsx_k\|^2_{\eta_k\bsM_{1,k}-\eta_k^2\bsM_{2,k}-\omega_k\bsM_{3}-b_{1,k}\bsK}
-\Big\|\bm{v}_k+\frac{1}{\beta_k}\bsg_{k}^0\Big\|^2_{b^0_{2,k}\bsK}\nonumber\\
&\quad-\eta_k\Big(\frac{1}{4}-(1+\tilde{\sigma}_0^2)(b_{3,k}+8p(1+\sigma_0^2)b_{4,k})\eta_k\Big)\|\bar{\bsg}^0_{k}\|^2\nonumber\\
&\quad+2pn\sigma^2_1b_{4,k}\eta_k^2+n\sigma^2_2(b_{3,k}+4p(1+\sigma_0^2)b_{4,k})\eta_k^2
+b_{5,k}\eta_k\delta_k^2,
\label{zerosg:vkLya}
\end{align}
where the first inequality holds due to \eqref{zerosg:v1k}, \eqref{zerosg:v2k}, \eqref{zerosg:v3k}, and \eqref{zerosg:v4k}; the second inequality holds due to $\alpha_k=\kappa_1\beta_k$,  $\eta_k=\frac{\kappa_2}{\beta_k}$ and $\|\bar{\bsg}^e_{k}\|^2\le\|\bsg^e_{k}\|^2$; the last inequality holds due to \eqref{zerosg:rand-grad-esti2} and \eqref{zerosg:rand-grad-esti4}.

From \eqref{nonconvex:KL-L-eq2}, $\alpha_k=\kappa_1\beta_k$, $\kappa_1>1$, $\beta_k\ge\varepsilon_{1}\ge1+3L_f^2$, and $\eta_k=\frac{\kappa_2}{\beta_k}$, we have
\begin{align}\label{zerosg:m1-rand-pd}
\eta_k\bsM_{1,k}\ge\varepsilon_2\kappa_2\bsK.
\end{align}
From \eqref{nonconvex:KL-L-eq2}, $\alpha_k=\kappa_1\beta_k$, $\beta_k\ge\varepsilon_{1}\ge(8+8p(1+\sigma_0^2)(1+\tilde{\sigma}_0^2)(6+L_f))^{\frac{1}{2}}L_f$, and $\eta_k=\frac{\kappa_2}{\beta_k}$, we have
\begin{align}\label{zerosg:m2-rand-pd}
\eta_k^2\bsM_{2,k}\le\varepsilon_3\kappa_2^2\bsK.
\end{align}
From \eqref{nonconvex:KL-L-eq2}, $\alpha_k=\kappa_1\beta_k$,  and $\eta_k=\frac{\kappa_2}{\beta_k}$, we have
\begin{align}\label{zerosg:m3-rand-pd}
\bsM_{3}\le\varepsilon_5\bsK.
\end{align}
From $\beta_k\ge\varepsilon_{1}\ge p\kappa_3\ge \kappa_3$ and $\eta_k=\frac{\kappa_2}{\beta_k}$, we have
\begin{align}\label{zerosg:vkLya-b1}
b^0_{2,k}\ge&b_{2,k}.
\end{align}

From \eqref{zerosg:vkLya}--\eqref{zerosg:vkLya-b1}, we know that \eqref{zerosg:sgproof-vkLya} holds.

Similar to the way to get \eqref{zerosg:sgproof-vkLya}, we have \eqref{zerosg:sgproof-vkLya-bounded}.
\end{proof}

\begin{lemma}\label{zerosg:lemma:sg2}
Suppose Assumptions~\ref{zerosg:ass:graph}--\ref{zerosg:ass:fig} hold. Suppose $\alpha_k=\kappa_1\beta_k$, $\beta_k=\kappa_0(k+t_1)^{\theta}$, and $\eta_k=\frac{\kappa_2}{\beta_k}$, where $\theta\in[0,1]$, $\kappa_0\ge \frac{c_0(\kappa_1,\kappa_2)}{t_1^\theta}$, $\kappa_1>c_1$, $\kappa_2\in(0,c_2(\kappa_1))$, and $t_1\ge (c_3(\kappa_1,\kappa_2))^{\frac{1}{\theta}}$. Let $\{\bsx_k\}$ be the sequence generated by Algorithm~\ref{zerosg:algorithm-random-pd}, then
\begin{subequations}
\begin{align}
&\mathbf{E}_{\mathfrak{L}_k}[W_{k+1}]
\le  W_{k}-\varepsilon_4\|\bsx_k\|^2_{\bsK}
-\varepsilon_6\Big\|\bm{v}_k+\frac{1}{\beta_k}\bsg_{k}^0\Big\|^2_{\bsK}
-\frac{1}{16}\eta_k\|\bar{\bsg}^0_{k}\|^2
+pn\varepsilon_{12}\eta_k^2+pn\varepsilon_{11}\eta_k\delta_k^2,
\label{zerosg:sgproof-vkLya2}\\
&\mathbf{E}_{\mathfrak{L}_k}[\breve{W}_{k+1}]
\le  \breve{W}_{k}-\varepsilon_4\|\bsx_k\|^2_{\bsK}
-\varepsilon_6\Big\|\bm{v}_k+\frac{1}{\beta_k}\bsg_{k}^0\Big\|^2_{\bsK}
+p\varepsilon_{13}\eta_k^2\|\bar{\bsg}^0_{k}\|^2
+pn\varepsilon_{12}\eta_k^2+pn\varepsilon_{11}\eta_k\delta_k^2,
\label{zerosg:sgproof-vkLya2-bounded}\\
&\mathbf{E}_{\mathfrak{L}_k}[W_{4,k+1}]\le W_{4,k}+\|\bsx_k\|^2_{2\eta_kL_f^2\bsK}
-\frac{3}{16}\eta_k\|\bar{\bsg}_{k}^0\|^2+p\eta_k^2\varepsilon_{15}+(n+p)L_f^2\eta_k\delta^2_k.
\label{zerosg:v4kspeed-diminishing-2}
\end{align}
\end{subequations}
\end{lemma}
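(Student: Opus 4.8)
The plan is to obtain Lemma~\ref{zerosg:lemma:sg2} as a specialization of Lemma~\ref{zerosg:lemma:sg}: I substitute the concrete schedule $\beta_k=\kappa_0(k+t_1)^{\theta}$, $\eta_k=\kappa_2/\beta_k$, $\alpha_k=\kappa_1\beta_k$ into the recursions \eqref{zerosg:sgproof-vkLya} and \eqref{zerosg:sgproof-vkLya-bounded}, and show that under the stated lower bounds on $\kappa_0,\kappa_1,\kappa_2,t_1$ every coefficient there collapses to the clean constant claimed in \eqref{zerosg:sgproof-vkLya2}--\eqref{zerosg:sgproof-vkLya2-bounded}. First I would check the hypotheses of Lemma~\ref{zerosg:lemma:sg}: $\{\beta_k\}$ is non-decreasing since $\theta\ge0$, $\kappa_1>c_1\ge1$, $\kappa_2>0$, and $\beta_k\ge\beta_0=\kappa_0t_1^{\theta}\ge c_0(\kappa_1,\kappa_2)\ge\varepsilon_{1}$ because $\kappa_0\ge c_0/t_1^{\theta}$. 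Thus \eqref{zerosg:sgproof-vkLya} and \eqref{zerosg:sgproof-vkLya-bounded} are in force.

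The technical core is a uniform control of $\omega_k=\beta_k^{-1}-\beta_{k+1}^{-1}$. By the mean value theorem applied to $t\mapsto t^{-\theta}$ I would establish $\omega_k\le\theta/(\kappa_0(k+t_1)^{\theta+1})$, hence the ratio bounds $\omega_k/\eta_k\le\theta/(\kappa_2(k+t_1))$ and $\omega_k/\eta_k^2\le\theta\kappa_0(k+t_1)^{\theta-1}/\kappa_2^2\le\kappa_0/\kappa_2^2$, the last because $\theta\le1$ and $k+t_1\ge1$. These turn the rate-dependent quantities $b_{3,k},b_{4,k}$ into genuine constants: using $\kappa_3=\kappa_4+1$, $\kappa_5=\kappa_4+\tfrac32$, and $\omega_k\le\beta_k^{-1}\le1$, a term-by-term comparison gives $b_{3,k}\le\varepsilon_9$ and $b_{4,k}\le\varepsilon_{10}$, matching precisely how $\varepsilon_9$ and $\varepsilon_{10}$ were assembled.

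With these bounds I would verify the coefficient inequalities hidden in \eqref{zerosg:sgproof-vkLya}. For the consensus term, $\omega_k\le\beta_0^{-1}\le c_0^{-1}\le\varepsilon_4/(2\varepsilon_5)$ gives $\varepsilon_5\omega_k\le\varepsilon_4/2$, and the remaining pieces of $b_{1,k}$ (each carrying $\beta_k^{-2}$, $\eta_k\omega_k$, or similar vanishing factors) are bounded by $p(1+\sigma_0^2)(1+\tilde{\sigma}_0^2)\varepsilon_7/\beta_0^2\le\varepsilon_4/2$ via $c_0\ge(2p(1+\sigma_0^2)(1+\tilde{\sigma}_0^2)\varepsilon_7/\varepsilon_4)^{1/2}$, so $2\varepsilon_4-\varepsilon_5\omega_k-b_{1,k}\ge\varepsilon_4$. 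For the dual term, $c_0\ge\varepsilon_8/(2\varepsilon_6)$ yields $b_{2,k}\ge\varepsilon_6$. For the gradient term I split $(1+\tilde{\sigma}_0^2)(b_{3,k}+8p(1+\sigma_0^2)b_{4,k})\eta_k$: the $b_{3,k}$ part is $\le\tfrac1{16}$ because $t_1^{\theta}\ge c_3=24(1+\tilde{\sigma}_0^2)\kappa_3/\kappa_2$ (its $\kappa_4$-remainder is lower order), while the $b_{4,k}$ part is $\le\tfrac1{16}$ because $c_0\ge128p(1+\sigma_0^2)(1+\tilde{\sigma}_0^2)\kappa_2\varepsilon_{10}$ forces $\eta_k\le\kappa_2/c_0$ to be small; hence $\tfrac14-(\cdot)\ge\tfrac1{16}$. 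Finally the residual noise terms fold into $pn\varepsilon_{12}\eta_k^2+pn\varepsilon_{11}\eta_k\delta_k^2$ by the same $b_{3,k}\le\varepsilon_9$, $b_{4,k}\le\varepsilon_{10}$, together with $\eta_k\le\kappa_2$ and $p\,b_{4,k}\eta_k\le1/(128(1+\sigma_0^2)(1+\tilde{\sigma}_0^2))$; this proves \eqref{zerosg:sgproof-vkLya2}. For \eqref{zerosg:sgproof-vkLya2-bounded} the same estimates apply except that the gradient term of \eqref{zerosg:sgproof-vkLya-bounded} is retained and its coefficient bounded by $p\varepsilon_{13}$ through $(1+\tilde{\sigma}_0^2)(\varepsilon_9+8p(1+\sigma_0^2)\varepsilon_{10})=p\varepsilon_{13}$.

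For \eqref{zerosg:v4kspeed-diminishing-2} I would restart from the standalone descent estimate \eqref{zerosg:v4k} for $W_{4,k}$. Discarding the nonpositive $-\tfrac14\eta_k\|\bar{\bsg}^s_k\|^2$, I bound the single stochastic term $\tfrac12\eta_k^2L_f\mathbf{E}_{\mathfrak{L}_k}[\|\bar{\bsg}^e_k\|^2]$ by chaining \eqref{zerosg:rand-grad-esti5}, \eqref{zerosg:rand-grad-esti2}, and \eqref{zerosg:rand-grad-esti9}. Since $c_0\ge128p(1+\sigma_0^2)(1+\tilde{\sigma}_0^2)\kappa_2\varepsilon_{10}$ keeps $\eta_k$ small, the $\|\bar{\bsg}^0_k\|^2$-contribution it produces is at most $\tfrac1{16}\eta_k$, so $-\tfrac14\eta_k\|\bar{\bsg}^0_k\|^2$ is upgraded to $-\tfrac3{16}\eta_k\|\bar{\bsg}^0_k\|^2$; the consensus contribution stays within the budget $\|\bsx_k\|^2_{2\eta_kL_f^2\bsK}$, the $\sigma_1^2,\sigma_2^2$ constants assemble into $p\eta_k^2\varepsilon_{15}$, and the smoothing residuals into $(n+p)L_f^2\eta_k\delta_k^2$. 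The main obstacle is the penultimate consensus step: $b_{1,k}$ is a sum of several structurally different vanishing terms, and certifying $\varepsilon_5\omega_k+b_{1,k}\le\varepsilon_4$ uniformly in $k$ requires matching each term's decay rate against the correct component of $\varepsilon_7$, which is the most delicate bookkeeping in the whole argument.
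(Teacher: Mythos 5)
Your treatment of \eqref{zerosg:sgproof-vkLya2} and \eqref{zerosg:sgproof-vkLya2-bounded} is essentially the paper's proof: invoke Lemma~\ref{zerosg:lemma:sg} after checking $\beta_k\ge\kappa_0t_1^\theta\ge c_0(\kappa_1,\kappa_2)\ge\varepsilon_1$, control $\omega_k$, and then certify $2\varepsilon_4-\varepsilon_5\omega_k-b_{1,k}\ge\varepsilon_4$, $b_{2,k}\ge\varepsilon_6$, the gradient coefficient $\ge\tfrac{1}{16}\eta_k$, and $b_{3,k}\le\varepsilon_9$, $b_{4,k}\le\varepsilon_{10}$, $b_{5,k}\le pn\varepsilon_{11}$ from the parameter conditions. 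Your MVT bound $\omega_k\le\theta/(\kappa_0(k+t_1)^{\theta+1})$ is tighter than (hence implies) the paper's $\omega_k\le\kappa_0/\beta_k^2\le 1$, your identities $\kappa_3+\kappa_4=2\kappa_4+1$ and $(1+\tilde{\sigma}_0^2)(\varepsilon_9+8p(1+\sigma_0^2)\varepsilon_{10})=p\varepsilon_{13}$ match how $\varepsilon_9$, $\varepsilon_{10}$, $\varepsilon_{12}$, $\varepsilon_{13}$ were assembled, and your use of $\beta_0^2\ge c_0^2$ in place of the paper's $\kappa_0^2t_1^{3\theta}$ for $b_{1,k}$ is a valid (slightly weaker but sufficient) variant.

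The genuine problem is in \eqref{zerosg:v4kspeed-diminishing-2}. The paper keeps the term $-\tfrac14\eta_k\|\bar{\bsg}^s_k\|^2$ from \eqref{zerosg:v4k} precisely so that it cancels the $+\tfrac12\eta_k^2L_f\|\bar{\bsg}^s_k\|^2$ produced when \eqref{zerosg:rand-grad-esti5} is applied; since $2L_f\eta_k<1$ this combination is nonpositive and is simply dropped, and \eqref{zerosg:rand-grad-esti9} is never needed in this step. You instead discard $-\tfrac14\eta_k\|\bar{\bsg}^s_k\|^2$ first, so you must reconstitute the leftover $\tfrac12\eta_k^2L_f\|\bar{\bsg}^s_k\|^2$ via $\|\bar{\bsg}^s_k\|^2\le 2\|\bar{\bsg}^0_k\|^2+2\|\bar{\bsg}^s_k-\bar{\bsg}^0_k\|^2$ and \eqref{zerosg:rand-grad-esti9}. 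The $\|\bar{\bsg}^0_k\|^2$ and consensus pieces this generates are absorbable with slack, but it also generates an extra smoothing term $2nL_f^3\eta_k^2\delta_k^2=(2\eta_kL_f)\cdot nL_f^2\eta_k\delta_k^2$. The hypotheses only guarantee $\eta_kL_f\lesssim 1/(128p(1+\sigma_0^2)(1+\tilde{\sigma}_0^2))$ uniformly in $k$, so this extra term is of order $(n/p)L_f^2\eta_k\delta_k^2$ up to a constant, while the only remaining room in the claimed budget $(n+p)L_f^2\eta_k\delta^2_k$ is $pL_f^2\eta_k\delta_k^2$; whenever $n$ is large relative to $p^2$ (e.g.\ $p=1$ and $n$ in the hundreds or thousands, depending on $L_f$) your bound overflows and \eqref{zerosg:v4kspeed-diminishing-2} is obtained only with an inflated constant, not as stated. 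The repair is exactly the paper's move: do not discard the negative $\|\bar{\bsg}^s_k\|^2$ term; use it to absorb the one coming out of \eqref{zerosg:rand-grad-esti5}, which costs nothing and avoids \eqref{zerosg:rand-grad-esti9} entirely.
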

\begin{proof}
(i) Noting that $\kappa_1>c_1>1$ and $\beta_k=\kappa_0(k+t_1)^{\theta}\ge\kappa_0t_1^{\theta}\ge c_0(\kappa_1,\kappa_2)\ge\varepsilon_{1}\ge1$, we know that all conditions needed in Lemma~\ref{zerosg:lemma:sg} are satisfied, so \eqref{zerosg:sgproof-vkLya} and \eqref{zerosg:sgproof-vkLya-bounded} hold.

From $\kappa_1>c_1=\frac{1}{\rho_2(L)}+1$, we have
\begin{align}
\varepsilon_2>0.\label{zerosg:varepsilon1and2}
\end{align}

From \eqref{zerosg:varepsilon1and2} and $\kappa_2\in(0,\min\{\frac{\varepsilon_2}{\varepsilon_3},~\frac{1}{5}\})$, we have
\begin{align}
\varepsilon_4>0~\text{and}~
\varepsilon_6>0.\label{zerosg:kappa4-6}
\end{align}

From $t_1\ge(c_3(\kappa_1,\kappa_2))^{\frac{1}{\theta}}$ and $c_3(\kappa_1,\kappa_2)=\frac{24(1+\tilde{\sigma}_0^2)\kappa_3}{\kappa_2}$, we have
\begin{align}\label{zerosg:b3k-1}
\frac{3\kappa_3}{2\kappa_2t_1^{\theta}}\le\frac{1}{16(1+\tilde{\sigma}_0^2)}.
\end{align}

From $\kappa_0\ge \frac{c_0(\kappa_1,\kappa_2)}{t_1^\theta}
\ge\frac{24(1+\tilde{\sigma}_0^2)\kappa_4}{\kappa_2t_1^{\theta}}
\ge\frac{24(1+\tilde{\sigma}_0^2)\kappa_4}{\kappa_2t_1^{3\theta}}$, we have
\begin{align}\label{zerosg:b3k-2}
\frac{3\kappa_4}{2\kappa_2\kappa_0t_1^{3\theta}}\le\frac{1}{16(1+\tilde{\sigma}_0^2)}.
\end{align}

From $\beta_k=\kappa_0(k+t_1)^{\theta}$, we have
\begin{align}\label{zerosg:omegak}
\omega_k&=\frac{1}{\beta_{k}}-\frac{1}{\beta_{k+1}}
=\frac{1}{\kappa_0}(\frac{1}{(k+t_1)^{\theta}}-\frac{1}{(k+t_1+1)^{\theta}})
\nonumber\\
&\le\frac{1}{\kappa_0(k+t_1)^{\theta}(k+t_1+1)^{\theta}}
\le\frac{\kappa_0}{\beta_k^2}\le1.
\end{align}

From \eqref{zerosg:omegak}, $\eta_k=\frac{\kappa_2}{\beta_k}$, $\beta_k\ge1$, $\omega_k\le1$, and $\kappa_0\ge \frac{c_0(\kappa_1,\kappa_2)}{t_1^\theta}
\ge(\frac{2p(1+\sigma_0^2)(1+\tilde{\sigma}_0^2)\varepsilon_7}{\varepsilon_4t_1^{3\theta}})^{\frac{1}{2}}$, we have
\begin{align}
b_{1,k}\le\frac{p(1+\sigma_0^2)(1+\tilde{\sigma}_0^2)\varepsilon_7}{\kappa_0^2t_1^{3\theta}}\le\frac{\varepsilon_4}{2} .\label{zerosg:b1k}
\end{align}

From \eqref{zerosg:omegak}, \eqref{zerosg:b1k}, $\kappa_0\ge \frac{c_0(\kappa_1,\kappa_2)}{t_1^\theta}\ge\frac{2\varepsilon_5}{\varepsilon_4t_1^{\theta}}$, and \eqref{zerosg:kappa4-6}, we have
\begin{align}
2\varepsilon_4-\varepsilon_5\omega_k-b_{1,k}
\ge2\varepsilon_4-\frac{\varepsilon_5}{\kappa_0t_1^{\theta}}-\frac{\varepsilon_4}{2}
\ge\varepsilon_4>0.\label{zerosg:varepsilon4}
\end{align}

From \eqref{zerosg:omegak}, $\eta_k=\frac{\kappa_2}{\beta_k}$, $\kappa_0\ge \frac{c_0(\kappa_1,\kappa_2)}{t_1^\theta}\ge\frac{\varepsilon_8}{2\varepsilon_6t_1^{\theta}}\ge\frac{\varepsilon_8}{2\varepsilon_6t_1^{2\theta}}$, and \eqref{zerosg:kappa4-6}, we have
\begin{align}
b_{2,k}\ge2\varepsilon_6-\frac{\varepsilon_8}{2\kappa_0t_1^{2\theta}}
\ge\varepsilon_6>0.\label{zerosg:b2k}
\end{align}

From \eqref{zerosg:b3k-1}--\eqref{zerosg:omegak} and $\eta_k=\frac{\kappa_2}{\beta_k}$, we have
\begin{align}\label{zerosg:b3keta}
b_{3,k}\eta_k\le\frac{3\kappa_3}{2\kappa_2t_1^{\theta}}
+\frac{3\kappa_4}{2\kappa_2\kappa_0t_1^{3\theta}}\le\frac{1}{8}.
\end{align}

From $\beta_k\ge1$ and $\omega_k\le1$, we have
\begin{subequations}
\begin{align}
b_{3,k}&\le \varepsilon_9,\label{zerosg:b3k}\\
b_{4,k}&\le \varepsilon_{10}.\label{zerosg:b4k}
\end{align}
\end{subequations}

From \eqref{zerosg:b3keta}, \eqref{zerosg:b4k}, and $\kappa_0\ge \frac{c_0(\kappa_1,\kappa_2)}{t_1^\theta}\ge\frac{128p(1+\sigma_0^2)(1+\tilde{\sigma}_0^2)\kappa_2\varepsilon_{10}}{t_1^{\theta}}$, we have
\begin{align}\label{zerosg:b3kb4keta}
&\frac{1}{4}-(1+\tilde{\sigma}_0^2)(b_{3,k}+8p(1+\sigma_0^2)b_{4,k})\eta_k\nonumber\\
&\ge\frac{1}{8}-8p(1+\sigma_0^2)(1+\tilde{\sigma}_0^2)b_{4,k}\eta_k
\ge\frac{1}{8}-\frac{8p(1+\sigma_0^2)(1+\tilde{\sigma}_0^2)\kappa_2\varepsilon_{10}}{\kappa_0t_1^{\theta}}
\ge\frac{1}{16}.
\end{align}

From \eqref{zerosg:b3kb4keta}, $\eta_k=\frac{\kappa_2}{\beta_k}$, $\beta_k\ge1$, and $\omega_k\le1$, we have
\begin{align}
b_{5,k}\le pn\varepsilon_{11}.\label{zerosg:b5k}
\end{align}

From \eqref{zerosg:sgproof-vkLya}, \eqref{zerosg:varepsilon4}, \eqref{zerosg:b2k}, and \eqref{zerosg:b3k}--\eqref{zerosg:b5k}, we know that \eqref{zerosg:sgproof-vkLya2} holds.

(ii) From \eqref{zerosg:sgproof-vkLya-bounded}, \eqref{zerosg:varepsilon4}, \eqref{zerosg:b2k}, \eqref{zerosg:b3k}, \eqref{zerosg:b4k}, and \eqref{zerosg:b5k}, we have \eqref{zerosg:sgproof-vkLya2-bounded}.

(iii) From \eqref{zerosg:v4k}, \eqref{zerosg:rand-grad-esti5}, and \eqref{zerosg:rand-grad-esti2}, we have
\begin{align}
\mathbf{E}_{\mathfrak{L}_k}[W_{4,k+1}]
&\le W_{4,k}-\frac{1}{4}\eta_k\|\bar{\bsg}^s_{k}\|^2
+\|\bsx_k\|^2_{\eta_kL_f^2\bsK}+nL_f^2\eta_k\delta^2_k\nonumber\\
&\quad-\frac{1}{4}\eta_k\|\bar{\bsg}_{k}^0\|^2
+\frac{1}{2}\eta_k^2L_f\Big(\frac{16p(1+\sigma_0^2)(1+\tilde{\sigma}_0^2)}{n}\|\bar{\bsg}_{k}^0\|^2
+\frac{16p(1+\sigma_0^2)(1+\tilde{\sigma}_0^2)}{n}L_f^2\|\bsx_{k}\|^2_{\bsK}\nonumber\\
&\quad+4p\sigma^2_1+8p(1+\sigma_0^2)\sigma^2_2+\frac{1}{2}p^2L_f^2\delta_k^2
+\|\bar{\bsg}^s_{k}\|^2\Big).\label{zerosg:v4kspeed-diminishing}
\end{align}

From $\eta_k=\frac{\kappa_2}{\beta_k}\le\frac{\kappa_2}{\kappa_0t_1^\theta}$ and $\kappa_0t_1^\theta\ge c_0(\kappa_1,\kappa_2)\ge128p(1+\sigma_0^2)(1+\tilde{\sigma}_0^2)\kappa_2\varepsilon_{10}
>128p(1+\sigma_0^2)(1+\tilde{\sigma}_0^2)\kappa_2L_f$, we have
\begin{subequations}
 \begin{align}
&\frac{8p(1+\sigma_0^2)(1+\tilde{\sigma}_0^2)}{n}\eta_k^2L_f
\le\frac{8p(1+\sigma_0^2)(1+\tilde{\sigma}_0^2)\kappa_2}{\kappa_0t_1^\theta}\eta_kL_f
<\frac{1}{16}\eta_k,\label{zerosg:v4kspeed-diminishing-1.1}\\
&\frac{8p(1+\sigma_0^2)(1+\tilde{\sigma}_0^2)}{n}\eta_k^2L_f^3
<\frac{1}{16}\eta_kL_f^2,\label{zerosg:v4kspeed-diminishing-1.2}\\
&\frac{1}{2}\eta_k^2L_f<\frac{1}{16}\eta_k,\label{zerosg:v4kspeed-diminishing-1.3}\\
&\frac{1}{4}p^2\eta_k^2L_f^3<pL_f^2\eta_k.\label{zerosg:v4kspeed-diminishing-1.4}
\end{align}
\end{subequations}

From \eqref{zerosg:v4kspeed-diminishing}--\eqref{zerosg:v4kspeed-diminishing-1.4}, we have \eqref{zerosg:v4kspeed-diminishing-2}.
\end{proof}

Now it is ready to prove Theorem~\ref{zerosg:thm-random-pd-sm}.

Denote
\begin{align*}
\hat{V}_k=\|\bm{x}_k\|^2_{\bsK}+\Big\|\bsv_k
+\frac{1}{\beta_k}\bsg_k^0\Big\|^2_{\bsK}+n(f(\bar{x}_k)-f^*).
\end{align*}
We have
\begin{align}
W_{k}
&=\frac{1}{2}\|\bsx_{k}\|^2_{\bsK}
+\frac{1}{2}\Big\|\bsv_k+\frac{1}{\beta_k}\bsg_k^0\Big\|^2_{\bsQ+\kappa_1\bsK}+\bsx_k^\top\bsK\Big(\bm{v}_k+\frac{1}{\beta_k}\bsg_k^0\Big)+n(f(\bar{x}_k)-f^*)\nonumber\\
&\ge\frac{1}{2}\|\bsx_{k}\|^2_{\bsK}
+\frac{1}{2}\Big(\frac{1}{\rho(L)}+\kappa_1\Big)
\Big\|\bsv_k+\frac{1}{\beta_k}\bsg_k^0\Big\|^2_{\bsK}
-\frac{1}{2\kappa_1}\|\bsx_{k}\|^2_{\bsK}
-\frac{1}{2}\kappa_1\Big\|\bsv_k+\frac{1}{\beta_k}\bsg_k^0\Big\|^2_{\bsK}
+n(f(\bar{x}_k)-f^*)\nonumber\\
&\ge\kappa_7\Big(\|\bsx_{k}\|^2_{\bsK}+\Big\|\bsv_k+\frac{1}{\beta_k}\bsg_k^0\Big\|^2_{\bsK}\Big)
+n(f(\bar{x}_k)-f^*)\label{zerosg:vkLya3.2}\\
&\ge\kappa_7\hat{V}_k\ge0,\label{zerosg:vkLya3}
\end{align}
where the first inequality holds due to \eqref{nonconvex:lemma-eq2} and the Cauchy--Schwarz inequality; and the last inequality holds due to $0<\kappa_7<\frac{1}{2}$. Similarly, we have
\begin{align}\label{zerosg:vkLya3.1}
W_k\le\kappa_6\hat{V}_k.
\end{align}

From \eqref{zerosg:sgproof-vkLya2} and \eqref{zerosg:kappa4-6}, we have
\begin{align}\label{zerosg:vkLya4}
\mathbf{E}_{\mathfrak{L}_k}[W_{k+1}]
&\le   W_{k}-\varepsilon_4\|\bsx_k\|^2_{\bsK}
-\frac{1}{16}\eta_k\|\bar{\bsg}^0_{k}\|^2+pn\varepsilon_{12}\eta_k^2+pn\varepsilon_{11}\eta_k\delta_k^2.
\end{align}
Then, taking expectation in $\calL_{T}$, summing \eqref{zerosg:vkLya4} over $ k\in[0,T-1]$,  noting $\eta_k=\frac{\kappa_2}{\kappa_0(k+t_1)^\theta}$, $\theta\in(0.5,1)$, and $\delta_k\le \frac{\kappa_\delta\sqrt{p\eta_k}}{\sqrt{n+p}}$ as stated in \eqref{zerosg:step:eta1-sm}, and
using \eqref{zerosg:serise:lemma:sum-equ} yield
\begin{align}\label{zerosg:vkLya4.1}
&\mathbf{E}[W_{T}]+\sum_{k=0}^{T-1}\mathbf{E}\Big[\varepsilon_4\|\bsx_k\|^2_{\bsK}
+\frac{1}{16}\eta_k\|\bar{\bsg}^0_{k}\|^2\Big]
\le W_{0}+\frac{pn(\varepsilon_{11}\kappa_\delta^2
+\varepsilon_{12})\kappa_2^2}{\kappa_0^2}\sum_{k=0}^{T-1}\frac{1}{(k+t_1)^{2\theta}}
\le n\varepsilon_{14}.
\end{align}

Noting that $t_1^\theta=\mathcal{O}(\sqrt{p})$, we have
\begin{align}\label{zerosg:k0}
\kappa_0=\mathcal{O}\Big(\frac{p}{t_1^\theta}\Big)=\mathcal{O}(\sqrt{p}).
\end{align}

From $W_0=\mathcal{O}(n)$ and \eqref{zerosg:k0}, we have
\begin{align}
\varepsilon_{14}=\frac{W_{0}}{n}+\frac{p(\varepsilon_{11}\kappa_\delta^2
+\varepsilon_{12})\kappa_2^2}{(2\theta-1)\kappa_0^2}=\mathcal{O}(1).\label{zerosg:c10}
\end{align}

From \eqref{zerosg:vkLya4.1}, \eqref{zerosg:vkLya3.2}, and $\kappa_7>0$, we have
\begin{align}\label{zerosg:thm-sg-sm-equ2p}
\mathbf{E}[f(\bar{x}_{T})]-f^*\le\frac{1}{n}\mathbf{E}[W_{T}]
\le \varepsilon_{14},~\forall T\in\mathbb{N}_0,
\end{align}
which gives \eqref{zerosg:thm-sg-sm-equ2}.

From \eqref{zerosg:vkLya4.1}, \eqref{zerosg:vkLya3}, and \eqref{zerosg:kappa4-6}, we have
\begin{align}\label{zerosg:thm-sg-sm-equ1.1p}
\sum_{k=0}^{T-1}\mathbf{E}[\|\bsx_k\|^2_{\bsK}]
\le\frac{n\varepsilon_{14}}{\varepsilon_4},~\forall T\in\mathbb{N}_+.
\end{align}

From \eqref{zerosg:rand-grad-smooth} and \eqref{zerosg:thm-sg-sm-equ2p}, we have
\begin{align}\label{zerosg:thm-sg-sm-bounded}
\mathbf{E}[\|\bar{\bsg}^0_k\|^2]\le2nL_f(\mathbf{E}[f(\bar{x}_k)]-f^*)
\le 2nL_f\varepsilon_{14}.
\end{align}

From \eqref{zerosg:rand-grad-esti2}, \eqref{zerosg:thm-sg-sm-equ1.1p}, and \eqref{zerosg:thm-sg-sm-bounded}, we know that $\mathbf{E}[\|\bsg^e_k\|^2]$ is bounded. Then, same as the proof of the first part of Theorem~1 in \cite{tang2020distributedzero}, we have \eqref{zerosg:thm-sg-sm-equ1bounded}.

From \eqref{zerosg:vkLya3.2} and \eqref{zerosg:vkLya3.1}, we have
\begin{align}\label{zerosg:vkLya3.2-bound}
0\le2\kappa_7(W_{1,k}+W_{2,k})\le\breve{W}_k\le2\kappa_6(W_{1,k}+W_{2,k}).
\end{align}

Denote $\breve{z}_k=\mathbf{E}[\breve{W}_k]$. From \eqref{zerosg:sgproof-vkLya2-bounded}, \eqref{zerosg:thm-sg-sm-bounded}, \eqref{zerosg:vkLya3.2-bound}, and \eqref{zerosg:step:eta1-sm}, we have
\begin{align}\label{zerosg:vkLya4-bound}
\breve{z}_{k+1}\le(1-a_1)\breve{z}_k+\frac{a_2}{(k+t_1)^{2\theta}}.
\end{align}

From $\kappa_1>1$, we have $\kappa_6>1$. From $0<\kappa_2<\frac{1}{5}$, we have $\varepsilon_6=\frac{1}{4}(\kappa_2-5\kappa_2^2)\le\frac{1}{80}$.
Thus,
\begin{align}\label{zerosg:vkLya2-a1-bounded}
0<a_1\le\frac{\varepsilon_6}{\kappa_6}\le\frac{1}{80}.
\end{align}

From \eqref{zerosg:kappa4-6}, we know that
\begin{align}\label{zerosg:vkLya2-pl-a1a2-bounded}
a_1>0~\text{and}~a_2>0.
\end{align}

From \eqref{zerosg:vkLya3.2-bound}--\eqref{zerosg:vkLya2-pl-a1a2-bounded} and \eqref{zerosg:serise:lemma:sequence-equ6}, we have
\begin{align}\label{zerosg:lemma:sequence-equ6-bounded}
2\kappa_7\mathbf{E}[\|\bsx_k\|^2_{\bsK}]\le\breve{z}_{k}\le \phi_3(k,t_1,a_1,a_2,2\theta,\breve{z}_{0}),~\forall k\in\mathbb{N}_+,
\end{align}
where the function $\phi_3$ is defined in \eqref{zerosg:serise:lemma:sequence-equ6-phi4}.

Noting that $\phi_3(k,t_1,a_1,a_2,2\theta,\breve{z}_0)=\mathcal{O}(\frac{n}{k^{2\theta}})$ due to \eqref{zerosg:k0}, from \eqref{zerosg:lemma:sequence-equ6-bounded}, we have \eqref{zerosg:thm-sg-sm-equ1.1bounded}.

From \eqref{zerosg:v4kspeed-diminishing-2}, we have
\begin{align}\label{zerosg:v4kspeed-diminishing-2-thm1}
\Big(\frac{1}{\eta_k}-\frac{1}{\eta_{k+1}}+\frac{1}{\eta_{k+1}}\Big)\mathbf{E}_{\mathfrak{L}_k}[W_{4,k+1}]
\le \frac{W_{4,k}}{\eta_k}+\|\bsx_k\|^2_{2L_f^2\bsK}
-\frac{3}{16}\|\bar{\bsg}_{k}^0\|^2+p\eta_k\varepsilon_{15}+(n+p)L_f^2\delta^2_k.
\end{align}
Then, taking expectation in $\calL_{T}$, summing \eqref{zerosg:v4kspeed-diminishing-2-thm1} over $ k\in[0,T-1]$,  noting \eqref{zerosg:thm-sg-sm-equ2p}, $\eta_k=\frac{\kappa_2}{\kappa_0(k+t_1)^\theta}$, $\theta\in(0.5,1)$, and $\delta_k\le \frac{\kappa_\delta\sqrt{p\eta_k}}{\sqrt{n+p}}$ as stated in \eqref{zerosg:step:eta1-sm}, and
using \eqref{zerosg:serise:lemma:sum-equ} yield
\begin{align}\label{zerosg:v4kspeed-diminishing-2-thm1-1}
&\frac{3}{16}\sum_{k=0}^{T-1}\mathbf{E}[\|\bar{\bsg}_{k}^0\|^2]\nonumber\\
&\le \frac{W_{4,0}}{\eta_0}
+\sum_{k=0}^{T-1}\Big(\frac{1}{\eta_{k+1}}-\frac{1}{\eta_{k}}\Big)\mathbf{E}[W_{4,k+1}]
+\sum_{k=0}^{T-1}\mathbf{E}[\|\bsx_k\|^2_{2L_f^2\bsK}]
+\frac{p(\varepsilon_{15}+L_f^2\kappa_\delta^2)\kappa_2(T+t_1)^{1-\theta}}{\kappa_0(1-\theta)}\nonumber\\
&\le \frac{n\varepsilon_{14}}{\eta_0}
+\sum_{k=0}^{T-1}\Big(\frac{1}{\eta_{k+1}}-\frac{1}{\eta_{k}}\Big)n\varepsilon_{14}
+\sum_{k=0}^{T-1}\mathbf{E}[\|\bsx_k\|^2_{2L_f^2\bsK}]
+\frac{p(\varepsilon_{15}+L_f^2\kappa_\delta^2)\kappa_2(T+t_1)^{1-\theta}}{\kappa_0(1-\theta)}\nonumber\\
&= \frac{n\varepsilon_{14}\kappa_0(T+t_1)^\theta}{\kappa_2}
+\sum_{k=0}^{T-1}\mathbf{E}[\|\bsx_k\|^2_{2L_f^2\bsK}]
+\frac{p(\varepsilon_{15}+L_f^2\kappa_\delta^2)\kappa_2(T+t_1)^{1-\theta}}{\kappa_0(1-\theta)}.
\end{align}
From \eqref{zerosg:v4kspeed-diminishing-2-thm1-1}, \eqref{zerosg:lemma:sequence-equ6-bounded}, \eqref{zerosg:k0}, and $\theta\in(0.5,1)$, we have \eqref{zerosg:thm-sg-sm-equ1}.

\subsection{Proof of Theorem~\ref{zerosg:thm-sg-smT}}\label{zerosg:proof-thm-sg-smT}
In addition to the notations defined in Appendix~\ref{zerosg:proof-thm-random-pd-sm},
we also denote the following notations.
\begin{align*}
&\tilde{c}_0(\kappa_1,\kappa_2)=\max\Big\{\varepsilon_{1},
~\Big(\frac{p(1+\sigma_0^2)(1+\tilde{\sigma}_0^2)\tilde{\varepsilon}_7}{\varepsilon_4}\Big)^{\frac{1}{3}},
~64p(1+\sigma_0^2)(1+\tilde{\sigma}_0^2)\kappa_2\tilde{\varepsilon}_{10}\Big\},\\
&\tilde{\varepsilon}_7
=8(1+3\kappa_2+\kappa_4+2\kappa_2\kappa_4)\kappa_2L_f^4,\\
&\tilde{\varepsilon}_{10}=6+L_f+\frac{1}{\kappa_2}(\kappa_4+1)L_f^2
+(3\kappa_4+3)L_f^2,\\
&\tilde{\varepsilon}_{11}=L_f^2\Big(\frac{1}{256(1+\sigma_0^2)(1+\tilde{\sigma}_0^2)}+\frac{8\kappa_2+3}{p}\Big),\\
&\tilde{\varepsilon}_{12}=2(\sigma^2_1+2(1+\sigma_0^2)\sigma^2_2)\tilde{\varepsilon}_{10},\\
&\tilde{\varepsilon}_{13}=8(1+\sigma_0^2)(1+\tilde{\sigma}_0^2)\tilde{\varepsilon}_{10},\\
&\tilde{a}_1=\frac{1}{\kappa_6}\min\{\varepsilon_{4},~2\varepsilon_{6}\}.
\end{align*}

To prove Theorem~\ref{zerosg:thm-sg-smT}, the following lemma is used.
\begin{lemma}\label{zerosg:lemma:sg2-T}
Suppose Assumptions~\ref{zerosg:ass:graph}--\ref{zerosg:ass:fig} hold. Suppose $\alpha_k=\alpha=\kappa_1\beta$, $\beta_k=\beta$, and $\eta_k=\eta=\frac{\kappa_2}{\beta}$, where
$\beta\ge\tilde{c}_0(\kappa_1,\kappa_2)$, $\kappa_1>c_1$, and $\kappa_2\in(0,c_2(\kappa_2))$ are constants. Let $\{\bsx_k\}$ be the sequence generated by Algorithm~\ref{zerosg:algorithm-random-pd}, then
\begin{subequations}
\begin{align}
\mathbf{E}_{\mathfrak{L}_k}[W_{k+1}]
&\le   W_{k}-\varepsilon_4\|\bsx_k\|^2_{\bsK}
-2\varepsilon_6\Big\|\bm{v}_k+\frac{1}{\beta}\bsg_{k}^0\Big\|^2_{\bsK}
-\frac{1}{8}\eta\|\bar{\bsg}^0_{k}\|^2+pn\tilde{\varepsilon}_{12}\eta^2+pn\tilde{\varepsilon}_{11}\eta\delta_k^2,
\label{zerosg:sgproof-vkLya2T}\\
\mathbf{E}_{\mathfrak{L}_k}[\breve{W}_{k+1}]&\le   \breve{W}_{k}-\varepsilon_4\|\bsx_k\|^2_{\bsK}-2\varepsilon_6\Big\|\bm{v}_k+\frac{1}{\beta}\bsg_{k}^0\Big\|^2_{\bsK}
+p\tilde{\varepsilon}_{13}\eta^2\|\bar{\bsg}^0_{k}\|^2+pn\tilde{\varepsilon}_{12}\eta^2+pn\tilde{\varepsilon}_{11}\eta\delta_k^2,
\label{zerosg:sgproof-vkLya2T-bounded}\\
\mathbf{E}_{\mathfrak{L}_k}[W_{4,k+1}]
&\le  W_{4,k}+\|\bsx_k\|^2_{2\eta L_f^2\bsK}-\frac{1}{8}\eta\|\bar{\bsg}_{k}^0\|^2
+p\varepsilon_{15}\eta^2
+(n+p)L_f^2\eta\delta^2_k.\label{zerosg:v4kspeed}
\end{align}
\end{subequations}
\end{lemma}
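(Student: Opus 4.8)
The plan is to obtain Lemma~\ref{zerosg:lemma:sg2-T} as the constant-parameter specialization of Lemma~\ref{zerosg:lemma:sg}, exploiting that with $\beta_k\equiv\beta$ the increment $\omega_k=\frac{1}{\beta_k}-\frac{1}{\beta_{k+1}}$ vanishes identically. First I would verify the hypotheses of Lemma~\ref{zerosg:lemma:sg}: the constant sequence $\{\beta_k\}=\{\beta\}$ is trivially non-decreasing, $\alpha_k=\kappa_1\beta$, $\eta_k=\kappa_2/\beta$ with $\kappa_1>c_1>1$ and $\kappa_2>0$, and $\beta\ge\tilde{c}_0(\kappa_1,\kappa_2)\ge\varepsilon_1$, so both \eqref{zerosg:sgproof-vkLya} and \eqref{zerosg:sgproof-vkLya-bounded} are available. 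Setting $\omega_k=0$ in the coefficients of Lemma~\ref{zerosg:lemma:sg} collapses them dramatically: $b_{3,k}=0$ and $b_{2,k}=2\varepsilon_6$, while $b_{1,k}$, $b_{4,k}$, and $b_{5,k}$ retain only their $\omega_k$-free parts.

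The core of the argument is then to convert $\beta\ge\tilde{c}_0(\kappa_1,\kappa_2)$ into the bounds producing the claimed coefficients. With $\omega_k=0$ and $\eta=\kappa_2/\beta$, the residual $b_{1,k}$ is $\mathcal{O}(pL_f^4/\beta^3)$; substituting $\kappa_3=\kappa_4+1$ and $\kappa_5=\kappa_4+\tfrac32$ and using $\beta\ge1$ gives exactly $b_{1,k}\le p(1+\sigma_0^2)(1+\tilde{\sigma}_0^2)\tilde{\varepsilon}_7/\beta^3$, so the cube-root term in $\tilde{c}_0$ yields $b_{1,k}\le\varepsilon_4$ and hence $2\varepsilon_4-\varepsilon_5\omega_k-b_{1,k}\ge\varepsilon_4$, the coefficient of $-\|\bsx_k\|^2_{\bsK}$. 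The linear term $\beta\ge64p(1+\sigma_0^2)(1+\tilde{\sigma}_0^2)\kappa_2\tilde{\varepsilon}_{10}$ combined with $b_{4,k}\le\tilde{\varepsilon}_{10}$ (again from $\beta\ge1$) forces $8p(1+\sigma_0^2)(1+\tilde{\sigma}_0^2)b_{4,k}\eta\le\tfrac18$, so the gradient coefficient $\tfrac14-(1+\tilde{\sigma}_0^2)(b_{3,k}+8p(1+\sigma_0^2)b_{4,k})\eta\ge\tfrac18$. The same $b_{4,k}\eta$ bound turns $b_{5,k}=nL_f^2(\tfrac14p^2b_{4,k}\eta+3+8\eta)$ into $b_{5,k}\le pn\tilde{\varepsilon}_{11}$, and collecting the $\sigma_1^2$ and $\sigma_2^2$ variance terms (with $b_{3,k}=0$, $b_{4,k}\le\tilde{\varepsilon}_{10}$) gives the combined bound $pn\tilde{\varepsilon}_{12}\eta^2$. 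These substitutions deliver \eqref{zerosg:sgproof-vkLya2T}; feeding the identical constants into \eqref{zerosg:sgproof-vkLya-bounded}, where the gradient term appears with reversed sign and coefficient $8p(1+\sigma_0^2)(1+\tilde{\sigma}_0^2)b_{4,k}\eta^2\le p\tilde{\varepsilon}_{13}\eta^2$, gives \eqref{zerosg:sgproof-vkLya2T-bounded}.

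For the third inequality \eqref{zerosg:v4kspeed} I would reuse the derivation of part (iii) of Lemma~\ref{zerosg:lemma:sg2}, starting from \eqref{zerosg:v4k} and inserting \eqref{zerosg:rand-grad-esti5} and \eqref{zerosg:rand-grad-esti2}. The only difference is the strength of the stepsize bound: here the linear part of $\tilde{c}_0$ gives $\frac{8p(1+\sigma_0^2)(1+\tilde{\sigma}_0^2)}{n}\eta^2L_f\le\tfrac18\eta$ (rather than $\tfrac1{16}\eta$ as in the diminishing case, since $\tilde{c}_0$ carries the factor $64$ where $c_0$ carried $128$), so the gradient coefficient consolidates to $-\tfrac14+\tfrac18=-\tfrac18$, while the $\|\bar{\bsg}^s_k\|^2$ terms cancel because $\tfrac12\eta L_f\le\tfrac14$ and the $\mathcal{O}(p^2)$ smoothing remainder is absorbed into $(n+p)L_f^2\eta\delta_k^2$ via $\tfrac14p^2\eta^2L_f^3\le pL_f^2\eta$.

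The work is essentially bookkeeping rather than new ideas, so the main obstacle is the precise matching of constants: one must verify that each of the three quantities inside $\tilde{c}_0(\kappa_1,\kappa_2)$ is exactly calibrated to discharge its respective inequality, in particular that the cubic scaling $b_{1,k}=\mathcal{O}(1/\beta^3)$ lines up with the cube-root threshold, and that $\tilde{\varepsilon}_7,\tilde{\varepsilon}_{10},\tilde{\varepsilon}_{11},\tilde{\varepsilon}_{12},\tilde{\varepsilon}_{13}$ arise as the $\omega_k=0$ reductions of $b_{1,k},b_{4,k},b_{5,k}$ and the variance coefficients. No step requires an estimate not already present in Lemmas~\ref{zerosg:lemma:grad-st} and \ref{zerosg:lemma:sg}.
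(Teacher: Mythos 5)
Your proposal is correct and follows essentially the same route as the paper: the paper likewise specializes the machinery of Lemma~\ref{zerosg:lemma:sg} to constant parameters with $\omega_k=0$ (re-deriving from \eqref{zerosg:v1k}--\eqref{zerosg:v4k}, which amounts to the same computation), then uses the three terms of $\tilde{c}_0(\kappa_1,\kappa_2)$ exactly as you describe to bound $b_{1,k}$, the gradient coefficient, and $b_{5,k}$, and obtains \eqref{zerosg:v4kspeed} by repeating the argument for \eqref{zerosg:v4kspeed-diminishing-2} with the factor $64$ in place of $128$. Your constant-matching ($\tilde{\varepsilon}_7$, $\tilde{\varepsilon}_{10}$--$\tilde{\varepsilon}_{13}$ as the $\omega_k=0$ reductions, and the $1/8$ gradient coefficient) agrees with the paper's.
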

\begin{proof}
(i) Substituting $\alpha_k=\alpha=\kappa_1\beta$, $\beta_k=\beta$, $\eta_k=\eta=\frac{\kappa_2}{\beta}$, and $\omega_k=0$ into \eqref{zerosg:v1k}, \eqref{zerosg:v2k}, \eqref{zerosg:v3k}, and \eqref{zerosg:v4k}, similar to the way to get \eqref{zerosg:vkLya}, we have
\begin{align}
\mathbf{E}_{\mathfrak{L}_k}[W_{k+1}]
&\le W_{k}-\|\bsx_k\|^2_{\eta\tilde{\bsM}_{1}-\eta^2\tilde{\bsM}_{2}-\tilde{b}_{1}\bsK}
-\Big\|\bm{v}_k+\frac{1}{\beta_k}\bsg_{k}^0\Big\|^2_{\tilde{b}^0_{2}\bsK}\nonumber\\
&\quad-\eta\Big(\frac{1}{4}-8p(1+\sigma_0^2)(1+\tilde{\sigma}_0^2)\tilde{b}_{4}\eta\Big)\|\bar{\bsg}^0_{k}\|^2
+2pn(\sigma^2_1+2(1+\sigma_0^2)\sigma^2_2)\tilde{b}_{4}\eta^2+\tilde{b}_{5}\eta\delta_k^2,
\label{zerosg:vkLyaT}
\end{align}
where
\begin{align*}
\tilde{\bsM}_{1}&=(\alpha-\beta)\bsL-(1+3L_f^2)\bsK,\\
\tilde{\bsM}_{2}&=\beta^2\bsL+(2\alpha^2+\beta^2)\bsL^2+8L_f^2\bsK
+8p(1+\sigma_0^2)(1+\tilde{\sigma}_0^2)(6+ L_f)L_f^2\bsK,\\
\tilde{b}_{1}&=8p(1+\sigma_0^2)(1+\tilde{\sigma}_0^2)\kappa_3L_f^4\frac{\eta}{\beta^2}
+16p(1+\sigma_0^2)(1+\tilde{\sigma}_0^2)\kappa_5L_f^4\frac{\eta^2}{\beta^2},\\
\tilde{b}^0_{2}&=\frac{1}{2}\eta(2\beta-\kappa_3)-\frac{5}{2}\kappa_2^2,\\
\tilde{b}_{4}&=6+L_f+\frac{\kappa_3}{\kappa_2}L_f^2\frac{1}{\beta}
+2\kappa_5L_f^2\frac{1}{\beta^2},\\
\tilde{b}_{5}&=nL_f^2\Big(\frac{1}{4}p^2\tilde{b}_{4}\eta+3+8\eta\Big).
\end{align*}

From \eqref{zerosg:vkLyaT}, similar to the way to get \eqref{zerosg:sgproof-vkLya}, we have
\begin{align}\label{zerosg:sgproof-vkLyaT}
\mathbf{E}_{\mathfrak{L}_k}[W_{k+1}]
&\le  W_{k}-\|\bsx_k\|^2_{(2\varepsilon_4-\tilde{b}_{1})\bsK}
-\|\bm{v}_k+\frac{1}{\beta_k}\bsg_{k}^0\|^2_{2\varepsilon_6\bsK}\nonumber\\
&\quad-\eta\Big(\frac{1}{4}-8p(1+\sigma_0^2)(1+\tilde{\sigma}_0^2)\tilde{b}_{4}\eta\Big)\|\bar{\bsg}^0_{k}\|^2
+2pn(\sigma^2_1+2(1+\sigma_0^2)\sigma^2_2)\tilde{b}_{4}\eta^2+\tilde{b}_{5}\eta\delta_k^2.
\end{align}

From \eqref{zerosg:sgproof-vkLyaT}, similar to the way to get \eqref{zerosg:sgproof-vkLya2}, we have \eqref{zerosg:sgproof-vkLya2T}.

(ii) Similarly, we know that \eqref{zerosg:sgproof-vkLya2T-bounded} holds.

(iii) Noting $\eta_k=\eta$, $\beta\ge\tilde{c}_0(\kappa_1,\kappa_2)\ge64p(1+\sigma_0^2)(1+\tilde{\sigma}_0^2)\kappa_2\tilde{\varepsilon}_{10}
\ge64p(1+\sigma_0^2)(1+\tilde{\sigma}_0^2)\kappa_2L_f$, and $\eta=\frac{\kappa_2}{\beta}$, similar to the way to get \eqref{zerosg:v4kspeed-diminishing-2}, we have \eqref{zerosg:v4kspeed}.
\end{proof}

We are now ready to prove Theorem~\ref{zerosg:thm-sg-smT}.

From $\beta_k=\beta=\frac{\kappa_2\sqrt{pT}}{\sqrt{n}}$ and $T\ge  \frac{n(\tilde{c}_0(\kappa_1,\kappa_2))^2}{p\kappa_2^2}$, we have $\beta\ge\tilde{c}_0(\kappa_1,\kappa_2)$. Thus, all conditions needed in Lemma~\ref{zerosg:lemma:sg2-T} are satisfied. So \eqref{zerosg:sgproof-vkLya2T}--\eqref{zerosg:v4kspeed} hold.

Taking expectation in $\calL_{T}$, summing \eqref{zerosg:sgproof-vkLya2T} over $ k\in[0,T-1]$, noting $\eta_k=\eta=\frac{\sqrt{n}}{\sqrt{pT}}$ and $\delta_{i,k}\le\frac{p^{\frac{1}{4}}n^{\frac{1}{4}}\kappa_\delta}
{\sqrt{n+p}(k+1)^{\frac{1}{4}}}$ as stated in \eqref{zerosg:step:eta2-sm}, and
using \eqref{zerosg:serise:lemma:sum-equ} yield
\begin{align}
&\frac{1}{nT}\sum_{k=0}^{T-1}\mathbf{E}[\|\bsx_k\|^2_{\bsK}]
\le\frac{1}{\varepsilon_4}\Big(\frac{W_{0}}{nT}
+\frac{n\tilde{\varepsilon}_{12}}{T}
+\frac{2n\tilde{\varepsilon}_{11}\kappa_\delta^2}{T}\Big).
\label{zerosg:thm-sg-sm-equ3.1p}
\end{align}
Similarly, from \eqref{zerosg:v4kspeed} and \eqref{zerosg:step:eta2-sm},  we have
\begin{align}\label{zerosg:thm-sg-sm-equ3p}
\frac{1}{T}\sum_{k=0}^{T}\mathbf{E}[\|\nabla f(\bar{x}_k)\|^2]&=\frac{1}{nT}\sum_{k=0}^{T}\mathbf{E}[\|\bar{\bsg}_{k}^0\|^2]
\le 8\Big(\frac{W_{4,0}}{nT\eta}
+\frac{2L_f^2}{nT}\sum_{k=0}^{T}\mathbf{E}[\|\bsx_k\|^2_{\bsK}]+\frac{p\varepsilon_{15}\eta}{n}
+\frac{2\sqrt{p}L_f^2\kappa_\delta^2}{\sqrt{nT}}\Big).
\end{align}
Noting that $\eta=\frac{\kappa_2}{\beta_k}=\frac{\sqrt{n}}{\sqrt{pT}}$, from \eqref{zerosg:thm-sg-sm-equ3p} and \eqref{zerosg:thm-sg-sm-equ3.1p}, we have
\begin{align*}
\frac{1}{T}\sum_{k=0}^{T-1}\mathbf{E}[\|\nabla f(\bar{x}_k)\|^2]
&=8(f(\bar{x}_0)-f^*+\varepsilon_{15}+2L_f^2\kappa_\delta^2)\frac{\sqrt{p}}{\sqrt{nT}}
+\mathcal{O}\Big(\frac{n}{T}\Big),
\end{align*}
which gives \eqref{zerosg:coro-sg-sm-equ3}.

Taking expectation in $\calL_{T}$, summing \eqref{zerosg:v4kspeed} over $ k\in[0,T]$, and using \eqref{zerosg:step:eta2-sm}  yield
\begin{align}\label{zerosg:thm-sg-sm-equ4p}
n(\mathbf{E}[f(\bar{x}_{T})]-f^*)&=\mathbf{E}[W_{4,T}]
\le W_{4,0}+\frac{2L_f^2\sqrt{n}}{\sqrt{pT}} \sum_{k=0}^{T-1}\mathbf{E}[\|\bsx_k\|^2_{\bsK}]+n\varepsilon_{15}
+2nL_f^2\kappa_\delta^2.
\end{align}

Noting that $W_{4,0}=\mathcal{O}(n)$ and $\frac{\sqrt{n}n}{\sqrt{pT}}\le1$ due to $T\ge \frac{n^3}{p}$, from \eqref{zerosg:thm-sg-sm-equ3.1p} and \eqref{zerosg:thm-sg-sm-equ4p}, we have \eqref{zerosg:coro-sg-sm-equ4}. Then, from \eqref{zerosg:rand-grad-smooth}, we know that there exists a constant $\tilde{c}_g>0$, such that
\begin{align}\label{zerosg:coro-sg-sm-gbark0}
\mathbf{E}[\|\bar{\bsg}^0_k\|^2]\le n\tilde{c}_g,~\forall k\in\mathbb{N}_0.
\end{align}

From \eqref{zerosg:sgproof-vkLya2T-bounded}, \eqref{zerosg:coro-sg-sm-gbark0}, \eqref{zerosg:vkLya3.2-bound}, and \eqref{zerosg:step:eta2-sm}, we have
\begin{align}\label{zerosg:vkLya4-bound-tilde}
\breve{z}_{k+1}\le(1-\tilde{a}_1)\breve{z}_k
+\frac{n^2\tilde{\varepsilon}_{11}\kappa_\delta^2}{\sqrt{T(k+1)}}
+\frac{n^2(\tilde{\varepsilon}_{13}\tilde{c}_g+\tilde{\varepsilon}_{12})}{T},~\forall 0\le k\le T.
\end{align}

From \eqref{zerosg:vkLya2-a1-bounded}, we have
\begin{align}\label{zerosg:vkLya2-a1-bounded-thm2}
0<\tilde{a}_1\le\frac{2\varepsilon_6}{\kappa_6}\le\frac{1}{40}.
\end{align}

From \eqref{zerosg:vkLya4-bound-tilde} and \eqref{zerosg:vkLya2-a1-bounded-thm2}, similar to the way to get \eqref{zerosg:serise:lemma:sequence-equ6}, we have $\breve{z}_T=\mathcal{O}(\frac{n^2}{T})$. Then, from \eqref{zerosg:vkLya3.2-bound}, we have \eqref{zerosg:coro-sg-sm-equ3.1}.

Similar to the proof of \eqref{zerosg:thm-sg-sm-equ1bounded}, we have \eqref{zerosg:coro-sg-sm-equ3bounded}.


\subsection{Proof of Theorem~\ref{zerosg:thm-sg-diminishing}}\label{zerosg:proof-thm-sg-diminishing}
In addition to the notations defined in Appendix~\ref{zerosg:proof-thm-random-pd-sm}, we also denote the following notations.
\begin{align*}
&\varepsilon_{16}=\frac{1}{\kappa_6}\min\Big\{\frac{\varepsilon_4\kappa_0t_1^\theta}{\kappa_2},
~\frac{\varepsilon_6\kappa_0t_1^\theta}{\kappa_2},~\frac{\nu}{8}\Big\},\\
&\varepsilon_{17}=\frac{16\theta4^\theta\kappa_2(\varepsilon_{15}+L_f^2\kappa_\delta^2)}{3\nu\kappa_0},\\
&\breve{a}_2=pn(\varepsilon_{11}\kappa_\delta^2+\varepsilon_{12}
+\varepsilon_{13}\breve{c}_g)\frac{\kappa_2^2}{\kappa^2_0},\\
&a_3=\frac{\kappa_2\varepsilon_{16}}{\kappa_0},\\
&a_4=pn(\varepsilon_{11}\kappa_\delta^2+\varepsilon_{12})\frac{\kappa_2^2}{\kappa^2_0}.
\end{align*}


All conditions needed in Lemma~\ref{zerosg:lemma:sg2} are satisfied, so \eqref{zerosg:sgproof-vkLya2}--\eqref{zerosg:v4kspeed-diminishing-2} hold.

From \eqref{nonconvex:equ:plc}, we have that
\begin{align}\label{nonconvex:gg3}
\|\bar{\bsg}^0_k\|^2=n\|\nabla f(\bar{x}_k)\|^2\ge2\nu n(f(\bar{x}_k)-f^*)
=2\nu W_{4,k}.
\end{align}

From \eqref{zerosg:sgproof-vkLya2}, \eqref{nonconvex:gg3}, \eqref{zerosg:vkLya3}, and \eqref{zerosg:vkLya3.1}, we have
\begin{align}\label{zerosg:vkLya2-pl}
\mathbf{E}_{\mathfrak{L}_k}[W_{k+1}]
&\le W_{k}-\varepsilon_4\|\bsx_k\|^2_{\bsK}
-\varepsilon_6\|\bm{v}_k+\frac{1}{\beta_k}\bsg_{k}^0\|^2_{\bsK}-\frac{\eta_k\nu n}{8}W_{4,k}
+pn\varepsilon_{12}\eta_k^2+pn\varepsilon_{11}\eta_k\delta_k^2\nonumber\\
&\le W_{k}-\frac{\eta_k}{\kappa_6}\min\Big\{\frac{\varepsilon_4}{\eta_k},
~\frac{\varepsilon_6}{\eta_k},~\frac{\nu}{8}\Big\}W_{k}
+pn\varepsilon_{12}\eta_k^2+pn\varepsilon_{11}\eta_k\delta_k^2\nonumber\\
&\le W_{k}-\eta_k\varepsilon_{16}W_{k}+pn\varepsilon_{12}\eta_k^2
+pn\varepsilon_{11}\eta_k\delta_k^2,~\forall k\in\mathbb{N}_0.
\end{align}

Denote $z_k=\mathbf{E}[W_k]$, $r_{1,k}=\eta_k\varepsilon_{16}$, and $r_{2,k}=pn\varepsilon_{12}\eta_k^2
+pn\varepsilon_{11}\eta_k\delta_k^2$. From \eqref{zerosg:vkLya2-pl}, we have
\begin{align}
z_{k+1}
\le (1-r_{1,k})z_k+r_{2,k},~\forall k\in\mathbb{N}_0.
\label{zerosg:vkLya2-pl-z}
\end{align}

From \eqref{zerosg:step:eta1}, we have
\begin{align}
r_{1,k}&=\eta_k\varepsilon_{16}
=\frac{a_3}{(k+t_1)^\theta},\label{zerosg:vkLya2-pl-r1}\\
r_{2,k}&=pn\varepsilon_{12}\eta_k^2
+pn\varepsilon_{11}\eta_k\delta_k^2
\le\frac{a_4}{(k+t_1)^{2\theta}}.\label{zerosg:vkLya2-pl-r2}
\end{align}

From \eqref{zerosg:vkLya2-a1-bounded}, we have
\begin{align}\label{zerosg:vkLya2-pl-r1.1}
r_{1,k}\le\frac{\varepsilon_6}{\kappa_6}\le\frac{1}{80}.
\end{align}

From \eqref{zerosg:kappa4-6}, we know that
\begin{align}\label{zerosg:vkLya2-pl-a1a2}
a_3>0~\text{and}~a_4>0.
\end{align}

Then, from $\theta\in(0,1)$, \eqref{zerosg:vkLya2-pl-z}--\eqref{zerosg:vkLya2-pl-a1a2}, and \eqref{zerosg:serise:lemma:sequence-equ4}, we have
\begin{align}\label{zerosg:vkLya2-pl-theta0}
z_{k}\le\phi_1(k,t_1,a_3,a_4,\theta,2\theta,z_0),~\forall k\in\mathbb{N}_+,
\end{align}
where the function $\phi_1$ is defined in \eqref{zerosg:serise:lemma:sequence-equ4-phi2}.

From $t_1\ge(pc_3(\kappa_1,\kappa_2))^{1/\theta}$, we have
\begin{align}\label{zerosg:t1-pl}
t_1^\theta=\mathcal{O}(p).
\end{align}

From $\kappa_0\ge\frac{c_0(\kappa_1,\kappa_2)}{t_1^\theta}$, $t_1\le(pc_4c_3(\kappa_1,\kappa_2))^{\frac{1}{\theta}}$, $c_0(\kappa_1,\kappa_2)\ge\varepsilon_1\ge p\kappa_3$, and $c_3(\kappa_1,\kappa_2)=\frac{24(1+\tilde{\sigma}_0^2)\kappa_3}{\kappa_2}$, we have
\begin{align}\label{zerosg:k0-pl}
\frac{\kappa_2}{\kappa_0}\le\frac{\kappa_2t_1^\theta}{c_0(\kappa_1,\kappa_2)}
\le\frac{\kappa_2 pc_4c_3(\kappa_1,\kappa_2)}{p\kappa_3}\le 24(1+\tilde{\sigma}_0^2)c_4.
\end{align}
Thus,
\begin{align}\label{zerosg:phi1-pl}
\phi_1(k,t_1,a_3,a_4,\theta,2\theta,z_0)=\mathcal{O}\Big(\frac{pn}{(k+t_1)^\theta}\Big).
\end{align}

From \eqref{zerosg:vkLya3}, we have
\begin{align}\label{zerosg:vkLya5}
\|\bsx_{k}\|^2_{\bsK}+W_{4,k}
\le\hat{V}_k\le\frac{W_{k}}{\kappa_7}.
\end{align}

From \eqref{zerosg:rand-grad-smooth}, \eqref{zerosg:vkLya2-pl-theta0}, \eqref{zerosg:phi1-pl}, and \eqref{zerosg:vkLya5}, we get
\begin{align}\label{zerosg:gbark0-pl-1}
\mathbf{E}[\|\bar{\bsg}^0_k\|^2]
=\mathcal{O}\Big(\frac{pn}{(k+t_1)^\theta}\Big),~\forall k\in\mathbb{N}_+.
\end{align}

From \eqref{zerosg:t1-pl} and \eqref{zerosg:gbark0-pl-1}, we know that there exists a constant $\breve{c}_g>0$, such that
\begin{align}\label{zerosg:gbark0-pl}
\mathbf{E}[\|\bar{\bsg}^0_k\|^2]\le n\breve{c}_g,~\forall k\in\mathbb{N}_0.
\end{align}

From \eqref{zerosg:sgproof-vkLya2-bounded}, \eqref{zerosg:gbark0-pl}, \eqref{zerosg:vkLya3.2-bound}, and \eqref{zerosg:step:eta1}, we have
\begin{align}\label{zerosg:vkLya4-bound-pl}
\breve{z}_{k+1}\le(1-a_1)\breve{z}_k+\frac{\breve{a}_2}{(t+t_1)^{2\theta}}.
\end{align}

Using \eqref{zerosg:serise:lemma:sequence-equ6}, from \eqref{zerosg:vkLya2-a1-bounded} and \eqref{zerosg:vkLya4-bound-pl}, we have
\begin{align}\label{zerosg:lemma:sequence-equ6-bounded-pl}
\breve{z}_{k}&\le \phi_3(k,t_1,a_1,\breve{a}_2,2\theta,\breve{z}_{0}),~\forall k\in\mathbb{N}_+,
\end{align}
where the function $\phi_3$ is defined in \eqref{zerosg:serise:lemma:sequence-equ6-phi4}.
From \eqref{zerosg:lemma:sequence-equ6-bounded-pl}, \eqref{zerosg:vkLya3.2-bound}, \eqref{zerosg:serise:lemma:sequence-equ6-phi4}, and \eqref{zerosg:k0-pl}, we have
\begin{align}\label{zerosg:vkLya4-bound-brevez}
&\mathbf{E}[\|\bsx_k\|^2_{\bsK}]\le\frac{1}{\kappa_7}\breve{z}_{k}
\le\frac{1}{\kappa_7}\phi_3(k,t_1,a_1,\breve{a}_2,2\theta,\breve{z}_0)
=\mathcal{O}\Big(\frac{pn}{(k+t_1)^{2\theta}}\Big),
\end{align}
which yields \eqref{zerosg:thm-sg-diminishing-equ1.1bounded}.

From \eqref{zerosg:v4kspeed-diminishing-2}, \eqref{nonconvex:gg3}, and $\delta_k\le \frac{\kappa_\delta\sqrt{p\eta_k}}{\sqrt{n+p}}$ as stated in \eqref{zerosg:step:eta1}, we have
\begin{align}
\mathbf{E}[W_{4,k+1}]\le \mathbf{E}[W_{4,k}]-\frac{3\nu}{8}\eta_k\mathbf{E}[W_{4,k}]
+\|\bsx_k\|^2_{2\eta_kL_f^2\bsK}
+p\varepsilon_{15}\eta_k^2+pL_f^2\kappa_\delta^2\eta_k^2.
\label{zerosg:v4kspeed-diminishing-3}
\end{align}

Similar to the way to prove \eqref{zerosg:serise:lemma:sequence-equ4}, from \eqref{zerosg:vkLya4-bound-brevez} and \eqref{zerosg:v4kspeed-diminishing-3}, we have
\begin{align}\label{zerosg:v4kspeed-diminishing-4}
\mathbf{E}[f(\bar{x}_{T})-f^*]
\le\frac{\varepsilon_{17}p}{n(T+t_1)^\theta}+\mathcal{O}\Big(\frac{p}{(T+t_1)^{2\theta}}\Big).
\end{align}
From \eqref{zerosg:k0-pl}, we have
\begin{align}\label{zerosg:v4kspeed-diminishing-5}
\varepsilon_{17}=\frac{16\theta4^\theta\kappa_2(\varepsilon_{15}+L_f^2\kappa_\delta^2)}{3\nu\kappa_0}
\le\frac{128\theta4^\theta\kappa_2(\varepsilon_{15}+L_f^2\kappa_\delta^2)
(1+\tilde{\sigma}_0^2)c_4}
{\nu}.
\end{align}
Thus, from \eqref{zerosg:v4kspeed-diminishing-4} and \eqref{zerosg:v4kspeed-diminishing-5}, we have \eqref{zerosg:thm-sg-diminishing-equ1bounded}.

\subsection{Proof of Theorem~\ref{zerosg:thm-sg-diminishingt}}\label{zerosg:proof-thm-sg-diminishingt}
In addition to the notations defined in Appendices~\ref{zerosg:proof-thm-random-pd-sm} and \ref{zerosg:proof-thm-sg-diminishing}, we also denote the following notations.
\begin{align*}
&\hat{c}_3(\kappa_0,\kappa_1,\kappa_2)=\max\Big\{\frac{c_0(\kappa_1,\kappa_2)}{\kappa_0},
~c_3(\kappa_1,\kappa_2),~\frac{2}{3\varepsilon_4},
~\frac{2}{3\varepsilon_6}\Big\},\\
&\hat{\varepsilon}_{17}
=\frac{4\kappa_2^2(\varepsilon_{15}+L_f^2\kappa_\delta^2)}{\kappa_0^2(\frac{3\nu\kappa_2}{8\kappa_0}-1)},\\
&\hat{a}_2=pn(\varepsilon_{11}\kappa_\delta^2+\varepsilon_{12}
+\varepsilon_{13}\hat{c}_g)\frac{\kappa_2^2}{\kappa_0^2},\\
&\hat{a}_3=\frac{2}{3\kappa_6}.
\end{align*}


From $t_1\ge\hat{c}_3(\kappa_0,\kappa_1,\kappa_2)\ge\frac{c_0(\kappa_1,\kappa_2)}{\kappa_0}$, we have
\begin{align*}
\kappa_0\ge\frac{c_0(\kappa_1,\kappa_2)}{t_1}.
\end{align*}
Thus, all conditions needed in Lemma~\ref{zerosg:lemma:sg2} are satisfied, so \eqref{zerosg:vkLya2-pl-z}--\eqref{zerosg:vkLya2-pl-a1a2} still hold when $\theta=1$.

From rom $t_1\ge\hat{c}_3(\kappa_0,\kappa_1,\kappa_2)\ge \frac{2}{3\varepsilon_4}$, we have
\begin{align}\label{zerosg:vkLya2-pl-a1-1}
\frac{\varepsilon_4t_1}{\kappa_6}\ge\frac{2}{3\kappa_6}.
\end{align}

From $t_1\ge\hat{c}_3(\kappa_0,\kappa_1,\kappa_2)\ge \frac{2}{3\varepsilon_6}$, we have
\begin{align}\label{zerosg:vkLya2-pl-a1-2}
\frac{\varepsilon_6t_1}{\kappa_6}\ge\frac{2}{3\kappa_6}.
\end{align}

From $\kappa_0\in[\frac{3\hat{c}_0\nu\kappa_2}{16},\frac{3\nu\kappa_2}{16})$, we have
\begin{align}\label{zerosg:vkLya2-pl-a1-3}
\frac{16}{3\nu}<\frac{\kappa_2}{\kappa_0}\le\frac{16}{3\hat{c}_0\nu}.
\end{align}
Thus,
\begin{align}\label{zerosg:vkLya2-pl-a1-2.1}
\frac{\nu\kappa_2}{8\kappa_6\kappa_0}>\frac{2}{3\kappa_6}.
\end{align}

Hence, from \eqref{zerosg:vkLya2-pl-a1-1}, \eqref{zerosg:vkLya2-pl-a1-2}, \eqref{zerosg:vkLya2-pl-a1-2.1}, and $\kappa_6>1$ due to $\kappa_1>1$, we have
\begin{align}\label{zerosg:vkLya2-pl-a1}
a_3>\hat{a}_3>0~\text{and}~\hat{a}_3<\frac{2}{3}.
\end{align}

Then from $\theta=1$, \eqref{zerosg:vkLya2-pl-z}--\eqref{zerosg:vkLya2-pl-a1a2}, \eqref{zerosg:vkLya2-pl-a1}, and \eqref{zerosg:serise:lemma:sequence-equ5}, we have
\begin{align}\label{zerosg:vkLya2-pl-theta0t}
z_{k}\le\phi_2(k,t_1,\hat{a}_3,a_4,2,z_0),~\forall k\in\mathbb{N}_+,
\end{align}
where the function $\phi_2$ is defined in \eqref{zerosg:serise:lemma:sequence-equ5-phi3}.

From \eqref{zerosg:vkLya2-pl-a1} and \eqref{zerosg:vkLya2-pl-a1-3}, we have $\phi_2(k,t_1,\hat{a}_3,a_4,2,z_0)
=\mathcal{O}(\frac{nt_1^{\hat{a}_3}}{(k+t_1)^{\hat{a}_3}}
+\frac{pn}{(k+t_1)^{\hat{a}_3}t_1^{1-\hat{a}_3}})$. Hence, from \eqref{zerosg:rand-grad-smooth}, \eqref{zerosg:vkLya2-pl-theta0t}, and \eqref{zerosg:vkLya5}, we get
\begin{align}\label{zerosg:gbark0-pl-speed-1}
\mathbf{E}[\|\bar{\bsg}^0_k\|^2]
=\mathcal{O}\Big(\frac{nt_1^{\hat{a}_3}}{(k+t_1)^{\hat{a}_3}}
+\frac{pn}{(k+t_1)^{\hat{a}_3}t_1^{1-\hat{a}_3}}\Big),~\forall k\in\mathbb{N}_+.
\end{align}

Noting that $t_1>\hat{c}_3(\kappa_0,\kappa_1,\kappa_2)\ge \frac{c_0(\kappa_1,\kappa_2)}{\kappa_0}
\ge\frac{p\kappa_3}{\kappa_0}$, from \eqref{zerosg:gbark0-pl-speed-1} and \eqref{zerosg:vkLya2-pl-a1-3}, we know that there exists a constant $\hat{c}_g>0$, such that
\begin{align}\label{zerosg:gbark0-pl-speed}
\mathbf{E}[\|\bar{\bsg}^0_k\|^2]\le n\hat{c}_g,~\forall k\in\mathbb{N}_0.
\end{align}

From \eqref{zerosg:sgproof-vkLya2-bounded}, \eqref{zerosg:gbark0-pl-speed}, \eqref{zerosg:vkLya3.2-bound}, and \eqref{zerosg:step:eta1t1}, we have
\begin{align}\label{zerosg:vkLya4-bound-pl-1}
\breve{z}_{k+1}\le(1-a_1)\breve{z}_k+\frac{\hat{a}_2}{(t+t_1)^{2}}.
\end{align}

Using \eqref{zerosg:serise:lemma:sequence-equ6}, from \eqref{zerosg:vkLya2-a1-bounded} and \eqref{zerosg:vkLya4-bound-pl-1}, we have
\begin{align}\label{zerosg:lemma:sequence-equ6-bounded-pl-speed}
\breve{z}_{k}&\le \phi_3(k,t_1,a_1,\hat{a}_2,2,\breve{z}_{0}),~\forall k\in\mathbb{N}_+,
\end{align}
where the function $\phi_3$ is defined in \eqref{zerosg:serise:lemma:sequence-equ6-phi4}.
From \eqref{zerosg:lemma:sequence-equ6-bounded-pl-speed}, \eqref{zerosg:vkLya3.2-bound}, \eqref{zerosg:serise:lemma:sequence-equ6-phi4}, and \eqref{zerosg:vkLya2-pl-a1-3}, we have
\begin{align}\label{zerosg:vkLya4-bound-brevez-speed}
&\mathbf{E}[\|\bsx_k\|^2_{\bsK}]\le\frac{1}{\kappa_7}\breve{z}_{k}
\le\frac{1}{\kappa_7}\phi_3(k,t_1,a_1,\hat{a}_2,2,\breve{z}_0)
=\mathcal{O}\Big(\frac{pn}{(k+t_1)^{2}}\Big),
\end{align}
which yields \eqref{zerosg:thm-sg-diminishing-equ2.1bounded}.

From $\kappa_0<\frac{3\nu\kappa_2}{16}$, we have
\begin{align}\label{zerosg:vkLya2-pl-a1-3-bounded}
\frac{3\nu\kappa_2}{8\kappa_0}>2.
\end{align}

Same to the way to prove \eqref{zerosg:serise:lemma:sequence-equ5}, from \eqref{zerosg:vkLya4-bound-brevez-speed}, \eqref{zerosg:vkLya2-pl-a1-3-bounded}, and \eqref{zerosg:v4kspeed-diminishing-3}, we have \begin{align}\label{zerosg:v4kspeed-diminishing-6}
\mathbf{E}[f(\bar{x}_{T})-f^*]
\le\frac{\hat{\varepsilon}_{17}p}{n(T+t_1)}+\mathcal{O}\Big(\frac{p}{(T+t_1)^{2}}\Big).
\end{align}
From \eqref{zerosg:vkLya2-pl-a1-3}, we have
\begin{align}\label{zerosg:v4kspeed-diminishing-7}
&\hat{\varepsilon}_{17}
=\frac{4\kappa_2^2(\varepsilon_{15}+L_f^2\kappa_\delta^2)}{\kappa_0^2(\frac{3\nu\kappa_2}{8\kappa_0}-1)}
\le\frac{1024(\varepsilon_{15}+L_f^2\kappa_\delta^2)}{9\hat{c}_0(2-\hat{c}_0)\nu^2}.
\end{align}
Thus, from \eqref{zerosg:v4kspeed-diminishing-6} and \eqref{zerosg:v4kspeed-diminishing-7}, we have \eqref{zerosg:thm-sg-diminishing-equ2bounded}.

\subsection{Proof of Theorem~\ref{zerosg-a5:thm-sg-diminishingt}}\label{zerosg-a5:proof-thm-sg-diminishingt}
In addition to the notations defined in Appendices~\ref{zerosg:proof-thm-random-pd-sm}, \ref{zerosg:proof-thm-sg-diminishing}, and \ref{zerosg:proof-thm-sg-diminishingt}, we also denote the following notations.
\begin{align*}
&\check{c}_3(\kappa_0,\kappa_1,\kappa_2)=\max\Big\{
\frac{2}{3\varepsilon_4},~\frac{2}{3\varepsilon_6},
~\frac{\check{\varepsilon}_{1}}{\kappa_0},
~\frac{2\varepsilon_5}{\kappa_0\varepsilon_4},
~\Big(\frac{2p(1+\sigma_0^2)\check{\varepsilon}_7}{\kappa_0^2\varepsilon_4}\Big)^{\frac{1}{3}},
~\Big(\frac{\varepsilon_8}{2\kappa_0\varepsilon_6}\Big)^{\frac{1}{2}},\\
&\quad\quad\quad\quad\quad\quad~\frac{16L_f\kappa_3}{\nu\kappa_2},
~\Big(\frac{16L_f\kappa_4}{\nu\kappa_0\kappa_2}\Big)^{\frac{1}{3}},
~\frac{64p(1+\sigma_0^2)L_f\kappa_2\check{\varepsilon}_{10}}{\nu\kappa_0},
~\frac{p(1+\sigma_0^2)\kappa_2\check{\varepsilon}_{10}}{4\kappa_0}\Big\},\\
&\check{\varepsilon}_{1}=\max\{1+3L_f^2,~(8+4p(1+\sigma_0^2)
(6+L_f))^{\frac{1}{2}}L_f,~p\kappa_3\},\\
&\check{\varepsilon}_7
=4(6+5\kappa_2+2\kappa_4+8\kappa_2\kappa_4)\kappa_2L_f^4
+\frac{(1+2L_f^2)\kappa_2}{2p(1+\sigma_0^2)}
+\Big(\frac{5}{p(1+\sigma_0^2)}+16\Big)L_f^2\kappa_2^2,\\
&\check{\varepsilon}_{10}=10+L_f+\frac{1}{\kappa_2}(2\kappa_4+1)L_f^2
+(8\kappa_4+5)L_f^2,\\
&\check{\varepsilon}_{11}=L_f^2\Big(\frac{1}{1+\sigma_0^2}+\frac{13\kappa_2+4}{p}\Big),\\
&\check{\varepsilon}_{12}=2\check{\varepsilon}_{10}\sigma^2_1
+\frac{2\varepsilon_9\check{\sigma}^2_2}{3p}
+8(1+\sigma_0^2)\check{\varepsilon}_{10}\check{\sigma}^2_2,\\
&\check{\varepsilon}_{13}=\frac{4L_f\varepsilon_9}{3p}
+8L_f(1+\sigma_0^2)\check{\varepsilon}_{10},\\
&\check{\sigma}^2_2=2L_ff^*-\frac{2L_f}{n}\sum_{i=1}^{n}f_i^*\ge0.
\end{align*}

To prove Theorem~\ref{zerosg-a5:thm-sg-diminishingt}, the following lemma is used.
\begin{lemma}\label{zerosg-a5:lemma:sg2}
Suppose Assumptions~\ref{zerosg:ass:graph}--\ref{zerosg:ass:zeroth-variance} hold and each $f_i^*>-\infty$. Suppose $\alpha_k=\kappa_1\beta_k$, $\beta_k=\kappa_0(k+t_1)^{\theta}$, and $\eta_k=\frac{\kappa_2}{\beta_k}$, where $\theta\in[0,1]$, $\kappa_0>0$, $\kappa_1>c_1$, $\kappa_2\in(0,c_2(\kappa_1))$, and $t_1^\theta\ge \check{c}_3(\kappa_0,\kappa_1,\kappa_2)$ with any constant $\nu>0$. Let $\{\bsx_k\}$ be the sequence generated by Algorithm~\ref{zerosg:algorithm-random-pd}, then
\begin{subequations}
\begin{align}
\mathbf{E}_{\mathfrak{L}_k}[W_{k+1}]
&\le  W_{k}-\varepsilon_4\|\bsx_k\|^2_{\bsK}
-\varepsilon_6\Big\|\bm{v}_k+\frac{1}{\beta_k}\bsg_{k}^0\Big\|^2_{\bsK}
-\frac{1}{4}\eta_k\|\bar{\bsg}^0_{k}\|^2\nonumber\\
&\quad+\frac{3\nu}{8}\eta_kW_{4,k}
+pn\check{\varepsilon}_{12}\eta_k^2+pn\check{\varepsilon}_{11}\eta_k\delta_k^2,
\label{zerosg-a5:sgproof-vkLya2}\\
\mathbf{E}_{\mathfrak{L}_k}[\breve{W}_{k+1}]
&\le  \breve{W}_{k}-\varepsilon_4\|\bsx_k\|^2_{\bsK}
-\varepsilon_6\Big\|\bm{v}_k+\frac{1}{\beta_k}\bsg_{k}^0\Big\|^2_{\bsK}
\nonumber\\
&\quad+p\check{\varepsilon}_{13}\eta_k^2W_{4,k}
+pn\check{\varepsilon}_{12}\eta_k^2+pn\check{\varepsilon}_{11}\eta_k\delta_k^2,
\label{zerosg-a5:sgproof-vkLya2-bounded}\\
\mathbf{E}_{\mathfrak{L}_k}[W_{4,k+1}]
&\le W_{4,k}+\|\bsx_k\|^2_{2\eta_kL_f^2\bsK}
-\frac{1}{4}\eta_k\|\bar{\bsg}_{k}^0\|^2+\frac{\nu}{8}\eta_kW_{4,k}\nonumber\\
&\quad+2p\eta_k^2L_f(\sigma^2_1+2(1+\sigma_0^2)\check{\sigma}^2_2)
+(n+p)L_f^2\eta_k\delta^2_k.
\label{zerosg-a5:v4kspeed-diminishing-2}
\end{align}
\end{subequations}
\end{lemma}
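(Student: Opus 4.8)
The plan is to re-run the proof of Lemma~\ref{zerosg:lemma:sg2} while replacing, at every point where Assumption~\ref{zerosg:ass:fig} was previously used, a bound that rests only on smoothness and the finiteness of each $f_i^*$. The crucial observation is that the intermediate one-step estimate \eqref{zerosg:vkLya-1} in the proof of Lemma~\ref{zerosg:lemma:sg} was obtained using only \eqref{zerosg:rand-grad-esti8}, \eqref{zerosg:rand-grad-esti6}, \eqref{zerosg:rand-grad-esti9}, and \eqref{zerosg:gg-rand-pd}, all of which hold under Assumption~\ref{zerosg:ass:zeroth-smooth} alone; Assumption~\ref{zerosg:ass:fig} entered only afterwards, when \eqref{zerosg:rand-grad-esti2} and \eqref{zerosg:rand-grad-esti4} were substituted for the residual terms $\mathbf{E}_{\mathfrak{L}_k}[\|\bsg^e_k\|^2]$ and $\|\bsg^0_{k+1}\|^2$. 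Accordingly I would take \eqref{zerosg:vkLya-1} as the starting point and supply new bounds for these two residuals.

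First I would establish the replacement gradient bounds. Since each $f_i$ is smooth with constant $L_f$ and $f_i^*>-\infty$, \eqref{nonconvex:lemma:lipschitz2} gives $\|\nabla f_i(\bar x_k)\|^2\le 2L_f(f_i(\bar x_k)-f_i^*)$; combining this with the split $\|\nabla f_i(x_{i,k})\|^2\le 2\|\nabla f_i(x_{i,k})-\nabla f_i(\bar x_k)\|^2+2\|\nabla f_i(\bar x_k)\|^2$ and the Lipschitz-gradient property, then summing over $i\in[n]$ and using $\sum_i f_i(\bar x_k)=nf(\bar x_k)$ together with $nf^*\ge\sum_i f_i^*$, yields
\begin{align*}
\sum_{i=1}^n\|\nabla f_i(x_{i,k})\|^2\le 2L_f^2\|\bsx_k\|^2_{\bsK}+4L_fW_{4,k}+2n\check{\sigma}^2_2,
\end{align*}
where $\check{\sigma}^2_2=2L_ff^*-\frac{2L_f}{n}\sum_i f_i^*\ge0$ is exactly the quantity defined before the lemma. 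Feeding this into \eqref{zerosg:rand-grad2-esti1} (which requires only Assumption~\ref{zerosg:ass:zeroth-variance}) gives the analog of \eqref{zerosg:rand-grad-esti2} in which $(1+\tilde\sigma_0^2)\|\bar\bsg^0_k\|^2$ is replaced by $2L_fW_{4,k}$ and the constant-variance remainder $\sigma_2^2$ by $\check\sigma_2^2$. A parallel argument, bounding $\|\bsg^0_k-\bar\bsg^0_k\|^2\le\|\bsg^0_k\|^2=\sum_i\|\nabla f_i(\bar x_k)\|^2\le 2L_fW_{4,k}+n\check{\sigma}^2_2$ and inserting it together with \eqref{zerosg:gg-rand-pd} and \eqref{zerosg:rand-grad-smooth} into \eqref{zerosg:rand-grad-esti4.1}, gives the analog of \eqref{zerosg:rand-grad-esti4} with an $L_fW_{4,k}$ term in place of $(1+\tilde\sigma_0^2)\|\bar\bsg^0_k\|^2$.

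Next I would substitute these two bounds into \eqref{zerosg:vkLya-1} and collect. The $\|\bsx_k\|^2_{\bsK}$-weights again form the pencil $\eta_k\bsM_{1,k}-\eta_k^2\bsM_{2,k}-\omega_k\bsM_3$ but with the $(6+L_f)$ contribution now carrying a $\tilde\sigma_0$-free constant, so the hypotheses $\kappa_1>c_1$, $\kappa_2<c_2(\kappa_1)$ and the $\nu$-free entries of $\check c_3$ (the $\tilde\sigma_0$-free analogs $\check\varepsilon_1,\check\varepsilon_7$ of the thresholds in $c_0$) force this weight to dominate $\varepsilon_4\bsK$ and the $\|\bm v_k+\beta_k^{-1}\bsg^0_k\|^2_{\bsK}$-weight to dominate $\varepsilon_6\bsK$, exactly as in \eqref{zerosg:varepsilon4}--\eqref{zerosg:b2k}. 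The genuinely new feature is that the residual gradient terms now produce $W_{4,k}$ rather than $\|\bar\bsg^0_k\|^2$, with aggregate contribution $\big(16p(1+\sigma_0^2)L_fb_{4,k}+4L_fb_{3,k}\big)\eta_k^2W_{4,k}$ plus strictly higher-order pieces in $\omega_k$; since $\omega_k/\eta_k=\mathcal{O}(1/\beta_k)$ and $\eta_k=\mathcal{O}(1/\beta_k)$, each piece is $\mathcal{O}(1/t_1^\theta)$, and the three $\nu$-dependent entries of $\check c_3$ are exactly what is needed to cap this by $\frac{3\nu}{8}\eta_kW_{4,k}$, giving \eqref{zerosg-a5:sgproof-vkLya2}; because the gradient bounds no longer eat into the $-\frac14\eta_k\|\bar\bsg^0_k\|^2$ term, the latter survives intact. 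Retaining the same terms at order $\eta_k^2$ rather than collapsing them against that descent yields the $\breve W$-version \eqref{zerosg-a5:sgproof-vkLya2-bounded} with constant $\check\varepsilon_{13}$. Finally, \eqref{zerosg-a5:v4kspeed-diminishing-2} follows from \eqref{zerosg:v4k}: apply \eqref{zerosg:rand-grad-esti5} to $\mathbf{E}_{\mathfrak{L}_k}[\|\bar\bsg^e_k\|^2]$, insert the new $\mathbf{E}_{\mathfrak{L}_k}[\|\bsg^e_k\|^2]$ bound, use $\frac12\eta_kL_f\le\frac14$ to cancel the $\|\bar\bsg^s_k\|^2$ contributions, and invoke the remaining entries of $\check c_3$ to push the induced $W_{4,k}$ coefficient below $\frac\nu8\eta_k$, after which the $\check\sigma_2^2$, $\sigma_1^2$ and $\delta_k^2$ remainders assemble into $2p\eta_k^2L_f(\sigma_1^2+2(1+\sigma_0^2)\check\sigma_2^2)$ and $(n+p)L_f^2\eta_k\delta_k^2$.

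The main obstacle is the constant bookkeeping in the last step: one must check that a single threshold $t_1^\theta\ge\check c_3$ simultaneously keeps the coefficient of $\|\bsx_k\|^2_{\bsK}$ below $-\varepsilon_4$ and squeezes the $W_{4,k}$ contributions---which arrive both directly through $\mathbf{E}_{\mathfrak{L}_k}[\|\bsg^e_k\|^2]$ and indirectly through the $\|\bsg^0_{k+1}\|^2$ term, itself containing $\|\bsg^e_k\|^2$---under $\frac{3\nu}{8}\eta_k$. Since $\nu$ is an arbitrary positive constant, the threshold scales like $1/\nu$, and disentangling which of the $\mathcal{O}(\eta_k)$, $\mathcal{O}(\omega_k/\eta_k)$, and $\mathcal{O}(\omega_k^2/\eta_k)$ pieces dictates each $\nu$-dependent entry of $\check c_3$ is the delicate part; the rest is a mechanical replay of Lemmas~\ref{zerosg:lemma:sg} and \ref{zerosg:lemma:sg2}.
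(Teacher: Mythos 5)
Your proposal follows essentially the same route as the paper's proof: keep the intermediate estimate \eqref{zerosg:vkLya-1} (which needs only smoothness), replace the Assumption~\ref{zerosg:ass:fig}-based bounds \eqref{zerosg:rand-grad-esti2} and \eqref{zerosg:rand-grad-esti4} by bounds built from $\|\nabla f_i(\bar{x}_k)\|^2\le 2L_f(f_i(\bar{x}_k)-f_i^*)$, which introduces $\check{\sigma}^2_2=2L_ff^*-\frac{2L_f}{n}\sum_{i=1}^{n}f_i^*$ and turns the former $(1+\tilde{\sigma}_0^2)\|\bar{\bsg}^0_k\|^2$ terms into $L_fW_{4,k}$ terms, and then redo the threshold bookkeeping against $\check{c}_3$ exactly as you describe, including the $\frac{3\nu}{8}\eta_kW_{4,k}$ cap in \eqref{zerosg-a5:sgproof-vkLya2} and the $\frac{\nu}{8}\eta_kW_{4,k}$ cap in \eqref{zerosg-a5:v4kspeed-diminishing-2}. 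The only cosmetic deviation is that the paper bounds $\|\bsg^0_{k+1}\|^2$ by the two-term split $\|\bsg^0_{k+1}\|^2\le 2(\eta_k^2L_f^2\|\bsg^e_k\|^2+2L_fW_{4,k}+n\check{\sigma}^2_2)$ rather than your three-term split via \eqref{zerosg:rand-grad-esti4.1}, which changes only absolute constants.
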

\begin{proof} The proof of this lemma is similar to the proof of Lemma~\ref{zerosg:lemma:sg2}. For the sake of completeness, the proof is included here.

We know that \eqref{zerosg:rand-grad-esti1}--\eqref{zerosg:rand-grad-smooth} and \eqref{zerosg:rand-grad2-esti1} still hold since  Assumptions~\ref{zerosg:ass:zeroth-smooth} and \ref{zerosg:ass:zeroth-variance} hold. Moreover, \eqref{zerosg:vkLya-1} still holds.

We have
\begin{align}
&\|\bsg^0_{k}\|^2
=\sum_{i=1}^{n}\|\nabla f_i(\bar{x}_k)\|^2
\le\sum_{i=1}^{n}2L_f(f_i(\bar{x}_k)-f_i^*)=2L_fn(f(\bar{x}_k)-f^*)+n\check{\sigma}^2_2,\label{zerosg-a5:vkLya-2g0k}
\end{align}
where the inequality holds due to \eqref{nonconvex:lemma:lipschitz2}.

We have
\begin{align}\label{zerosg-a5:rand-grad2-esti1.2}
&\|\bsg_{k}\|^2
=\|\bsg_{k}-\bsg^0_{k}+\bsg^0_{k}\|^2
\le 2(\|\bsg_{k}-\bsg^0_{k}\|^2
+\|\bsg^0_{k}\|^2)\le 2(L_f^2\|\bsx_{k}\|^2_{\bsK}+2L_fW_{4,k}+n\check{\sigma}^2_2),
\end{align}
where the first inequality holds due to the Cauchy--Schwarz inequality; and the last inequality holds due to \eqref{zerosg:gg1} and \eqref{zerosg-a5:vkLya-2g0k}.

From \eqref{zerosg:rand-grad2-esti1} and \eqref{zerosg-a5:rand-grad2-esti1.2}, we have
\begin{align}
\mathbf{E}_{\mathfrak{L}_k}[\|\bsg^e_k\|^2]
&\le  16p(1+\sigma_0^2)L_fW_{4,k}+8p(1+\sigma_0^2)L_f^2\|\bsx_{k}\|^2_{\bsK}+4np\sigma^2_1
+8np(1+\sigma_0^2)\check{\sigma}^2_2+\frac{1}{2}np^2L_f^2\delta_k^2.
\label{zerosg-a5:rand-grad-esti2}
\end{align}

From  the Cauchy--Schwarz inequality, \eqref{zerosg:gg-rand-pd}, and \eqref{zerosg-a5:vkLya-2g0k}, we have
\begin{align}
&\|\bsg^0_{k+1}\|^2
=\|\bsg^0_{k+1}-\bsg^0_{k}+\bsg^0_{k}\|^2
\le2(\|\bsg^0_{k+1}-\bsg^0_{k}\|^2+\|\bsg^0_{k}\|^2)
\le 2(\eta^2_kL_f^2\|\bsg^e_k\|^2+2L_fW_{4,k}+n\check{\sigma}^2_2).\label{zerosg-a5:vkLya-2}
\end{align}

Then, from \eqref{zerosg:vkLya-1}, \eqref{zerosg-a5:rand-grad-esti2}, and \eqref{zerosg-a5:vkLya-2}, we get
\begin{align}\label{zerosg:vkLya-thm5}
\mathbf{E}_{\mathfrak{L}_k}[W_{k+1}]
&\le  W_{k}-\|\bsx_k\|^2_{\eta_k\bsM_{1,k}-\eta_k^2\check{\bsM}_{2,k}-\omega_k\bsM_{3}-\check{b}_{1,k}\bsK}
-\Big\|\bm{v}_k+\frac{1}{\beta_k}\bsg_{k}^0\Big\|^2_{b^0_{2,k}\bsK}-\frac{1}{4}\eta_k\|\bar{\bsg}^0_{k}\|^2\nonumber\\
&\quad+L_f\Big(\frac{4}{3}b_{3,k}+8p(1+\sigma_0^2)\check{b}_{4,k}\Big)\eta_k^2W_{4,k}
+2pn\sigma^2_1\check{b}_{4,k}\eta_k^2\nonumber\\
&\quad+n\check{\sigma}^2_2\Big(\frac{2}{3}b_{3,k}+4p(1+\sigma_0^2)\check{b}_{4,k}\Big)\eta_k^2
+\check{b}_{5,k}\eta_k\delta_k^2,
\end{align}
where
\begin{align*}
\check{\bsM}_{2,k}&=\bsM^0_{2,k}
+4p(1+\sigma_0^2)(6+L_f)L_f^2\bsK,\\
\check{b}_{1,k}
&=4p(1+\sigma_0^2)\kappa_3L_f^4\frac{\eta_k}{\beta_k^2}
+8p(1+\sigma_0^2)\kappa_5L_f^4\frac{\eta_k^2}{\beta_k^2}+\Big(\frac{1}{2}+L_f^2\Big)\eta_k\omega_k\\
&\quad+4p(1+\sigma_0^2)\kappa_4L_f^4\frac{\eta_k\omega_k}{\beta_k^2}
+(5+16p(1+\sigma_0^2)+8p(1+\sigma_0^2)\kappa_3L_f^2)L_f^2\eta_k^2\omega_k\\
&\quad+8p(1+\sigma_0^2)\kappa_4L_f^4\eta_k^2\omega_k^2
+8p(1+\sigma_0^2)\kappa_4L_f^4\frac{\eta_k^2\omega_k}{\beta_k^2},\\
\check{b}_{4,k}&=6+L_f+\frac{\kappa_3L_f^2}{\kappa_2}\frac{1}{\beta_k}+2\kappa_5L_f^2\frac{1}{\beta_k^2}
+\frac{\kappa_4L_f^2}{\kappa_2}\frac{\omega_k}{\beta_k}\\
&\quad+(4+2\kappa_3L_f^2)\omega_k+2\kappa_4L_f^2\omega_k^2+2\kappa_4L_f^2\frac{\omega_k}{\beta_k^2},\\
\check{b}_{5,k}
&=nL_f^2\Big(\frac{1}{4}p^2\check{b}_{4,k}\eta_k+3+\omega_k+8\eta_k+5\eta_k\omega_k\Big).
\end{align*}
Then, from \eqref{zerosg:vkLya-thm5} and $t_1^\theta\ge \check{c}_3(\kappa_0,\kappa_1,\kappa_2)\ge\frac{\check{\varepsilon}_{1}}{\kappa_0}$, similar to the way to get \eqref{zerosg:sgproof-vkLya}--\eqref{zerosg:sgproof-vkLya-bounded}, we have
\begin{subequations}
\begin{align}
\mathbf{E}_{\mathfrak{L}_k}[W_{k+1}]
&\le  W_{k}-\|\bsx_k\|^2_{(2\varepsilon_4-\varepsilon_5\omega_k-\check{b}_{1,k})\bsK}
-\Big\|\bm{v}_k+\frac{1}{\beta_k}\bsg_{k}^0\Big\|^2_{b_{2,k}\bsK}-\frac{1}{4}\eta_k\|\bar{\bsg}^0_{k}\|^2\nonumber\\
&\quad+L_f\Big(\frac{4}{3}b_{3,k}+8p(1+\sigma_0^2)\check{b}_{4,k}\Big)\eta_k^2W_{4,k}
+2pn\sigma^2_1\check{b}_{4,k}\eta_k^2\nonumber\\
&\quad+n\check{\sigma}^2_2\Big(\frac{2}{3}b_{3,k}+4p(1+\sigma_0^2)\check{b}_{4,k}\Big)\eta_k^2
+\check{b}_{5,k}\eta_k\delta_k^2,
\label{zerosg:sgproof-vkLya-thm5}\\
\mathbf{E}_{\mathfrak{L}_k}[\breve{W}_{k+1}]
&\le  \breve{W}_{k}-\|\bsx_k\|^2_{(2\varepsilon_4-\varepsilon_5\omega_k-\check{b}_{1,k})\bsK}
-\Big\|\bm{v}_k+\frac{1}{\beta_k}\bsg_{k}^0\Big\|^2_{b_{2,k}\bsK}\nonumber\\
&\quad+L_f\Big(\frac{4}{3}b_{3,k}+8p(1+\sigma_0^2)\check{b}_{4,k}\Big)\eta_k^2W_{4,k}
+2pn\sigma^2_1\check{b}_{4,k}\eta_k^2\nonumber\\
&\quad+n\check{\sigma}^2_2\Big(\frac{2}{3}b_{3,k}+4p(1+\sigma_0^2)\check{b}_{4,k}\Big)\eta_k^2
+\check{b}_{5,k}\eta_k\delta_k^2.
\label{zerosg:sgproof-vkLya-bounded-thm5}
\end{align}
\end{subequations}

From $t_1^\theta\ge \check{c}_3(\kappa_0,\kappa_1,\kappa_2)\ge\max\{\frac{2\varepsilon_5}{\kappa_0\varepsilon_4},
~(\frac{2p(1+\sigma_0^2)\check{\varepsilon}_7}{\kappa_0^2\varepsilon_4})^{\frac{1}{3}}\}$, similar to the way to prove \eqref{zerosg:varepsilon4}, we have
\begin{align}
2\varepsilon_4-\varepsilon_5\omega_k-\check{b}_{1,k}
\ge2\varepsilon_4-\frac{\varepsilon_5}{\kappa_0t_1^{\theta}}-
\frac{p(1+\sigma_0^2)\check{\varepsilon}_7}{\kappa_0^2t_1^{3\theta}}
\ge\varepsilon_4>0.\label{zerosg:varepsilon4-thm5}
\end{align}

From $t_1^\theta\ge \check{c}_3(\kappa_0,\kappa_1,\kappa_2)\ge(\frac{\varepsilon_8}{2\kappa_0\varepsilon_6})^{\frac{1}{2}}$, similar to the way to prove \eqref{zerosg:b2k}, we have
\begin{align}
b_{2,k}\ge2\varepsilon_6-\frac{\varepsilon_8}{2\kappa_0t_1^{2\theta}}
\ge\varepsilon_6>0.\label{zerosg:b2k-thm5}
\end{align}

From $\beta_k\ge1$ and $\omega_k\le1$, we have
\begin{align}
\check{b}_{4,k}&\le \check{\varepsilon}_{10}.\label{zerosg:b4k-thm5}
\end{align}

From
$t_1^\theta\ge \check{c}_3(\kappa_0,\kappa_1,\kappa_2)\ge\max\{\frac{16L_f\kappa_3}{\nu\kappa_2},
~(\frac{16L_f\kappa_4}{\nu\kappa_0\kappa_2})^{\frac{1}{3}},
~\frac{64p(1+\sigma_0^2)L_f\kappa_2\check{\varepsilon}_{10}}{\nu\kappa_0}\}$, similar to the way to get \eqref{zerosg:b3kb4keta}, we have
\begin{align}\label{zerosg-a5:b3kb4keta}
\frac{4}{3}L_f(b_{3,k}+6p(1+\sigma_0^2)\check{b}_{4,k})\eta_k
\le\frac{2\kappa_3}{\kappa_2t_1^{\theta}}
+\frac{2\kappa_4}{\kappa_2\kappa_0t_1^{3\theta}}
+\frac{8p(1+\sigma_0^2)L_f\kappa_2\check{\varepsilon}_{10}}{\kappa_0t_1^{\theta}}
\le\frac{3\nu}{8}.
\end{align}

From $t_1^\theta\ge \check{c}_3(\kappa_0,\kappa_1,\kappa_2)\ge\frac{p(1+\sigma_0^2)\kappa_2\check{\varepsilon}_{10}}{4\kappa_0}$, we have
\begin{align}
\check{b}_{5,k}\le pn\check{\varepsilon}_{11}.\label{zerosg:b5k-thm5}
\end{align}

From \eqref{zerosg:sgproof-vkLya-thm5}--\eqref{zerosg:sgproof-vkLya-bounded-thm5} and \eqref{zerosg:varepsilon4-thm5}--\eqref{zerosg:b5k-thm5}, we have \eqref{zerosg-a5:sgproof-vkLya2}--\eqref{zerosg-a5:sgproof-vkLya2-bounded}.

From \eqref{zerosg:v4k}, \eqref{zerosg:rand-grad-esti5}, and \eqref{zerosg-a5:rand-grad-esti2}, we have
\begin{align}
\mathbf{E}_{\mathfrak{L}_k}[W_{4,k+1}]
&\le W_{4,k}-\frac{1}{4}\eta_k\|\bar{\bsg}^s_{k}\|^2
+\|\bsx_k\|^2_{\eta_kL_f^2\bsK}+nL_f^2\eta_k\delta^2_k\nonumber\\
&\quad-\frac{1}{4}\eta_k\|\bar{\bsg}_{k}^0\|^2
+\frac{1}{2}\eta_k^2L_f\Big(\frac{16p(1+\sigma_0^2)L_f}{n}W_{4,k}
+\frac{8p(1+\sigma_0^2)}{n}L_f^2\|\bsx_{k}\|^2_{\bsK}\nonumber\\
&\quad+4p\sigma^2_1+8p(1+\sigma_0^2)\check{\sigma}^2_2+\frac{1}{2}p^2L_f^2\delta_k^2
+\|\bar{\bsg}^s_{k}\|^2\Big).\label{zerosg:v4kspeed-diminishing-thm5}
\end{align}

From $\eta_k=\frac{\kappa_2}{\beta_k}\le\frac{\kappa_2}{\kappa_0t_1^\theta}$, $t_1^\theta\ge \check{c}_3(\kappa_0,\kappa_1,\kappa_2)
\ge\max\{\frac{64p(1+\sigma_0^2)L_f\kappa_2\check{\varepsilon}_{10}}{\nu\kappa_0},
~\frac{p(1+\sigma_0^2)\kappa_2\check{\varepsilon}_{10}}{4\kappa_0}\}$, and $\check{\varepsilon}_{10}>(1+2\sqrt{10(8\kappa_4+5)}) L_f>23L_f$ due to $\kappa_4>1$, we have
\begin{subequations}
 \begin{align}
&\frac{8p(1+\sigma_0^2)L^2_f}{n}\eta_k^2
\le\frac{8p(1+\sigma_0^2)\kappa_2L^2_f}{\kappa_0t_1^\theta}\eta_k
<\frac{\nu}{8}\eta_k,\label{zerosg:v4kspeed-diminishing-1.1-thm5}\\
&\frac{4p(1+\sigma_0^2)L_f^3}{n}\eta_k^2
\le\frac{4p(1+\sigma_0^2)\kappa_2L_f}{\kappa_0t_1^\theta}\eta_kL_f^2
<\frac{16}{23}\eta_kL_f^2,\label{zerosg:v4kspeed-diminishing-1.2-thm5}\\
&\frac{1}{2}\eta_k^2L_f<\frac{2}{23}\eta_k,\label{zerosg:v4kspeed-diminishing-1.3-thm5}\\
&\frac{1}{4}p^2\eta_k^2L_f^3<pL_f^2\eta_k.\label{zerosg:v4kspeed-diminishing-1.4-thm5}
\end{align}
\end{subequations}

From \eqref{zerosg:v4kspeed-diminishing-thm5}--\eqref{zerosg:v4kspeed-diminishing-1.4-thm5}, we have \eqref{zerosg-a5:v4kspeed-diminishing-2}.
\end{proof}

Now we are ready to prove Theorem~\ref{zerosg-a5:thm-sg-diminishingt}

All conditions needed in Lemma~\ref{zerosg-a5:lemma:sg2} are satisfied, so \eqref{zerosg-a5:sgproof-vkLya2}--\eqref{zerosg-a5:v4kspeed-diminishing-2} still hold when $\theta=1$.

From \eqref{zerosg-a5:sgproof-vkLya2}--\eqref{zerosg-a5:v4kspeed-diminishing-2}, \eqref{nonconvex:gg3}, \eqref{zerosg:vkLya3.2-bound}, \eqref{zerosg:vkLya3}, \eqref{zerosg:vkLya3.1}, \eqref{zerosg-a5:step:eta1t1}, and $t_1\ge \check{c}_3(\kappa_0,\kappa_1,\kappa_2)\ge\max\{\frac{2}{3\varepsilon_4},~\frac{2}{3\varepsilon_6}\}$, similar to the way to get \eqref{zerosg:thm-sg-diminishing-equ2.1bounded} and \eqref{zerosg:thm-sg-diminishing-equ2bounded}, we have \eqref{zerosg-a5:thm-sg-diminishing-equ2.1bounded} and \eqref{zerosg-a5:thm-sg-diminishing-equ2bounded}.

\subsection{Proof of Theorem~\ref{zerosg:thm-random-pd-fixed}}\label{zerosg:proof-thm-random-pd-fixed}

In addition to the notations defined in Appendix~\ref{zerosg:proof-thm-sg-smT},
we also denote the following notations.
\begin{align*}
&\varepsilon=\frac{1}{2}+\frac{1}{2}\max\{1-\tilde{\varepsilon}_{16},~\tilde{\varepsilon}^2\},\\
&\tilde{\varepsilon}_{16}=\frac{1}{4\kappa_6}\min\{4\varepsilon_4,~8\varepsilon_6,
~\eta\nu\}.
\end{align*}

All conditions needed in Lemma~\ref{zerosg:lemma:sg2-T} are satisfied, so \eqref{zerosg:sgproof-vkLya2T} still holds.

(i) Taking expectation in $\calL_{T}$, summing \eqref{zerosg:sgproof-vkLya2T} over $ k\in[0,T-1]$, and using $\delta_{i,k}\in(0,\kappa_\delta\tilde{\varepsilon}^{k}]$ yield
\begin{align*}
\mathbf{E}[W_{T}]+\varepsilon_4\sum_{k=0}^{T-1}\mathbf{E}[\|\bsx_k\|^2_{\bsK}]
+\frac{1}{8}\eta\sum_{k=0}^{T-1}\mathbf{E}[\|\bar{\bsg}^0_{k}\|^2]
\le  W_{0}
+pn\tilde{\varepsilon}_{12}\eta^2T
+\frac{pn\tilde{\varepsilon}_{11}\kappa_\delta^2\eta}{1-\tilde{\varepsilon}^2},
\end{align*}
which further implies
\begin{align}\label{zerosg:sgproof-vkLya2T-fixed}
\sum_{k=0}^{T-1}\mathbf{E}[\|\bsx_k\|^2_{\bsK}]
\le \frac{1}{\varepsilon_4}\Big( W_{0}
+pn\tilde{\varepsilon}_{12}\eta^2T
+\frac{pn\tilde{\varepsilon}_{11}\kappa_\delta^2\eta}{1-\tilde{\varepsilon}^2}\Big).
\end{align}
Therefore, \eqref{zerosg:thm-sg-fixed-equ3.1} holds due to $\eta=\mathcal{O}(\frac{1}{p})$.

From \eqref{zerosg:v4kspeed}, we have
\begin{align}\label{zerosg:v4kspeed-fixed}
\mathbf{E}[W_{4,T}]
&\le  W_{4,0}+\sum_{k=0}^{T-1}\mathbf{E}[\|\bsx_k\|^2_{2\eta L_f^2\bsK}]-\frac{1}{8}\eta\sum_{k=0}^{T-1}\mathbf{E}[\|\bar{\bsg}_{k}^0\|^2]
+p\varepsilon_{15}\eta^2T
+\frac{(n+p)L_f^2\kappa_\delta^2\eta}{1-\tilde{\varepsilon}^2}.
\end{align}

From \eqref{zerosg:sgproof-vkLya2T-fixed} and \eqref{zerosg:v4kspeed-fixed}, we have
\begin{align*}
\sum_{k=0}^{T-1}\mathbf{E}[\|\bar{\bsg}_{k}^0\|^2]&\le
\frac{8W_{4,0}}{\eta}+\sum_{k=0}^{T-1}\mathbf{E}[\|\bsx_k\|^2_{16 L_f^2\bsK}]
+8p\varepsilon_{15}\eta T
+\frac{8(n+p)L_f^2\kappa_\delta^2}{1-\tilde{\varepsilon}^2}\\
&\le
\frac{8W_{4,0}}{\eta}+\frac{16 L_f^2}{\varepsilon_4}\Big(W_{0}
+pn\tilde{\varepsilon}_{12}\eta^2T
+\frac{pn\tilde{\varepsilon}_{11}\kappa_\delta^2\eta}{1-\tilde{\varepsilon}^2}\Big)
+8p\varepsilon_{15}\eta T
+\frac{8(n+p)L_f^2\kappa_\delta^2}{1-\tilde{\varepsilon}^2},
\end{align*}
which gives \eqref{zerosg:thm-sg-fixed-equ3}.

From \eqref{zerosg:rand-grad-smooth}, \eqref{zerosg:sgproof-vkLya2T-fixed}, and \eqref{zerosg:v4kspeed-fixed}, we have
\begin{align}\label{zerosg:sgproof-vkLya2T-fixed1}
&\mathbf{E}[\|\bar{\bsg}^0_k\|^2]
\le2L_f\mathbf{E}[W_{4,k}]\nonumber\\
&\le 2L_f\Big( W_{4,0}
+p\varepsilon_{15}\eta^2k
+\frac{(n+p)L_f^2\kappa_\delta^2\eta}{1-\tilde{\varepsilon}^2}\Big)
+\frac{4L_f^3\eta}{\varepsilon_4}\Big(W_0+pn\tilde{\varepsilon}_{12}\eta^2k
+\frac{pn\tilde{\varepsilon}_{11}\kappa_\delta^2\eta}{1-\tilde{\varepsilon}^2}\Big),~\forall k\in\mathbb{N}_0.
\end{align}

From \eqref{zerosg:sgproof-vkLya2T-bounded}, \eqref{zerosg:vkLya3.2-bound}, \eqref{zerosg:vkLya2-a1-bounded-thm2}, $\delta_{i,k}\in(0,\kappa_\delta\tilde{\varepsilon}^{k}]$, \eqref{zerosg:sgproof-vkLya2T-fixed1}, and , we have
\begin{align*}
\mathbf{E}[\breve{W}_{k+1}]&\le \mathbf{E}[ (1-\tilde{a}_1) \breve{W}_{k}
+p\tilde{\varepsilon}_{13}\eta^2\|\bar{\bsg}^0_{k}\|^2
+pn\tilde{\varepsilon}_{12}\eta^2
+pn\tilde{\varepsilon}_{11}\eta\kappa_\delta^2\tilde{\varepsilon}^{2k}]\nonumber\\
&\le (1-\tilde{a}_1)^k \breve{W}_{0}
+p\tilde{\varepsilon}_{13}\eta^2\sum_{\tau=0}^{k}(1-\tilde{a}_1)^\tau\mathbf{E}[\|\bar{\bsg}^0_{k-\tau}\|^2]\nonumber\\
&\quad+pn\tilde{\varepsilon}_{12}\eta^2\sum_{\tau=0}^{k}(1-\tilde{a}_1)^\tau
+pn\tilde{\varepsilon}_{11}\eta\kappa_\delta^2\sum_{\tau=0}^{k}(1-\tilde{a}_1)^\tau
\tilde{\varepsilon}^{2(k-\tau)}\nonumber\\
&=\mathcal{O}(pn\eta^2+\varepsilon_{15}p^2\eta^4(k+1)+\tilde{\varepsilon}_{12}p^2n\eta^5(k+1)),
\end{align*}
which gives \eqref{zerosg:thm-sg-fixed-equ3.2}.

(ii) If Assumption~\ref{zerosg:ass:fil} also holds, then \eqref{nonconvex:gg3} holds.
From  \eqref{zerosg:sgproof-vkLya2T}, \eqref{nonconvex:gg3},  and \eqref{zerosg:vkLya3.1}, for any $k\in\mathbb{N}_0$, we have
\begin{align}\label{zerosg:vkLya2-pl-fixed}
\mathbf{E}[W_{k+1}]
&\le \mathbf{E}\Big[W_{k}-\varepsilon_4\|\bsx_k\|^2_{\bsK}
-2\varepsilon_6\|\bm{v}_k+\frac{1}{\beta}\bsg_{k}^0\|^2_{\bsK}
-\frac{\eta\nu n}{4}(f(\bar{x}_k)-f^*)\Big]\nonumber\\
&\quad
+2pn(\sigma^2_1+2(1+\sigma_0^2)\sigma^2_2)\tilde{\varepsilon}_{10}\eta^2+pn\tilde{\varepsilon}_{11}\eta\delta_k^2\nonumber\\
&\le \mathbf{E}[W_{k}-\tilde{\varepsilon}_{16}W_{k}]+2pn(\sigma^2_1+2(1+\sigma_0^2)\sigma^2_2)\tilde{\varepsilon}_{10}\eta^2
+pn\tilde{\varepsilon}_{11}\eta\delta_k^2.
\end{align}

From \eqref{zerosg:vkLya2-a1-bounded}, we have
\begin{align}\label{zerosg:vkLya2-pl-r1.1-fixed}
0<\tilde{\varepsilon}_{16}\le\frac{2\varepsilon_6}{\kappa_6}\le\frac{1}{40}.
\end{align}

From \eqref{zerosg:vkLya2-pl-fixed}, \eqref{zerosg:vkLya3},  \eqref{zerosg:vkLya2-pl-r1.1-fixed}, and $\delta_{i,k}\in(0,\kappa_\delta\tilde{\varepsilon}^{k}]$, we have
\begin{align}\label{zerosg:vkLya2-pl-fixed2}
\mathbf{E}[W_{k+1}]
&\le (1-\tilde{\varepsilon}_{16})^{k+1}W_{0}
+2pn(\sigma^2_1+2(1+\sigma_0^2)\sigma^2_2)\tilde{\varepsilon}_{10}\eta^2
\sum_{\tau=0}^{k}(1-\tilde{\varepsilon}_{16})^\tau\nonumber\\
&\quad+pn\tilde{\varepsilon}_{11}\kappa_\delta^2\eta
\sum_{\tau=0}^{k}(1-\tilde{\varepsilon}_{16})^\tau
\tilde{\varepsilon}^{2(k-\tau)},~\forall k\in\mathbb{N}_0.
\end{align}

From \eqref{zerosg:vkLya3}, \eqref{zerosg:vkLya2-pl-fixed2}, \eqref{zerosg:lemma:sumgeo-equ}, and $\varepsilon>\max\{1-\tilde{\varepsilon}_{16},~\tilde{\varepsilon}^2\}$,  we have
\begin{align}\label{zerosg:vkLya2-pl-fixed3}
&\mathbf{E}[\|\bm{x}_k\|^2_{\bsK}+n(f(\bar{x}_k)-f^*)]
\le\frac{1}{\kappa_7}\mathbf{E}[W_{k}]\nonumber\\
&\le \frac{n}{\kappa_7}\Big(\frac{W_{0}}{n}
+\frac{p\tilde{\varepsilon}_{11}\kappa_\delta^2\eta}{\varepsilon-\tilde{\varepsilon}^2}\Big)\epsilon^{k}
+\frac{2n\tilde{\varepsilon}_{10}\eta}{\kappa_7\tilde{\varepsilon}_{16}}
(\sigma^2_1+2(1+\sigma_0^2)\sigma^2_2)p\eta,~\forall k\in\mathbb{N}_0,
\end{align}
which gives \eqref{zerosg:thm-sg-fixed-equ1}.



\subsection{Proof of Theorem~\ref{zerosg-p:thm-random-sm}}\label{zerosg-p:proof-thm-random-sm}
We denote the following notations.
\begin{align*}
&d_1=\frac{\rho_2(L)}{2\rho(L^2)},\\
&d_2(\gamma)=\min\Big\{\frac{4\epsilon_1}{9L_f^2},
~\frac{1}{64p(1+\sigma_0^2)(1+\tilde{\sigma}_0^2)(2\epsilon_2+L_f)}\Big\},\\
&\epsilon_{1}=\frac{1}{2}\gamma\rho_2(L)-\gamma^2\rho(L^2),\\
&\epsilon_{2}=\frac{1+2\gamma\rho_2(L)}{2\gamma\rho_2(L)},\\
&\epsilon_3=\Big(2\epsilon_{2}+\frac{L_f}{n}\Big)\epsilon_5,\\
&\epsilon_4=\frac{L_f^2}{4}\Big(\frac{1}{64(1+\sigma_0^2)(1+\tilde{\sigma}_0^2)}+\frac{4}{p}\Big),\\
&\epsilon_5=2(\sigma^2_1+2(1+\sigma_0^2)\sigma^2_2),\\
&\epsilon_6=\frac{ W_{1,0}+W_{4,0}}{n}
+\frac{p(\epsilon_3+\kappa^2_\delta \epsilon_4)\kappa^2_\eta}{2\theta-1},\\
&\epsilon_7=pn\kappa_\eta^2(32(1+\sigma_0^2)(1+\tilde{\sigma}_0^2)
L_f\epsilon_{2}\epsilon_6
+2\epsilon_2\epsilon_5+\epsilon_4\kappa_\delta^2).
\end{align*}

To prove Theorem~\ref{zerosg-p:thm-random-sm}, the following lemma is used.
\begin{lemma}\label{zerosg-p:lemma:sg2}
Suppose Assumptions~\ref{zerosg:ass:graph}--\ref{zerosg:ass:fig} hold. Suppose $\gamma\in(0,d_1)$ and $\eta_k\in(0,d_2(\gamma)]$. Let $\{\bsx_k\}$ be the sequence generated by Algorithm~\ref{zerosg-p:algorithm-random}, then
\begin{subequations}
\begin{align}
&\mathbf{E}_{\mathfrak{L}_k}[W_{1,k+1}+W_{4,k+1}]
\le W_{1,k}+W_{4,k}-\|\bsx_k\|^2_{\frac{\epsilon_{1}}{2}\bsK}
-\frac{1}{8}\eta_k\|\bar{\bsg}_{k}^0\|^2
+pn\epsilon_3\eta^2_k+pn\epsilon_4\eta_k\delta_k^2,\label{zerosg-p:w1w4k2}\\
&\mathbf{E}_{\mathfrak{L}_k}[W_{1,k+1}]
\le W_{1,k}-\|\bsx_k\|^2_{\frac{\epsilon_{1}}{2}\bsK}
+16p(1+\sigma_0^2)(1+\tilde{\sigma}_0^2)\epsilon_2\eta_k^2\|\bar{\bsg}_{k}^0\|^2
+2pn\epsilon_2\epsilon_5\eta^2_k
+pn\epsilon_4\eta_k\delta_k^2,
\label{zerosg-p:w1w4k1}\\
&\mathbf{E}_{\mathfrak{L}_k}[W_{4,k+1}]\le W_{4,k}+\|\bsx_k\|^2_{2L_f^2\eta_k\bsK}
-\frac{1}{8}\eta_k\|\bar{\bsg}_{k}^0\|^2+pL_f\epsilon_5\eta^2_k
+(p+n)L_f^2\eta_k\delta_k^2.\label{zerosg-p:w1w4k3}
\end{align}
\end{subequations}
\end{lemma}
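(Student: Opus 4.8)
The plan is to mirror the structure of the primal--dual analysis (Lemmas~\ref{zerosg:lemma:grad-st}--\ref{zerosg:lemma:sg2}), exploiting the fact that the primal algorithm shares the \emph{same} network-average dynamics. First I would write \eqref{zerosg-p:alg:random} in the compact form $\bsx_{k+1}=\bsx_k-\gamma\bsL\bsx_k-\eta_k\bsg^e_k$. Multiplying by $\bsH$ and using $\bsH\bsL=\bszero$ gives $\bar{\bsx}_{k+1}=\bar{\bsx}_k-\eta_k\bar{\bsg}^e_k$, which is identical to \eqref{zerosg:xbardynamic-rand-pd}. Consequently every estimate in Lemma~\ref{zerosg:lemma:grad-st} that depends only on the gradient estimator, Assumptions~\ref{zerosg:ass:zeroth-smooth}--\ref{zerosg:ass:fig}, and the average update — in particular \eqref{zerosg:rand-grad-esti1}, \eqref{zerosg:rand-grad-esti8}, \eqref{zerosg:rand-grad-esti5}, and \eqref{zerosg:rand-grad-esti2} — holds verbatim here, and so does the smoothness descent \eqref{zerosg:v4k} for $W_{4,k}$.

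For the consensus term \eqref{zerosg-p:w1w4k1} I would expand $W_{1,k+1}=\tfrac12\|\bsx_{k+1}\|^2_{\bsK}$, using $\bsK\bsL=\bsL\bsK=\bsL$ from \eqref{nonconvex:KL-L-eq} and $\bsK^2=\bsK$, to obtain $\|\bsx_{k+1}\|^2_{\bsK}=\|\bsx_k\|^2_{\bsK}-2\gamma\|\bsx_k\|^2_{\bsL}+\gamma^2\|\bsx_k\|^2_{\bsL^2}-2\eta_k\bsx_k^\top(\bsK-\gamma\bsL)\bsg^e_k+\eta_k^2\|\bsg^e_k\|^2_{\bsK}$. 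Taking $\mathbf{E}_{\mathfrak{L}_k}[\cdot]$ replaces $\bsg^e_k$ by $\bsg^s_k$ in the linear term via \eqref{zerosg:rand-grad-esti1}. The Laplacian part is bounded through \eqref{nonconvex:KL-L-eq2}: since $\gamma<d_1=\rho_2(L)/(2\rho(L^2))$ forces $\epsilon_1=\tfrac12\gamma\rho_2(L)-\gamma^2\rho(L^2)>0$, one gets $-2\gamma\|\bsx_k\|^2_{\bsL}+\gamma^2\|\bsx_k\|^2_{\bsL^2}\le-2\epsilon_1\|\bsx_k\|^2_{\bsK}$. The cross term $-\eta_k\bsx_k^\top(\bsK-\gamma\bsL)\bsg^s_k$ is split by Young's inequality into an $\|\bsx_k\|^2_{\bsK}$ piece (absorbed into the contraction; the Young constant produces $\epsilon_2=(1+2\gamma\rho_2(L))/(2\gamma\rho_2(L))$) and a gradient piece, which together with $\eta_k^2\mathbf{E}_{\mathfrak{L}_k}[\|\bsg^e_k\|^2_{\bsK}]\le\eta_k^2\mathbf{E}_{\mathfrak{L}_k}[\|\bsg^e_k\|^2]$ is controlled by \eqref{zerosg:rand-grad-esti8} and \eqref{zerosg:rand-grad-esti2} (using Jensen $\|\bsg^s_k\|^2\le\mathbf{E}_{\mathfrak{L}_k}[\|\bsg^e_k\|^2]$). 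This introduces exactly the $16p(1+\sigma_0^2)(1+\tilde{\sigma}_0^2)\epsilon_2\eta_k^2\|\bar{\bsg}^0_k\|^2$ feedback, the constant term $2pn\epsilon_2\epsilon_5\eta_k^2$ with $\epsilon_5=2(\sigma_1^2+2(1+\sigma_0^2)\sigma_2^2)$, and the $pn\epsilon_4\eta_k\delta_k^2$ term; keeping the leftover contraction at $\tfrac{\epsilon_1}{2}$ yields \eqref{zerosg-p:w1w4k1}.

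The bound \eqref{zerosg-p:w1w4k3} on $W_{4,k}$ is obtained exactly as \eqref{zerosg:v4kspeed-diminishing-2} was: insert \eqref{zerosg:rand-grad-esti5} and \eqref{zerosg:rand-grad-esti2} into \eqref{zerosg:v4k} and use $\eta_k\le d_2(\gamma)$ to absorb the $\mathcal{O}(\eta_k^2)\|\bar{\bsg}^0_k\|^2$ feedback into the $-\tfrac14\eta_k\|\bar{\bsg}^0_k\|^2$ term coming from \eqref{zerosg:v4k}, leaving $-\tfrac18\eta_k\|\bar{\bsg}^0_k\|^2$ (here $\varepsilon_{15}=L_f\epsilon_5$, so that $p\varepsilon_{15}\eta_k^2=pL_f\epsilon_5\eta_k^2$). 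The additional $\|\bsx_k\|^2_{\bsK}$ generated by \eqref{zerosg:rand-grad-esti2} upgrades the $L_f^2\eta_k\|\bsx_k\|^2_{\bsK}$ of \eqref{zerosg:v4k} to the stated $2L_f^2\eta_k\|\bsx_k\|^2_{\bsK}$.

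Finally, \eqref{zerosg-p:w1w4k2} follows by adding the consensus recursion (retained with its full contraction, before the positive feedback is peeled off) to \eqref{zerosg-p:w1w4k3}: the $+2L_f^2\eta_k\|\bsx_k\|^2_{\bsK}$ produced by the $W_{4,k}$ descent is re-absorbed into the contraction using $\eta_k\le 4\epsilon_1/(9L_f^2)$, and the combined positive feedback $8p(1+\sigma_0^2)(1+\tilde{\sigma}_0^2)(2\epsilon_2+L_f)\eta_k^2\|\bar{\bsg}^0_k\|^2$ is dominated by $\tfrac18\eta_k\|\bar{\bsg}^0_k\|^2$ via $\eta_k\le\bigl(64p(1+\sigma_0^2)(1+\tilde{\sigma}_0^2)(2\epsilon_2+L_f)\bigr)^{-1}$, which is precisely the second term defining $d_2(\gamma)$; the net coefficient then becomes $-\tfrac14\eta_k+\tfrac18\eta_k=-\tfrac18\eta_k$, while $\epsilon_3=(2\epsilon_2+L_f/n)\epsilon_5$ collects the $\eta_k^2$ constants from both recursions. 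The main obstacle is this simultaneous budgeting of the single Laplacian contraction $-2\epsilon_1\|\bsx_k\|^2_{\bsK}$, which must absorb the $W_1$ cross term, the $W_4$ curvature term $2L_f^2\eta_k\|\bsx_k\|^2_{\bsK}$, and still leave $\tfrac{\epsilon_1}{2}$, together with verifying that the two stepsize thresholds hidden in $d_2(\gamma)$ are exactly what is needed to pin the $\|\bar{\bsg}^0_k\|^2$ coefficient at $-\tfrac18\eta_k$.
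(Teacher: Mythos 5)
Your overall route --- carrying over Lemma~\ref{zerosg:lemma:grad-st} and \eqref{zerosg:v4k} via the identical average dynamics $\bar{\bsx}_{k+1}=\bar{\bsx}_k-\eta_k\bar{\bsg}^e_k$, expanding $W_{1,k+1}$ with \eqref{nonconvex:KL-L-eq}--\eqref{nonconvex:KL-L-eq2} and Young's inequality to reach $\mathbf{E}_{\mathfrak{L}_k}[W_{1,k+1}]\le W_{1,k}-\epsilon_1\|\bsx_k\|^2_{\bsK}+\epsilon_2\eta_k^2\mathbf{E}_{\mathfrak{L}_k}[\|\bsg^e_k\|^2]$, then invoking \eqref{zerosg:rand-grad-esti2} and the two thresholds hidden in $d_2(\gamma)$ --- is exactly the paper's proof in Appendix~\ref{zerosg-p:proof-thm-random-sm}, and your derivations of \eqref{zerosg-p:w1w4k1} and \eqref{zerosg-p:w1w4k3} individually are sound.

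The flaw is the last step: \eqref{zerosg-p:w1w4k2} cannot be obtained by adding the consensus recursion to the \emph{already-loosened} bound \eqref{zerosg-p:w1w4k3}. In \eqref{zerosg-p:w1w4k3} the gradient feedback $\frac{8p(1+\sigma_0^2)(1+\tilde{\sigma}_0^2)L_f}{n}\eta_k^2\|\bar{\bsg}^0_k\|^2$ has already been absorbed at the cost of halving $-\tfrac14\eta_k$ to $-\tfrac18\eta_k$, and the $\|\bsx_k\|^2_{\bsK}$ coefficient has already been loosened to $2L_f^2\eta_k$. Adding the $W_1$ recursion on top, its own feedback $16p(1+\sigma_0^2)(1+\tilde{\sigma}_0^2)\epsilon_2\eta_k^2\|\bar{\bsg}^0_k\|^2$ --- which the stepsize condition only bounds by $\tfrac18\eta_k\|\bar{\bsg}^0_k\|^2$ --- wipes out the remaining $-\tfrac18\eta_k\|\bar{\bsg}^0_k\|^2$ entirely, and $2L_f^2\eta_k\le\tfrac{8\epsilon_1}{9}$ consumes far more than half of the $-\epsilon_1\|\bsx_k\|^2_{\bsK}$ contraction; what survives is a coefficient $0$ on $\|\bar{\bsg}^0_k\|^2$ and roughly $-\tfrac{\epsilon_1}{18}$ on $\|\bsx_k\|^2_{\bsK}$, not the $-\tfrac18\eta_k$ and $-\tfrac{\epsilon_1}{2}$ the lemma asserts (and which the downstream theorems rely on). The fix --- which your own arithmetic ``$-\tfrac14\eta_k+\tfrac18\eta_k=-\tfrac18\eta_k$'' already presupposes --- is what the paper does in \eqref{zerosg-p:w1w4k}: add the \emph{raw} recursions \eqref{zerosg-p:v1k-1} (after inserting \eqref{zerosg:rand-grad-esti2}) and \eqref{zerosg:v4kspeed-diminishing} first, so that the combined feedbacks, $8p(1+\sigma_0^2)(1+\tilde{\sigma}_0^2)(2\epsilon_2+L_f)\eta_k^2$ on $\|\bar{\bsg}^0_k\|^2$ and $L_f^2\eta_k\bigl(1+8p(1+\sigma_0^2)(1+\tilde{\sigma}_0^2)(2\epsilon_2+L_f)\eta_k\bigr)\le\tfrac98L_f^2\eta_k$ on $\|\bsx_k\|^2_{\bsK}$, are each absorbed \emph{once} against $-\tfrac14\eta_k$ and $-\epsilon_1$ respectively. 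In other words, \eqref{zerosg-p:w1w4k2}, \eqref{zerosg-p:w1w4k1}, and \eqref{zerosg-p:w1w4k3} are three parallel consequences of the raw bounds, not consequences of one another.
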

\begin{proof} It is straightforward to see that for $\{\bsx_k\}$ generated by Algorithm~\ref{zerosg-p:algorithm-random}, Lemma~\ref{zerosg:lemma:grad-st} and \eqref{zerosg:v4k} still hold. Thus, \eqref{zerosg:v4kspeed-diminishing} still holds.

We have
\begin{align}
\mathbf{E}_{\mathfrak{L}_k}[W_{1,k+1}]
&=\mathbf{E}_{\mathfrak{L}_k}\Big[\frac{1}{2}\|\bm{x}_{k+1} \|^2_{\bsK}\Big]\nonumber\\
&=\mathbf{E}_{\mathfrak{L}_k}\Big[\frac{1}{2}\|\bm{x}_k-(\gamma\bsL\bm{x}_k+\eta_k\bsg^e_k) \|^2_{\bsK}\Big]\nonumber\\
&=\mathbf{E}_{\mathfrak{L}_k}\Big[\frac{1}{2}\|\bm{x}_k\|^2_{\bsK}-\gamma\|\bsx_k\|^2_{\bsL}
+\frac{1}{2}\gamma^2\|\bsx_k\|^2_{\bsL^2}
\nonumber\\
&\quad-\eta_k\bsx^\top_k({\bm I}_{np}-\gamma\bsL)\bsK\bsg^e_k
+\frac{1}{2}\eta^2_k\|\bsg^e_k\|^2_{\bsK}\Big]\nonumber\\
&\le\mathbf{E}_{\mathfrak{L}_k}\Big[\frac{1}{2}\|\bm{x}_k\|^2_{\bsK}-\gamma\|\bsx_k\|^2_{\bsL}
+\frac{1}{2}\gamma^2\|\bsx_k\|^2_{\bsL^2}\nonumber\\
&\quad+\frac{1}{2}\gamma\rho_2(L)\|\bm{x}_k\|^2_{\bsK}
+\frac{1}{2\gamma\rho_2(L)}\eta^2_k\|\bsg^e_k\|^2\nonumber\\
&\quad+\frac{1}{2}\gamma^2\|\bm{x}_k\|^2_{\bsL^2}
+\frac{1}{2}\eta^2_k\|\bsg^e_k\|^2
+\frac{1}{2}\eta^2_k\|\bsg^e_k\|^2\Big]\nonumber\\
&\le\mathbf{E}_{\mathfrak{L}_k}\Big[\frac{1}{2}\|\bm{x}_k\|^2_{\bsK}
-\gamma\|\bsx_k\|^2_{\rho_2(L)\bsK}
+\gamma^2\|\bsx_k\|^2_{\rho(L^2)\bsK}\nonumber\\
&\quad+\frac{1}{2}\gamma\rho_2(L)\|\bm{x}_k\|^2_{\bsK}
+\frac{1+2\gamma\rho_2(L)}{2\gamma\rho_2(L)}\eta^2_k
\|\bsg^e_k\|^2\Big]\nonumber\\
&=\frac{1}{2}\|\bm{x}_k\|^2_{\bsK}-\|\bsx_k\|^2_{\epsilon_{1}\bsK}
+\epsilon_{2}\eta^2_k\mathbf{E}_{\mathfrak{L}_k}[\|\bsg^e_k\|^2]\label{zerosg-p:v1k-1}\\
&\le\frac{1}{2}\|\bm{x}_k\|^2_{\bsK}-\|\bsx_k\|^2_{\epsilon_{1}\bsK}
+\epsilon_{2}\eta^2_k\Big(16p(1+\sigma_0^2)(1+\tilde{\sigma}_0^2)
(\|\bar{\bsg}_{k}^0\|^2+L_f^2\|\bsx_{k}\|^2_{\bsK})\nonumber\\
&\quad+4np\sigma^2_1
+8np(1+\sigma_0^2)\sigma^2_2+\frac{np^2L_f^2}{2}\delta_k^2\Big)\nonumber\\
&=\frac{1}{2}\|\bm{x}_k\|^2_{\bsK}-\|\bsx_k\|^2_{\epsilon_{1}\bsK
-16p(1+\sigma_0^2)(1+\tilde{\sigma}_0^2)L_f^2\epsilon_{2}\eta^2_k\bsK}
+\epsilon_{2}\eta^2_k\Big(16p(1+\sigma_0^2)(1+\tilde{\sigma}_0^2)
\|\bar{\bsg}_{k}^0\|^2\nonumber\\
&\quad+4np\sigma^2_1
+8np(1+\sigma_0^2)\sigma^2_2+\frac{np^2L_f^2}{2}\delta_k^2\Big),
\label{zerosg-p:v1k}
\end{align}
where the second equality holds due to \eqref{zerosg-p:alg:random}; the third equality holds due to \eqref{nonconvex:KL-L-eq}; the first inequality holds due to the Cauchy--Schwarz inequality and $\rho(\bsK)=1$; the second  inequality holds due to \eqref{nonconvex:KL-L-eq2}; the second last  equality holds since that $x_{i,k}$ is independent of $\mathfrak{L}_k$; and the last  inequality holds due to \eqref{zerosg:rand-grad-esti2}.

From \eqref{zerosg:v4kspeed-diminishing} and \eqref{zerosg-p:v1k}, we have
\begin{align}\label{zerosg-p:w1w4k}
\mathbf{E}_{\mathfrak{L}_k}[W_{1,k+1}+W_{4,k+1}]
&\le W_{1,k}+W_{4,k}-\|\bsx_k\|^2_{\epsilon_{1}\bsK
-(L_f^2\eta_k+16p(1+\sigma_0^2)(1+\tilde{\sigma}_0^2)L_f^2\epsilon_{2}\eta^2_k
+\frac{8p(1+\sigma_0^2)(1+\tilde{\sigma}_0^2)L_f^3}{n}\eta^2_k)\bsK}\nonumber\\
&\quad-\frac{1}{4}\Big(1-64p(1+\sigma_0^2)(1+\tilde{\sigma}_0^2)\epsilon_{2}\eta_k
-\frac{32p(1+\sigma_0^2)(1+\tilde{\sigma}_0^2)L_f}{n}\eta_k\Big)
\eta_k\|\bar{\bsg}_{k}^0\|^2\nonumber\\
&\quad-\frac{1}{4}(1-2L_f\eta_k)\eta_k\|\bar{\bsg}^s_{k}\|^2
+2pn\Big(2\epsilon_{2}+\frac{L_f}{n}\Big)(\sigma^2_1
+2(1+\sigma_0^2)\sigma^2_2)\eta^2_k\nonumber\\
&\quad
+\frac{pnL_f^2}{4}\Big(2p\epsilon_{2}\eta_k+\frac{pL_f}{n}\eta_k+\frac{4}{p}\Big)\eta_k\delta_k^2.
\end{align}

From $\gamma\in(0,d_1)$ and $\rho_2(L)\le\rho(L)$, we have
\begin{align}\label{zerosg-p:epsilon1}
0<\epsilon_{1}<\frac{1}{16}.
\end{align}

From $\eta_k\le d_2(\gamma)\le \frac{1}{64p(1+\sigma_0^2)(1+\tilde{\sigma}_0^2)(2\epsilon_2+L_f)}$, we have
\begin{subequations}
\begin{align}
&32p(1+\sigma_0^2)(1+\tilde{\sigma}_0^2)\Big(2\epsilon_{2}
+\frac{L_f}{n}\Big)\eta_k
\le32p(1+\sigma_0^2)(1+\tilde{\sigma}_0^2)(2\epsilon_2+L_f)d_2(\gamma)\le\frac{1}{2},\label{zerosg-p:epsilon2}\\
&2L_f\eta_k\le\frac{2L_f}{64p(1+\sigma_0^2)(1+\tilde{\sigma}_0^2)(2\epsilon_2+L_f)}
<\frac{1}{32p}<1,\label{zerosg-p:epsilon2.1}\\
&\frac{L_f^2}{4}\Big(2p\epsilon_{2}\eta_k+\frac{pL_f}{n}\eta_k+\frac{4}{p}\Big)
\le\epsilon_4.\label{zerosg-p:epsilon2.2}
\end{align}
\end{subequations}

From $\eta_k\le d_2(\gamma)\le \frac{4\epsilon_1}{9L_f^2}$ and \eqref{zerosg-p:epsilon2}, we have
\begin{align}\label{zerosg-p:epsilon3}
&L_f^2\eta_k+16p(1+\sigma_0^2)(1+\tilde{\sigma}_0^2)L_f^2\epsilon_{2}\eta^2_k
+\frac{8p(1+\sigma_0^2)(1+\tilde{\sigma}_0^2)L_f^3}{n}\eta^2_k\nonumber\\
&\le(1+8p(1+\sigma_0^2)(1+\tilde{\sigma}_0^2)(2\epsilon_2+L_f)d_2(\gamma))L_f^2d_2(\gamma)
\le\frac{9L_f^2d_2(\gamma)}{8}\le\frac{1}{2}\epsilon_1.
\end{align}

From \eqref{zerosg-p:w1w4k}--\eqref{zerosg-p:epsilon3}, we have \eqref{zerosg-p:w1w4k2}.

Similarly, we get \eqref{zerosg-p:w1w4k1} and \eqref{zerosg-p:w1w4k3}.
\end{proof}

Now it is ready to prove Theorem~\ref{zerosg-p:thm-random-sm}. The proof is similar to the proof of Theorem~\ref{zerosg:thm-random-pd-sm}. For the sake of completeness, the proof is included here.

From $\kappa_\eta\in(0,d_2(\gamma)t_1^\theta]$ and $\eta_k=\frac{\kappa_\eta}{(k+t_1)^\theta}$, we have $\eta_k\le d_2(\gamma)$. Thus, all conditions needed in Lemma~\ref{zerosg-p:lemma:sg2} are satisfied. So \eqref{zerosg-p:w1w4k2}--\eqref{zerosg-p:w1w4k1} hold.

Taking expectation in $\calL_{T}$, summing \eqref{zerosg-p:w1w4k2} over $ k\in[0,T-1]$, noting $\eta_k=\frac{\kappa_\eta}{(k+t_1)^\theta}$, $\theta\in(0.5,1)$, and $\delta_k\le \frac{\kappa_\delta\sqrt{p\eta_k}}{\sqrt{n+p}}$ as stated in \eqref{zerosg-p:thm-random-sm-akbk}, and
using \eqref{zerosg:serise:lemma:sum-equ} yield
\begin{align}\label{zerosg-p:vkLya4.1}
&\mathbf{E}[W_{1,T}+W_{4,T}]
+\sum_{k=0}^{T-1}\mathbf{E}\Big[\frac{\epsilon_1}{2}\|\bsx_k\|^2_{\bsK}
+\frac{1}{8}\eta_k\|\bar{\bsg}^0_{k}\|^2\Big]\nonumber\\
&\le W_{1,0}+W_{4,0}+pn(\epsilon_3+\kappa^2_\delta \epsilon_4)\kappa^2_\eta\sum_{k=0}^{T-1}\frac{1}{(k+t_1)^{2\theta}}
\le n\epsilon_6.
\end{align}

Noting that $t_1^\theta=\mathcal{O}(\sqrt{p})$, we have
\begin{align}\label{zerosg-p:k0}
\kappa_\eta=\mathcal{O}\Big(\frac{t_1^\theta}{p}\Big)=\mathcal{O}\Big(\frac{1}{\sqrt{p}}\Big).
\end{align}

From $W_{1,0}+W_{4,0}=\mathcal{O}(n)$ and \eqref{zerosg-p:k0}, we have
\begin{align}\label{zerosg-p:epsilon5}
\epsilon_6=\frac{ W_{1,0}+W_{4,0}}{n}
+\frac{p(\epsilon_3+\kappa^2_\delta \epsilon_4)\kappa^2_\eta}{2\theta-1}=\mathcal{O}(1).
\end{align}

From \eqref{zerosg-p:vkLya4.1} and \eqref{zerosg-p:epsilon1}, we have
\begin{align}\label{zerosg-p:thm-sg-sm-equ2p}
\mathbf{E}[f(\bar{x}_{T})]-f^*\le\frac{1}{n}W_{4,T}
\le \epsilon_6.
\end{align}

From \eqref{zerosg-p:thm-sg-sm-equ2p} and \eqref{zerosg-p:epsilon5}, we have \eqref{zerosg-p:thm-sg-sm-equ2}.

From \eqref{zerosg-p:vkLya4.1} and \eqref{zerosg-p:epsilon1}, we have
\begin{align}\label{zerosg-p:thm-sg-sm-equ1.1p}
\sum_{k=0}^{T-1}\mathbf{E}[\|\bsx_k\|^2_{\bsK}]
\le\frac{2n\epsilon_6}{\epsilon_1}.
\end{align}

From \eqref{zerosg:rand-grad-smooth} and \eqref{zerosg-p:thm-sg-sm-equ2p}, we have
\begin{align}\label{zerosg-p:thm-sg-sm-bounded}
\|\bar{\bsg}^0_k\|^2\le 2nL_f\epsilon_6.
\end{align}

From \eqref{zerosg:rand-grad-esti2}, \eqref{zerosg-p:thm-sg-sm-equ1.1p}, and \eqref{zerosg-p:thm-sg-sm-bounded}, we know that $\mathbf{E}[\|\bsg^e_k\|^2]$ is bounded. Then, same as the proof of the first part of Theorem~1 in \cite{tang2020distributedzero}, we have  \eqref{zerosg-p:thm-sg-sm-equ1bounded}.

From \eqref{zerosg-p:w1w4k1}, \eqref{zerosg-p:thm-sg-sm-bounded}, and \eqref{zerosg-p:thm-random-sm-akbk}, we have
\begin{align}\label{zerosg-p:vkLya4-bound}
\mathbf{E}[W_{1,k+1}]\le(1-\epsilon_{1})\mathbf{E}[W_{1,k}]+\frac{\epsilon_{7}}{(t+t_1)^{2\theta}}.
\end{align}

From \eqref{zerosg-p:vkLya4-bound}, \eqref{zerosg-p:epsilon1}, and \eqref{zerosg:serise:lemma:sequence-equ6}, we have
\begin{align}\label{zerosg-p:lemma:sequence-equ6-bounded}
\mathbf{E}[W_{1,k}]&\le \phi_3(k,t_1,\epsilon_{1},\epsilon_{7},2\theta,W_{1,0}),~\forall k\in\mathbb{N}_+,
\end{align}
where the function $\phi_3$ is defined in \eqref{zerosg:serise:lemma:sequence-equ6-phi4}.

Noting that $\phi_3(k,t_1,\epsilon_{1},\epsilon_{7},2\theta,W_{1,0})=\mathcal{O}(\frac{n}{k^{2\theta}})$ due to \eqref{zerosg-p:k0}, from \eqref{zerosg-p:lemma:sequence-equ6-bounded}, we have \eqref{zerosg-p:thm-sg-sm-equ1.1bounded}.

From \eqref{zerosg-p:w1w4k3}, we have
\begin{align}\label{zerosg-p:w1w4k3-thm7}
\Big(\frac{1}{\eta_k}-\frac{1}{\eta_{k+1}}+\frac{1}{\eta_{k+1}}\Big)\mathbf{E}_{\mathfrak{L}_k}[W_{4,k+1}]
\le \frac{W_{4,k}}{\eta_k}+\|\bsx_k\|^2_{2L_f^2\bsK}
-\frac{1}{8}\|\bar{\bsg}_{k}^0\|^2+pL_f\epsilon_5\eta_k
+(p+n)L_f^2\delta_k^2.
\end{align}
Then, taking expectation in $\calL_{T}$, summing \eqref{zerosg-p:w1w4k3-thm7} over $ k\in[0,T-1]$,  noting \eqref{zerosg-p:thm-sg-sm-equ2p}, $\eta_k=\frac{\kappa_\eta}{(k+t_1)^\theta}$, $\theta\in(0.5,1)$, and $\delta_k\le \frac{\kappa_\delta\sqrt{p\eta_k}}{\sqrt{n+p}}$ as stated in \eqref{zerosg-p:thm-random-sm-akbk}, and
using \eqref{zerosg:serise:lemma:sum-equ} yield
\begin{align}\label{zerosg-p:w1w4k3-thm7-1}
&\frac{1}{8}\sum_{k=0}^{T-1}\mathbf{E}[\|\bar{\bsg}_{k}^0\|^2]\nonumber\\
&\le \frac{W_{4,0}}{\eta_0}
+\sum_{k=0}^{T-1}\Big(\frac{1}{\eta_{k+1}}-\frac{1}{\eta_{k}}\Big)\mathbf{E}[W_{4,k+1}]
+\sum_{k=0}^{T-1}\mathbf{E}[\|\bsx_k\|^2_{2L_f^2\bsK}]
+\frac{pL_f(\epsilon_5+L_f\kappa_\delta^2)\kappa_\eta(T+t_1)^{1-\theta}}{1-\theta}\nonumber\\
&\le \frac{n\epsilon_6}{\eta_0}
+\sum_{k=0}^{T-1}\Big(\frac{1}{\eta_{k+1}}-\frac{1}{\eta_{k}}\Big)n\epsilon_6
+\sum_{k=0}^{T-1}\mathbf{E}[\|\bsx_k\|^2_{2L_f^2\bsK}]
+\frac{pL_f(\epsilon_5+L_f\kappa_\delta^2)\kappa_\eta(T+t_1)^{1-\theta}}{1-\theta}\nonumber\\
&= \frac{n\epsilon_6(T+t_1)^\theta}{\kappa_\eta}
+\sum_{k=0}^{T-1}\mathbf{E}[\|\bsx_k\|^2_{2L_f^2\bsK}]
+\frac{pL_f(\epsilon_5+L_f\kappa_\delta^2)\kappa_\eta(T+t_1)^{1-\theta}}{1-\theta}.
\end{align}
From \eqref{zerosg-p:w1w4k3-thm7-1}, \eqref{zerosg-p:lemma:sequence-equ6-bounded}, \eqref{zerosg-p:k0}, and $\theta\in(0.5,1)$, we have \eqref{zerosg-p:thm-sg-sm-equ1}.

\subsection{Proof of Theorem~\ref{zerosg-p:thm-sg-smT}}\label{zerosg-p:proof-thm-sg-smT}
We use the notations defined in Appendix~\ref{zerosg-p:proof-thm-random-sm}.

From $\eta_k=\eta=\frac{\sqrt{n}}{\sqrt{pT}}$ and $T\ge \frac{n}{pd_2^2(\gamma)}$, we have $\eta_k\le d_2(\gamma)$. Thus, all conditions needed in Lemma~\ref{zerosg-p:lemma:sg2} are satisfied. So \eqref{zerosg-p:w1w4k2}--\eqref{zerosg-p:w1w4k3} hold.

Taking expectation in $\calL_{T}$, summing \eqref{zerosg-p:w1w4k2} over $ k\in[0,T-1]$, noting $\eta_k=\eta=\frac{\sqrt{n}}{\sqrt{pT}}$ and $\delta_{i,k}\le\frac{p^{\frac{1}{4}}n^{\frac{1}{4}}\kappa_\delta}
{\sqrt{n+p}(k+1)^{\frac{1}{4}}}$ as stated in \eqref{zerosg-p:step:eta2-sm}, and
using \eqref{zerosg:serise:lemma:sum-equ} yield
\begin{align}
&\frac{1}{nT}\sum_{k=0}^{T-1}\mathbf{E}[\|\bsx_k\|^2_{\bsK}]
\le\frac{2}{\epsilon_1}\Big(\frac{W_{1,0}+W_{4,0}}{nT}
+\frac{n\epsilon_{3}}{T}
+\frac{2n\kappa_\delta^2\epsilon_{4}}{T}\Big).
\label{zerosg-p:thm-sg-sm-equ3.1p}
\end{align}
Similarly, from \eqref{zerosg-p:w1w4k3} and \eqref{zerosg-p:step:eta2-sm}, we have
\begin{align}\label{zerosg-p:thm-sg-sm-equ3p}
\frac{1}{T}\sum_{k=0}^{T-1}\mathbf{E}[\|\nabla f(\bar{x}_k)\|^2]&=\frac{1}{nT}\sum_{k=0}^{T}\mathbf{E}[\|\bar{\bsg}_{k}^0\|^2]
\le 8\Big(\frac{W_{4,0}}{nT\eta}
+\frac{2L_f^2}{nT}\sum_{k=0}^{T}\mathbf{E}[\|\bsx_k\|^2_{\bsK}]
+\frac{pL_f\epsilon_5\eta}{n}+\frac{2\sqrt{p}L_f^2\kappa_\delta^2}{\sqrt{nT}}\Big).
\end{align}
Noting that $\eta=\frac{\sqrt{n}}{\sqrt{pT}}$, from \eqref{zerosg-p:thm-sg-sm-equ3.1p} and \eqref{zerosg-p:thm-sg-sm-equ3p}, we have
\begin{align*}
\frac{1}{T}\sum_{k=0}^{T-1}\mathbf{E}[\|\nabla f(\bar{x}_k)\|^2]
&=8(f(\bar{x}_0)-f^*+\epsilon_5L_f
+2L_f^2\kappa_\delta^2)\frac{\sqrt{p}}{\sqrt{nT}}
+\mathcal{O}(\frac{n}{T}),
\end{align*}
which gives \eqref{zerosg-p:thm-sg-sm-equ3}.

Taking expectation in $\calL_{T}$, summing \eqref{zerosg-p:w1w4k3} over $ k\in[0,T-1]$, and using \eqref{zerosg-p:step:eta2-sm}  yield
\begin{align}\label{zerosg-p:thm-sg-sm-equ4p}
n(\mathbf{E}[f(\bar{x}_{T})]-f^*)&=\mathbf{E}[W_{4,T}]
\le W_{4,0}+\frac{2L_f^2\sqrt{n}}{\sqrt{pT}}\sum_{k=0}^{T-1}\|\bsx_k\|^2_{\bsK}+nL_f\epsilon_5
+2nL_f^2\kappa_\delta^2.
\end{align}

Noting that $W_{4,0}=\mathcal{O}(n)$ and $\frac{\sqrt{n}n}{\sqrt{pT}}\le1$ due to $T\ge \frac{n^3}{p}$, from \eqref{zerosg-p:thm-sg-sm-equ3.1p} and \eqref{zerosg-p:thm-sg-sm-equ4p}, we have \eqref{zerosg-p:thm-sg-sm-equ4}.
Then, from \eqref{zerosg:rand-grad-smooth}, we know that there exists a constant $\tilde{d}_g>0$, such that
\begin{align}\label{zerosg-p:coro-sg-sm-gbark0}
\mathbf{E}[\|\bar{\bsg}^0_k\|^2]\le n\tilde{d}_g,~\forall k\in\mathbb{N}_0.
\end{align}

From \eqref{zerosg-p:w1w4k1}, \eqref{zerosg-p:coro-sg-sm-gbark0}, and \eqref{zerosg-p:step:eta2-sm}, we have
\begin{align}\label{zerosg-p:vkLya4-bound-tilde}
\mathbf{E}[W_{1,k+1}]\le(1-\epsilon_{1})\mathbf{E}[W_{1,k}]
+\frac{n^2\epsilon_4\kappa_\delta^2}{\sqrt{T(k+1)}}
+\frac{n^2(16(1+\sigma_0^2)(1+\tilde{\sigma}_0^2)\epsilon_2\tilde{d}_g
+2\epsilon_2\epsilon_5)}{T},~\forall 0\le k\le T.
\end{align}

From \eqref{zerosg-p:vkLya4-bound-tilde} and \eqref{zerosg-p:epsilon1}, similar to the way to get \eqref{zerosg:serise:lemma:sequence-equ6}, we have $\mathbf{E}[W_{1,T}]=\mathcal{O}(\frac{n^2}{T})$, which gives \eqref{zerosg-p:thm-sg-sm-equ3.1}.

Similar to the proof of \eqref{zerosg-p:thm-sg-sm-equ1bounded}, we have \eqref{zerosg-p:coro-sg-sm-equ3bounded}.

\subsection{Proof of Theorem~\ref{zerosg-p:thm-sg-diminishing}}\label{zerosg-p:proof-thm-sg-diminishing}
In addition to the notations defined in Appendix~\ref{zerosg-p:proof-thm-random-sm}, we also denote the following notations.
\begin{align*}
&\breve{\epsilon}_7=pn\kappa_\eta^2(16(1+\sigma_0^2)(1+\tilde{\sigma}_0^2)
\epsilon_{2}d_g+2\epsilon_2\epsilon_5+\epsilon_4\kappa_\delta^2),\\
&\epsilon_{8}=\min\Big\{\frac{\epsilon_1t_1^\theta}{\kappa_\eta},~\frac{\nu}{4}\Big\},\\
&b_1=\epsilon_{8}\kappa_\eta,\\
&b_2=pn(\epsilon_{3}+\epsilon_{4}\kappa_\delta^2)\kappa_\eta^2.
\end{align*}

All conditions needed in Lemma~\ref{zerosg-p:lemma:sg2} are satisfied, so \eqref{zerosg-p:w1w4k2}--\eqref{zerosg-p:w1w4k3} hold.

Denote $\check{W}_{k}=W_{1,k}+W_{4,k}$. From \eqref{zerosg-p:w1w4k2} and \eqref{nonconvex:gg3},  we have
\begin{align}\label{zerosg-p:vkLya2-pl}
\mathbf{E}_{\mathfrak{L}_k}[\check{W}_{k+1}]
&\le \check{W}_{k}-\|\bsx_k\|^2_{\frac{\epsilon_{1}}{2}\bsK}
-\frac{\nu}{4}\eta_kW_{4,k}+pn\epsilon_3\eta^2_k+pn\epsilon_4\eta_k\delta_k^2\nonumber\\
&\le \Big(1-\eta_k\min\Big\{\frac{\epsilon_1}{\eta_k},~\frac{\nu}{4}\Big\}\Big)
\check{W}_{k}+pn\epsilon_3\eta^2_k+pn\epsilon_4\eta_k\delta_k^2\nonumber\\
&\le (1-\eta_k\epsilon_{8})\check{W}_{k}+pn\epsilon_3\eta^2_k+pn\epsilon_4\eta_k\delta_k^2
,~\forall k\in\mathbb{N}_0.
\end{align}

Denote $\check{z}_k=\mathbf{E}[\check{W}_k]$, $s_{1,k}=\eta_k\epsilon_{8}$, and $s_{2,k}=pn\epsilon_3\eta^2_k+pn\epsilon_4\eta_k\delta_k^2$. From \eqref{zerosg-p:vkLya2-pl}, we have
\begin{align}
\check{z}_{k+1}\le (1-s_{1,k})\check{z}_k+s_{2,k},~\forall k\in\mathbb{N}_0.
\label{zerosg-p:vkLya2-pl-z}
\end{align}

From \eqref{zerosg-p:step:eta1}, we have
\begin{align}
s_{1,k}&=\eta_k\epsilon_{8}
=\frac{b_1}{(k+t_1)^\theta},\label{zerosg-p:vkLya2-pl-r1}\\
s_{2,k}&=pn\epsilon_3\eta^2_k+pn\epsilon_4\eta_k\delta_k^2
\le\frac{b_2}{(k+t_1)^{2\theta}}.\label{zerosg-p:vkLya2-pl-r2}
\end{align}

From \eqref{zerosg-p:epsilon1}, we have
\begin{align}\label{zerosg-p:vkLya2-pl-r1.1}
0<s_{1,k}\le\epsilon_1\le\frac{1}{16}.
\end{align}

Then, from $\theta\in(0,1)$, \eqref{zerosg-p:vkLya2-pl-z}--\eqref{zerosg-p:vkLya2-pl-r1.1}, and \eqref{zerosg:serise:lemma:sequence-equ4}, we have
\begin{align}\label{zerosg-p:vkLya2-pl-theta0}
\check{z}_{k}\le\phi_1(k,t_1,b_1,b_2,\theta,2\theta,\check{z}_0),~\forall k\in\mathbb{N}_+,
\end{align}
where the function $\phi_1$ is defined in \eqref{zerosg:serise:lemma:sequence-equ4-phi2}.

From $\kappa_\eta\in(0,d_2(\gamma)t_1^\theta]$, $\theta\in(0,1)$, and $t_1\in[ p^{\frac{1}{\theta}},d_3p^{\frac{1}{\theta}}]$, we have
\begin{align}\label{zerosg-p:k0-pl}
0<\kappa_\eta\le d_3pd_2(\gamma)<\frac{d_3}{64(1+\sigma_0^2)(1+\tilde{\sigma}_0^2)L_f}.
\end{align}

From \eqref{zerosg:rand-grad-smooth}, \eqref{zerosg-p:vkLya2-pl-theta0}, and \eqref{zerosg-p:k0-pl}, we get
\begin{align}\label{zerosg-p:gbark0-pl-1}
\mathbf{E}[\|\bar{\bsg}^0_k\|^2]
=\mathcal{O}\Big(\frac{pn}{(k+t_1)^\theta}\Big),~\forall k\in\mathbb{N}_+.
\end{align}
From \eqref{zerosg-p:gbark0-pl-1} and $t_1^\theta=\mathcal{O}(p)$, we know that there exists a constant $\breve{d}_g>0$, such that
\begin{align}\label{zerosg-p:gbark0-pl}
\mathbf{E}[\|\bar{\bsg}^0_k\|^2]\le n\breve{d}_g,~\forall k\in\mathbb{N}_0.
\end{align}

From \eqref{zerosg-p:w1w4k1}, \eqref{zerosg-p:gbark0-pl}, and \eqref{zerosg-p:step:eta1}, we have
\begin{align}\label{zerosg-p:vkLya4-bound-pl}
\mathbf{E}[W_{1,k+1}]\le(1-\epsilon_{1})\mathbf{E}[W_{1,k}]
+\frac{\breve{\epsilon}_{7}}{(t+t_1)^{2\theta}}.
\end{align}

Using \eqref{zerosg:serise:lemma:sequence-equ6}, from \eqref{zerosg-p:epsilon1} and \eqref{zerosg-p:vkLya4-bound-pl}, we have
\begin{align}\label{zerosg-p:lemma:sequence-equ6-bounded-pl}
\mathbf{E}[W_{1,k}]&\le \phi_3(k,t_1,\epsilon_{1},\breve{\epsilon}_{7},2\theta,W_{0,k}),~\forall k\in\mathbb{N}_+,
\end{align}
where the function $\phi_3$ is defined in \eqref{zerosg:serise:lemma:sequence-equ6-phi4}.
From \eqref{zerosg-p:lemma:sequence-equ6-bounded-pl}, \eqref{zerosg:serise:lemma:sequence-equ6-phi4}, and \eqref{zerosg-p:k0-pl}, we have
\begin{align}\label{zerosg-p:vkLya4-bound-brevez}
&\mathbf{E}[\|\bsx_k\|^2_{\bsK}]\le2\mathbf{E}[W_{1,k}]
\le2\phi_3(k,t_1,\epsilon_{1},\breve{\epsilon}_{7},2\theta,W_{0,k})
=\mathcal{O}\Big(\frac{pn}{(k+t_1)^{2\theta}}\Big),
\end{align}
which yields \eqref{zerosg-p:thm-sg-diminishing-equ1.1bounded}.

From \eqref{zerosg-p:w1w4k3}, \eqref{nonconvex:gg3}, and $\delta_k\le \frac{\kappa_\delta\sqrt{p\eta_k}}{\sqrt{n+p}}$ we have
\begin{align}
\mathbf{E}[W_{4,k+1}]
&\le \mathbf{E}[W_{4,k}]-\frac{\nu}{4}\eta_k\mathbf{E}[W_{4,k}]
+\|\bsx_k\|^2_{2L_f^2\eta_k\bsK}
+pL_f\epsilon_5\eta^2_k+pL_f^2\kappa_\delta^2\eta_k^2.
\label{zerosg-p:v4kspeed-diminishing-3}
\end{align}

Similar to the way to prove \eqref{zerosg:serise:lemma:sequence-equ4}, from \eqref{zerosg-p:vkLya4-bound-brevez} and \eqref{zerosg-p:v4kspeed-diminishing-3}, we have \begin{align}\label{zerosg-p:v4kspeed-diminishing-4}
\mathbf{E}[f(\bar{x}_{T})-f^*]
\le\frac{\epsilon_9p}{n(T+t_1)^\theta}+\mathcal{O}\Big(\frac{p}{(T+t_1)^{2\theta}}\Big).
\end{align}
From \eqref{zerosg-p:k0-pl}, we have
\begin{align}\label{zerosg-p:v4kspeed-diminishing-5}
\epsilon_9=\frac{8\theta4^\theta\kappa_\eta(\epsilon_5+L_f^2\kappa_\delta^2)}{\nu}
<\frac{8\theta4^\theta(\epsilon_5+L_f^2\kappa_\delta^2)d_3}{64\nu(1+\sigma_0^2)(1+\tilde{\sigma}_0^2)L_f}.
\end{align}
Thus, from \eqref{zerosg-p:v4kspeed-diminishing-4} and \eqref{zerosg-p:v4kspeed-diminishing-5}, we have\eqref{zerosg-p:thm-sg-diminishing-equ1bounded}.

\subsection{Proof of Theorem~\ref{zerosg-p:thm-sg-diminishingt}}\label{zerosg-p:proof-thm-sg-diminishingt}
In addition to the notations defined in Appendices~\ref{zerosg-p:proof-thm-random-sm} and \ref{zerosg-p:proof-thm-sg-diminishing},
we also denote
\begin{align*}
&\hat{d}_2(\gamma)=\max\Big\{\frac{1}{\epsilon_1},~\frac{\kappa_\eta}{d_2(\gamma)}\Big\}.
\end{align*}

From $t_1>\hat{d}_2(\gamma)\ge\frac{\kappa_\eta}{d_2(\gamma)}$, we have
\begin{align*}
\eta_k=\frac{\kappa_\eta}{k+t_1}\le\frac{\kappa_\eta}{t_1}<d_2(\gamma).
\end{align*}
Thus, all conditions needed in Lemma~\ref{zerosg-p:lemma:sg2} are satisfied, so \eqref{zerosg-p:vkLya2-pl-z}--\eqref{zerosg-p:vkLya2-pl-r1.1} still hold when $\theta=1$.

From $t_1>\hat{d}_2(\gamma)\ge\frac{1}{\epsilon_1}$, we have
\begin{align}\label{zerosg-p:vkLya2-pl-a1-1}
\epsilon_1t_1>1.
\end{align}

From $\kappa_\eta\in(\frac{8}{\nu},\frac{8d_3}{\nu}]$, we have
\begin{align}\label{zerosg-p:vkLya2-pl-a1-3}
2d_3\ge\frac{\nu\kappa_\eta}{4}>2.
\end{align}

Hence, from \eqref{zerosg-p:vkLya2-pl-a1-1} and \eqref{zerosg-p:vkLya2-pl-a1-3}, we have
\begin{align}\label{zerosg-p:vkLya2-pl-a1}
b_1=\epsilon_8\kappa_\eta>1.
\end{align}

Then from $\theta=1$, \eqref{zerosg-p:vkLya2-pl-z}--\eqref{zerosg-p:vkLya2-pl-r1.1}, \eqref{zerosg-p:vkLya2-pl-a1}, and \eqref{zerosg:serise:lemma:sequence-equ5}, we have
\begin{align}\label{zerosg-p:vkLya2-pl-theta0t}
\check{z}_{k}\le\phi_2(k,t_1,b_1,b_2,2,\check{z}_0),~\forall k\in\mathbb{N}_+,
\end{align}
where the function $\phi_2$ is defined in \eqref{zerosg:serise:lemma:sequence-equ5-phi3}.

From \eqref{zerosg-p:vkLya2-pl-a1-3} and \eqref{zerosg-p:vkLya2-pl-a1}, we know  $\phi_2(k,t_1,b_1,b_2,2,\check{z}_0)=\mathcal{O}(n+\frac{pn}{k+t_1})$. Hence, from \eqref{zerosg:rand-grad-smooth} and \eqref{zerosg-p:vkLya2-pl-theta0t}, we get
\begin{align}\label{zerosg-p:gbark0-pl-speed-1}
\mathbf{E}[\|\bar{\bsg}^0_k\|^2]
=\mathcal{O}\Big(n+\frac{pn}{k+t_1}\Big),~\forall k\in\mathbb{N}_+.
\end{align}

Noting that $t_1>\hat{d}_2(\gamma)\ge\frac{\kappa_\eta}{d_2(\gamma)}\ge64p\kappa_\eta(1+\sigma_0^2)
(1+\tilde{\sigma}_0^2)(2\epsilon_2+L_f)$, from \eqref{zerosg-p:gbark0-pl-speed-1} and \eqref{zerosg-p:vkLya2-pl-a1-3}, we know that there exists a constant $\hat{d}_g>0$, such that
\begin{align}\label{zerosg-p:gbark0-pl-speed}
\mathbf{E}[\|\bar{\bsg}^0_k\|^2]\le n\hat{d}_g,~\forall k\in\mathbb{N}_0.
\end{align}
Then, similar to the way to get \eqref{zerosg-p:thm-sg-diminishing-equ1.1bounded} and \eqref{zerosg-p:thm-sg-diminishing-equ1bounded}, we get \eqref{zerosg-p:thm-sg-diminishing-equ2.1bounded} and \eqref{zerosg-p:thm-sg-diminishing-equ2bounded}.

\subsection{Proof of Theorem~\ref{zerosg-p-a5:thm-sg-diminishingt}}\label{zerosg-p-a5:proof-thm-sg-diminishingt}
In addition to the notations defined in Appendices~\ref{zerosg-p:proof-thm-random-sm}, \ref{zerosg-p:proof-thm-sg-diminishing}, and \ref{zerosg-p:proof-thm-sg-diminishingt},
we also denote
\begin{align*}
&\tilde{d}_2(\gamma)=\min\Big\{\frac{\epsilon_1}{4L_f^2},~
\frac{1}{4p(1+\sigma_0^2)(2\epsilon_2+L_f)}\Big\},\\
&\check{d}_2(\gamma)=\max\Big\{\frac{1}{\epsilon_1},~\frac{\kappa_\eta}{\tilde{d}_2(\gamma)},
~\frac{4\kappa_\eta\epsilon_{10}}{\nu}\Big\},\\
&\check{\epsilon}_3=2\Big(2\epsilon_{2}+\frac{L_f}{n}\Big)\check{\epsilon}_5,\\
&\check{\epsilon}_4=\frac{L_f^2}{4}\Big(\frac{1}{1+\sigma_0^2}+\frac{4}{p}\Big),\\
&\check{\epsilon}_5=2(\sigma^2_1+2(1+\sigma_0^2)\check{\sigma}^2_2),\\
&\epsilon_{10}=8p(1+\sigma_0^2)\Big(2\epsilon_{2}+\frac{L_f}{n}\Big)L_f.
\end{align*}

To prove Theorem~\ref{zerosg-p-a5:thm-sg-diminishingt}, the following lemma is used.
\begin{lemma}\label{zerosg-p-a5:lemma:sg2}
Suppose Assumptions~\ref{zerosg:ass:graph}--\ref{zerosg:ass:zeroth-variance} hold and each $f_i^*>-\infty$. Suppose $\gamma\in(0,d_1)$ and $\eta_k\in(0,\tilde{d}_2(\gamma)]$. Let $\{\bsx_k\}$ be the sequence generated by Algorithm~\ref{zerosg-p:algorithm-random}, then
\begin{subequations}
\begin{align}
\mathbf{E}_{\mathfrak{L}_k}[W_{1,k+1}+W_{4,k+1}]
&\le W_{1,k}+W_{4,k}-\|\bsx_k\|^2_{\frac{\epsilon_{1}}{2}\bsK}
-\frac{1}{4}\eta_k\|\bar{\bsg}_{k}^0\|^2\nonumber\\
&\quad+\epsilon_{10}\eta^2_kW_{4,k}+pn\check{\epsilon}_3\eta^2_k
+pn\check{\epsilon}_4\eta_k\delta_k^2,\label{zerosg-p-a5:w1w4k2}\\
\mathbf{E}_{\mathfrak{L}_k}[W_{1,k+1}]
&\le W_{1,k}-\|\bsx_k\|^2_{\frac{\epsilon_{1}}{2}\bsK}
+16p(1+\sigma_0^2)\epsilon_2L_f\eta_k^2W_{4,k}\nonumber\\
&\quad+2pn\epsilon_2\check{\epsilon}_5\eta^2_k+pn\check{\epsilon}_4\eta_k\delta_k^2,
\label{zerosg-p-a5:w1w4k1}\\
\mathbf{E}_{\mathfrak{L}_k}[W_{4,k+1}]
&\le W_{4,k}+\|\bsx_k\|^2_{2L_f^2\eta_k\bsK}
-\frac{1}{4}\eta_k\|\bar{\bsg}_{k}^0\|^2+\frac{8p(1+\sigma_0^2)L_f^2}{n}\eta_k^2W_{4,k}\nonumber\\
&\quad+pL_f\check{\epsilon}_5\eta^2_k+(p+n)L_f^2\eta_k\delta_k^2.
\label{zerosg-p-a5:w1w4k3}
\end{align}
\end{subequations}
\end{lemma}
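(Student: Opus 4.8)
The plan is to follow the proof of Lemma~\ref{zerosg-p:lemma:sg2} almost verbatim, replacing every use of the state-dependent second-moment bound \eqref{zerosg:rand-grad-esti2} (which rests on Assumption~\ref{zerosg:ass:fig}) by the refined estimate \eqref{zerosg-a5:rand-grad-esti2} established in the proof of Lemma~\ref{zerosg-a5:lemma:sg2}. First I would observe that the algorithm-specific facts carry over unchanged: since the column sums of $L$ vanish, the primal update still yields $\bar{\bsx}_{k+1}=\bar{\bsx}_k-\eta_k\bar{\bsg}^e_k$, so Lemma~\ref{zerosg:lemma:grad-st} and the descent inequality \eqref{zerosg:v4k} hold for $\{\bsx_k\}$ generated by Algorithm~\ref{zerosg-p:algorithm-random}, exactly as noted at the start of the proof of Lemma~\ref{zerosg-p:lemma:sg2}. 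The crucial structural change is that \eqref{zerosg-a5:rand-grad-esti2}, derived from $\|\bsg^0_k\|^2\le 2L_fW_{4,k}+n\check{\sigma}^2_2$ in \eqref{zerosg-a5:vkLya-2g0k}, carries a $W_{4,k}$ term in place of the $\|\bar{\bsg}^0_k\|^2$ term; this is why the target bounds track $W_{4,k}$ through the coefficients $\epsilon_{10}\eta_k^2$ and $16p(1+\sigma_0^2)\epsilon_2 L_f\eta_k^2$ rather than regenerating the gradient norm.

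For the consensus estimate \eqref{zerosg-p-a5:w1w4k1}, I would start from the algorithm-only recursion \eqref{zerosg-p:v1k-1}, namely $\mathbf{E}_{\mathfrak{L}_k}[W_{1,k+1}]\le\frac{1}{2}\|\bsx_k\|^2_{\bsK}-\|\bsx_k\|^2_{\epsilon_1\bsK}+\epsilon_2\eta_k^2\mathbf{E}_{\mathfrak{L}_k}[\|\bsg^e_k\|^2]$, and substitute \eqref{zerosg-a5:rand-grad-esti2}. The $\|\bsx_k\|^2_{\bsK}$ coefficient then becomes $\epsilon_1-8p(1+\sigma_0^2)L_f^2\epsilon_2\eta_k^2$, while the $W_{4,k}$, variance, and smoothing contributions reorganize into $16p(1+\sigma_0^2)\epsilon_2L_f\eta_k^2$, $2pn\epsilon_2\check{\epsilon}_5\eta_k^2$, and $\tfrac{1}{2}\epsilon_2 np^2L_f^2\eta_k^2\delta_k^2$. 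For the optimality estimate \eqref{zerosg-p-a5:w1w4k3}, I would combine \eqref{zerosg:v4k} with \eqref{zerosg:rand-grad-esti5} and \eqref{zerosg-a5:rand-grad-esti2} to reach the analog of \eqref{zerosg:v4kspeed-diminishing-thm5}, whose $\frac{1}{2}\eta_k^2L_f\|\bar{\bsg}^s_k\|^2$ term cancels against $-\frac{1}{4}\eta_k\|\bar{\bsg}^s_k\|^2$ once $\eta_k\le\frac{1}{2L_f}$. Adding the resulting $W_1$ and $W_4$ inequalities gives \eqref{zerosg-p-a5:w1w4k2}, with the aggregate $W_{4,k}$ coefficient $16p(1+\sigma_0^2)\epsilon_2 L_f\eta_k^2+\frac{8p(1+\sigma_0^2)L_f^2}{n}\eta_k^2=\epsilon_{10}\eta_k^2$ by the definition of $\epsilon_{10}$.

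The main obstacle is the stepsize bookkeeping: showing that $\eta_k\le\tilde{d}_2(\gamma)=\min\{\frac{\epsilon_1}{4L_f^2},\frac{1}{4p(1+\sigma_0^2)(2\epsilon_2+L_f)}\}$ simultaneously drives every quadratic-in-$\eta_k$ term into the leading negative terms. Mirroring \eqref{zerosg-p:epsilon1}–\eqref{zerosg-p:epsilon3}, I would verify that $L_f^2\eta_k+[8p(1+\sigma_0^2)L_f^2\epsilon_2+\frac{4p(1+\sigma_0^2)L_f^3}{n}]\eta_k^2\le\frac{\epsilon_1}{2}$, the first bound on $\tilde{d}_2(\gamma)$ supplying $L_f^2\eta_k\le\frac{\epsilon_1}{4}$ and the second making the bracketed $\eta_k^2$ factor at most $\frac{\epsilon_1}{4}$; and that the same window keeps the $\|\bar{\bsg}^0_k\|^2$ coefficient at $-\frac{1}{4}$ (no reduction is needed here, unlike the $-\frac{1}{8}$ in \eqref{zerosg-p:w1w4k2}, precisely because \eqref{zerosg-a5:rand-grad-esti2} produces no new $\|\bar{\bsg}^0_k\|^2$ term). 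Finally I would check that the combined $\delta_k^2$ coefficients collapse into $pn\check{\epsilon}_4\eta_k$ and that the summed variance terms are dominated by $pn\check{\epsilon}_3\eta_k^2$, the factor-two slack in the definitions of $\check{\epsilon}_3$ and $\check{\epsilon}_4$ absorbing the residual quadratic pieces. Since each of these is a routine constant comparison once the refined second-moment bound is in place, no genuinely new difficulty arises beyond carefully propagating the $W_{4,k}$-dependence introduced by the relaxed assumption.
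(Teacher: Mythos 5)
Your proposal is correct and follows essentially the same route as the paper's own proof: the paper likewise combines the primal consensus recursion \eqref{zerosg-p:v1k-1} with the refined second-moment bound \eqref{zerosg-a5:rand-grad-esti2} and the $W_4$ descent inequality \eqref{zerosg:v4kspeed-diminishing-thm5} to obtain the summed inequality \eqref{zerosg-p:w1w4k-thm11}, and then repeats the stepsize bookkeeping of Lemma~\ref{zerosg-p:lemma:sg2}. Your coefficient accounting (the aggregation into $\epsilon_{10}$, $\check{\epsilon}_3$, $\check{\epsilon}_4$, and the observation that the $\|\bar{\bsg}^0_k\|^2$ coefficient stays at $-\tfrac{1}{4}\eta_k$ because \eqref{zerosg-a5:rand-grad-esti2} generates a $W_{4,k}$ term rather than a gradient-norm term) matches the paper's derivation exactly.
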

\begin{proof}
We know that \eqref{zerosg:rand-grad-esti1}--\eqref{zerosg:rand-grad-smooth} and \eqref{zerosg-a5:rand-grad-esti2} still hold since  Assumptions~\ref{zerosg:ass:zeroth-smooth} and \ref{zerosg:ass:zeroth-variance} hold, and each $f_i^*>-\infty$. Therefore, \eqref{zerosg:v4kspeed-diminishing-thm5} also holds.

From \eqref{zerosg:v4kspeed-diminishing-thm5}, \eqref{zerosg-p:v1k-1}, and \eqref{zerosg-a5:rand-grad-esti2}, we have
\begin{align}\label{zerosg-p:w1w4k-thm11}
\mathbf{E}_{\mathfrak{L}_k}[W_{1,k+1}+W_{4,k+1}]
&\le W_{1,k}+W_{4,k}-\|\bsx_k\|^2_{\epsilon_{1}\bsK
-(L_f^2\eta_k+8p(1+\sigma_0^2)L_f^2\epsilon_{2}\eta^2_k
+\frac{4p(1+\sigma_0^2)L_f^3}{n}\eta^2_k)\bsK}\nonumber\\
&\quad-\frac{1}{4}\eta_k\|\bar{\bsg}_{k}^0\|^2+
8p(1+\sigma_0^2)\Big(2\epsilon_{2}+\frac{L_f}{n}\Big)L_f\eta_k^2W_{4,k}
\nonumber\\
&\quad-\frac{1}{4}(1-2L_f\eta_k)\eta_k\|\bar{\bsg}^s_{k}\|^2
+2pn\Big(2\epsilon_{2}+\frac{L_f}{n}\Big)(\sigma^2_1
+2(1+\sigma_0^2)\check{\sigma}^2_2)\eta^2_k\nonumber\\
&\quad
+\frac{pnL_f^2}{4}\Big(2p\epsilon_{2}\eta_k+\frac{L_fp}{n}\eta_k+\frac{4}{p}\Big)\eta_k\delta_k^2.
\end{align}
Then, similar to the rest of the proof of Lemma~\ref{zerosg-p:lemma:sg2}, we get Lemma~\ref{zerosg-p-a5:lemma:sg2}.
\end{proof}

Now we are ready to prove Theorem~\ref{zerosg-p-a5:thm-sg-diminishingt}.

From $t_1>\check{d}_2(\gamma)\ge\max\{\frac{\kappa_\eta}{\tilde{d}_2(\gamma)},
~\frac{4\kappa_\eta\epsilon_{10}}{\nu}\}$, we have
\begin{align}\label{zerosg-p-a5:etak-pl-speed}
\eta_k=\frac{\kappa_\eta}{k+t_1}\le\frac{\kappa_\eta}{t_1}
<\min\Big\{\tilde{d}_2(\gamma),~\frac{\nu}{4\epsilon_{10}}\Big\}.
\end{align}
Thus, all conditions needed in Lemma~\ref{zerosg-p-a5:lemma:sg2} are satisfied, so \eqref{zerosg-p-a5:w1w4k2}--\eqref{zerosg-p-a5:w1w4k3} hold

From \eqref{zerosg-p-a5:w1w4k2}, \eqref{nonconvex:gg3}, and \eqref{zerosg-p-a5:etak-pl-speed}, we know that \eqref{zerosg-p:vkLya2-pl} still holds when $\epsilon_3$ and $\epsilon_4$ are replaced by $\check{\epsilon}_3$ and $\check{\epsilon}_4$, respectively.

Then, similar to the way to get \eqref{zerosg-p:thm-sg-diminishing-equ2.1bounded} and \eqref{zerosg-p:thm-sg-diminishing-equ2bounded}, we have \eqref{zerosg-p-a5:thm-sg-diminishing-equ2.1bounded} and \eqref{zerosg-p-a5:thm-sg-diminishing-equ2bounded}.

\subsection{Proof of Theorem~\ref{zerosg-p:thm-random-pd-fixed}}\label{zerosg-p:proof-thm-random-pd-fixed}

In addition to the notations defined in Appendix~\ref{zerosg-p:proof-thm-random-sm},
we also denote the following notations.
\begin{align*}
&\epsilon=\frac{1}{2}+\frac{1}{2}\max\{1-\tilde{\epsilon}_8,~\tilde{\epsilon}^2\},\\
&\tilde{\epsilon}_8=\min\Big\{\epsilon_1,~\frac{\nu\eta}{4}\Big\}.
\end{align*}

All conditions needed in Lemma~\ref{zerosg-p:lemma:sg2} are satisfied, so \eqref{zerosg-p:w1w4k2} still holds.

(i) Taking expectation in $\calL_{T}$, summing \eqref{zerosg-p:w1w4k2} over $ k\in[0,T-1]$, and using $\eta_k=\eta$ and $\delta_{i,k}\in(0,\frac{\kappa_\delta\tilde{\varepsilon}^{k}}{\sqrt{p}}]$ yield
\begin{align*}
&\mathbf{E}[W_{1,T}+W_{4,T}]+\frac{\epsilon_1}{2}\sum_{k=0}^{T-1}\|\bsx_k\|^2_{\bsK}
+\frac{1}{8}\eta\sum_{k=0}^{T-1}\|\bar{\bsg}^0_{k}\|^2\le  W_{1,0}+W_{4,0}
+pn\epsilon_3\eta^2T
+\frac{pn\epsilon_4\kappa_\delta^2\eta}{1-\tilde{\epsilon}^2},
\end{align*}
which further implies
\begin{align}\label{zerosg-p:sgproof-vkLya2T-fixed}
\sum_{k=0}^{T-1}\mathbf{E}[\|\bsx_k\|^2_{\bsK}]
\le \frac{2}{\epsilon_1}\Big( W_{1,0}
+pn\epsilon_3\eta^2T
+\frac{pn\epsilon_4\kappa_\delta^2\eta}{1-\tilde{\varepsilon}^2}\Big).
\end{align}
Therefore, \eqref{zerosg-p:thm-sg-fixed-equ3.1} holds due to $\eta=\mathcal{O}(\frac{1}{p})$.

From \eqref{zerosg-p:w1w4k3}, we have
\begin{align}\label{zerosg-p:v4kspeed-fixed}
\mathbf{E}[W_{4,T}]
&\le  W_{4,0}+\sum_{k=0}^{T-1}\mathbf{E}[\|\bsx_k\|^2_{2\eta L_f^2\bsK}]-\frac{1}{8}\eta\sum_{k=0}^{T-1}\mathbf{E}[\|\bar{\bsg}_{k}^0\|^2]
+pL_f\epsilon_5\eta^2T
+\frac{(n+p)L_f^2\kappa_\delta^2\eta}{1-\tilde{\varepsilon}^2}.
\end{align}

From \eqref{zerosg-p:sgproof-vkLya2T-fixed} and \eqref{zerosg-p:v4kspeed-fixed}, we have
\begin{align*}
\sum_{k=0}^{T-1}\mathbf{E}[\|\bar{\bsg}_{k}^0\|^2]&\le
\frac{8W_{4,0}}{\eta}+\sum_{k=0}^{T-1}\mathbf{E}[\|\bsx_k\|^2_{16 L_f^2\bsK}]
+8pL_f\epsilon_5\eta T
+\frac{8(n+p)L_f^2\kappa_\delta^2}{1-\tilde{\varepsilon}^2}\\
&\le
\frac{8W_{4,0}}{\eta}+\frac{16 L_f^2}{\epsilon_1}\Big( W_{1,0}
+pn\epsilon_3\eta^2T
+\frac{pn\epsilon_4\kappa_\delta^2\eta}{1-\tilde{\varepsilon}^2}\Big)
+8pL_f\epsilon_5\eta T
+\frac{8(n+p)L_f^2\kappa_\delta^2}{1-\tilde{\varepsilon}^2},
\end{align*}
which gives \eqref{zerosg-p:thm-sg-fixed-equ3}.

From \eqref{zerosg:rand-grad-smooth}, \eqref{zerosg-p:sgproof-vkLya2T-fixed}, and \eqref{zerosg-p:v4kspeed-fixed}, we have
\begin{align}\label{zerosg-p:sgproof-vkLya2T-fixed1}
&\mathbf{E}[\|\bar{\bsg}^0_k\|^2]
\le2L_f\mathbf{E}[W_{4,k}]\nonumber\\
&\le 2L_f\Big( W_{4,0}
+pL_f\epsilon_5\eta^2k
+\frac{(n+p)L_f^2\kappa_\delta^2\eta}{1-\tilde{\varepsilon}^2}\Big)
+\frac{4L_f^3\eta}{\epsilon_1}\Big( W_{1,0}
+pn\epsilon_3\eta^2k
+\frac{pn\epsilon_4\kappa_\delta^2\eta}{1-\tilde{\varepsilon}^2}\Big),~\forall k\in\mathbb{N}_0.
\end{align}

From \eqref{zerosg-p:w1w4k1}, \eqref{zerosg-p:epsilon1}, $\delta_{i,k}\in(0,\kappa_\delta\tilde{\varepsilon}^{k}]$, \eqref{zerosg-p:sgproof-vkLya2T-fixed1}, and , we have
\begin{align*}
\mathbf{E}[W_{1,k+1}]&\le \mathbf{E}[ (1-\epsilon_1) W_{1,k}
+16p(1+\sigma_0^2)(1+\tilde{\sigma}_0^2)\epsilon_2\eta^2\|\bar{\bsg}^0_{k}\|^2
+2pn\epsilon_2\epsilon_5\eta^2
+pn\epsilon_4\eta\kappa_\delta^2\tilde{\varepsilon}^{2k}]\nonumber\\
&\le (1-\epsilon_1)^k \breve{W}_{0}
+16p(1+\sigma_0^2)(1+\tilde{\sigma}_0^2)\epsilon_2
\eta^2\sum_{\tau=0}^{k}(1-\epsilon_1)^\tau\mathbf{E}[\|\bar{\bsg}^0_{k-\tau}\|^2]\nonumber\\
&\quad+2pn\epsilon_2\epsilon_5\eta^2\sum_{\tau=0}^{k}(1-\epsilon_1)^\tau
+pn\epsilon_4\eta\kappa_\delta^2\sum_{\tau=0}^{k}(1-\epsilon_1)^\tau
\tilde{\varepsilon}^{2(k-\tau)}\nonumber\\
&=\mathcal{O}(pn\eta^2+\epsilon_5p^2\eta^4(k+1)+\epsilon_3p^2n\eta^5(k+1)),
\end{align*}
which gives \eqref{zerosg-p:thm-sg-fixed-equ3.2}.

(ii) If Assumption~\ref{zerosg:ass:fil} also holds, then \eqref{nonconvex:gg3} holds. Thus, \eqref{zerosg-p:vkLya2-pl} also holds when $\eta_k=\eta$.
From \eqref{zerosg-p:vkLya2-pl} and $\eta_k=\eta$, for all $k\in\mathbb{N}_0$, we have
\begin{align}\label{zerosg-p:vkLya2-pl-fixed}
\mathbf{E}_{\mathfrak{L}_k}[\check{W}_{k+1}]
\le (1-\tilde{\epsilon}_8)\check{W}_{k}+pn\epsilon_3\eta^2+pn\epsilon_4\eta\delta_k^2.
\end{align}

From \eqref{zerosg-p:epsilon1}
\begin{align}\label{zerosg-p:vkLya2-pl-r1.1-fixed}
0<\tilde{\epsilon}_8\le\epsilon_{1}<\frac{1}{16}.
\end{align}

From \eqref{zerosg-p:vkLya2-pl-fixed}, \eqref{zerosg-p:vkLya2-pl-r1.1-fixed}, and $\delta_{i,k}\in(0,\kappa_\delta\tilde{\varepsilon}^k]$, we have
\begin{align}\label{zerosg-p:vkLya2-pl-fixed2}
&\mathbf{E}[\check{W}_{k+1}]\le (1-\tilde{\epsilon}_8)^{k+1}\check{W}_{0}
+pn\epsilon_3\eta^2
\sum_{\tau=0}^{k}(1-\tilde{\epsilon}_8)^\tau
+pn\epsilon_4\kappa_\delta^2\eta\sum_{\tau=0}^{k}(1-\tilde{\epsilon}_8)^\tau
\tilde{\varepsilon}^{2(k-\tau)},~\forall k\in\mathbb{N}_0.
\end{align}

From \eqref{zerosg-p:vkLya2-pl-fixed2}, \eqref{zerosg:lemma:sumgeo-equ}, and $\epsilon>\max\{1-\tilde{\epsilon}_8,~\tilde{\epsilon}^2\}$,  we have
\begin{align}\label{zerosg-p:vkLya2-pl-fixed3}
\mathbf{E}[\check{W}_{k+1}]\le \Big(\frac{W_{1,0}+W_{4,0}}{n}
+\frac{p\epsilon_4\kappa_\delta^2\eta}{\epsilon-\tilde{\epsilon}^2}\Big)n\epsilon^{k+1}
+\frac{2\eta}{\tilde{\epsilon}_7}\Big(2\epsilon_{2}+\frac{1}{n}L_f\Big)
n(\sigma^2_1+2(1+\sigma_0^2)\sigma^2_2)p\eta,~\forall k\in\mathbb{N}_0,
\end{align}
which gives \eqref{zerosg-p:thm-sg-fixed-equ1}.

\end{document}